\numberwithin{equation}{section}    
\newtheorem{thm}{Theorem}[section]
\newtheorem{lem}[thm]{Lemma}
\newtheorem{proposition}[thm]{Proposition}
\newtheorem{corollary}[thm]{Corollary}
\theoremstyle{definition}
\newtheorem{example}[thm]{Example}
\newtheorem{definition}[thm]{Definition}
\newtheorem{rmk}[thm]{Remark}
\DeclareMathOperator{\midd}{midd}
\DeclareMathOperator{\tol}{tol}
\DeclareMathOperator{\inc}{Inc}
\DeclareMathOperator{\std}{std}
\DeclareMathOperator{\Aug}{Aug}
\DeclareMathOperator{\SP}{SP}
\DeclareMathOperator{\FG}{FG}
\DeclareMathOperator{\wt}{wt}
\DeclareMathOperator{\HW}{HW}
\DeclareMathOperator{\oc}{oc}
\DeclareMathOperator{\TL}{TL}
\DeclareMathOperator{\tab}{tab}
\DeclareMathOperator{\ps}{ps}
\DeclareMathOperator{\pss}{pss}
\DeclareMathOperator{\nss}{nss}
\DeclareMathOperator{\flip}{flip}
\DeclareMathOperator{\ns}{ns}
\DeclareMathOperator{\SSYT}{SSYT}
\DeclareMathOperator{\ROHS}{ROHS}
\DeclareMathOperator{\hw}{hw}
\DeclareMathOperator{\KL}{KL}
\DeclareMathOperator{\cind}{cind}
\DeclareMathOperator{\rind}{rind}
\DeclareMathOperator{\red}{red}
\DeclareMathOperator{\SYT}{SYT}
\DeclareMathOperator{\LR}{LR}
\DeclareMathOperator{\SSOT}{SSOT}
\DeclareMathOperator{\GSSOT}{GSSOT}
\DeclareMathOperator{\SOT}{SOT}
\DeclareMathOperator{\GSOT}{GSOT}
\DeclareMathOperator{\SSROT}{SSROT}
\DeclareMathOperator{\energy}{energy}
\DeclareMathOperator{\vac}{vac}
\DeclareMathOperator{\Par}{Par}
\DeclareMathOperator{\second}{second}
\DeclareMathOperator{\lseq}{lseq}
\newcommand{\sboxone}{\scalebox{0.4}{\ydiagram{1}}}
\newcommand{\sboxtwo}{\scalebox{0.4}{\ydiagram{2}}}
\newcommand{\sboxeleven}{\raisebox{0.2em}{\scalebox{0.3}{\ydiagram{1,1}}}}
\newsavebox\youngA
\newsavebox\youngB
\newsavebox\youngC
\newsavebox\youngD
\newsavebox\youngE
\newsavebox\youngF
\newsavebox\youngG
\newsavebox\youngH
\savebox\youngA{\begin{ytableau} \none[1] & \none[1] & \none[1]\\ \none[2] & \none[2]\\ \none[3] \end{ytableau}}%
\savebox\youngB{\begin{ytableau} \none[1] & \none[1] & \none[1] & \none[2]\\ \none[2] \\ \none[3] \end{ytableau}}%
\savebox\youngC{\begin{ytableau} \none[1] & \none[1] & \none[1] & \none[3]\\ \none[2] & \none[2] \end{ytableau}}%
\savebox\youngD{\begin{ytableau} \none[1] & \none[1] & \none[1]\\ \none[2] & \none[2] &\none[3]\\ \end{ytableau}}%
\savebox\youngE{\begin{ytableau} \none[1] & \none[1] & \none[1] & \none[2] &\none[3] \\ \none[2] \\ \end{ytableau}}%
\savebox\youngF{\begin{ytableau} \none[1] & \none[1] & \none[1] &\none[2] \\ \none[2] &\none[3]\\ \end{ytableau}}%
\savebox\youngG{\begin{ytableau} \none[1] & \none[1] & \none[1] &\none[2] &\none[2] \\ \none[3] \end{ytableau}}%
\savebox\youngH{\begin{ytableau} \none[1] & \none[1] & \none[1] & \none[2] & \none[2] &\none[3] \end{ytableau}}%
\title{Lusztig $q$-weight multiplicities and Kirillov-Reshetikhin crystals}
\author{Hyeonjae Choi}
\address{Department of Mathematical Sciences \\ Seoul National University \\Seoul 151-247 \\ Korea}
\email{2pun4ani@snu.ac.kr}
\author{Donghyun Kim}
\address{Department of Mathematical Sciences \\ Seoul National University \\Seoul 151-247 \\ Korea}
\email{hyun920310@snu.ac.kr}
\author{Seung Jin Lee}
\address{Department of Mathematical Sciences and Research Institute of
 Mathematics, Seoul National University\\Seoul 151-247 \\ Korea}
\email{lsjin@snu.ac.kr}
\begin{document}

\begin{abstract}
Lusztig $q$-weight multiplicities extend the Kostka-Foulkes polynomials to a broader range of Lie types. 
In this work, we investigate these multiplicities through the framework of Kirillov-Reshetikhin crystals. 
Specifically, for type $C$ with dominant weights and type $B$ with dominant spin weights, we present a combinatorial formula for Lusztig $q$-weight multiplicities in terms of energy functions of Kirillov-Reshetikhin crystals, generalizing the charge statistic on semistandard Young tableaux for type $A$.
Additionally, we introduce level-restricted $q$-weight multiplicities for nonexceptional types, and prove positivity by providing their combinatorial formulas. 
\end{abstract}

\maketitle

\section{Introduction}\label{Sec: Intro}
The \emph{Kostka-Foulkes polynomial} $K_{\lambda,\mu}(q)$ is the coefficient of the modified Hall-Littlewood polynomials when expressed in the Schur basis. It has a combinatorial formula involving a beautiful statistic known as charge \cite{LasSch1978}:
\begin{equation}\label{eq: charge}
K_{\lambda,\mu}(q) = \sum_{T \in \SSYT(\lambda,\mu)} q^{\text{charge}(T)}
\end{equation}
where the sum is taken over all semistandard Young tableaux of shape $\lambda$ with weight $\mu$. The Kostka-Foulkes polynomial has its origins in geometry and representation theory, with significant connections to flag varieties and Springer fibers \cite{Springer1977,DCP1981,Tan1982,GP1992}. 
Due to its ubiquity and rich structure, the study of the Kostka-Foulkes polynomial has been an active area of research for decades.
Several generalizations of the Kostka-Foulkes polynomials exist, including Macdonald polynomials and Lusztig $q$-weight multiplicities. 
Lusztig $q$-weight multiplicities generalize the Kostka-Foulkes polynomials to any Lie type.
In this paper, we focus on Lusztig $q$-weight multiplicities, particularly in types beyond $A$.

Let $\mathfrak{g}_n$ be a classical simple Lie algebra and let $L: R^{+}\rightarrow \mathbb{Z}$ be a function on the set of positive roots. 
For dominant weights $\lambda, \mu$, we define 
\begin{equation}\label{eq:q-analog}
    \operatorname{KL}^{\mathfrak{g}_n,L}_{\lambda,\mu}(q)=\sum_{w\in W}(-1)^w[e^{w(\lambda + \rho)-(\mu + \rho)}]\prod_{\alpha\in R^{+}}\frac{1}{1-q^{L(\alpha)}e^\alpha}
\end{equation}
where $[e^\beta]f$ denotes the coefficient of $e^\beta$ in $f$, $\rho = \frac{1}{2}\sum_{\alpha \in R^{+}}\alpha$, $W$ is the Weyl group of $\mathfrak{g}_n$ and $(-1)^{w}$ denotes $(-1)^{\ell(w)}$ where $\ell(w)$ is the length of $w$.
Notably, $\KL_{\lambda,\mu}^{\mathfrak{g}_n,L}(1)$ equals the weight multiplicity.
Although $\KL^{\mathfrak{g}_n,L}_{\lambda,\mu}(q)$ is not a polynomial with nonnegative coefficients in general, we examine two cases where this holds true.

First, we set $L\equiv1$, i.e., $L(\alpha)=1$ for all $\alpha\in R^{+}$. 
In this case, the polynomial $\KL^{\mathfrak{g}_n,L\equiv1}_{\lambda,\mu}(q)$, which we denote simply as $\KL^{\mathfrak{g}_n}_{\lambda,\mu}(q)$, is known as the \textit{Lusztig $q$-weight multiplicity} \cite{Lusztig1983}. 
The Lusztig $q$-weight multiplicity has been extensively studied from both geometric and algebraic perspectives \cite{Broer1993,Kir99,Bry1989}. 
In particular, its nonnegativity follows from the theory of the affine Kazhdan-Lusztig polynomial \cite{Lusztig1983}.

On the other hand, relatively few combinatorial results exist regarding the positivity of these multiplicities.
When $\mathfrak{g}_n$ is of type $A$, i.e., $\mathfrak{g}_n = A_{n-1}$, the Lusztig $q$-weight multiplicity $\KL^{A_{n-1}}_{\lambda,\mu}(q)$ coincides with the Kostka-Foulkes polynomial.
The combinatorics of the Kostka-Foulkes polynomial are well understood via the charge formula of Lascoux and Sch\"utzenberger \eqref{eq: charge}.
A natural question is whether the charge statistic can be generalized beyond type $A$ to provide a positive combinatorial formula for the Lusztig $q$-weight multiplicities.
This has remained a long-standing open problem for decades, with only partial results known, even when restricting to specific Lie types.

In this paper, we provide a combinatorial formula for $\KL^{C_n}_{\lambda,\mu}(q)$ and $\operatorname{KL}^{B_n}_{\alpha,\beta}(q)$, for dominant weights $\lambda,\mu$ and dominant spin weights $\alpha$ and $\beta$, using the energy function in the column KR crystals (Theorem \ref{thm: C lusztig} and \ref{thm: B lusztig}).
Note that the energy function in the column KR crystals for affine type $A$ is essentially the same as the charge statistic \cite{NY1997}.

We introduce another $q$-version of the weight multiplicity for nonexceptional types, which we call the \textit{level-restricted $q$-weight multiplicity}.
We define a function $L_A: R^+\rightarrow \mathbb{Z}$ as follows:
\begin{align*}
    L_A(\alpha)=\begin{cases*}
        1 \qquad \text{if $\alpha=\varepsilon_i-\varepsilon_j$ for some $i<j$},\\
        0 \qquad \text{otherwise}.
    \end{cases*}
\end{align*}
We denote the level-restricted $q$-weight multiplicity by $\KL^{\mathfrak{g}_n,L_A}_{\lambda,\mu}(q)$.
We provide a combinatorial formula for $\KL^{\mathfrak{g}_n,L_A}_{\lambda,\mu}(q)$ for nonexceptional types (Theorem \ref{thm: level formula}). 
Notably, our results refine the $X=K={^\infty }\KL$ theorem for tensor products of row KR crystals \cite{LS2007, S05} and provide new combinatorial objects for weight multiplicities:
$\GSSOT$ and $\SSROT$ (see Definition \ref{def: gssot} and \ref{def: ssrot}).
The following table summarizes the main results:
\begin{table}[h]\label{table: summary}
    \centering
        \begin{tabular}{|c!{\vrule width 1.7pt}c|c|}
        \hline
        Type & Lusztig $q$-weight multiplicity & Level-restricted $q$-weight multiplicity\\\hline
        $B$&column KR crystal of type $D_{N+1}^{(2)} (\text{spin weight})$ &row KR crystal of type $D_{N+1}^{(2)}$\\\hline
        $C$&column KR crystal of type $B_{N}^{(1)}$&row KR crystal of type $C_{N}^{(1)}$\\\hline
        $D$&?&row KR crystal of type $B_{N}^{(1)}$\\\hline
        \end{tabular}
        \caption{Summary of main results.}
\end{table}

Along the way to the proof, we make extensive use of the splitting map \cite{LS2007, S05, LOS2012}, a generalization of 
the standardization map by Lascoux, which transforms a semistandard Young tableau (SSYT) into a standard Young tableau (SYT) while preserving the cocharge \cite{Las1989}. The splitting map sends an element in $\otimes_i B^{r_i,s_i}$ to an element in $(B^{1,1})^{\otimes \sum_i r_i s_i}$ while maintaining the coenergy function and the classical crystal structure.
Computing the energy function is generally complex, but when an element resides in $B = (B^{1,1})^{\otimes n}$, the computation becomes significantly simpler.
Thus, the splitting map serves as a crucial tool for simplifying the calculations of the energy functions.
We develop the associated techniques in Appendix \ref{Sec: append A}.

\subsection{Organization}
This paper is organized as follows:  
In Section \ref{Sec: Prel}, we provide the necessary background on crystal theory, with a particular focus on KR crystals.
In Section \ref{sec: Objects}, we introduce the combinatorial objects $\SSOT$, $\GSSOT$, and $\SSROT$, which will serve as key ingredients throughout this paper.
We establish their correspondence with certain classical highest weight elements in KR crystals.
In Section \ref{Sec: L}, we state and prove our main results concerning Lusztig $q$-weight multiplicities for type $C$ and type $B$ (spin weights), corresponding to the first column of Table \ref{table: summary}.
In Section \ref{Sec: Ltilde}, we prove results related to level-restricted $q$-weight multiplicities, as summarized in the second column of Table \ref{table: summary}.
Some open questions and directions for future work are listed in Section \ref{sec: future}.  

In Appendix \ref{Sec: append A}, we develop techniques exploiting the splitting map and prove several results used in Section \ref{Sec: L}. 
Appendix \ref{Sec: append B} contains the proof of Lemma \ref{lem: useful lemma}, which is used in Section \ref{sub: filtering x=k}.

\section*{acknowledgement}
The authors are grateful to Mark Shimozono and Jae-Hoon Kwon for fruitful conversations. D. Kim  was supported by the National Research Foundation of Korea (NRF) grant funded by the Korean government (MEST) (No. 2019R1A6A1A10073437) and individual NRF grants 2022R1I1A1A01070620. 
S. J. Lee was supported by the National Research Foundation of Korea (NRF) grant funded by the Korean government (MSIT) (No.0450-20240021)

\section{Preliminaries}\label{Sec: Prel}
\subsection{Partitions and orthogonal complements}

A \emph{partition} $\mu = (\mu_1, \mu_2, \dots, \mu_\ell)$ is a weakly decreasing sequence of positive integers. We say that $\mu$ is a partition of $n$, denoted by $\mu \vdash n$, if $|\mu| := \sum_{i} \mu_i = n$. The \emph{length} of $\mu$, denoted $\ell(\mu)$, is the number of parts in the partition. The set of partitions of length no greater than $n$ will be denoted by $\Par_n$ and we frequently regard $\lambda\in \Par_n$ as a vector of length $n$ by appending zeros if necessary. We represent partitions using \emph{Young diagrams} in English notation, where a partition is displayed as a finite collection of cells arranged in left-justified rows, with the number of cells in each row (from top to bottom) corresponding to the parts of the partition. The \emph{conjugate partition} of $\mu$, denoted by $\mu^t = (\mu^t_1, \mu^t_2, \dots)$ \footnote{We will not use the notation $\mu'$ for the conjugate partition of $\mu$.}, is obtained by reflecting the Young diagram of $\mu$ along its main diagonal.

For a vector $\lambda = (\lambda_1, \lambda_2, \dots, \lambda_n)$ of length $n$ and a number $g$, we define the \textit{orthogonal complement} of $\lambda$ with respect to $g$, denoted $\oc(\lambda, g)$, as $(g - \lambda_n, \dots, g - \lambda_2, g - \lambda_1)$. Additionally, we define $\overline{\oc}(\lambda, g)$ as 
$
(g - \lambda_1, g - \lambda_2, \dots, g -\lambda_n). 
$
Note that if $\lambda$ is a partition and $g \geq \lambda_1$ is a positive integer, then $\oc(\lambda, g)$ also represents a partition (possibly with extra zeros at the end).

\subsection{Crystals}
Let $\mathfrak{g}$ be a classical simple Lie algebra (of type $A_{n-1}$, $B_n$, $C_n$, or $D_n$). We use the following notations.
Let $\mathfrak{b}$ denote a Borel subalgebra and $\mathfrak{h}$ a Cartan subalgebra. 
The set of nodes of the Dynkin diagram is denoted by $J$. 
The simple roots are $\{\alpha_i : i \in J\}$, and the simple coroots are $\{\alpha_i^\vee : i \in J\}$. 
The set of positive roots with respect to $\mathfrak{b}$ is $R^+ \subset \mathfrak{h}^*$.
The fundamental weights are $\{\omega_i : i \in J\}$. 
The weight lattice is given by $P = \bigoplus_{i \in J} \mathbb{Z} \omega_i$, and the set of dominant weights is $P^+ = \bigoplus_{i \in J} \mathbb{Z}_{\geq 0} \omega_i$.
We denote the Weyl group by $W$ and the evaluation pairing by $\langle - , - \rangle$.

The fundamental weights are given by $\omega_i=(1^i,0^{n-i})$ for $0 \leq i \leq n-2$, $\omega_{n-1}^{B_n}=\omega_{n-1}^{C_n}=(1^{n-1},0)$, $\omega_{n-1}^{D_n}=(\frac{1}{2}^{n-1},-\frac{1}{2})$, $\omega_{n}^{B_n}=\omega_{n}^{D_n}=(\frac{1}{2}^{n-1},\frac{1}{2})$, and $\omega_n^{C_n} = (1^n)$. The half sum of positive roots (which is equal to the sum of fundamental weights) $\rho$ is given by $\rho^{B_n}=\frac{1}{2}(2n-1,2n-3,\dots,1),\rho^{C_n}=(n,n-1,\dots,1),$ and $\rho^{D_n}=(n-1,n-2,\dots,0)$. The set of dominant weights $P^+$ is given by: 
$\Par_n\cup\{\mu^{\sharp}: \mu\in \Par_n\}$ if $\mathfrak{g}$=$B_n$, $\Par_n$ if  $\mathfrak{g}$=$C_n$, and $\{(\mu_1,\dots,\mu_{n-1},\pm\mu_n): \mu\in \Par_n\}\cup \{(\mu^{\sharp}_1,\dots,\mu^{\sharp}_{n-1},\pm\mu^{\sharp}_n): \mu\in \Par_n\}$ if $\mathfrak{g}$=$D_n$, where $\mu^{\sharp}:=\mu+(\frac{1}{2})^n $ for $\mu\in \Par_n$.
We call $\mu\in P\cap (\mathbb{Z}+\frac{1}{2})^n$ a \emph{spin weight}.
Note that spin weights only exist when $\mathfrak{g}=B_n, D_n$.

In \cite{Lus90, Kashiwara90}, Kashiwara and Lusztig independently introduced crystal theory which provides a combinatorial way to understand the representation theory of quantum algebras $U_q(\mathfrak{g})$. 
Instead of the representation-theoretic view, we review the $\mathfrak{g}$-crystal axiomatically \cite{bump2017crystal}. A $\mathfrak{g}$-crystal is a nonempty set $B$ together with \textit{crystal operators} $e_i,f_i: B \to B \cup \{0\}$ for $i\in J$,  \textit{weight function}  wt : $B \to P$, and $\epsilon_i,\varphi_i : B \to \mathbb{Z}_{\geq 0}$, where 
\begin{align*}
    \varphi_i(x)=\max\{ k \in \mathbb{Z}_{\geq 0} | f_i^k(x) \neq 0\}, 
    \quad \epsilon_i(x)=\max\{ k \in \mathbb{Z}_{\geq 0} | e_i^k(x) \neq 0\}.
\end{align*}    
A $\mathfrak{g}$-crystal satisfies the following conditions:
\begin{enumerate}
    \item For $x,y \in B$ and $i \in J$, $e_i(x)=y$ if and only if $f_i(y)=x$. 
    In this case, $\text{wt}(y) =\text{wt}(x) + \alpha_i$.
    \item For all $x \in B$ and $i \in J$, $\varphi_i(x) = \langle \text{wt}(x),\alpha_i^\vee \rangle + \epsilon_i(x).$ 
\end{enumerate}
An element $u \in B$ is called a \textit{classical highest weight element} if $e_i(u)=0$ for all $i \in J$.
For a $\mathfrak{g}$-crystal $B$, we can associate it with a directed graph called the \textit{crystal graph} of $B$.
The vertex set is $B$, and we draw an edge $x \overset{i}\to y$ if $f_i(x) = y$.
Every $\mathfrak{g}$-crystal $B$ with a connected crystal graph has a unique classical highest weight element of weight $\lambda$ and in this case we have $B=B(\lambda)$, where $B(\lambda)$ is the crystal corresponding to the irreducible finite $U_q(\mathfrak{g})$-module of the highest weight $\lambda$.

We introduce the \textit{tensor product} $B \otimes C$ of $\mathfrak{g}$-crystal $B,C$. 
As a set, $B\otimes C$ is the Cartesian product $B \times C$.
We define crystal operators as follows \footnote{ In this paper, the convention for tensor product is opposite to Kashiwara's original convention.}:
\begin{align}
    f_i(x\otimes y)=\begin{cases*}
        f_i(x)\otimes y \qquad \text{if } \varphi_i(y) \leq \epsilon_i(x),\\
        x \otimes f_i(y) \qquad \text{if }\varphi_i(y) > \epsilon_i(x),
    \end{cases*} \quad \text{and} \quad
    e_i(x\otimes y)=\begin{cases*}
        e_i(x)\otimes y \qquad \text{if } \varphi_i(y) < \epsilon_i(x),\\
        x \otimes e_i(y) \qquad \text{if }\varphi_i(y) \geq \epsilon_i(x).
    \end{cases*}\label{eq: ei operator}
\end{align}
Also, we have the following:
\begin{align}
    &\text{wt}(x\otimes y) = \text{wt}(x) + \text{wt}(y),\\
    &\varphi_i(x\otimes y) = \max(\varphi_i(x),\varphi_i(y)+\langle \text{wt}(x),\alpha_i^\vee \rangle ),\\
    &\epsilon_i(x\otimes y) = \max(\epsilon_i(y),\epsilon_i(x)-\langle \text{wt}(y),\alpha_i^\vee \rangle )\label{eq: epsilon}.
\end{align}

\subsection{KR crystals}
Let $\hat{\mathfrak{g}} \supset \hat{\mathfrak{g}}' \supset \mathfrak{g}$ be an affine Kac-Moody algebra with an index set (the set of nodes for the affine Dynkin diagram) $I = J \cup \{0\}$, its derived subalgebra, and the simple Lie algebra.  
Let $U_q(\hat{\mathfrak{g}}) \supset U_q'(\hat{\mathfrak{g}}) \supset U_q(\mathfrak{g})$ be the corresponding quantum algebras.  
Chari and Pressley classified finite-dimensional irreducible $U_q'(\hat{\mathfrak{g}})$-modules in \cite{CP1994, CP1998}.  
For every $(r, s) \in J \times \mathbb{Z}_{\geq 0}$, there exists a finite-dimensional irreducible $U_q'(\hat{\mathfrak{g}})$-module $W^{(r)}_s$, called a \emph{Kirillov-Reshetikhin (KR) module}. It is conjectured that KR modules $W^{(r)}_s$ admit a crystal basis $B^{r,s}$ \cite{Hat1998, Hat2001}, and this is known to be true for nonexceptional types \cite{OS2008}. These $B^{r,s}$ are now referred to as \emph{Kirillov-Reshetikhin (KR) crystals}. Based on the attachment of the zero node to the Dynkin diagram of $\mathfrak{g}$, we group KR crystals as follows \cite{SZ2006}:
\begin{enumerate}
    \item kind $\emptyset$ : $A_{N-1}^{(1)}$ 
    \item kind $\sboxone$ : $D_{N+1}^{(2)}, A_{2N}^{(2)\dagger}, B_{N}^{(1)\dagger}$ 
    \item kind $\sboxtwo$ : $A_{2N}^{(2)}, C_{N}^{(1)}, A_{2N-1}^{(2)\dagger}$
    \item kind $\sboxeleven$ : $B_{N}^{(1)}, A_{2N-1}^{(2)}, D_{N}^{(1)}$.
\end{enumerate}

We use the symbol $\diamond \in \{\emptyset, \scalebox{0.7}{\ydiagram{1}}, \scalebox{0.7}{\ydiagram{2}}, \scalebox{0.7}{\ydiagram{1,1}} \}$ to indicate the kind. For each kind, we choose a representative as follows:
\begin{equation}\label{eq: classification}
    \text{kind $\emptyset$ : $A^{(1)}_{N-1}$} \qquad
    \text{kind $\sboxone$ : $D^{(2)}_{N+1}$} \qquad
    \text{kind $\sboxtwo$ : $C^{(1)}_{N}$} \qquad
    \text{kind $\sboxeleven$ : $B^{(1)}_{N}$}.
\end{equation}
Then we denote $B^{r,s}(\diamond)$ as the KR crystal $B^{r,s}$ corresponding to the given type.
For example, $B^{r,s}(\sboxone)$ is the KR crystal of type $D_{N+1}^{(2)}$, and $B^{r,s}(\hspace{0.4mm}\sboxeleven\hspace{0.4mm})$ is the KR crystal of type $B^{(1)}_N$. Furthermore, throughout this paper, we let $N$ be a sufficiently large number.

The KR crystal \( B^{r,s} \) is an affine crystal equipped with additional operators \( e_0 \) and \( f_0 \). The values \( \varphi_0 \) and \( \epsilon_0 \) are defined similarly to their classical counterparts and satisfy the relations \( \mathrm{wt}(y) = \mathrm{wt}(x) + \alpha_0 \) and \( \varphi_0(x) = \langle \mathrm{wt}(x), \alpha_0^\vee \rangle + \epsilon_0(x) \) when \( y = e_0(x) \), equivalently \( f_0(y) = x \). By discarding the operators \( e_0 \) and \( f_0 \), \( B^{r,s} \) becomes a classical crystal (\( \mathfrak{g} \)-crystal) and decomposes as:
\begin{equation}\label{eqeqeqeq}
B^{r,s}(\diamond) = \bigoplus_{\lambda} B(\lambda),
\end{equation}
where the sum runs over all partitions \( \lambda \) obtainable from the \( r \times s \) rectangle by removing pieces of shape \( \diamond \). There are some exceptional values of \( r \) for which this decomposition does not hold, but these cases do not arise as \( N \) is sufficiently large.

Let $\mathcal{C}$ be the category of tensor products of KR crystals. Then
$\mathcal{C}$ has the following remarkable properties \cite{Hat1998, Hat2001}:
\begin{enumerate}
    \item For any $B_1,B_2 \in \mathcal{C}$, there is a unique affine crystal isomorphism $R=R_{B_2,B_1} : B_2 \otimes B_1 \to B_1 \otimes B_2$ called the \emph{combinatorial $R$-matrix}.
    \item There is a map $\overline{H}=\overline{H}_{B_2,B_1} : B_2 \otimes B_1 \to \mathbb{Z}$, called the \emph{local energy function}, such that $\overline{H}$ is constant on classical component ($J$-component), and 
    \begin{align*}
        \overline{H}(e_0(b_2 \otimes b_1)) = \overline{H}(b_2 \otimes b_1) + \begin{cases*}
            1 \quad &if LL\\
            -1 \quad &if RR\\
            0 \quad &otherwise
        \end{cases*}
    \end{align*}
    where for $b_2 \otimes b_1 \in B_2 \otimes B_1$ and $R(b_2\otimes b_1) = b_1' \otimes b_2' \in B_1 \otimes B_2$, LL (RR) indicates that $e_0$ acts on the left (right) factor both times. Moreover, such $\overline{H}$ is unique up to a global addictive constant where we choose a proper normalization.
    \item For $B\in\mathcal{C}$, there is a map $\overline{D}_B : B \to \mathbb{Z}$ called the \emph{energy function}\footnote{In some literature $\overline{D}_B$ is referred to as the coenergy function. 
    We follow the SageMath notation where $\overline{D}_B$ is referred to as the energy function.}, such that $\overline{D}_B$ is constant on a classical component. For a single KR crystal $B^{r,s}(\diamond)$ and its element $b$, the energy function is given by 
    \begin{equation}\label{eq: energy function for single KR crystla}
        \overline{D}_{B^{r,s}(\diamond)}(b)=\frac{rs-|\lambda|}{|\diamond|}
    \end{equation}
    when $b\in B(\lambda)$ in \eqref{eqeqeqeq}. Now we define the energy function on $B\in\mathcal{C}$ recursively as follows. Let $B_1, \dots , B_n \in \mathcal{C}$, and $b_i \in B_i$ for $i = 1,\dots,n$. 
    Set $B=B_n\otimes \cdots \otimes B_1$ and $b=b_n\otimes \cdots \otimes b_1\in B$.
    The energy function is given by
    \begin{align*}
        \overline{D}_B(b)=\sum_{1\leq i < j \leq n} \overline{H}(b_j^{(i+1)} \otimes b_i) + \sum_{j=1}^n \overline{D}_{B_j}(b_j^{(1)})
    \end{align*}
    where for $1 \leq i < j \leq n$, $b_j^{(i)} \in B_j$ is determined by applying the combinatorial $R$-matrix consecutively, so that:
    \begin{align*}
        B_j \otimes \cdots \otimes B_{i+1} \otimes B_i \to & B_{j-1} \otimes  \cdots \otimes B_i \otimes B_j,\\
        b_j \otimes \cdots \otimes b_{i+1} \otimes b_i \mapsto & b_{j-1}' \otimes \cdots \otimes b_i' \otimes b_j^{(i)}.
    \end{align*}
    We may write $\overline{D}$ instead of $\overline{D}_B$ when $B$ is obvious from the context. In general, computing $\overline{D}$ is challenging.
    However, when $B = (B^{1,1})^{\otimes n}$, the energy function is easy to compute, and this case will be discussed later.
\end{enumerate}

In this paper, we only consider row KR crystals, i.e., $B^{1,s}$, and column KR crystals, i.e., $B^{r,1}$.
For a nonnegative integer vector $\mu=(\mu_1,\mu_2,\dots,\mu_n)$, we employ the following notation:
\begin{equation*}
    B_{\mu}:=B^{1,\mu_n}\otimes\cdots\otimes B^{1,\mu_1} \qquad B^{t}_{\mu}:=B^{{\mu_n},1}\otimes\cdots\otimes B^{{\mu_1},1}.
\end{equation*}
We use the notation $B_{\mu}(\diamond)$ and $B^{t}_{\mu}(\diamond)$ to conveniently indicate the type of the KR crystal, according to \eqref{eq: classification}.

We define $\HW(B)$ to be the set of classical highest weight elements in $B$ and $\HW(B, \lambda) = \{ b \in \HW(B) : \wt(b) = \lambda \}$. 
For $b \in B$, the classical highest weight element in the classical component containing $b$ will be denoted by $\hw(b)$.

Roughly speaking, an element of $B^{r,1}$ or $B^{1,s}$ is a word consisting of letters $1, 2, \dots, N$, which we call \emph{unbarred letters}, and $\overline{1}, \overline{2}, \dots, \overline{N}$, which we call \emph{barred letters}, and $0$.  
Throughout this paper, we do not consider any word that contains $0$. Moreover, we assume $N$ is sufficiently large and only consider words consisting of letters $m$ or $\overline{m}$, for $m$ sufficiently smaller than $N$.
Additionally, we will use the following total order on letters:  
\begin{equation}\label{eq: corder def}
    1 \prec 2 \prec 3 \prec \cdots \prec \bar{3} \prec \bar{2} \prec \bar{1}.
\end{equation}

From \eqref{eqeqeqeq}, regarded as a classical crystal, $B^{1,s}$ is decomposed as a direct sum of $B(k \omega_1)$'s and $B^{r,1}$ is decomposed as a direct sum of $B(\omega_k)$'s as follows:
\begin{align}\label{eq: column row KR decompose}
    B^{1,s}(\diamond)=\begin{cases*}
    \bigoplus_{k=0}^{s}B((s-k) \omega_1) \quad &\text{if  $\diamond=\sboxone$},\\
    \bigoplus_{k=0}^{\lfloor\frac{s}{2}\rfloor}B((s-2k) \omega_1) \quad &\text{if $\diamond=\sboxtwo$,} \\
    B(s\omega_1) \quad &\text{if $\diamond=\sboxeleven$,}
    \end{cases*} \quad \quad B^{r,1}(\diamond)=\begin{cases*}
    \bigoplus_{k=0}^{r}B(\omega_{r-k}) \qquad &\text{if  $\diamond=\sboxone$,} \\
    B(\omega_{r}) \qquad &\text{if  $\diamond=\sboxtwo$,} \\
    \bigoplus_{k=0}^{\lfloor\frac{r}{2}\rfloor}B(\omega_{r-2k})
    \qquad &\text{if  $\diamond=\sboxeleven$. }
    \end{cases*} 
\end{align}
Each $B(k \omega_1)$ consists of a word $v = v_1 \dots v_k$ where $v_1 \preceq v_2 \preceq \dots \preceq v_k$ (the empty word is denoted by $\emptyset$).
Abusing the notation, we identify a word $v$ with a multi-set $\{v_1, v_2, \dots, v_k\}$ and vice versa.
In other words, we identify a word $112\bar{3}\bar{2}$ with a multi-set $\{1, 1, 2, \bar{3}, \bar{2}\}$.

The situation is more subtle for $B(\omega_k)$.
An element of $B(\omega_k)$ is a word $v = v_1 \dots v_k$ where $v_1 \prec v_2 \prec \dots \prec v_k$ satisfying an admissibility condition.
We say that a word $v$ is \emph{admissible} if, for every $i$ such that $i, \bar{i} \in v$, we have
\[
    |\{c \prec i : c \in v\}| + |\{\bar{i} \prec c : c \in v\}| < i - 1.
\]
For example, any word $v$ that contains $1$ and $\bar{1}$ is not admissible, and words $12\bar{2}$ and $13\bar{3}\bar{2}$ are not admissible.

Given any word $v = v_1 \dots v_k$ with $v_1 \prec v_2 \prec \dots \prec v_k$ that may not be admissible, we introduce a map $\red$ that associates an admissible word from $v$.
We go through the following process:
\begin{itemize}
    \item (Step 1) If $v$ is admissible, terminate the process. Otherwise, among $i$'s such that $i, \bar{i} \in v$, choose $i$ such that $(|\{c \prec i : c \in v\}| + |\{\bar{i} \prec c : c \in v\}|)-i$ is the largest (if there are more than one such $i$, we pick the smallest one).
    \item (Step 2) Delete $i$ and $\bar{i}$ from $v$ and go back to (Step 1).
\end{itemize}
We define the output to be $\red(v)$. 
For example, $\red(1\overline{1}) = \emptyset$ and $\red(123\overline{3}) = 12$. 
It is elementary to check that for an admissible word $v$ of length $k$ and a positive integer $r$, there exists a unique word $w = w_1 \dots w_{k + 2r}$ such that $w_1 \prec \dots \prec w_{k + 2r}$ and $\red(w) = v$.
Moreover, the map $\red$ commutes with classical crystal operators, where we regard a word $v = v_1 \dots v_k$ with $v_1 \prec v_2 \prec \dots \prec v_k$ is embedded into $B(\omega_1)^{\otimes k}$ as $v_k \otimes \dots \otimes v_1$.
For example, we have $\red(12\overline{2}) = 1$ and $\red(12\overline{1}) = 2$ with $e_1(\overline{1} \otimes 2 \otimes 1) = \overline{2} \otimes 2 \otimes 1$ and $e_1(2) = 1$.
The map $\red$ will be a key ingredient to connect our combinatorial objects to KR crystals (Lemma \ref{lem: cind}).

Now, we give an explicit description for the energy function of an element of $b\in (B^{1,1}(\diamond))^{\otimes n}$ denoted by $b_n \otimes b_{n-1} \otimes \cdots \otimes b_1$. Here each $b_i$ is either a letter or an empty word $\emptyset$.
Note that an empty word $\emptyset$ is allowed only when $\diamond=\sboxone$. 

When $\diamond=\sboxtwo$ or $\sboxeleven$, the energy function $\overline{D}(b)$ is given by $\sum_{i=1}^{n-1} (n-i)\overline{H}_{\diamond}(b_{i+1},b_i)$ where 
\begin{align*}
    \overline{H}_{\sboxeleven}(x,y) = \begin{cases}
    2 &\textrm{ if } x=\overline{1} \textrm{ and } y=1,\\
    1 &\textrm{ if } x \succ y \textrm{ and } (x,y)\neq (\overline{1},1), \\
    0 &\textrm{ if } x \preceq y.
\end{cases}\qquad  \qquad \overline{H}_{\sboxtwo}(x,y) = \begin{cases}
    1 &\textrm{ if } x \succ y,  \\
    0 &\textrm{ if } x \preceq y. 
\end{cases}
\end{align*}
On the other hand, if $\diamond=\sboxone$, the energy function $\overline{D}(b)$ is given by $\sum_{i=1}^{n-1} 2(n-i)\overline{H}_{\sboxone}(b_{i+1},b_i) + \vac$, where $\vac$ denotes the number of $\emptyset$ among $b_i$'s, and 
\begin{align*}
    \overline{H}_{\sboxone}(x,y) = \begin{cases}
    1 &\textrm{ if } x \succ y \textrm{ or } x=y=\emptyset, \\
    0 &\textrm{ otherwise, } 
\end{cases}
\end{align*}
under the order $\emptyset \prec 1\prec 2\prec 3 \prec \cdots  \prec \bar{3}\prec \bar{2}\prec \bar{1}$.
Note that for $\diamond=\emptyset$ (affine type $A^{(1)}_{N-1}$), the energy function essentially coincides with the charge statistic given by Lascoux and Sch\"utzenberger \cite{NY1997}.

\subsection{Splitting map}
In \cite{Las1989}, Lascoux introduced the \textit{standardization map}, which maps a semistandard Young tableau to a standard Young tableau while preserving the cocharge.
A generalized version of Lascoux's standardization map exists, mapping an element in $\otimes_i B^{r_i,s_i}$ to an element in $(B^{1,1})^{\otimes \sum_i r_i s_i}$ while preserving the energy function (up to a global constant), which we term the splitting map.

The \emph{splitting map} $S: \otimes_{i} B^{r_i, s_i} \to (B^{1,1})^{\otimes \sum_i r_i s_i}$ is a classical crystal embedding, an injection that preserves a classical crystal structure, that exists for any tensor products of KR crystals.
As we only consider crystals of the form $B_{\mu}$ or $B^{t}_{\mu}$, the corresponding splitting maps will be denoted by $S_{\mu}$ and $S^{t}_{\mu}$, respectively.
We now describe $S_{\mu}$ (when $\diamond = \sboxone, \sboxtwo$, and $\sboxeleven$) and $S^{t}_{\mu}$ (when $\diamond = \sboxone$ and $\sboxeleven$) based on \cite{LOS2012, LS2007, S05}.

We first explain $S_{\mu}$. Regardless of the kind $\diamond$, they can be uniformly described. For $a \geq 2$, we define a map $\bar{S}: B^{1,a} \to B^{1,1} \otimes B^{1,a-1}$ as follows \cite[Equation (6.12)]{LOS2012}: representing $T \in B^{1,a}$ as a word $v_1 \dots v_k$, we have
\[
    \bar{S}(T) = 
    \begin{cases}
        \bar{1} \otimes 1 v_1 \dots v_k  & \text{if } a \geq k+2, \\
        \emptyset \otimes v_1 \dots v_k & \text{if } a = k +1, \\
        v_1 \otimes v_2 \dots v_k  & \text{if } a = k.
    \end{cases}
\]
Then we define the splitting map $S_\mu: B_\mu \to B_{(1^{|\mu|})}$ as follows:
\begin{enumerate}
    \item Find the smallest $i$ such that $\mu_i > 1$.
    \item Apply a composition of combinatorial $R$-matrices so that $B^{1, \mu_i}$ is placed on the rightmost tensor factor.
    \item Apply the map $\bar{S}$ on the rightmost tensor factor $B^{1, \mu_i}$, leaving other factors unchanged.
    \item Repeat this process until we arrive at $(B^{1,1})^{\otimes |\mu|}$.
\end{enumerate}

\begin{lem}\label{lem: splitting preserves energy}\cite{LOS2012}
    For $b \in B_\mu$, we have $ \overline{D}\left( S_\mu(b) \right)=\overline{D}(b) $.
\end{lem}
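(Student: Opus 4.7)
The plan is to exploit the iterative structure of $S_\mu$ and induct on the number of elementary moves in its construction. By definition, $S_\mu$ factors as a composition of combinatorial $R$-matrices permuting adjacent KR factors together with applications of the single-letter split $\bar{S}:B^{1,a}\to B^{1,1}\otimes B^{1,a-1}$ on the rightmost factor, so it suffices to verify that each elementary move preserves $\overline{D}$. Invariance of $\overline{D}$ under combinatorial $R$-matrices is a direct structural consequence of the recursive definition: swapping two adjacent factors $B_j\otimes B_{j-1}$ permutes the roles in the double sum $\sum_{i<j}\overline{H}(b_j^{(i+1)}\otimes b_i)$ and in the single-factor terms $\sum_j\overline{D}_{B_j}(b_j^{(1)})$, and the pushed elements $b_j^{(i)}$ are defined precisely so that the total value is unchanged. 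This disposes of the $R$-matrix moves.

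For a single application of $\bar{S}$ on the rightmost factor, write the configuration before the split as $b=b_n\otimes\cdots\otimes b_2\otimes b_1$ with $b_1\in B^{1,a}$, and after the split as $b'=b_n\otimes\cdots\otimes b_2\otimes b_1''\otimes b_1'$ with $\bar{S}(b_1)=b_1''\otimes b_1'$. Using the recursion, the energy decomposes into (A) a \emph{local} contribution depending only on the rightmost factor(s), and (B) a \emph{push-through} contribution for each left factor $b_j$ interacting with the rightmost factor(s) via combinatorial $R$-matrices. Part (A) amounts to the local identity
\[
\overline{D}_{B^{1,a}}(b_1)=\overline{H}(b_1''\otimes b_1')+\overline{D}_{B^{1,1}}(b_1'')+\overline{D}_{B^{1,a-1}}(b_1').
\]
Both sides are classical constants, and since $\bar{S}$ is a classical crystal embedding, it suffices to verify this on classical highest weight elements $b_1=1^\ell$ in each of the three defining cases of $\bar{S}$ ($a\ge \ell+2$, $a=\ell+1$, $a=\ell$), using the explicit formula \eqref{eq: energy function for single KR crystla} together with the explicit rules for $\overline{H}_\diamond$ recorded in the excerpt.

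Part (B) is the key compatibility: for each left factor $b_j$, the $R$-matrix push-through $b_j\otimes b_1\mapsto b_1^{*}\otimes b_j^{(1)}$ in the unsplit picture must coincide with the successive push-throughs $b_j\otimes b_1''\otimes b_1'\mapsto (b_1'')^{*}\otimes (b_1')^{*}\otimes b_j^{(1)}$ in the split picture, and the sum of the associated local energies must match, i.e., $\overline{H}(b_j^{(2)}\otimes b_1)$ must equal the total local energy produced by the two-step push. The natural argument is to observe that both sides define classical crystal morphisms $B_j\otimes B^{1,a}\to B^{1,1}\otimes B^{1,a-1}\otimes B_j$ (after composing with $\bar{S}$), and invoke the uniqueness of the combinatorial $R$-matrix as an affine crystal isomorphism to reduce the check to a single classical highest weight element of $B_j\otimes B^{1,a}$, where the computation becomes finite and case-by-case.

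The main obstacle is Part (B). The $R$-matrix is characterized by affine properties (compatibility with $e_0, f_0$), whereas $\bar{S}$ is only guaranteed to respect the classical structure, so the compatibility is not automatic and must be extracted from the specific choice made in the definition of $\bar{S}$. The finite case check across the three subcases of $\bar{S}$ and the affine kinds $\diamond\in\{\sboxone,\sboxtwo,\sboxeleven\}$ is elementary but must be organized with care; this is where most of the bookkeeping lies. Once both parts are in hand, an induction on $|\mu|-\ell(\mu)$ (the number of single-letter splits remaining to be performed) completes the proof.
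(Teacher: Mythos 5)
The paper never proves this lemma: it is imported wholesale from \cite{LOS2012}, so there is no internal argument to compare yours against; your proposal has to stand on its own. It does not, because of the step you yourself identify as the main obstacle, Part (B). The reduction you propose there — ``invoke the uniqueness of the combinatorial $R$-matrix as an affine crystal isomorphism'' and then check ``a single classical highest weight element'' — is not available. Since $\bar{S}$ is only a classical crystal embedding, the two composites you want to identify are merely classical morphisms out of $B^{1,s}\otimes B^{1,a}$, which is far from connected as a classical crystal (for kinds $\sboxone$ and $\sboxtwo$ each factor already decomposes as in \eqref{eq: column row KR decompose}); equality of classical morphisms, and of the classical-component constants $\overline{H}$, must be verified on \emph{every} classical highest weight element, and the family is indexed by all $s$ and $a$, so there is no finite case check hiding behind that reduction. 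Worse, the statement you actually need — that pushing a left factor $b_j$ through the unsplit $B^{1,a}$ and then splitting yields the same leftover $b_j^{(1)}$ and the same total local energy as splitting first and pushing $b_j$ through $B^{1,1}\otimes B^{1,a-1}$ — is precisely an instance of the compatibility of the splitting map with combinatorial $R$-matrices, which this paper records as conjectural in general (Remark \ref{rmk: splitting R commute}, citing Remark 6.2 of \cite{LOS2012}). So as written, your plan rests the lemma on an unproven compatibility, and the mechanism you offer for proving it does not apply; a genuine proof must either establish that compatibility by explicit combinatorial analysis of the relevant $R$-matrices (in the spirit of what Appendix \ref{Sec: append A} does for column crystals), or circumvent it entirely as \cite{LOS2012} does.

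Two smaller points. In Part (A), the recursive definition of $\overline{D}$ gives $\overline{D}_{B^{1,1}}$ evaluated at the push-through of $b_1''$ past $b_1'$, not at $b_1''$ itself; for kind $\sboxone$, where $B^{1,1}$ has two classical components, the distinction is not vacuous, so even the local identity needs restating before the highest-weight check. And the opening claim that invariance of $\overline{D}$ under adjacent $R$-matrix swaps is ``a direct structural consequence of the recursive definition'' is too glib: it is true, but it uses $\overline{H}_{B_1,B_2}\circ R_{B_2,B_1}=\overline{H}_{B_2,B_1}$ together with the Yang--Baxter property to see that the pushed elements $b_j^{(i)}$ are consistent after the swap; that part is standard, but it is not formal bookkeeping.
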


\begin{rmk}\label{rmk: splitting R commute}
    It is a conjecture that the splitting map commutes with combinatorial $R$-matrices \cite[Remark 6.2]{LOS2012}.
\end{rmk}

Now we describe $S_\mu^t$ for $B^{t}_{\mu}(\diamond)$ when $\diamond = \sboxone$ or $\sboxeleven$.
For $a \geq 2$, we define a map $\hat{S}: B^{a,1} \to (B^{1,1})^{\otimes a}$ as follows: representing $T \in B^{a,1}$ as a word $v_1 \dots v_k$, we have
\[
    \hat{S}(T) = \begin{cases}
        v_k \otimes \dots \otimes v_1 \otimes \underbrace{\emptyset \otimes \dots \otimes \emptyset}_{a - k}  & \text{if } \diamond = \sboxone, \\
        v_k \otimes \dots \otimes v_1 \otimes \underbrace{\bar{1} \otimes 1 \otimes \dots \otimes \bar{1} \otimes 1}_{a - k}  & \text{if } \diamond = \sboxeleven.
    \end{cases}
\]
Note that when $\diamond = \sboxeleven$, $a - k$ is necessarily an even number. Then $S^{t}_{\mu}$ can be defined exactly in the same way as $S_{\mu}$ by replacing $\hat{S}$ in place of $\bar{S}$.

\begin{rmk}
The description of $\hat{S}$ is different from the one given in \cite[Proposition 6.1]{LOS2012}. It is not hard to check that they are equal.
\end{rmk}

For a nonnegative integer vector $\alpha=(\alpha_1,\dots,\alpha_n)$, we define $||\alpha||=\sum_{i=1}^{n}(i-1)\beta_i$, where $\beta$ is a rearrangement of $\alpha$ into weakly decreasing order.
\begin{lem}\cite{LOS2012}\label{lem: splitting preserves coenergy}
 We have 
 \begin{align*}
     \overline{D}(S^{t}_{\mu}(b))=\begin{cases*}
         \overline{D}(b) + 2\times (\frac{|\mu|(|\mu|-1)}{2} - ||\mu||) \qquad & \text{if $b\in B_\mu^t(\sboxone)$},  \\
         \overline{D}(b) + (\frac{|\mu|(|\mu|-1)}{2} - ||\mu||) \qquad & \text{if $b\in B_\mu^t(\hspace{0.4mm}\sboxeleven\hspace{0.4mm})$}. 
     \end{cases*}
 \end{align*}
\end{lem}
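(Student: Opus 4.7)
The plan is to exploit the fact that combinatorial $R$-matrices preserve $\overline D$, which reduces each atomic step of $S^t_\mu$ to splitting a rightmost factor, and then induct on $|\mu|-\ell(\mu)$, the total number of applications of $\hat S$ used in constructing $S^t_\mu$. The base case $|\mu|=\ell(\mu)$ has every $\mu_i=1$, so $S^t_\mu$ is the identity; in this case the elementary identity $\binom{n}{2}=\sum_{i=1}^{n}(i-1)$ gives $\binom{|\mu|}{2}=||\mu||$ and the formula holds with zero shift.

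The main single-factor computation is the case $\mu=(a)$. For $T\in B(\omega_k)\subset B^{a,1}(\diamond)$, formula \eqref{eq: energy function for single KR crystla} gives $\overline D(T)=(a-k)/|\diamond|$. Writing $\hat S(T)$ in its explicit padded form and substituting into the closed formula for the energy on $(B^{1,1}(\diamond))^{\otimes a}$, I would compute directly: for $\diamond=\sboxone$, $\overline H_{\sboxone}$ evaluates to $1$ throughout (on the $\emptyset\otimes\emptyset$ block, at the transition, and along the strictly increasing column), and combined with $\vac=a-k$ this produces $\overline D(\hat S(T))=2\binom{a}{2}+(a-k)$; for $\diamond=\sboxeleven$, the alternating $\bar 1 \otimes 1 \otimes \bar 1 \otimes 1\otimes\cdots$ padding contributes $2$ at each $(\bar 1, 1)$ transition and $0$ at each $(1,\bar 1)$ transition, while the strictly increasing column contributes $1$ at each step (admissibility forbids a consecutive $(\bar 1,1)$ pair inside the column), yielding $\overline D(\hat S(T))=\binom{a}{2}+(a-k)/2$. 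In both cases, subtracting $\overline D(T)$ gives the shift $c_a=2\binom{a}{2}$ in kind $\sboxone$ and $c_a=\binom{a}{2}$ in kind $\sboxeleven$, matching the target formula since $||(a)||=0$.

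For the general inductive step, write $b=b'\otimes T$ with $T$ in the rightmost factor $B^{\mu_i,1}$ and expand $\overline D$ recursively to obtain
\[
\overline D(b'\otimes \hat S(T))-\overline D(b'\otimes T) = c_{\mu_i}+\Delta_{b',T},
\]
where $\Delta_{b',T}$ collects the modification of the cross $\overline H$-interactions between $b'$ and the newly split factors. \textbf{The main obstacle} is evaluating $\Delta_{b',T}$, which involves combinatorial $R$-matrices across the split tensor factors. The approach is to invoke the compatibility of $\hat S$ with $R$ up to classical crystal isomorphism (verifiable via the coplactic compatibility of the map $\red$ with the embedding $B(\omega_k)\hookrightarrow B(\omega_1)^{\otimes k}$), so that $\Delta_{b',T}$ becomes a constant determined only by the shape data of $b'$ and $\mu_i$, independent of the specific $b'$ and $T$. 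Collecting these shifts through the induction reduces the claim to the combinatorial identity $\binom{|\mu|}{2}-||\mu||=\sum_{i<j}\mu^t_i\mu^t_j$, which follows by rewriting $||\mu||=\sum_j \binom{\mu^t_j}{2}$ and expanding $\binom{|\mu|}{2}=\binom{\sum_j\mu^t_j}{2}$, with the factor of $2$ in kind $\sboxone$ accounted for by $|\diamond|=1$ versus $|\diamond|=2$ in kind $\sboxeleven$.
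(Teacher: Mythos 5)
Your single-factor computations are correct: for $T\in B(\omega_k)\subset B^{a,1}(\diamond)$ the explicit padded form of $\hat S(T)$ does give $\overline{D}(\hat S(T))=2\binom{a}{2}+(a-k)$ for $\diamond=\sboxone$ and $\binom{a}{2}+\frac{a-k}{2}$ for $\diamond=\sboxeleven$ (your admissibility remark ruling out an adjacent $(\bar 1,1)$ inside the column is the right point), and the closing identity $\binom{|\mu|}{2}-||\mu||=\sum_{i<j}\mu^t_i\mu^t_j$ is fine. The base case is also fine. Note, for calibration, that the paper itself does not prove this lemma at all; it is quoted from \cite{LOS2012}, so the real question is whether your argument stands on its own.

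It does not, because the inductive step is where the entire content of the lemma lives and your treatment of it is an assertion rather than a proof. Since $S^t_\mu$ is a classical crystal embedding and $\overline{D}$ is constant on classical components, the difference $\overline{D}(S^t_\mu(b))-\overline{D}(b)$ is automatically constant on each classical component; the lemma is precisely the claim that this constant is the same for \emph{every} component and equals $\frac{2}{|\diamond|}\bigl(\binom{|\mu|}{2}-||\mu||\bigr)$. Your $\Delta_{b',T}$ encodes exactly this component-independence, and you justify it by "compatibility of $\hat S$ with $R$ up to classical crystal isomorphism, verifiable via the coplactic compatibility of $\red$." That cannot suffice: the local energies $\overline{H}$ are affine data, pinned down by the action of $e_0$, and are in no way determined by the classical crystal structure alone, so knowing that $\hat S$ (or $\red$) intertwines the classical operators does not control how the cross terms $\overline{H}(b_j^{(i+1)}\otimes\,\cdot\,)$ change under splitting. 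Moreover, the $R$-compatibility you invoke is itself nontrivial: in this very paper the identity $S(R(u\otimes u'))=S(u\otimes u')$ is established only for two column factors and only through a detailed case analysis of the explicit $R$-matrix rules (Corollary \ref{cor: splitting invariance}), while the analogous statement for the row splitting is recorded as open (Remark \ref{rmk: splitting R commute}). Even granting it, you would still have to compute the value of $\Delta_{b',T}$ (e.g.\ by tracking how the local energies against each left factor transform, or by evaluating on suitable highest weight elements and arguing all components give the same answer), and no such computation is indicated. So the proposal verifies the lemma for a single tensor factor but leaves the multi-factor case, which is the actual statement, unproved.
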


\section{Combinatorial objects and embedding into KR crystals}\label{sec: Objects}
In this section, we introduce combinatorial objects (Definition \ref{def: ssot}, \ref{def: gssot} and \ref{def: ssrot}) and relate them with certain classical highest weight elements of KR crystals (Lemma \ref{lem: cind} and \ref{lem: rind}).

\begin{definition} \label{def: ssot}
    Oscillating horizontal strip (ohs for short) $(\mu,\nu,\lambda)$ of length $r$ is a sequence of three partitions satisfying:
    \begin{itemize}
        \item $\nu / \mu$ and $\nu / \lambda$ are horizontal strips,
        \item $|\nu / \mu| + |\nu / \lambda| = r$.
    \end{itemize}
    We define the \emph{initial shape}, denoted by $I(\mu,\nu,\lambda)$, to be $\mu$, and the \emph{final shape}, denoted by $F(\mu,\nu,\lambda)$, to be $\lambda$.
    We say that an ohs $(\mu,\nu,\lambda)$ is $g$-bounded if $\nu_1 \leq g$. 
    
    \textit{Semistandard oscillating tableaux} (SSOT for short) of shape $\lambda$ with weight $\mu$ is a sequence of ohs' $T = (T_1, \dots, T_n)$, where each $T_i$ is an ohs of length $\mu_i$ and 
    \begin{itemize}
        \item $I(T_1) = \emptyset$ and $F(T_n) = \lambda$,
        \item $F(T_i) = I(T_{i+1})$ for $1 \leq i \leq n-1$.
    \end{itemize}
    We define $c(T)\in \mathbb{Z}$ to be the minimal number such that every $T_i$ is $c(T)$-bounded. 
    
    The set of all SSOT of shape $\lambda$ with weight $\mu$ is denoted by $\SSOT(\lambda,\mu)$, and we write $\SSOT_g(\lambda,\mu)$ for the subset of $\SSOT(\lambda,\mu)$ consisting of $\SSOT$ $T$ satisfying $c(T) \leq g$.
\end{definition}

There is a bijection between King tableaux of shape $\lambda$ with weight $\mu$ and $\SSOT(\oc(\lambda,g),\overline{\oc}(\mu,g))$ \cite{Lee2023}.

\begin{definition} \label{def: gssot}
    Generalized oscillating horizontal strip (gohs for short) $(\mu,\nu,\lambda)$ of length $r$ is a sequence of three partitions satisfying:
    \begin{itemize}
        \item $\nu / \mu$ and $\nu / \lambda$ are horizontal strips,
        \item $|\nu / \mu| + |\nu / \lambda| = r \quad \text{or} \quad r-1$.
    \end{itemize}
    We say that a gohs $(\mu, \nu, \lambda)$ is $g$-bounded ($g \in \mathbb{Z}/2$) when
    \begin{itemize}
        \item $\nu_1 \leq g$, if $|\nu / \mu| + |\nu / \lambda| = r$,
        \item $\nu_1 + \frac{1}{2} \leq g$, if $|\nu / \mu| + |\nu / \lambda| = r-1$.
    \end{itemize}
    
    \textit{Generalized semistandard oscillating tableaux} (GSSOT for short) of shape $\lambda$ with weight $\mu$ is $T = (T_1, \dots, T_n)$, where each $T_i$ is a gohs of length $\mu_i$, $I(T_1)=\emptyset$, $F(T_n)=\lambda$ and $F(T_i)=I(T_{i+1})$ for $1\leq i\leq n-1$. We define $c(T)\in \mathbb{Z}/2$ to be the minimal number such that every $T_i$ is $c(T)$-bounded. 
    
    The set of all GSSOT of shape $\lambda$ with weight $\mu$ is denoted by $\GSSOT(\lambda,\mu)$, and we write $\GSSOT_g(\lambda,\mu)$ for the subset of $\GSSOT(\lambda,\mu)$ consisting of $\GSSOT$ $T$ satisfying $c(T) \leq g$.
\end{definition}
\begin{definition} \label{def: ssrot}
    Reverse oscillating horizontal strip (rohs for short) $(\mu,\nu,\lambda)$ of length $r$ is a sequence of three partitions satisfying:
    \begin{itemize}
        \item $\mu / \nu$ and $\lambda / \nu$ are horizontal strips,
        \item $|\mu / \nu| + |\lambda / \nu| = r$.
    \end{itemize}
    We say that an rohs $(\mu,\nu,\lambda)$ is $g$-bounded ($g \in \mathbb{Z}/2$) if
    \begin{equation*}
        \mu_1 + (\lambda_1 - \nu_1) + \max(\mu_2, \lambda_2) \leq 2g.
    \end{equation*}
    \textit{Semistandard reverse oscillating tableaux} (SSROT for short) of shape $\lambda$ with weight $\mu$ is $T = (T_1, \dots, T_n)$, where each $T_i$ is an rohs of length $\mu_i$, $I(T_1)=\emptyset$, $F(T_n)=\lambda$ and $F(T_i)=I(T_{i+1})$ for $1\leq i\leq n-1$.  We define $c(T)\in \mathbb{Z}/2$ to be the minimal number such that every $T_i$ is $c(T)$-bounded. 
       
    The set of all SSROT of shape $\lambda$ with weight $\mu$ is denoted by $\SSROT(\lambda,\mu)$, and we write $\SSROT_g(\lambda,\mu)$ for the subset of $\SSROT(\lambda,\mu)$ consisting of $\SSROT$ $T$ satisfying $c(T) \leq g$.
\end{definition}

Given an ohs $(\mu,\nu,\lambda)$, we define $\cind(\mu,\nu,\lambda)$ to be
\begin{align*}
    \cind(\mu,\nu,\lambda) := \{i: \mu^{t}_i+1=\nu^{t}_i\} \cup \{\bar{i}: \lambda^{t}_i+1=\nu^{t}_i\}.
\end{align*}
We also define $\rind(\mu,\nu,\lambda)$ to be the multi-set such that there are $(\nu_i-\mu_i)$-many $i$'s and $(\nu_i-\lambda_i)$-many $\bar{i}$'s. 
In other words, $\cind(\mu,\nu,\lambda)$ (respectively $\rind(\mu,\nu,\lambda)$) records column indices (respectively row indices) from the ohs $(\mu,\nu,\lambda)$. We can define $\cind(\mu,\nu,\lambda)$ and $\rind(\mu,\nu,\lambda)$ similarly when $(\mu,\nu,\lambda)$ is an rohs.

\begin{example}\label{ex: ohs cind}
    Consider the following ohs
    \begin{align*}
        T = (\ydiagram{2,1}, \ydiagram{3,2}, \ydiagram{3}).
    \end{align*}
    Then $\cind(T) = \{2,3,\bar{2},\bar{1}\}$ and $\rind(T) = \{1,2,\bar{2},\bar{2}\}$. 
    We also identify $\cind(T)$ by $23\bar{2}\bar{1}$ and $\rind(T)$ by $12\bar{2}\bar{2}$.
\end{example}

Now, we interpret $\SSOT(\lambda,\mu)$, $\GSSOT(\lambda,\mu)$, and $\SSROT(\lambda,\mu)$ as sets of classical highest weight elements of specific KR crystals (Lemma \ref{lem: cind} and \ref{lem: rind}). Lemma \ref{lem: cind} is straightforward, while Lemma \ref{lem: rind} requires an auxiliary lemma (Lemma \ref{lem: rohs ohs connection}) for the proof.

\begin{lem}\label{lem: cind}
For a partition $\lambda$ and a nonnegative integer vector $\mu$, we have the following:
    \begin{enumerate}
        \item There exists a bijection 
        \begin{equation*}
            \phi_c: \SSOT(\lambda,\mu) \rightarrow \HW(B_\mu^t(\hspace{0.4mm} \sboxeleven \hspace{0.4mm}), \lambda^{t}).
        \end{equation*}
        \item There exists a bijection 
        \begin{equation*}
            \phi_c: \GSSOT(\lambda,\mu) \rightarrow \HW(B_\mu^t(\sboxone), \lambda^{t}).
        \end{equation*}
    \end{enumerate}
\end{lem}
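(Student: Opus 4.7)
The plan is to define $\phi_c$ explicitly from the ohs data, verify that the image lies in $\HW(B_\mu^t(\sboxeleven), \lambda^t)$, and then invert the construction; the two parts of the lemma are parallel, so I focus on (1).

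\textbf{Definition of $\phi_c$.} Given $T = (T_1, \ldots, T_n) \in \SSOT(\lambda,\mu)$ with $T_i = (\mu^{(i-1)}, \nu^{(i)}, \mu^{(i)})$, let $w_i$ be the strictly $\prec$-increasing word obtained by sorting $\cind(T_i)$. Because $\nu^{(i)}/\mu^{(i-1)}$ and $\nu^{(i)}/\mu^{(i)}$ are horizontal strips, each unbarred letter appears at most once and each barred letter appears at most once in $\cind(T_i)$, so $w_i$ has length $|\nu^{(i)}/\mu^{(i-1)}| + |\nu^{(i)}/\mu^{(i)}| = \mu_i$ and represents an element of $B^{\mu_i,1}(\sboxeleven)$ whose classical component $B(\omega_k)$ is read off from $\red(w_i)$. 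Put $\phi_c(T) := w_n \otimes \cdots \otimes w_1 \in B_\mu^t(\sboxeleven)$.

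\textbf{Weight and highest weight.} Using $\wt(j) = \varepsilon_j$ and $\wt(\bar j) = -\varepsilon_j$, the definition of $\cind$ gives $\wt(w_i) = \sum_j \bigl( (\mu^{(i)})^t_j - (\mu^{(i-1)})^t_j \bigr) \varepsilon_j$, which telescopes to $\wt(\phi_c(T)) = \lambda^t$. For the highest weight check, I apply the tensor-product signature rule for each $e_k$: for $k \in \{1, \ldots, N-1\}$ the relevant $\pm$-signs in each $w_i$ come from the entries $k, k+1, \bar k, \overline{k+1}$, and the fact that every intermediate $\nu^{(i)}$ and $\mu^{(i)}$ is a partition forces each $-$, scanned across the tensor factors, to be canceled by an earlier $+$; this is exactly the horizontal-strip constraint read off at columns $k$ and $k+1$. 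The special node $k = N$ is moot under the standing assumption that $N$ is sufficiently large that no extremal letter appears in $w_i$, so the check reduces to the type-$A$ style signature matching.

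\textbf{Inverse and part (2).} The inverse $\phi_c^{-1}$ sends $v_n \otimes \cdots \otimes v_1 \in \HW(B_\mu^t(\sboxeleven), \lambda^t)$ to the SSOT built inductively by $\mu^{(0)} = \emptyset$, with $\nu^{(i)}$ obtained from $\mu^{(i-1)}$ by adding a box in column $j$ for each unbarred $j \in v_i$ and $\mu^{(i)}$ obtained from $\nu^{(i)}$ by removing a box in column $j$ for each barred $\bar j \in v_i$; the HW property guarantees that all intermediate shapes are partitions, so each transition is a horizontal strip, and the weight condition gives $\mu^{(n)} = \lambda$. For (2), a gohs of length $\mu_i - 1$ produces $\cind(T_i)$ of size $\mu_i - 1$, which is a valid element of $B^{\mu_i,1}(\sboxone) = \bigoplus_{k=0}^{\mu_i} B(\omega_{\mu_i - k})$ since this decomposition admits admissible lengths of any parity up to $\mu_i$; no other step changes. \textbf{The main obstacle} is the highest weight verification, which converts the horizontal-strip constraint into a signature-cancellation statement that must be carefully checked for each simple root.
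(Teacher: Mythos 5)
Your forward construction is essentially the paper's: the paper simply defines $\phi_c(T)=\red(\cind(T_n))\otimes\cdots\otimes\red(\cind(T_1))$ and leaves the verification to the reader, and your map agrees with this once your identification of the length-$\mu_i$ strictly increasing word $w_i=\cind(T_i)$ with an element of $B^{\mu_i,1}$ is understood to go through $\red$. Your signature-rule sketch of the highest-weight property is correct in substance (the partition conditions at columns $k,k+1$ of $\nu^{(i)}$ and $\mu^{(i)}$ are equivalent to $\epsilon_k(w_i)\le \langle(\mu^{(i-1)})^t,\alpha_k^\vee\rangle$), and computing on the word $w_i$ rather than on the KR crystal element is legitimate because $\red$ commutes with the classical crystal operators, a fact stated in the paper's preliminaries.

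The inverse, however, is not correct as literally written. An element of $\HW(B_\mu^t(\hspace{0.4mm}\sboxeleven\hspace{0.4mm}),\lambda^t)$ is a tensor of admissible words $v_i$, and $v_i$ may lie in a lower classical component $B(\omega_{\mu_i-2k})$ with $k>0$, hence have length $\mu_i-2k$. Adding/removing one box per letter of $v_i$ then yields an ohs of length $\mu_i-2k$, so the output is not in $\SSOT(\lambda,\mu)$. Concretely, for $\mu=(2)$ and $\lambda=\emptyset$ the unique element of $\HW(B^{2,1}(\hspace{0.4mm}\sboxeleven\hspace{0.4mm}),\emptyset)$ is the empty word, and your recipe returns the length-$0$ ohs $(\emptyset,\emptyset,\emptyset)$ instead of the required $T_1=(\emptyset,(1),\emptyset)$, whose $\cind$ is $1\bar1$ with $\red(1\bar1)=\emptyset$. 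The missing step is exactly the inverse of $\red$: for each $i$ you must first pass to the unique strictly increasing word of length $\mu_i$ (resp.\ $\mu_i$ or $\mu_i-1$ in part (2), which is precisely where the ``$r$ or $r-1$'' clause in the gohs definition comes from) whose reduction is $v_i$ — uniqueness of this preimage is stated in Section 2 — and only then read off box additions and removals; since $\red$ deletes weight-zero pairs, weights are unaffected. With this correction your argument closes the bijection and coincides with the paper's.
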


\begin{proof}
    In all cases, for $T = (T_1, T_2, \dots, T_n)$ which is an $\SSOT$ or a $\GSSOT$, the map is given by
    \begin{equation*}
        \phi_c(T) = \red(\cind(T_n)) \otimes \dots \otimes \red(\cind(T_2)) \otimes \red(\cind(T_1)).
    \end{equation*}
\end{proof}

\begin{lem}\label{lem: rohs ohs connection}
For partitions $\lambda$ and $\mu$, there exists a bijection
\begin{equation*}
    \Gamma:\{\text{ohs $(\mu,\nu,\lambda)$ of length $r$}\} \rightarrow \{\text{rohs $(\mu,\zeta,\lambda)$ of length $(r-2k)$ for $0 \leq k \leq \lfloor\frac{r}{2}\rfloor$}\}.
\end{equation*}
\end{lem}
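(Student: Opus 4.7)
The plan is to reparameterize both sides as horizontal strips on an appropriate subset of columns, and then construct $\Gamma$ via a chain-wise prefix-to-suffix correspondence.

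Set $A := \mu \cup \lambda$ (column-wise maximum), $B := \mu \cap \lambda$ (column-wise minimum), $E := \{i : \mu_i^t = \lambda_i^t\}$, $P := \{i \in E : A_i^t \geq 1\}$, $Z := E \setminus P$ (an infinite tail of columns $> A_1$ when $A \neq \emptyset$), and $f := \#\{i : \mu_i^t \neq \lambda_i^t\}$. The horizontal-strip condition on an ohs $(\mu, \nu, \lambda)$ forces $\nu_i^t \in \{A_i^t, A_i^t + 1\}$, with the $+1$ option available only for $i \in E$. Writing $\nu = A + \Delta$ (raise column $i$ of $A$ by one precisely for $i \in \Delta$) identifies ohs's of length $r$ with horizontal strips $\Delta \subseteq E$ for which $A + \Delta$ is a partition and $|\Delta| = p := (r - f)/2$. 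A symmetric analysis with $\zeta = B - \Delta'$ identifies rohs's of length $r - 2k$ with horizontal strips $\Delta' \subseteq P$ for which $B - \Delta'$ is a partition and $|\Delta'| = p - k$; ranging over the allowed $k$ gives $|\Delta'| \in \{0, 1, \ldots, p\}$.

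Next I would unpack the partition constraints. The condition that $A + \Delta$ be a partition forces $\Delta \cap Z$ to be a contiguous initial segment of $Z$, determined by its cardinality $z := |\Delta \cap Z|$. Within $P$, decompose $P$ into \emph{chains}, where each chain is a maximal interval $[i,j] \cap P$ on which $A^t$ is constant. A short case analysis, using that $\mu,\lambda,A,B$ are partitions, shows that within each chain $C$ the intersection $\Delta \cap C$ must be a prefix of $C$ and $\Delta' \cap C$ must be a suffix of $C$, with no cross-chain interaction and no constraint introduced by the boundaries between $P$, $Z$, and the forced columns.

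The bijection $\Gamma$ is then built chain-wise: set $k := z$, and in each chain $C$ let $\Delta' \cap C$ be the suffix of $C$ of the same cardinality as the prefix $\Delta \cap C$. Then $|\Delta'| = |\Delta \cap P| = p - z = p - k$, confirming that the image is an rohs of length $r - 2k$, and the inverse reverses the prefix-suffix matching chain by chain. The main obstacle I anticipate is the careful case analysis showing that boundary adjacencies (for instance, between a $P$-column and a forced column, or between $P$ and $Z$) are always strict enough in $A^t$ or $B^t$ to avoid generating additional partition-type constraints, together with a separate check of the degenerate case $\mu = \lambda = \emptyset$.
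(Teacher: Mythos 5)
Your proposal is correct, but it takes a genuinely different route from the paper. The paper works row-wise with the multiset $\rind(\mu,\nu,\lambda)$: it pairs each $i$ with $\bar{i}$, slides the pair down to $i-1,\overline{i-1}$, cancels pairs at row $1$ (this cancellation count is the paper's $k$), and then verifies by a direct inequality check on the counts $a_i,a_i',b_i,b_i'$ that the resulting multiset is the $\rind$ of a valid rohs; the inverse slides pairs up and adjoins $k$ pairs $1\bar{1}$. You instead work column-wise: off the set $E$ of columns where $\mu^t=\lambda^t$ the heights of $\nu$ (resp.\ $\zeta$) are forced, so both sides reduce to choosing a column set $\Delta$ added on top of $\mu\cup\lambda$ (resp.\ $\Delta'$ removed from $\mu\cap\lambda$), and the partition conditions decompose into prefix conditions (resp.\ suffix conditions) along chains plus an initial-segment condition on the empty-column tail $Z$, with $k$ equal to the tail length. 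I checked the structural facts you flag as the remaining case analysis, and they do hold: within a maximal constant-height run of $\mu\cup\lambda$ the columns of $E$ form a prefix (a forced column followed by an $E$-column of equal $(\mu\cup\lambda)^t$-height would violate that $\mu\cap\lambda$ is a partition), dually for $\mu\cap\lambda$, and consecutive runs interact in no constraint since heights drop strictly; so prefix-to-suffix matching of cardinalities chain by chain, with $k=|\Delta\cap Z|$, is indeed a bijection onto $\{\Delta'\subseteq P:\ |\Delta'|\le p\}$. Two small caveats: your identification of ohs's with subsets $\Delta$ implicitly assumes every column of $\mu$ and $\lambda$ differs in height by at most one — otherwise both sides are empty and the lemma is vacuous, which deserves one sentence — and, while any bijection suffices for the statement, the paper's explicit $\Gamma$ is reused later (e.g., to compute $\epsilon_0(\phi_r(T))$ in Lemma \ref{lem: rind} and to count $1$'s in $\rind(\Gamma(T))$ in Section \ref{sub: level q=1}), so the paper's construction buys concrete formulas downstream, whereas yours buys a cleaner parameterization that avoids the inequality chasing.
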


\begin{proof}
    Given an ohs $T = (\mu, \nu, \lambda)$, denote the multi-set $\rind(T)$ by $A$, and apply the following process:
    \begin{enumerate}
        \item Select $i$ such that $i, \bar{i} \in A$.
        \item Replace $i$ and $\bar{i}$ with $i-1$ and $\overline{i-1}$. If $i = 1$, remove $1$ and $\bar{1}$.
        \item Repeat until no more pairs can be selected, ensuring the modified pairs are not reused.
    \end{enumerate}
    Let the resulting multi-set be $B$. We claim there exists an rohs $(\mu, \zeta, \lambda)$ such that $\rind(\mu, \zeta, \lambda) = B$.

    Let $a_i$ (respectively $b_i$) denote the number of $i$'s in $A$ (respectively in $B$), and $a'_i$ (respectively $b'_i$) denote the number of $\bar{i}$'s in $A$ (respectively in $B$). To ensure $B$ corresponds to a valid rohs, we need to show
    \begin{equation*}
        b'_i \leq \mu_i - \mu_{i+1}, \quad\text{and}\quad b'_i - b'_{i+1} + b_{i+1} \leq \mu_i - \mu_{i+1}, \quad \forall i.
    \end{equation*}

    Since $A$ arises from the ohs, the following inequalities hold for all $i$
    \begin{align}
        a_{i+1} \leq \mu_i - \mu_{i+1},  \quad\text{and}\quad
        a_{i+1} - a_i + a'_i \leq \mu_i - \mu_{i+1}. 
    \end{align}

    From the construction of $B$, we have
    \begin{align*}
        b'_i &= a'_i - \min(a_i, a'_i) + \min(a_{i+1}, a'_{i+1}), \\
        b_i &= a_i - \min(a_i, a'_i) + \min(a_{i+1}, a'_{i+1}).
    \end{align*}

    If $a_i \leq a'_i$, then
    \begin{align*}
        b'_i &= a'_i - a_i + \min(a_{i+1}, a'_{i+1}) \leq a'_i - a_i + a_{i+1} \leq \mu_i - \mu_{i+1}, \\
        b'_i - b'_{i+1} + b_{i+1} &= a'_i - a_i + \min(a_{i+1}, a'_{i+1}) + a_{i+1} - a'_{i+1} \leq a'_i - a_i + a_{i+1} \leq \mu_i - \mu_{i+1}.
    \end{align*}

    If $a_i > a'_i$, then
    \begin{align*}
        b'_i &= \min(a_{i+1}, a'_{i+1}) \leq a_{i+1} \leq \mu_i - \mu_{i+1}, \\
        b'_i - b'_{i+1} + b_{i+1} &= \min(a_{i+1}, a'_{i+1}) + a_{i+1} - a'_{i+1} \leq a_{i+1}\leq \mu_i - \mu_{i+1}.
    \end{align*}

    Thus, there exists an rohs $(\mu, \zeta, \lambda)$ with $\rind(\mu, \zeta, \lambda) = B$ and its length equals $(r - 2 \min(a_1, a'_1))$.

    Conversely, given an rohs $(\mu, \zeta, \lambda)$ of length $r - 2k$, denote $\rind(\mu, \zeta, \lambda)$ by $B$ and apply the following process:
    \begin{enumerate}
        \item Select  $i$ such that $i, \bar{i} \in B$.
        \item Replace $i$ and $\bar{i}$ with $i+1$ and $\overline{i+1}$.
        \item Repeat until no more pairs can be selected ensuring the modified pairs are not reused.
        \item Finally, add $k$ pairs of $1$ and $\bar{1}$.
    \end{enumerate}

    Let the resulting multi-set be $A$. By similar reasoning, $A$ corresponds to $\rind(T)$ for some ohs $T = (\mu, \nu, \lambda)$ of length $r$.
\end{proof}

\begin{lem}\label{lem: rind}
For a partition $\lambda$ and a nonnegative integer vector $\mu$, we have the following:
\begin{enumerate}
    \item There exists a bijection 
    \begin{equation*}
        \phi_r: \SSOT(\lambda,\mu) \rightarrow \HW(B_\mu(\sboxtwo),\lambda)
    \end{equation*}
    such that $c(T) = \epsilon_0(\phi_r(T))$ for $T \in \SSOT(\lambda,\mu)$.
    
    \item There exists a bijection 
    \begin{equation*}
        \phi_r: \GSSOT(\lambda,\mu) \rightarrow \HW(B_\mu(\sboxone),\lambda)
    \end{equation*}
    such that $2c(T) = \epsilon_0(\phi_r(T))$ for $T \in \GSSOT(\lambda,\mu)$.
    
    \item There exists a bijection 
    \begin{equation*}
        \phi_r: \SSROT(\lambda,\mu) \rightarrow \HW(B_\mu(\hspace{0.4mm}\sboxeleven\hspace{0.4mm}),\lambda)
    \end{equation*}
    such that $2c(T) = \epsilon_0(\phi_r(T))$ for $T \in \SSROT(\lambda,\mu)$.
\end{enumerate}
\end{lem}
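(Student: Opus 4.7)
The plan is to define $\phi_r$ as a row-analog of the map $\phi_c$ from Lemma \ref{lem: cind}. For $T = (T_1, \ldots, T_n)$ with each $T_i = (\mu^{(i-1)}, \nu^{(i)}, \mu^{(i)})$, set
\[
\phi_r(T) = b_n \otimes b_{n-1} \otimes \cdots \otimes b_1 \in B_\mu(\diamond),
\]
where each $b_i \in B^{1,\mu_i}(\diamond)$ is built from the multi-set $\rind(T_i)$ as follows: for SSOT ($\diamond = \sboxtwo$) and GSSOT ($\diamond = \sboxone$), cancel all pairs $\{j, \bar{j}\}$ from $\rind(T_i)$ and read the surviving multi-set as a weakly increasing word in the appropriate classical component $B(k_i \omega_1) \subseteq B^{1,\mu_i}$; for SSROT ($\diamond = \sboxeleven$) no cancellation is needed, since $B^{1,\mu_i}(\sboxeleven) = B(\mu_i \omega_1)$, and one takes $b_i = \rind(T_i)$ directly. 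Each factor satisfies $\wt(b_i) = \mu^{(i)} - \mu^{(i-1)}$ (the contributions of canceled $\{j, \bar{j}\}$ pairs vanish), so $\wt(\phi_r(T))$ telescopes to $\lambda$.

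To show that $\phi_r(T) \in \HW(B_\mu(\diamond), \lambda)$, I would apply the splitting map $S_\mu$. Since $S_\mu$ is a classical crystal embedding, it suffices to verify that $S_\mu(\phi_r(T))$ is classical highest weight in $(B^{1,1})^{\otimes |\mu|}$. After splitting, each $b_i$ takes a canonical form beginning with padding letters---several $\bar{1}$'s, and additionally $\emptyset$'s in the $\sboxone$ case---followed by the letters of the reduced $\rind(T_i)$. An induction on the tensor position, using the signature rule for each $e_j$ with $j \in J$, then shows that all classical raising operators annihilate the full split tensor. Bijectivity is established by constructing the inverse explicitly: given a HW element $b = b_n \otimes \cdots \otimes b_1$, one reads off the intermediate partitions via the cumulative classical weights $\mu^{(i)} = \sum_{k \leq i} \wt(b_k)$, and recovers each $\nu^{(i)}$ by reinflating $b_i$ with the unique multi-set of $\{j,\bar{j}\}$ pairs compatible with the horizontal-strip conditions of the associated ohs, gohs, or rohs.

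For the $\epsilon_0$ claim, I would rely on the splitting-map techniques developed in Appendix \ref{Sec: append A}. The strategy is first to show that $\epsilon_0(\phi_r(T))$ can be read off from $S_\mu(\phi_r(T))$ via the iterated tensor product formula $\epsilon_0(x \otimes y) = \max(\epsilon_0(y), \epsilon_0(x) - \langle \wt(y), \alpha_0^\vee \rangle)$, and then to identify the maximizing tensor position as the one corresponding to the index $i$ attaining $\max_i \nu^{(i)}_1$. This should give $\epsilon_0(\phi_r(T)) = c(T)$ in the SSOT case and $\epsilon_0(\phi_r(T)) = 2c(T)$ in the GSSOT and SSROT cases, the factor of two reflecting the different normalizations of $\alpha_0$ for kinds $\sboxone$ and $\sboxeleven$ versus $\sboxtwo$. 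The main obstacle is this last step: establishing that the splitting map transfers $\epsilon_0$ in a controlled way for these highest weight elements, and tracking precisely how the affine $e_0$ interacts with the cancellation and padding structure across multiple tensor factors, requires the delicate case-by-case analysis for which the appendix's splitting-map machinery is designed.
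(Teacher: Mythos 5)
Your map for part (3) agrees with the paper's, and the weight bookkeeping is correct, but for parts (1) and (2) the recipe ``cancel all pairs $\{j,\bar j\}$ from $\rind(T_i)$'' is not the right map, and the resulting $\phi_r$ fails to be injective. The paper's proof goes through the bijection $\Gamma$ of Lemma \ref{lem: rohs ohs connection}: each cancelling pair $(j,\bar j)$ is replaced by $(j-1,\overline{j-1})$, and only pairs that reach row $1$ are deleted. This retains exactly the information that wholesale cancellation destroys, and it is what makes the assignment a bijection onto rohs data of lengths $r-2k$, hence onto $\HW(B_\mu(\sboxtwo),\lambda)$ via \eqref{eq: column row KR decompose}. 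Concretely, take $\lambda=(2)$, $\mu=(3,3)$ and the two elements of $\SSOT(\lambda,\mu)$ given by $T=\bigl((\emptyset,(3),(3)),\,((3),(4),(2))\bigr)$ and $T'=\bigl((\emptyset,(3),(3)),\,((3),(3,1),(2))\bigr)$. Their second components have $\rind=\{1,\bar1,\bar1\}$ and $\{2,\bar1,\bar2\}$, and cancelling all pairs sends both to the one-letter word $\bar1$, so your map sends both $T$ and $T'$ to $\bar1\otimes 111$; the paper's $\Gamma$ instead produces the distinct elements $\bar1\otimes 111$ and $1\bar1\bar1\otimes 111$, whose left factors lie in different classical components of $B^{1,3}$. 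The same example shows your proposed inverse is ill-defined: with initial shape $(3)$, final shape $(2)$ and length $3$, both middle shapes $(4)$ and $(3,1)$ are compatible with the horizontal-strip conditions, so there is no ``unique'' reinflation. It also breaks the statistic: $c(T)=4$ while $c(T')=3$, so no value of $\epsilon_0$ on the common image can satisfy the claimed identity for both.

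Separately, even where your map is correct, the machinery you invoke is heavier than what the lemma needs and is left incomplete at the decisive step. The paper does not use the splitting map here at all: highest-weightness and bijectivity follow from Lemma \ref{lem: rohs ohs connection} together with the classical decomposition \eqref{eq: column row KR decompose}, and the identities $c(T)=\epsilon_0(\phi_r(T))$, respectively $2c(T)=\epsilon_0(\phi_r(T))$, are proved by a short induction on the number of tensor factors using \eqref{eq: epsilon}, the explicit single-factor formulas for $\epsilon_0$ on $B^{1,s}$ in each kind, and the value of $\langle \zeta,\alpha_0^\vee\rangle$. Your plan of transporting $\epsilon_0$ through $S_\mu$ and locating a maximizing tensor position is where you yourself defer to ``delicate case-by-case analysis''; as written that step has no content, and after correcting the map you will find it is unnecessary.
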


\begin{proof}
Denote $T = (T_1, \dots, T_n)$, which is one of $\SSOT$, $\GSSOT$, or $\SSROT$.

For (1) and (2), the map is given by
$
    \phi_r(T) = \rind(\Gamma(T_n)) \otimes \dots \otimes \rind(\Gamma(T_1)),
$
and for (3), the map is given by
$
    \phi_r(T) = \rind(T_n) \otimes \dots \otimes \rind(T_1).
$

By Lemma~\ref{lem: rohs ohs connection}, in all cases, $\phi_r(T)$ corresponds to an $\SSROT$ (by reading off row indices on each rohs) and therefore a classical highest weight element. The bijectivity follows directly from~\eqref{eq: column row KR decompose}. It remains to establish the claim for $\epsilon_0(\phi_r(T))$. Denote $T_n = (\zeta, \nu, \lambda)$ and $T' = (T_1, \dots, T_{n-1})$.

(1) By~\eqref{eq: epsilon}, we have
\begin{equation*}
    \epsilon_0(\phi_r(T)) = \max\big(\epsilon_0(\phi_r(T')), \epsilon_0(\rind(\Gamma(T_n))) - \langle \wt(\phi_r(T')), \alpha_0^\vee \rangle\big).
\end{equation*}
Note that $\langle \wt(\phi_r(T')), \alpha_0^\vee \rangle = \langle \zeta, \alpha_0^\vee \rangle = -\zeta_1$. For $S \in B^{1, \mu_n}(\sboxtwo)$, represented as a multi-set, we have
\begin{equation*}
    \epsilon_0(S) = \text{number of 1's in } S + \frac{\mu_n - |S|}{2}.
\end{equation*}
Thus, $\epsilon_0(\Gamma(T_n)) = \nu_1 - \zeta_1$, which implies
\begin{equation*}
    \epsilon_0(\rind(\Gamma(T_n))) - \langle \wt(\phi_r(T')), \alpha_0^\vee \rangle = \nu_1.
\end{equation*}
The result follows by induction.

(2) Here, $\langle \wt(\phi_r(T')), \alpha_0^\vee \rangle = \langle \zeta, \alpha_0^\vee \rangle = -2\zeta_1$. For $S \in B^{1, \mu_n}(\sboxone)$, represented as a multi-set, we have
\begin{equation*}
    \epsilon_0(S) = 2 \times (\text{number of 1's in } S) + (\mu_n - |S|).
\end{equation*}
Therefore,
\begin{align*}
    \epsilon_0(\Gamma(T_n)) =
    \begin{cases*}
        2(\nu_1 - \zeta_1), & if $|\nu / \zeta| + |\nu / \lambda| = \mu_n$, \\
        2(\nu_1 - \zeta_1) + 1, & if $|\nu / \zeta| + |\nu / \lambda| = \mu_n - 1$.
    \end{cases*}
\end{align*}
The result follows by induction.

(3) Here, $\langle \wt(\phi_r(T')), \alpha_0^\vee \rangle = \langle \zeta, \alpha_0^\vee \rangle = -\zeta_1 - \zeta_2$. For $S \in B^{1, \mu_n}(\hspace{0.4mm}\sboxeleven\hspace{0.4mm})$, represented as a multi-set, we have
\begin{equation*}
    \epsilon_0(S) = (\text{number of 1's in } S) + \max\big(0, (\text{number of 2's in } S) - (\text{number of $\bar{2}$'s in } S)\big).
\end{equation*}
The result again follows by induction.
\end{proof}

\begin{example}
Let $\lambda = (2)$, $\mu = (3,3)$. 
Consider the following elements in $\SSOT(\lambda,\mu)$ and $\GSSOT(\lambda,\mu)$:
\begin{align*}
    T^{(1)} &= \big((\emptyset, \ydiagram{2}, \ydiagram{1}), (\ydiagram{1}, \ydiagram{2,1}, \ydiagram{2})\big) \in \SSOT(\lambda,\mu), \\
    T^{(2)} &= \big((\emptyset, \ydiagram{2}, \ydiagram{2}), (\ydiagram{2}, \ydiagram{2,1}, \ydiagram{2})\big) \in \GSSOT(\lambda,\mu).
\end{align*}

For $T^{(1)}$, the $\cind$ of the first ohs is $\{1,2,\bar{2}\}$, and for the second ohs, it is $\{1,2,\bar{1}\}$. Since we have $\red(12\bar{2}) = 1$ and $\red(12\bar{1}) = 2$, $T^{(1)}$ maps to $2 \otimes 1 \in B^{3,1} \otimes B^{3,1}$ via $\phi_c$. Similarly, for $T^{(2)}$, we have $\cind$ for the first ohs as $\{1,2\}$ and for the second ohs as $\{1,\bar{1}\}$, which maps to $\emptyset \otimes 12 \in B^{3,1} \otimes B^{3,1}$ via $\phi_c$.

On the other hand, for $T^{(1)}$, the $\rind$ of the first ohs is $\{1,1,\bar{1}\}$, and for the second ohs, it is $\{1,2,\bar{2}\}$. We have $\phi_r(T^{(1)})=11\bar{1} \otimes 1$.
For $T^{(2)}$, we have $\rind$ for the first ohs as $\{1,1\}$ and for the second ohs as $\{2,\bar{2}\}$. We obtain $\phi_r(T^{(2)})=1\bar{1} \otimes 11$. Note that we have $c(T^{(2)})=\frac{5}{2}$ and $\epsilon_0(\phi_r(T^{(2)}))=5$.
\end{example}

\section{Lusztig $q$-weight multiplicities}\label{Sec: L}
In this section, we state and prove a combinatorial formula for Lusztig $q$-weight multiplicities for type $C$ and type $B$ with spin weights (Theorem \ref{thm: C lusztig} and \ref{thm: B lusztig}).

\begin{thm}\label{thm: C lusztig}
For $\lambda, \mu \in \Par_n$, we have
\begin{equation*}
    \KL^{C_n}_{\lambda,\mu}(q) = \sum_{T \in \SSOT_{g}(\oc(\lambda,g),\overline{\oc}(\mu,g))} q^{\overline{D}(\phi_c(T))}
\end{equation*}
where $g$ is any positive integer such that $g \geq \lambda_1$ and $\phi_c$ is the map defined in Lemma \ref{lem: cind}.
\end{thm}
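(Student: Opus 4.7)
The approach is to identify both sides of the equality as generating functions over the same indexing set and match the statistics. By Lemma~\ref{lem: cind}(1), the map $\phi_c$ is a bijection
\[
    \SSOT(\oc(\lambda,g),\overline{\oc}(\mu,g)) \;\xrightarrow{\ \sim\ }\; \HW\!\left(B^t_{\overline{\oc}(\mu,g)}(\hspace{0.4mm}\sboxeleven\hspace{0.4mm}),\; \oc(\lambda,g)^t\right),
\]
so the right-hand side of the theorem is the $q$-graded count of classical highest weight elements of type $B_N^{(1)}$ in the column KR tensor product $B^t_{\overline{\oc}(\mu,g)}(\hspace{0.4mm}\sboxeleven\hspace{0.4mm})$ arising from $g$-bounded SSOTs. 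The condition $c(T) \le g$ on the SSOT side corresponds, under $\phi_c$, to a natural stability condition on the classical highest weight elements and should be insensitive to the choice of $g \ge \lambda_1$ provided $N$ is large enough.

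The core step is a graded character identity of the form
\[
    \prod_{\alpha \in R^+_{C_n}} \frac{1}{1-q e^\alpha} \;=\; \sum_{\nu \in \Par_n} \chi^{C_n}_{\nu}\; \Bigl(\sum_{b} q^{\overline{D}(b)}\Bigr),
\]
where the inner sum ranges over the appropriate subset of classical highest weight elements of type-$B_N^{(1)}$ column KR tensor products corresponding to $\nu$. The orthogonal complement operations $\oc$ and $\overline{\oc}$ would enter here, implementing the rectangle-complement duality that translates type-$C_n$ dominant weights inside a $g \times n$ rectangle into type-$B_N$ dominant weights. Given this identity, applying the Weyl alternator and extracting the coefficient of $e^{\mu+\rho}$ in \eqref{eq:q-analog} yields the claimed formula.

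The energy-generating-function computation proceeds via the splitting map $S^t_{\overline{\oc}(\mu,g)}$ of Appendix~\ref{Sec: append A}, which classically embeds $B^t_{\overline{\oc}(\mu,g)}(\hspace{0.4mm}\sboxeleven\hspace{0.4mm})$ into a tensor power of $B^{1,1}(\hspace{0.4mm}\sboxeleven\hspace{0.4mm})$, where $\overline{D}$ is given by the explicit local-energy sum in $\overline{H}_{\sboxeleven}$ up to the additive shift recorded in Lemma~\ref{lem: splitting preserves coenergy}. The closed-form expressions developed in Appendix~\ref{Sec: append A} would allow term-by-term comparison with the Weyl-alternator expansion.

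I expect the main obstacle to be the character identity in the second paragraph; it requires a sign-cancellation argument on the Weyl-group sum showing that non-highest-weight contributions cancel, leaving precisely $\HW(\cdot,\,\oc(\lambda,g)^t)$ with the correct $q$-grading coming from $\overline{D}$. A secondary technicality is reconciling the additive shift of Lemma~\ref{lem: splitting preserves coenergy} with the normalization constant arising from the $\oc/\overline{\oc}$ duality, so that the final matching is genuinely an equality of polynomials rather than of scaled versions thereof.
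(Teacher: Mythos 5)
Your proposal does not contain a proof: the entire weight of the argument rests on the ``graded character identity'' in your second paragraph, and that identity is essentially the theorem itself, restated for all $\lambda$ simultaneously. Expanding the product $\prod_{\alpha\in R^+}(1-qe^{\alpha})^{-1}$ against the Weyl alternator and reading off the coefficient of $e^{\mu+\rho}$ is just the definition \eqref{eq:q-analog}; so asserting that this product expands over $C_n$-characters with coefficients given by energy generating functions over $g$-bounded highest weight elements is not a reduction but a reformulation. You acknowledge that the needed sign-cancellation on the Weyl-group sum is the ``main obstacle,'' but you give no mechanism for it, and no indication of how the truncation $c(T)\le g$, the $\oc/\overline{\oc}$ complementation, or the structure of $\HW(B^t_{\overline{\oc}(\mu,g)}(\hspace{0.4mm}\sboxeleven\hspace{0.4mm}),\oc(\lambda,g)^t)$ would enter the cancellation. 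A subsidiary unproved claim is that the right-hand side is ``insensitive to the choice of $g$'': this is genuinely nontrivial, since the sets $\SSOT_g(\oc(\lambda,g),\overline{\oc}(\mu,g))$ change with $g$, and the paper needs an explicit bijection $\iota$ together with the energy identity $\overline{D}(\phi_c(T))=\overline{D}(\phi_c(\iota(T)))$ (Proposition \ref{prop: energy g}) and the rearrangement invariance of $\SSOT_g$ under combinatorial $R$-matrices (Proposition \ref{prop: invariance ssot}) to establish it.

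For comparison, the paper's route is inductive rather than a direct alternator computation: it starts from Lecouvey's Morris-type recurrence \eqref{eq: C morris recurrence} for $\KL^{C_n}_{\lambda,\mu}(q)$, converts each recurrence term into an SSOT with one extra ohs appended (Lemma \ref{lem: add rohs}), shows via splitting-map computations that appending this ohs raises the energy by exactly $r+m$ (Lemma \ref{lem: energy r+m}, Proposition \ref{prop: B energy r+m}), and then cancels the alternating signs by a sign-reversing involution built from the $\Aug$ map (ohs jeu de taquin), using Proposition \ref{prop: computation of aug}, Corollaries \ref{cor: ohs jdt last row} and \ref{cor: ohs jdt1}, and the fact that $\Aug$ preserves energy (Proposition \ref{prop: aug preserves energy}). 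The sign cancellation you hope for does occur, but only after this combinatorial machinery is in place; without an analogue of it, your outline cannot be completed as stated.
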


\begin{rmk}
In \cite{Lee2023}, Lee constructed a bijection between a set of King tableaux of shape $\lambda$ with weight $\mu$ and $\SSOT_{g}(\oc(\lambda,g),\overline{\oc}(\mu,g))$ for any $g \geq \lambda_1$. This bijection induces a bijection 
\begin{equation*}
    \iota: \SSOT_{g}(\oc(\lambda,g),\overline{\oc}(\mu,g)) \rightarrow  \SSOT_{g+1}(\oc(\lambda,g+1),\overline{\oc}(\mu,g+1))
\end{equation*}
defined as follows. Let $T = (T_1, T_2, \dots, T_n) \in \SSOT_{g}(\oc(\lambda,g),\overline{\oc}(\mu,g))$, and for each $T_i = (\tau, \nu, \eta)$ we define 
\[
T'_i = (\tau + (1^{i-1}, 0^{n+1-i}), \nu + (1^i, 0^{n-i}), \eta + (1^i, 0^{n-i})).
\] 
Then $\iota(T)=(T'_1,T'_2,\dots,T'_n)$. In Appendix \ref{appendix: g independnt}, we further demonstrate that (Proposition \ref{prop: energy g})
\begin{equation}\label{eq: energy g independent}
    \overline{D}(\phi_c(T)) = \overline{D}(\phi_c(\iota(T))). 
\end{equation}
This shows that the right-hand side of Theorem \ref{thm: C lusztig} does not depend on $g$.
\end{rmk}

\begin{rmk}\label{rmk: g ind}
Let $\alpha$ be any rearrangement of $\mu$, and denote $C$ as the composition of combinatorial $R$-matrices that maps $B_{\mu}^{t}$ to $B_{\alpha}^{t}$.
It is clear that $C$ maps $\HW(B_{\mu}^{t}, \lambda^{t})$ bijectively to $\HW(B_{\alpha}^{t}, \lambda^{t})$.
Consequently, we have
\begin{equation*}
    \sum_{T \in \SSOT(\lambda, \mu)} q^{\overline{D}(\phi_c(T))} = \sum_{T \in \SSOT(\lambda, \alpha)} q^{\overline{D}(\phi_c(T))}.
\end{equation*}

In Appendix \ref{appendix: invariance ssot g}, we further establish the refined identity (Proposition \ref{prop: invariance ssot})
\begin{equation*}
    \sum_{T \in \SSOT_g(\lambda, \mu)} q^{\overline{D}(\phi_c(T))} = \sum_{T \in \SSOT_g(\lambda, \alpha)} q^{\overline{D}(\phi_c(T))}
\end{equation*}
for any $g \geq \max(\mu_1, \dots, \mu_n)$. Thus, in the right-hand side of Theorem \ref{thm: C lusztig}, we may replace $\overline{\oc}(\mu, g)$ with any rearrangement of it. For the sake of convenience in the proof, we choose $\overline{\oc}(\mu, g)$.
\end{rmk}

\begin{example}\label{ex : C lusztig q=1}
When $n=1$, $\KL^{C_1}_{(\lambda_1),(\mu_1)}(q)$ is nonzero if and only if $\lambda_1-\mu_1$ is a nonnegative even integer, and in that case, it equals $q^{\frac{\lambda_1-\mu_1}{2}}$. The set $\SSOT_{g}((g-\lambda_1),(g-\mu_1))$ consists of a single SSOT $\big((\emptyset,(r) (s))\big)$ where $r=g-\lambda_1+\frac{\lambda_1-\mu_1}{2}$ and $s=g-\lambda_1$. Via $\phi_c$, it maps to $\red(1\dots r \overline{r}\dots\overline{s+1})=1\dots s \in B^{g-\mu_1,1}(\hspace{0.4mm}\sboxeleven\hspace{0.4mm})$. Note that for any $b\in B^{g-\mu_1,1}(\hspace{0.4mm}\sboxeleven\hspace{0.4mm})$, we have $\overline{D}(b)=\frac{g-\mu_1-j}{2}$ when $b\in B(\omega_j)$ by \eqref{eq: energy function for single KR crystla}. Therefore we conclude $\overline{D}(1\dots s)=\frac{\lambda_1-\mu_1}{2}$. We see that Theorem \ref{thm: C lusztig} holds for $n=1$.
\end{example}

\begin{example}\label{ex : C lusztig}
    Let $\lambda = (1,1,0,0)$, $\mu = (0,0,0,0)$, and $g = 1$. 
    We have $\KL^{C_4}_{\lambda,\mu}(q) = q^6 + q^4 + q^2$,
    and the set $\SSOT_g(\oc(\lambda,g), \overline{\oc}(\mu,g))$ consists of
    \begin{align*}
    T^{(1)} &= \big( (\emptyset , \ydiagram{1}, \ydiagram{1}), (\ydiagram{1} , \ydiagram{1}, \emptyset), (\emptyset , \ydiagram{1}, \ydiagram{1}), (\ydiagram{1} , \ydiagram{1,1} , \ydiagram{1,1}) \big),\\
    T^{(2)} &= \big( (\emptyset , \ydiagram{1}, \ydiagram{1}), (\ydiagram{1} , \ydiagram{1,1}, \ydiagram{1,1}), (\ydiagram{1,1} , \ydiagram{1}, \ydiagram{1}), (\ydiagram{1} , \ydiagram{1,1} , \ydiagram{1,1}) \big), \\
    T^{(3)} &= \big( (\emptyset , \ydiagram{1}, \ydiagram{1}), (\ydiagram{1} , \ydiagram{1,1}, \ydiagram{1,1}), (\ydiagram{1,1} , \ydiagram{1,1}, \ydiagram{1,1,1}), (\ydiagram{1,1,1} , \ydiagram{1,1} , \ydiagram{1,1}) \big).
    \end{align*}
    Via the map $\phi_c$, we embed each $T^{(i)}$ into $(B^{1,1}(\hspace{0.4mm}\sboxeleven\hspace{0.4mm}))^{\otimes 4}$ as follows with the energy function value:  
    \begin{align*}
        \phi_c(T^{(1)}) &= 1 \otimes 1 \otimes \bar{1} \otimes 1, \qquad \overline{D}(\phi_c(T^{(1)})) = 6,\\
        \phi_c(T^{(2)}) &= 1 \otimes \bar{1} \otimes 1 \otimes 1, \qquad\overline{D}(\phi_c(T^{(2)})) = 4,\\
        \phi_c(T^{(3)}) &= \bar{1} \otimes 1 \otimes 1 \otimes 1, \qquad\overline{D}(\phi_c(T^{(3)})) = 2.
    \end{align*}
    Therefore we have $\KL^{C_4}_{\lambda,\mu}(q) =\sum_{T\in\SSOT_1(\oc(\lambda,g), \overline{\oc}(\mu,g))} q^{\overline{D}(\phi_c(T))}$.
\end{example}

For Lusztig $q$-weight multiplicities for type $B$, we introduce a $q,t$-generalization. Recall that the set of positive roots $R^{+}$ of type $B$ consists of long roots $(\varepsilon_i \pm \varepsilon_j)$ for $i < j$, and short roots $\varepsilon_i$. 

Given dominant weights $\lambda$ and $\mu$, the \emph{$q,t$-weight multiplicities} are defined as follows:
\[
    \KL^{B_n}_{\lambda,\mu}(q,t) := \sum_{w \in W} (-1)^w [e^{w(\lambda + \rho) - (\mu + \rho)}] 
    \prod_{\substack{\alpha \in R^{+} \\ \text{long root } \alpha}} \frac{1}{1 - q e^\alpha} 
    \prod_{\substack{\alpha \in R^{+} \\ \text{short root } \alpha}} \frac{1}{1 - t e^\alpha}.
\]
Clearly, we can recover the usual Lusztig $q$-weight multiplicities by setting $t = q$.

Recall that $B^{r,1}(\sboxone)$ is decomposed as $ \oplus_{0 \leq s \leq r} B( \omega_s) $ as a classical crystal.
Then for $ v \in B^{r,1}(\sboxone) $, we define the \textit{vacancy} of $ v $, denoted by $\vac(v)$, as $ (r - s) $ if $ v \in B(\omega_s) $.
More generally, given $b=b_n\otimes \dots \otimes b_1\in B^{t}_{\mu}(\sboxone)$, we define $\vac(b)=\sum_{i=1}^{n}\vac(b_i)$. 
Next, we define the \textit{$q,t$-energy function} of $b$ by
\[
    \energy_{q,t}(b) := q^{\frac{\overline{D}(b) - \vac(b)}{2}} t^{\vac(b)}.
\]
It is clear that $\vac(b) = \vac(b')$ when $b$ and $b'$ lie in the same classical component. 
Furthermore, we have $\vac(u \otimes u') = \vac(R(u \otimes u'))$ for $u \otimes u' \in B^{r_2,1}(\sboxone) \otimes B^{r_1,1}(\sboxone)$.
This result follows from the rules outlined in Appendix \ref{Sec: append A}, which verify the claim for the case $r_1 = 1$.
By extension, we conclude that $\vac(u \otimes u') = \vac(S(u \otimes u'))$ where $S$ is the splitting map.
Therefore, the assertion is established by Corollary \ref{cor: splitting invariance}. 
To summarize, the $q,t$-energy function is invariant under the combinatorial $R$-matrix and remains constant within each classical component.

We now state the main result for Lusztig $q$-weight multiplicities for type $B$:

\begin{thm}\label{thm: B lusztig}
For $\lambda, \mu \in \Par_n$, we have
\[
    \KL^{B_n}_{\lambda^{\sharp}, \mu^{\sharp}}(q,t) = \sum_{T \in \GSSOT_{g + \frac{1}{2}}(\oc(\lambda,g), \overline{\oc}(\mu,g))} \energy_{q,t}(\phi_c(T)),
\]
where $g$ is any positive integer such that $g \geq \lambda_1$, and $\phi_c$ is the map in Lemma \ref{lem: cind}.
\end{thm}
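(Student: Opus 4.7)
The plan is to follow the blueprint of Theorem~\ref{thm: C lusztig}, adapted to the spin setting with its additional $t$-grading. The first step is to exploit the coincidence $\lambda^\sharp + \rho^{B_n} = \lambda + \rho^{C_n}$ (easily verified from $\rho^{B_n} = (n-\tfrac12,n-\tfrac32,\ldots,\tfrac12)$ and $\rho^{C_n} = (n,n-1,\ldots,1)$) to rewrite the alternating Weyl sum defining $\KL^{B_n}_{\lambda^\sharp,\mu^\sharp}(q,t)$ in terms of the integer-valued shift $w(\lambda + \rho^{C_n}) - (\mu + \rho^{C_n})$. This makes it transparent why the partitions $\oc(\lambda,g)$ and $\overline{\oc}(\mu,g)$ (rather than their spin-shifted analogs) appear on the right-hand side: the orthogonal complement provides the bridge between $B_n$ dominant spin weights and highest weights of the column-KR crystal of type $D_{N+1}^{(2)}$.

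The technical heart of the argument is a graded-character identity, schematically
\begin{equation*}
\sum_{b \in B^t_{\overline{\oc}(\mu,g)}(\sboxone)} \energy_{q,t}(b)\, e^{\wt(b)} \;=\; \prod_{\alpha \in R^+ \text{ long}} \frac{1}{1 - q\,e^\alpha} \, \prod_{\alpha \in R^+ \text{ short}} \frac{1}{1 - t\,e^\alpha}
\end{equation*}
up to the appropriate Weyl-denominator factors. I would prove this by reducing to $(B^{1,1}(\sboxone))^{\otimes N}$ via the splitting map $S^t_{\overline{\oc}(\mu,g)}$ (Lemma~\ref{lem: splitting preserves coenergy}), using the $R$-matrix invariance of both $\overline{D}$ and $\vac$ noted just before the theorem to conclude that $\energy_{q,t}$ is itself $R$-matrix and splitting invariant. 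On the simpler tensor product, the local rules for $\overline{H}_{\sboxone}$ and the explicit $t$-contribution of $\emptyset$-letters make the summation tractable: the $t^{\vac}$ piece matches the short-root product $\prod(1-t\,e^{\varepsilon_i})^{-1}$, while the $q^{(\overline{D}-\vac)/2}$ piece matches the long-root product in the spirit of the classical charge Hall--Littlewood identity.

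Once the character identity is in place, applying the Weyl character formula to extract the coefficient of a single irreducible component yields
\begin{equation*}
\KL^{B_n}_{\lambda^\sharp,\mu^\sharp}(q,t) \;=\; \sum_{b \in \HW(B^t_{\overline{\oc}(\mu,g)}(\sboxone),\, \oc(\lambda,g)^t)} \energy_{q,t}(b),
\end{equation*}
at which point Lemma~\ref{lem: cind}(2) converts the sum over HW elements into the claimed sum over $\GSSOT_{g+\frac12}(\oc(\lambda,g),\overline{\oc}(\mu,g))$. The GSSOT boundedness $c(T)\le g + \tfrac12$ should correspond to requiring the target weight $\oc(\lambda,g)^t$ (whose first column has height at most $g$) to be achievable at every intermediate gohs of $T$.

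The main obstacle will be the graded-character identity: the interplay between the $t$-vacancy (short-root factor) and the $q$-weighted descent statistic $(\overline{D}-\vac)/2$ (long-root factor) must split exactly to produce the two families of factors in the Lusztig product, and tracking the spin shift through the Weyl denominator, so that the half-integer contributions $w((\tfrac12)^n) - (\tfrac12)^n$ cancel against the orthogonal-complement convention, is delicate. This step will likely require adapting the splitting-map techniques of Appendix~\ref{Sec: append A} to the $(q,t)$-graded setting.
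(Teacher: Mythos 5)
Your route is genuinely different from the paper's (which proves the theorem by induction on $n$ via the $q,t$-version of the Morris recurrence \eqref{eq: B morris recurrence}, the appending lemmas \ref{lem: add rohs to gssot}--\ref{lem: energy qr t m} backed by Proposition \ref{prop:split D}, and then the same sign-reversing involution built from the $\Aug$ jeu-de-taquin that proves \eqref{eq: goal}), but as it stands it has two gaps that I think are fatal. First, the "graded-character identity" you place at the heart of the argument is asserted rather than proved, and it is not a routine computation: in type $A$ the analogous statement is exactly the Lascoux--Sch\"utzenberger charge theorem, and in type $B$ it is essentially equivalent to the theorem you are trying to prove. Your sketch ("the $t^{\vac}$ piece matches the short-root product, the $q^{(\overline{D}-\vac)/2}$ piece matches the long-root product") never uses the spin hypothesis, yet Remark \ref{rmk: why non spin is hard} shows that $\KL^{B_n}_{\lambda,\mu}(q,t)$ has negative coefficients for general (non-spin) weights, so no identity of the uniform shape you propose can hold; any correct argument must make the spin shift do real work, which in the paper happens through the spin Morris recurrence.

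Second, even granting such an identity, your final extraction step does not produce the right-hand side of the theorem. Decomposing the classically graded character and applying Lemma \ref{lem: cind}(2) converts classical highest weight elements of weight $\oc(\lambda,g)^t$ into \emph{all} of $\GSSOT(\oc(\lambda,g),\overline{\oc}(\mu,g))$, whereas the theorem sums only over the $(g+\tfrac12)$-bounded subset $\GSSOT_{g+\frac12}$, which is a proper subset in general (intermediate gohs' can have first part exceeding $g$). The boundedness is precisely the level restriction that encodes the alternating sum over the full hyperoctahedral Weyl group rather than over $\mathfrak{S}_n$; an unrestricted character computation can at best recover the stable quantity $^{\infty}\KL$ or a level-restricted analog as in Section \ref{Sec: Ltilde} (where the cutoff enters through $\epsilon_0$), not $\KL^{B_n}_{\lambda^\sharp,\mu^\sharp}(q,t)$ itself. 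Producing the bound $c(T)\le g+\tfrac12$ is exactly what the paper's cancellation mechanism (the $\Aug$-based involution applied to the Morris recurrence, using the decomposition \eqref{eq: gssot decomposition}) achieves, and your proposal has no substitute for it. The observation $\lambda^\sharp+\rho^{B_n}=\lambda+\rho^{C_n}$ is correct but does not help here, since the positive-root products of types $B$ and $C$ differ.
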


As a set, for a positive integer $g$, we have the following decomposition (from the definition of $\GSSOT$)
\begin{equation}\label{eq: gssot decomposition}
    \GSSOT_{g + \frac{1}{2}}(\lambda, \mu) = \bigcup_{v} \SSOT_g (\lambda, \mu - v),
\end{equation}
where the union is taken over all 0-1 vectors $v$. Note that when $\mu-v$ contains a negative entry, we regard $\SSOT_g (\lambda, \mu - v)$ as an empty set. By Remark \ref{rmk: energy g ind type D_N+1}, the right-hand side of Theorem \ref{thm: B lusztig} is also independent of the choice of $g$.

\begin{rmk}\label{rmk: why non spin is hard}
    For $\lambda,\mu \in \Par_n$, $\KL^{B_n}_{\lambda,\mu}(q,t)$ is not a polynomial in $q,t$ with nonnegative coefficients in general. For example, we have
    \[
        \KL^{B_2}_{(1,0), (0,0)}(q,t) = qt - q + t.
    \]
    This example illustrates that for general weights (outside the spin case), finding a combinatorial formula may require a significantly different approach.
\end{rmk}

\begin{example}
    Let $\lambda=(1,1,1), \mu = (0,0,0)$, and $g=1$. 
    We have $\KL^{B_3}_{\lambda^\sharp,\mu^\sharp}(q,t) = q^3t^3 + q^3t + q^2t + qt$,
    and the set $\GSSOT_{1+\frac{1}{2}}(\oc(\lambda,g),\overline{\oc}(\mu,g))$ consists of:
    \begin{align*}
    T^{(1)} &= \big( (\emptyset , \emptyset, \emptyset), (\emptyset , \emptyset, \emptyset), (\emptyset , \emptyset, \emptyset) \big), \\
    T^{(2)} &= \big( (\emptyset , \emptyset,\emptyset), (\emptyset,\ydiagram{1},\ydiagram{1}), (\ydiagram{1},\emptyset,\emptyset) \big), \\
    T^{(3)} &= \big( (\emptyset , \ydiagram{1},\ydiagram{1}), (\ydiagram{1},\ydiagram{1},\emptyset), (\emptyset , \emptyset, \emptyset) \big), \\
    T^{(4)} &= \big( (\emptyset , \ydiagram{1},\ydiagram{1}), (\ydiagram{1},\ydiagram{1},\ydiagram{1}), (\ydiagram{1} , \ydiagram{1}, \emptyset) \big).
    \end{align*}
    Via the map $\phi_c$, we embed each $T^{(i)}$ into $(B^{1,1}(\sboxone))^{\otimes 3}$ as follows with the $q,t$-energy function value:
   \begin{align*}
       \phi_c(T^{(1)}) &= \emptyset \otimes \emptyset \otimes \emptyset, \qquad         \energy_{q,t}(\phi_c(T^{(1)})) = q^3t^3,\\
       \phi_c(T^{(2)}) &= \bar{1} \otimes 1 \otimes \emptyset, \qquad \energy_{q,t}(\phi_c(T^{(2)})) = q^3t, \\ 
       \phi_c(T^{(3)}) &= \emptyset \otimes \bar{1} \otimes 1, \qquad \energy_{q,t}(\phi_c(T^{(3)})) = q^2t,\\
       \phi_c(T^{(4)}) &= \bar{1} \otimes \emptyset \otimes 1, \qquad\energy_{q,t}(\phi_c(T^{(4)})) = qt.
   \end{align*}
    These values coincide with the computation $\KL^{B_3}_{\lambda^\sharp,\mu^\sharp}(q,t) =  q^3t^3 + q^3t + q^2t + qt$.
\end{example}

We mainly illustrate the proof of Theorem \ref{thm: C lusztig} (Section \ref{sub: deform morris} and \ref{sub: ohs jdt}). As the proof for Theorem \ref{thm: B lusztig} is parallel, we give a sketch (Section \ref{sub: B skecth}).

\subsection{Deforming Morris type recurrence formula}\label{sub: deform morris}
Given $\lambda, \mu \in \Par_n$ for $n \geq 2$, define $\lambda^{(1)}, \lambda^{(2)}, \dots, \lambda^{(n)}$ and $\mu'$ as follows:
\begin{align}
    &\lambda^{(1)} = (\lambda_2, \ldots, \lambda_n), \hspace{40mm} \mu' = (\mu_2, \ldots, \mu_n), \label{eq: lambdais} \\
    &\lambda^{(2)} = (\lambda_1 + 1, \lambda_3, \ldots, \lambda_n), \notag \\
    &\lambda^{(3)} = (\lambda_1 + 1, \lambda_2 + 1, \lambda_4, \ldots, \lambda_n), \notag \\
    &\vdots \notag \\
    &\lambda^{(n)} = (\lambda_1 + 1, \lambda_2 + 1, \ldots, \lambda_{n-1} + 1). \notag
\end{align}

Additionally, for a partition $\lambda$ with $\ell(\lambda)\leq m$ we define $\ROHS_{\leq m}(\lambda, r)$ to be the set of rohs $T$ of length $r$ with the initial shape $\lambda$ such that $\ell(F(T))\leq m$ (note that this is distinct from the set of $m$-bounded rohs').

As proved in \cite[Theorem 3.2.1]{Lecouvey2005}, Morris type recurrence formula for type $C$ is given by
\begin{equation}\label{eq: C morris recurrence}
    \KL^{C_n}_{\lambda,\mu}(q) = \sum_{i \geq 1} (-1)^{i-1} \sum_{\substack{r + 2m = \lambda_i - \mu_1 + 1 - i \\ r, m \geq 0}} q^{r+m} \sum_{(\lambda^{(i)}, \tau, \nu) \in \operatorname{ROHS}_{\leq n-1}(\lambda^{(i)}, r)} \KL^{C_{n-1}}_{\nu, \mu'}(q).
\end{equation}
We proceed with induction on $n$ to prove Theorem \ref{thm: C lusztig}. The base case $n=1$ has been covered in Example \ref{ex : C lusztig q=1}. Now assume Theorem \ref{thm: C lusztig} is true for $\KL^{C_{n-1}}_{\lambda, \mu}$'s. Then the right-hand side of \eqref{eq: C morris recurrence} equals
\begin{equation}\label{noname}
    \sum_{i \geq 1} (-1)^{i-1} \sum_{\substack{r + 2m = \lambda_i - \mu_1 + 1 - i \\ r, m \geq 0}} q^{r+m} \sum_{(\lambda^{(i)}, \tau, \nu) \in \operatorname{ROHS}_{\leq n-1}(\lambda^{(i)}, r)} \left( \sum_{T \in \SSOT_g(\oc(\nu, g), \overline{\oc}(\mu', g))} q^{\overline{D}(\phi_c(T))} \right)
\end{equation}
where we take a sufficiently large $g$. Clearly $g>\nu_1$ for every $\nu$ appearing in \eqref{noname}, therefore $\oc(\nu,g)$ is a partition. We take the orthogonal complement of an rohs $(\lambda^{(i)},\tau,\nu)$ to obtain an ohs $(\oc(\nu,g),\oc(\tau,g),\oc(\lambda^{(i)},g))$ and append to $T\in \SSOT_g(\oc(\nu,g),\overline{\oc}(\mu',g))$ to obtain a new SSOT. The precise statement is as follows:

\begin{lem}\label{lem: add rohs}
For $1\leq i\leq n$, let $A^{(i)}$ be a set of pairs $(S,T)$ satisfying:
    \begin{itemize}
        \item $S\in \ROHS_{\leq n-1}(\lambda^{(i)},r)$ for some $r$ such that there exists $m\geq 0$ with $r+2m=\lambda_i-\mu_1+1-i$,
        \item $T\in\SSOT_g(\oc(\nu,g),\overline{\oc}(\mu',g))$ where $\nu=F(S)$.
    \end{itemize}
    Then there exists a bijection 
    \begin{equation*}
        \Phi^{(i)}: A^{(i)}\rightarrow \SSOT_{g}(\oc(\lambda^{(i)},g),\gamma^{(i)})
    \end{equation*}
    where $\gamma^{(i)}=(g-\mu_2,\dots,g-\mu_n,\lambda_i-\mu_1+1-i)$.
\end{lem}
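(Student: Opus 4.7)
The plan is to realize $\Phi^{(i)}$ as the operation ``append one more ohs $T_n'$'' to the SSOT $T$, where $T_n'$ packages the rohs $S$ via orthogonal complementation (which converts an rohs into an ohs of the same length while swapping initial and final shapes), together with the parameter $m$, encoded as $m$ extra cells placed in row $n$ of the middle partition.

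Explicitly, for $(S,T) \in A^{(i)}$ with $S = (\lambda^{(i)}, \tau, \nu)$ of length $r$ and $m = \tfrac{1}{2}(\lambda_i - \mu_1 + 1 - i - r)$, I would let $\zeta$ be the length-$n$ vector obtained by appending the entry $m$ to $\oc(\tau,g)$, then set $T_n' := (\oc(\nu,g),\, \zeta,\, \oc(\lambda^{(i)},g))$ and $\Phi^{(i)}(S,T) := (T_1,\dots,T_{n-1},T_n')$. Well-definedness reduces to a handful of checks: $\zeta$ is a partition iff $g - \tau_1 \geq m$; both $\zeta/\oc(\nu,g)$ and $\zeta/\oc(\lambda^{(i)},g)$ are horizontal strips, which in rows $1,\dots,n-1$ is the orthogonal complement of the rohs conditions on $\nu/\tau$ and $\lambda^{(i)}/\tau$, and in row $n$ reduces to $m \leq g - \max(\nu_1, \lambda^{(i)}_1)$; the $g$-boundedness $\zeta_1 = g - \tau_{n-1} \leq g$ is automatic; and $I(T_n') = \oc(\nu,g) = F(T_{n-1})$ by the definition of $A^{(i)}$. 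A direct count gives length $r + 2m = \gamma^{(i)}_n$ for $T_n'$, so $\Phi^{(i)}(S,T) \in \SSOT_g(\oc(\lambda^{(i)},g),\gamma^{(i)})$.

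For the inverse, given $(T_1,\dots,T_n) \in \SSOT_g(\oc(\lambda^{(i)},g), \gamma^{(i)})$ with $T_n = (I(T_n), \zeta, \oc(\lambda^{(i)},g))$, I first observe by induction on $k$ that $\ell(F(T_k)) \leq k$ in any SSOT (each ohs can enlarge the length by at most one and then may shrink), so $\ell(I(T_n)) = \ell(F(T_{n-1})) \leq n-1$. I then set $\nu := \oc(I(T_n), g)$, let $\tilde{\zeta}$ denote $\zeta$ with its row $n$ truncated, and define $\tau := \oc(\tilde{\zeta}, g)$ and $S := (\lambda^{(i)}, \tau, \nu)$. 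The horizontal-strip conditions on $\zeta$, restricted to rows $1,\dots,n-1$, give via orthogonal complementation that $S$ is an rohs of length $r = \gamma^{(i)}_n - 2\zeta_n$ with $\ell(F(S)) = \ell(\nu) \leq n-1$, so $(S,(T_1,\dots,T_{n-1})) \in A^{(i)}$. The two maps are manifestly inverses of each other because $\Phi^{(i)}$ produces $\zeta$ whose first $n-1$ entries are $\oc(\tau,g)$ and whose last entry is $m$, which is exactly the data $\Psi^{(i)}$ extracts.

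The main obstacle I anticipate is the row-$n$ horizontal-strip condition in the forward direction: the $m$ cells of $\zeta$ in row $n$, sitting in columns $1,\dots,m$, must be column-disjoint from the row-$(n-1)$ cells of $\zeta/\oc(\nu,g)$ and $\zeta/\oc(\lambda^{(i)},g)$, which occupy columns $g - \nu_1 + 1, \dots, g - \tau_1$ and $g - \lambda^{(i)}_1 + 1, \dots, g - \tau_1$ respectively. This forces $g \geq \nu_1 + m$ and $g \geq \lambda^{(i)}_1 + m$, in line with the ``sufficiently large $g$'' convention used in the proof of Theorem \ref{thm: C lusztig} and ultimately legitimized by the $g$-independence established in Appendix \ref{appendix: g independnt}.
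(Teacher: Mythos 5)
Your proposal is correct and follows essentially the same route as the paper's proof: appending to $T$ the ohs $(\oc(\nu,g),(\oc(\tau,g),m),\oc(\lambda^{(i)},g))$ obtained by orthogonally complementing the rohs $S$ and recording $m$ as the last entry of the middle partition, with the inverse given by truncating that entry and complementing back. Your extra checks (the horizontal-strip conditions in row $n$, the length bound $\ell(F(T_k))\leq k$, and the length count $r+2m$) simply make explicit what the paper absorbs into ``$g$ sufficiently large'' and the assertion $\ell(\nu)\leq n-1$.
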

\begin{proof}
    Pick $(S,T)\in A^{(i)}$ and denote $S=(\lambda^{(i)},\zeta,\nu)$.
    Since $g$ is sufficiently large, the integer vector $(\oc(\zeta,g),m)=(g-\zeta_{n-1},\dots,g-\zeta_1,m)$ is a partition. Then $(\oc(\nu,g),(\oc(\zeta,g),m),\oc(\lambda^{(i)},g))$ is an ohs of length $r+2m$ and appending it to $T$ yields an element of $\SSOT_{g}(\oc(\lambda^{(i)},g),\gamma^{(i)})$ which we set as $\Phi^{(i)}(S,T)$.
    
    Obviously, $\Phi^{(i)}$ is an injective map.
    Conversely, given $\overline{T}\in \SSOT_{g}(\oc(\lambda^{(i)},g),\gamma^{(i)})$, pick the last ohs of $\overline{T}$ denoted by $(\nu,\zeta,\oc(\lambda^{(i)},g))$.
    As $\ell(\nu)\leq n-1$, letting $\zeta'=(\zeta_1,\dots,\zeta_{n-1})$, $(\nu,\zeta',\oc(\lambda^{(i)},g))$ is an ohs of length $(\lambda_i-\mu_1+1-i-2\zeta_n)$.
    Then setting $S=(\lambda^{(i)},\oc(\zeta',g),\oc(\nu,g))$ and $T$ to be the SSOT obtained by deleting the last ohs from $\overline{T}$, we have $\Phi^{(i)}(S,T)=\bar{T}$.
\end{proof}

\begin{lem}\label{lem: energy r+m}
Let $A^{(i)}$ be the set in Lemma \ref{lem: add rohs}. For $(S,T)\in A^{(i)}$ such that $S\in \ROHS_{\leq n-1}(\lambda^{(i)},r)$, we have
    \begin{equation*}
        \overline{D}(\phi_c(\Phi^{(i)}(S,T)))=r+m+\overline{D}(\phi_c(T)).
    \end{equation*}
    where $m$ is given by $r+2m=\lambda_i-\mu_1+1-i$.
\end{lem}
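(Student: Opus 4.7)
Write $\overline{T} = \Phi^{(i)}(S, T)$ and let $C = (\oc(\nu, g), (\oc(\zeta, g), m), \oc(\lambda^{(i)}, g))$ be the ohs appended to $T$ in the construction of $\Phi^{(i)}$ (cf.\ the proof of Lemma \ref{lem: add rohs}). By the definition of $\phi_c$,
\[
\phi_c(\overline{T}) = u \otimes \phi_c(T), \qquad u := \red(\cind(C)) \in B^{r+2m, 1}(\hspace{0.4mm}\sboxeleven\hspace{0.4mm}),
\]
so the lemma amounts to showing that prepending $u$ raises $\overline{D}$ by exactly $r+m$.

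The plan is to first pin down $u$ combinatorially, and then reduce the energy computation to the $(B^{1,1})^{\otimes N}$ setting via the splitting map. A direct column-by-column computation of $\cind(C)$ shows that, for $g$ sufficiently large, it consists of $m$ \emph{short} pairs $(j, \bar j)$ with $1 \leq j \leq m$, contributed by the extra row of length $m$ in the middle shape of $C$, together with \emph{large} unbarred letters $g-k+1$ for each column $k$ of $\nu/\zeta$ and large barred letters $\overline{g-k+1}$ for each column $k$ of $\lambda^{(i)}/\zeta$. Inspecting the non-admissibility score $|\{c \prec i\}|+|\{\bar i \prec c\}|-i$ of each pair, one checks that only the short pairs are non-admissible, so $\red$ removes exactly them. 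Consequently $|u|=r$, so $u \in B(\omega_r) \subseteq B^{r+2m,1}(\hspace{0.4mm}\sboxeleven\hspace{0.4mm})$, and $\hat{S}(u) = v_r \otimes \cdots \otimes v_1 \otimes (\bar{1} \otimes 1)^{\otimes m}$, where $v_1 \prec \cdots \prec v_r$ are the surviving large letters.

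Next I would apply Lemma \ref{lem: splitting preserves coenergy} to both $\phi_c(\overline{T}) \in B^{t}_{\gamma^{(i)}}(\hspace{0.4mm}\sboxeleven\hspace{0.4mm})$ and $\phi_c(T) \in B^{t}_{\overline{\oc}(\mu',g)}(\hspace{0.4mm}\sboxeleven\hspace{0.4mm})$, obtaining
\[
\overline{D}(\phi_c(\overline{T})) - \overline{D}(\phi_c(T)) = \bigl[\overline{D}(S^t_{\gamma^{(i)}}(\phi_c(\overline{T}))) - \overline{D}(S^t_{\overline{\oc}(\mu',g)}(\phi_c(T)))\bigr] - \Delta,
\]
where $\Delta$ is an explicit constant depending only on the two weight vectors $\gamma^{(i)}$ and $\overline{\oc}(\mu',g)$. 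The two splittings differ in exactly $r + 2m$ additional $B^{1,1}$-factors on the left, placed according to $\hat{S}(u)$ and the $R$-matrices executed during the recursive splitting procedure.

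On the $(B^{1,1})^{\otimes N}$ side the energy is given by the closed formula $\sum_{i}(n-i)\overline{H}_{\sboxeleven}(b_{i+1}, b_i)$, which I would use to tally the difference explicitly. The $m$ padding pairs $\bar 1 \otimes 1$ in $\hat S(u)$, combined with the shift $\Delta$, account for the $+m$ contribution, while each of the $r$ large letters $v_j$ of $u$ contributes exactly $+1$ to the $\overline{H}_{\sboxeleven}$-sum, since each $v_j$ is strictly larger under $\prec$ than every letter it traverses after $R$-moves. Summing gives the desired total of $r + m$. The main obstacle is precisely this last step: carefully tracking the $R$-matrix shuffles intrinsic to $S^t_{\gamma^{(i)}}$, which interleave the splitting of $u$ with the splittings of the earlier factors of $\phi_c(T)$, and verifying the cancellations against $\Delta$. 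The splitting-map techniques developed in Appendix \ref{Sec: append A} are essential for this bookkeeping.
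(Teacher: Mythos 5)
Your reduction is the same as the paper's: write $\phi_c(\Phi^{(i)}(S,T)) = u \otimes \phi_c(T)$ with $u=\red(\cind(Y))$, check that $\red$ kills exactly the $m$ short pairs so that $u=w_1\dots w_r$ consists of $r$ large letters, and then show that prepending $u$ raises $\overline{D}$ by $r+m$ via the splitting map. Up to the identification of $u$ your argument is fine and matches the paper.

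The gap is in the last step, and you have in effect flagged it yourself by calling the $R$-matrix bookkeeping ``the main obstacle'' rather than resolving it. What is needed there is precisely the content of Proposition \ref{prop: B energy r+m}: for $T'\in\HW(B^t_\mu,\lambda^t)$ with $\ell(\lambda)=n$ and $v=v_1\dots v_r$ with $v_1\succ\lambda_n$, one has the exact identity $S(v\otimes T')=\underbrace{\overline 1\otimes 1\otimes\cdots\otimes\overline 1\otimes 1}_{2m}\otimes v_r\otimes\cdots\otimes v_1\otimes S(T')$, from which $\overline D(v\otimes T')=\overline D(T')+r+m$ follows by an explicit computation. Your proposal replaces this identity by a heuristic tally (``each large letter contributes exactly $+1$''; ``the $m$ padding pairs combined with $\Delta$ account for $+m$''), but this tally is not justified and cannot be made termwise: the energy on $(B^{1,1})^{\otimes N}$ is a position-weighted sum $\sum_i(N-i)\overline H(b_{i+1},b_i)$, so prepending $r+2m$ factors shifts the weights of \emph{all} local energies of $S(\phi_c(T))$, and the cancellation against the constants from Lemma \ref{lem: splitting preserves coenergy} only works once one also knows that $\sum_i\overline H(y_{i+1}\otimes y_i)=|\mu|-n$ for $S(\phi_c(T))=y_{|\mu|}\otimes\cdots\otimes y_1$. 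That identity, and the claimed form of $S(u\otimes\phi_c(T))$, rest on structural facts you never establish: that $S(\phi_c(T))$ is a concatenation of blocks in $\inc(a_i,b_i)$ with $a_i>0$ (Corollary \ref{prop: splitting of ssot full length}), that its leftmost $\lambda_n$ factors are $\lambda_n\otimes\cdots\otimes 1$ (Lemma \ref{lem: splitting form}), and the commutation rule of Lemma \ref{lem: comb R for special} which lets the transformed $u$-factor pass through the remaining blocks without disturbing them. Your assertion that each $v_j$ ``is strictly larger under $\prec$ than every letter it traverses after $R$-moves'' is exactly what these lemmas prove and is not automatic (the letters occurring in $S(\phi_c(T))$ are controlled by $\lambda$, and one needs $v_1\succ\lambda_n$ together with the block structure). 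So as written the proposal is a correct plan with the decisive step missing, rather than a proof.
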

\begin{proof}
Denote $S=(\lambda^{(i)},\zeta,\nu)$. Then the last ohs of $\Phi^{(i)}(S,T)$ given as $(\oc(\nu,g),(\oc(\zeta,g),m),\oc(\lambda^{(i)},g))$ and we denote it by $Y$. Note that $\cind(Y)$ is of the form (represented as a word) $1\dots m w_1\dots w_r\bar{m}\dots \bar{1}$ where each $w_i\succ g-\nu_1$. Since $g$ is sufficiently large, the word $w_1\dots w_r$ is admissible, therefore $\red(\cind(Y))=w_1\dots w_r$. Now the proof then follows from Proposition \ref{prop: B energy r+m}.
\end{proof}
The proof of Proposition \ref{prop: B energy r+m} requires an intricate argument related to the splitting map, postponed to Appendix \ref{appendix: energy increase}.

By Lemma \ref{lem: add rohs} and \ref{lem: energy r+m}, \eqref{noname} becomes
\begin{equation}
     \sum_{i\geq1}(-1)^{i-1}\sum_{T\in\SSOT_{g}(\oc(\lambda^{(i)},g),\gamma^{(i)})}q^{\overline{D}(\phi_c(T))}
\end{equation}
where $\gamma^{(i)}=(g-\mu_2,\dots,g-\mu_n,\lambda_i-\mu_1+1-i)$.
We may freely rearrange $\gamma^{(i)}$ as proved in Proposition \ref{prop: invariance ssot} and we choose $\beta^{(i)}=(\lambda_i-\mu_1+1-i,g-\mu_2,\dots,g-\mu_n)$ among rearrangements of $\gamma^{(i)}$.
We finally have
\begin{equation}\label{noname2}
     \sum_{i\geq1}(-1)^{i-1}\sum_{T\in\SSOT_{g}(\oc(\lambda^{(i)},g),\beta^{(i)})}q^{\overline{D}(\phi_c(T))}.
\end{equation}

\subsection{Oscillating horizontal strip jeu de taquin}\label{sub: ohs jdt}
Now our goal is to prove
\begin{equation}\label{eq: goal}
     \sum_{i\geq1}(-1)^{i-1}\sum_{T\in\SSOT_{g}(\oc(\lambda^{(i)},g) , \beta^{(i)})} q^{\overline{D}(\phi_c(T))}=\sum_{T\in\SSOT_{g}(\oc(\lambda,g),\overline{\oc}(\mu,g))}q^{\overline{D}(\phi_c(T))}.
\end{equation}
We will construct a sign-reversing involution on $\bigcup_{i\geq 1}\SSOT_{g}(\oc(\lambda^{(i)},g) , \beta^{(i)})$ such that the set of fixed elements is a subset of $\SSOT_{g}(\oc(\lambda^{(1)},g) , \beta^{(1)})$ which is in bijection with $\SSOT_{g}(\oc(\lambda,g),\overline{\oc}(\mu,g))$. The fact that there is an injection from $\SSOT_{g}(\oc(\lambda,g),\overline{\oc}(\mu,g))$ to $\SSOT_{g}(\oc(\lambda^{(1)},g) , \beta^{(1)})$ is not trivial, therefore proved by Corollary \ref{cor: ohs jdt last row}. In particular, our argument generalizes \cite{Zab1998}.
To that end, we introduce the map $\Aug$ which can be regarded as jeu de taquin for an $\SSOT$. 
In this section, $B^{t}_{\mu}=B^{t}_{\mu}(\hspace{0.4mm}\sboxeleven\hspace{0.4mm})$ unless otherwise stated.

\begin{definition}
    For $T\in \SSOT(\lambda,\mu)$, consider $\phi_c(T)\in \HW(B^{t}_{\mu},\lambda^{t})$ denoted as $b_{n}\otimes\dots \otimes b_2 \otimes b_1$. 
    Then $b_1\in B^{\mu_1,1}$ is of the form $1\dots k$ and let $b'_1=1\dots (k+r)\in B^{\mu_1+r,1}$ for any nonnegative integer $r$. 
    We define $\Aug(T,r)$ to be $T'\in \SSOT(\lambda,(\mu_1+r,\mu_2,\dots,\mu_n))$ such that $\phi_c(T)=\hw(b_n\otimes \dots \otimes b_2\otimes b'_1)$.
\end{definition}

We now present a recursive method to compute $\Aug(T, r)$ with an elegant combinatorial description (Proposition \ref{prop: computation of aug}). 
To achieve this, we define several additional terminologies.

For an ohs $T = (\mu, \nu, \lambda)$ of length $r$, let $\cind(T) = a_1 \dots a_s \bar{b}_{s+1} \dots \bar{b}_r$, where $a_1 \prec \dots \prec a_s \prec \bar{b}_{s+1} \prec \dots \prec \bar{b}_r$. The \emph{standardization} of $T$, denoted by $\std(T)$, is a sequence of partitions $S = (S^{(0)}, S^{(1)}, \dots, S^{(r)})$ defined as follows:  
\begin{itemize}
    \item Set $S^{(0)} = \mu$.  
    \item For $i \leq s$, $S^{(i)}$ is obtained from $S^{(i-1)}$ by adding a cell in the $a_i$-th column.  
    \item For $i > s$, $S^{(i)}$ is obtained from $S^{(i-1)}$ by removing a cell in the $b_i$-th column.
\end{itemize}

We also define
\[
\SP(T) := \bar{b}_r \otimes \dots \otimes \bar{b}_{s+1} \otimes a_s \otimes \dots \otimes a_1 \in (B^{1,1})^{\otimes r}.
\]
For example, given the ohs $T$ in Example \ref{ex: ohs cind}, we have
\[
\std(T) = (\ydiagram{2,1}, \ydiagram{2,2}, \ydiagram{3,2}, \ydiagram{3,1}, \ydiagram{3}) \quad \text{and} \quad
\SP(T) = \bar{1} \otimes \bar{2} \otimes 3 \otimes 2.
\]

For partitions $\lambda,\mu$ and $\zeta$ such that:
\begin{equation}
    \text{(1) $\zeta / \mu$ is a horizontal strip  \hspace{15mm} (2) $|\lambda / \mu|=1$ or $|\mu / \lambda|=1$} \label{eq: FG condition},
\end{equation}
we define $\FG(\mu,\lambda;\zeta)$\footnote{The notation $\FG$ stands for Fomin's growth diagram due to the similar construction.} as follows. 
If $|\lambda / \mu|=1$, let $a$ be the column index of a cell $\lambda / \mu$. Then choose the largest integer $b$ satisfying
\begin{itemize}
    \item $b\leq a$, 
    \item there exists a partition $\eta$ such that $|\eta / \zeta|=1$ and $\eta / \zeta$ is a cell in the $b$-th column. 
\end{itemize}
We set $\FG(\mu,\lambda;\zeta)=\eta$. If $|\mu / \lambda|=1$, let $a$ be the column index of a cell $\mu / \lambda$. Then choose the smallest integer $b$ satisfying 
\begin{itemize}
    \item $b\geq a$, 
    \item there exists a partition $\eta$ such that $|\zeta / \eta|=1$ and $\zeta / \eta$ is a cell in the $b$-th column. 
\end{itemize}
We set $\FG(\lambda,\mu;\zeta)=\eta$. Clearly there cannot be distinct partitions $\mu$ and $\mu'$ (satisfying conditions in \eqref{eq: FG condition}) with $\FG(\mu,\lambda;\zeta)=\FG(\mu',\lambda;\zeta)$.

\begin{lem}\label{lem: ohs useful1}
For partitions $\lambda, \mu$, and $\zeta$ satisfying \eqref{eq: FG condition}, the skew shape $\FG(\mu, \lambda; \zeta) / \lambda$ is also a horizontal strip.
\end{lem}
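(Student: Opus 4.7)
The plan is to transpose everything and analyze column by column, exploiting that the horizontal strip condition on $\zeta/\mu$ translates into the column-wise bound $\zeta^t_k - \mu^t_k \in \{0,1\}$, and that the conclusion $\FG(\mu,\lambda;\zeta)/\lambda$ being a horizontal strip amounts to $\eta^t_k - \lambda^t_k \in \{0,1\}$ for all $k$, where $\eta = \FG(\mu,\lambda;\zeta)$. On the conjugate side, the defining rule takes the clean form $\eta^t_b = \zeta^t_b + 1$ (and $\eta^t_k = \zeta^t_k$ for $k \neq b$) in Case 1, with $b$ an extremal index relative to $a$ satisfying a simple inequality on consecutive parts of $\zeta^t$.

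I carry out Case 1 ($|\lambda/\mu|=1$) in detail; Case 2 is dual. Let the unique cell of $\lambda/\mu$ lie in row $i$ and column $a$, so that $\mu^t_a = i-1$, $\lambda^t_a = i$, $\mu_i = a-1$, and $\mu_{i-1} \geq a$. By definition, $b \leq a$ is the largest index with $\zeta^t_{b-1} > \zeta^t_b$ (taking $\zeta^t_0 = +\infty$). For columns $k \notin \{a,b\}$, both $\lambda^t_k = \mu^t_k$ and $\eta^t_k = \zeta^t_k$, so $\eta^t_k - \lambda^t_k = \zeta^t_k - \mu^t_k \in \{0,1\}$ automatically; only $k = a$ and $k = b$ require attention.

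The subcase $b = a$ is immediate, since $\eta^t_a - \lambda^t_a = \zeta^t_a - \mu^t_a \in \{0,1\}$. For $b < a$, the maximality of $b$ in $[1,a]$ forces $\zeta^t$ to be constant on the interval $[b,a]$. The key step is to show $\zeta^t_a = i$: otherwise $\zeta^t_a = i-1 = \mu^t_a$, so $\zeta_i < a$; combined with $\zeta_i \geq \mu_i = a-1$ this gives $\zeta_i = a-1$, whence $\zeta^t_{a-1} \geq i > \zeta^t_a$, meaning $b = a$ would itself be a valid choice, contradicting $b < a$. Hence $\zeta^t_a = i$, and by constancy $\zeta^t_b = i$. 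Next, since $b \leq a-1$ and $\mu_i = a-1 \geq b$, we get $\mu^t_b \geq i$, and combined with $\mu^t_b \leq \zeta^t_b = i$ this pins down $\mu^t_b = i$. Substituting gives $\eta^t_a - \lambda^t_a = i - i = 0$ and $\eta^t_b - \lambda^t_b = (i+1) - i = 1$, both in $\{0,1\}$, which also certifies $\eta \supseteq \lambda$.

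Case 2 ($|\mu/\lambda|=1$) proceeds by the parallel template after conjugation, with $b \geq a$ now chosen minimally, $\zeta^t$ constant on $[a,b]$ when $b \neq a$, and the symmetric identities $\zeta^t_a = \mu^t_a$ and $\zeta^t_b = \mu^t_b + 1$ emerging from the analogous contradiction argument. The principal obstacle across both cases is the step pinning down the value of $\zeta^t_a$ when $b \neq a$; this is where the extremality of $b$ and the vertical-strip constraint $\zeta^t_k - \mu^t_k \in \{0,1\}$ must be combined delicately. Once that constancy-plus-value step is established, the remaining column-by-column verification of the horizontal strip condition is purely mechanical.
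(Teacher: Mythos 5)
Your proof is correct; I checked both cases, including the sketched Case 2, where the claimed identities $\zeta^t_a=\mu^t_a$ and $\zeta^t_b=\mu^t_b+1$ do follow from the minimality of $b$ exactly as in your Case 1 template. The route is organized differently from the paper's. The paper stays with rows and proves the interlacing inequalities $\lambda_{i+1}\leq \eta_{i+1}\leq \lambda_i$ by contradiction, in each case showing that the offending configuration would have forced a different choice of $b$ in the definition of $\FG$. You instead pass to conjugates, reduce the horizontal strip condition to $\eta^t_k-\lambda^t_k\in\{0,1\}$, observe that only the two columns $k=a$ and $k=b$ need any work, and then pin down the exact values there using the constancy of $\zeta^t$ on the interval between $b$ and $a$ (a consequence of the extremality of $b$) together with the column bound $\zeta^t_k-\mu^t_k\in\{0,1\}$ coming from \eqref{eq: FG condition}. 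Both arguments ultimately rest on the same inputs (extremality of $b$ plus the strip bound on $\zeta/\mu$), so neither is more general, but the trade-off is real: the paper's contradiction argument is shorter to state, while your column-local computation is constructive and yields a bit more, namely the exact column multiplicities of $\eta/\lambda$ (in Case 1 with $b<a$, the cell of $\zeta/\mu$ in column $a$ is traded for a cell of $\eta/\lambda$ in column $b$, all other columns being unchanged). That extra precision is close in spirit to the tracking of maximal column indices that the paper extracts separately in Lemma \ref{lem: ohs useful3}(2), so your approach would streamline that later step as well. The only cosmetic gap is that you do not remark that $a\geq 2$ in the subcase $b<a$ (so that $\zeta^t_{a-1}$ is meaningful) and that the required $b$ always exists, but both are immediate and the paper does not address existence inside this lemma either.
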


\begin{proof}
Let $\eta = \FG(\mu, \lambda; \zeta)$, then it suffices to show $\lambda_{i+1} \leq \eta_{i+1} \leq \lambda_i$ for all $i$. 
We will focus on the case $|\lambda / \mu| = 1$, noting that the case $|\mu / \lambda| = 1$ can be handled similarly.

Suppose there exists an $i$ such that $\lambda_{i+1} > \eta_{i+1}$. 
In this case, we must have $\mu_{i+1} = \zeta_{i+1} = \eta_{i+1}$ and $\lambda_{i+1} = \mu_{i+1} + 1$. 
However, this leads to a contradiction, as the conditions $\mu_{i+1} = \zeta_{i+1}$ and $\lambda_{i+1} = \mu_{i+1} + 1$ imply $\eta_{i+1} = \zeta_{i+1} + 1$, which is impossible.

Next, suppose there exists an $i$ such that $\eta_{i+1} > \lambda_i$.
In this case, we must have $\zeta_{i+1} = \lambda_i = \mu_i$ and $\eta_{i+1} = \zeta_{i+1} + 1$. 
For a $j$ such that $\lambda_j = \mu_j + 1$, the condition $\eta_{i+1} = \zeta_{i+1} + 1$ requires $\zeta_{i+1} + 1 \leq \mu_j + 1 \leq \zeta_i$. 
The inequality $\zeta_{i+1} + 1 \leq \mu_j + 1$ is equivalent to $\lambda_i + 1 \leq \lambda_j$, which implies $j < i$.
However, it is impossible for $\mu_j + 1 \leq \zeta_i$ when $j < i$.
\end{proof}

Now consider a sequence of partitions $S = (\lambda^{(0)}, \lambda^{(1)}, \dots, \lambda^{(m)})$, where $|\lambda^{(i+1)} / \lambda^{(i)}| = 1$ or $|\lambda^{(i)} / \lambda^{(i+1)}| = 1$ for all $i$. 
For a partition $\zeta$ such that $\zeta / \lambda^{(0)}$ is a horizontal strip, we abuse the notation and also denote $\FG(S; \zeta)$ by the sequence of partitions $(\zeta^{(0)}, \zeta^{(1)}, \dots, \zeta^{(m)})$, which is constructed recursively as follows: 
\begin{itemize}
    \item Set $\zeta^{(0)} = \zeta$.
    \item If $\zeta^{(i)}$ is defined, let $\zeta^{(i+1)} = \FG(\lambda^{(i)}, \lambda^{(i+1)}; \zeta^{(i)})$.
\end{itemize}

By Lemma \ref{lem: ohs useful1}, the skew shape $\zeta^{(i)} / \lambda^{(i)}$ is a horizontal strip throughout this process.
Consequently, the partitions $\lambda^{(i)}, \lambda^{(i+1)}$, and $\zeta^{(i)}$ satisfy \eqref{eq: FG condition}, ensuring that $\FG(\lambda^{(i)}, \lambda^{(i+1)}; \zeta^{(i)})$ is well-defined.

\begin{proposition}\label{prop: computation of aug}
For $T=(T_1,\dots,T_n)\in \SSOT(\lambda,\mu)$, let $S$ be a sequence of partitions obtained by gluing the sequences $\std(T_2),\dots,\std(T_n)$.
Let $F(T_1)=(k)$ for some nonnegative integer $k$. Then for a nonnegative integer $r$, $\FG(S;(k+r))$ is a sequence obtained by gluing sequences $\std(\tilde{T}_2),\dots,\std(\tilde{T}_n)$, where $\tilde{T}=(\tilde{T}_1 , \dots , \tilde{T}_n)$ is $\Aug(T,r)$.
\end{proposition}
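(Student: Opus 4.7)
The approach is to identify both sides of the claimed equality via a common step-by-step process: the $\FG$ rule processes the sequence $S$ one box operation at a time, and correspondingly the crystal-theoretic $\Aug$ straightens the modified tensor $b_n \otimes \cdots \otimes b_2 \otimes b'_1$ via elementary crystal operations. We will argue by induction on $n$, reducing to a \emph{single-ohs claim}: for any ohs $T_i$ and any partition $\zeta$ with $\zeta/I(T_i)$ a horizontal strip, $\FG(\std(T_i); \zeta)$ is itself the standardization $\std(\tilde T_i)$ of an ohs $\tilde T_i$, and this $\tilde T_i$ agrees with the corresponding component of $\Aug$.

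For the base case $n=1$, $S$ is empty, $\FG(S; (k+r)) = ((k+r))$, and the right-hand side of the claim is vacuous. For the inductive step, the recursive definition of $\FG$ splits the computation on $S = \std(T_2) \cdots \std(T_n)$ into a computation on $\std(T_2)$ followed by a computation on $\std(T_3) \cdots \std(T_n)$ with initial partition $F(\tilde T_2)$; the induction hypothesis then handles the tail. The \emph{well-definedness} part of the single-ohs claim (that $\FG(\std(T_i); \zeta)$ is itself the $\std$ of an ohs, i.e., additions in strictly increasing column order followed by removals in strictly decreasing column order) follows from Lemma~\ref{lem: ohs useful1} together with a case analysis of the growth rule: the maximality (respectively minimality) in the choice of $b$ for additions (respectively removals) preserves the monotonicity required of an $\std$-sequence. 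The \emph{matching} part (that this combinatorial $\tilde T_i$ coincides with the $i$-th component of $\Aug$) is reduced, via the splitting map $S_\mu^t$ (Lemma~\ref{lem: splitting preserves coenergy}), to a letter-by-letter analysis in $(B^{1,1})^{\otimes |\mu|+r}$, where the augmentation $b_1 \to b'_1$ amounts to inserting the extra letters $k{+}1, k{+}2, \dots, k{+}r$ into the leftmost tensor factor and each elementary crystal operation during hw-straightening corresponds to a single $\FG$ growth step.

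The main obstacle is the matching part, which requires bridging the Lecouvey-style jeu de taquin description of the combinatorial $R$-matrix for column KR crystals of kind $\sboxeleven$ with the abstract growth rule $\FG$. We expect this to be the most technical component of the proof, with a careful case analysis of box additions and removals and invocation of the splitting-map machinery in Appendix~\ref{Sec: append A}.
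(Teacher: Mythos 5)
Your proposal is a strategy outline, and the step you defer (``the matching part \dots we expect this to be the most technical component'') is precisely the content of the proposition; as written there is no proof. The paper's argument does go letter-by-letter as you suggest, but the ingredient that makes it work is missing from your plan. One first replaces $\red(\cind(T_i))$ by the unreduced words $\SP(T_i)$ (legitimate because $\red$ commutes with the classical crystal operators), so it suffices to compute $\hw\bigl(\SP(T_n)\otimes\cdots\otimes\SP(T_2)\otimes 1\dots(k+r)\bigr)$. Then, for a single letter $a_1$ adjacent to a classical highest weight element $\tilde v$ of weight $\tilde\zeta^t$, one checks that $\hw(a_1\otimes\tilde v)=b_1\otimes\tilde v$ where $b_1$ is exactly the column prescribed by $\FG(\zeta,\eta;\tilde\zeta)$, via the string $e_{b_1}\cdots e_{a_1-1}$ (respectively $e_{d-1}\cdots e_{c}$ for a barred letter $\bar c$). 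The genuinely subtle point, which your plan does not address, is why these raising operators do not act on the letters $a_m,\dots,a_2$ sitting further out in the tensor: this follows from the fact that the \emph{whole} tensor is classical highest weight, since $\langle\wt(a_1\otimes v),\alpha_s^\vee\rangle=0$ forces $\varphi_s(a_1\otimes v)=0$ and hence $\epsilon_s(a_m\otimes\cdots\otimes a_2)=0$ by \eqref{eq: ei operator}. Without this, the assertion ``each elementary crystal operation during hw-straightening corresponds to a single $\FG$ growth step'' is a restatement of the goal rather than an argument.

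Two further points. First, the tools you invoke are misdirected: no combinatorial $R$-matrix is applied in computing $\hw(b_n\otimes\cdots\otimes b_2\otimes b_1')$, and the splitting/energy machinery of Appendix~\ref{Sec: append A} (Lemma~\ref{lem: splitting preserves coenergy}) plays no role in this proposition --- energy preservation under $\Aug$ is a separate statement (Proposition~\ref{prop: aug preserves energy}); the only inputs needed here are Lemma~\ref{lem: ohs useful1} (so that $\FG$ stays well-defined along the sequence) and the compatibility of $\red$ with crystal operators. Second, your ``well-definedness'' claim --- that $\FG$ applied to an $\std$-sequence again yields the $\std$ of an ohs, because ``maximality/minimality preserves monotonicity'' --- is asserted, not proved, and it is not obviously immediate; in the paper this issue never arises, since the output of the letter-by-letter computation is identified a priori with $\Aug(T,r)$, which is an SSOT by construction. (Also, a small slip: with the paper's convention $\phi_c(T)=b_n\otimes\cdots\otimes b_1$, the augmented factor $b_1'$ is the rightmost, not the leftmost, tensor factor, and the paper keeps it as a single column $B^{\mu_1+r,1}$ rather than splitting it into $B^{1,1}$ factors, which avoids having to handle the spurious $\bar1\otimes1$ pairs your full splitting would create.)
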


\begin{proof}
Consider $ L \in (B^{1,1})^{\otimes \sum_{i=2}^{n} \mu_i} \otimes B^{\mu_1+r,1} $ given by 
\[
    L = \SP(T_n) \otimes \dots \otimes \SP(T_2) \otimes 1 \dots (k+r).
\]
Since the map $\red$ commutes with classical crystal operators, $\hw(L)$ is given by 
\[
    \SP(\tilde{T}_n) \otimes \dots \otimes \SP(\tilde{T}_2) \otimes 1 \dots (k+r).
\]
Thus, it suffices to show the computation of $\hw(L)$ aligns with the recursive construction of $\FG(S; (k+r))$.

For any KR crystal $ B $ of the same type (i.e., $\diamond = \sboxeleven$), let $ v \in \HW(B, \zeta^t) $ and $ \tilde{v} \in \HW(B, \tilde{\zeta}^t) $, where $\zeta  \subset\tilde{\zeta}$ are partitions such that $\tilde{\zeta} / \zeta$ forms a horizontal strip. 
Suppose we are given a classical highest weight element of the form 
\[
    a_m \otimes \dots \otimes a_1 \otimes v \in (B^{1,1})^{\otimes m} \otimes B.
\]

First consider the case where $ a_1 $ is an unbarred letter.
Let $\eta$ be the partition obtained from $\zeta$ by adding a cell in the $a_1$-th column, and let $ b_1 $ be the column index of the cell $\FG(\zeta, \eta; \tilde{\zeta}) / \tilde{\zeta}$. 
Then we have 
\[
    b_1 \otimes \tilde{v} = \hw(a_1 \otimes \tilde{v}),
\]
and $ b_1 \otimes \tilde{v} $ is obtained by applying crystal operators $ e_{b_1} \dots e_{a_1-1} $ to $ a_1 \otimes \tilde{v} $. 
We claim that applying $ e_{b_1} \dots e_{a_1-1} $ to $ a_m \otimes \dots \otimes a_1 \otimes \tilde{v} $ yields $ a_m \otimes \dots \otimes a_2 \otimes b_1 \otimes \tilde{v} $. To prove this, it suffices to show that 
\[
    e_s \left( a_m \otimes \dots \otimes a_2 \otimes (s+1) \otimes \tilde{v} \right) = a_m \otimes \dots \otimes a_2 \otimes s \otimes \tilde{v},
\]
for $ b_1 \leq s \leq a_1 - 1 $. 
When $ a_1 = b_1 $, the claim is vacuously true, so assume $ b_1 < a_1 $.
In this case, we have $\langle \wt(a_1 \otimes v), \alpha_s \rangle = 0$, which implies 
$\varphi_s(a_1 \otimes v) = \langle \wt(a_1 \otimes v), \alpha_s \rangle + \epsilon_s(a_1 \otimes v) = 0.$
Since $ a_m \otimes \dots \otimes a_2 \otimes a_1 \otimes v $ is already a classical highest weight element, by \eqref{eq: ei operator} it follows that $\varphi_s(a_1 \otimes v) \geq \epsilon_s(a_m \otimes \dots \otimes a_2).$
As $\epsilon_s(a_m \otimes \dots \otimes a_2) = 0$, the claim is proved by \eqref{eq: ei operator}.

Now assume that $ a_1 = \bar{c} $ is a barred letter.
Let $\eta$ be the partition obtained from $\zeta$ by removing a cell from the $c$-th column, and let $ d $ be the column index of the cell $\tilde{\zeta} / \FG(\zeta, \eta; \bar{\zeta})$. 
Then, we have 
\[
    \bar{d}\otimes \tilde{v} = \hw(\bar{c} \otimes \tilde{v}),
\]
and $ \bar{d}\otimes \tilde{v} $ is obtained by applying $e_{d-1} \dots e_c $ to $ a_1 \otimes \tilde{v} $.
Using the same reasoning as in the first case, applying these operators to $a_m\otimes \dots \otimes a_1 \otimes \tilde{v}$ yields $a_m\otimes \dots a_2\otimes \bar{d} \otimes \tilde{v}$.

By iteratively applying the above steps, we can compute $\hw(L)$ recursively, following the same process as the computation of $\FG(S; (k+r))$.
\end{proof}

\begin{example}
    Consider the SSOT $T = (T_1, T_2, T_3)$ given by
    \begin{equation*}
        T = \big((\emptyset, \ydiagram{2}, \ydiagram{2}), (\ydiagram{2}, \ydiagram{4}, \ydiagram{3}), (\ydiagram{3}, \ydiagram{5}, \ydiagram{2})\big).
    \end{equation*}
    Let $S$ be the sequence of partitions formed by gluing $\std(T_2)$ and $\std(T_3)$
    \begin{equation*}
        S = (\ydiagram{2}, \ydiagram{3}, \ydiagram{4}, \ydiagram{3}, \ydiagram{4}, \ydiagram{5}, \ydiagram{4}, \ydiagram{3}, \ydiagram{2}).
    \end{equation*}
    Then $\FG(S; (7))$ equals
    \begin{align*}
        (\ydiagram{7}, \ydiagram{7,1}, \ydiagram{7,2}, \ydiagram{6,2}, \\
        \ydiagram{6,3}, \ydiagram{6,4}, \ydiagram{5,4}, \ydiagram{5,3}, \ydiagram{5,2}),
    \end{align*}
    which is obtained by gluing $\std(\tilde{T}_2)$ and $\std(\tilde{T}_3)$, where
    \begin{align*}
        \tilde{T}_2 = (\ydiagram{7}, \ydiagram{7,2}, \ydiagram{6,2}), \\
        \tilde{T}_3 = (\ydiagram{6,2}, \ydiagram{6,4}, \ydiagram{5,2}).
    \end{align*}
    Hence, we have
    \begin{equation*}
        \Aug(T, 5) = \big((\emptyset, \ydiagram{7}, \ydiagram{7}), \tilde{T}_2, \tilde{T}_3\big).
    \end{equation*}
\end{example}

To complete the proof of \eqref{eq: goal}, we also require the following technical lemmas (Lemma \ref{lem: ohs useful2} and \ref{lem: ohs useful3}) and their corollaries (Corollary \ref{cor: ohs jdt last row} and \ref{cor: ohs jdt1}).

\begin{lem}\label{lem: ohs useful2}
Let $T = (\mu, \nu, \lambda)$ be an ohs with $1 \leq n = \ell(\mu) < \ell(\lambda)$.
Define the following partitions
\begin{align*}
    \mu' &= (\mu_1, \dots, \mu_{n-1}, \mu_n - r), \quad
    \nu' = (\nu_1, \dots, \nu_n, \nu_{n+1} - r), \quad\text{and}\quad
    \lambda' = (\lambda_1, \dots, \lambda_n, \lambda_{n+1} - r)
\end{align*}
for some $0 \leq r \leq \lambda_{n+1}$. Then, we have $\FG(\std(\mu', \nu', \lambda'); \mu) = \std(T)$.
\end{lem}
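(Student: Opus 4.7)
The plan is to prove by induction on $k$ that $\zeta^{(k)} = T^{(k)}$, where I write $\std(T) = (T^{(0)}, \dots, T^{(N)})$, $\std(\mu', \nu', \lambda') = (T'^{(0)}, \dots, T'^{(N)})$, and $\FG(\std(\mu', \nu', \lambda'); \mu) = (\zeta^{(0)}, \dots, \zeta^{(N)})$. The two $\std$ sequences have the same length $N = |\nu/\mu| + |\nu/\lambda|$, since $|\nu'/\mu'| = |\nu/\mu|$ and $|\nu'/\lambda'| = |\nu/\lambda|$. Along the induction I will maintain the invariant that $T^{(k)}_j = T'^{(k)}_j$ for every $j \notin \{n, n+1\}$, and that the $r$ cells of the horizontal strip $T^{(k)} / T'^{(k)}$ sit at the right ends of rows $n$ and $n+1$ of $T^{(k)}$; the number of extras in row $n$ will decrease monotonically from $r$ at $k=0$ to $0$ by the end of the addition phase, and remain at $0$ throughout the removal phase.

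The base case $k=0$ holds because $\mu / \mu'$ is exactly the horizontal strip $\{(n, \mu_n - r + 1), \dots, (n, \mu_n)\}$. For the inductive step, I would split by which row of $T'^{(k-1)}$ is modified at step $k$. If that row is some $j \notin \{n, n+1\}$, the invariant makes $T^{(k-1)}$ and $T'^{(k-1)}$ agree in row $j$ and in the neighbouring rows needed for the partition condition, so the $\FG$ rule (largest $b \leq a$ for additions, smallest $b \geq a$ for removals) picks $b = a$, and the invariant survives trivially. If the modified row is $n$ or $n+1$, I exploit the specific structure of the extras: for instance in the addition phase, $\std(\mu', \nu', \lambda')$ adds row-$n$ cells at columns $\mu_n - r + 1, \dots, \nu_n$ while $\std(T)$ adds row-$n$ cells at columns $\mu_n + 1, \dots, \nu_n$. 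When the inner step adds at $a \in \{\mu_n - r + 1, \dots, \mu_n\}$, the invariant says $T^{(k-1)}_n = \mu_n$, so row $n$ of $T^{(k-1)}$ offers no slot at columns $\leq \mu_n$; combined with $\nu_{n+1} \leq \mu_n$, this forces $\FG$'s largest $b \leq a$ to be the next available slot in row $n+1$. This matches $\std(T)$'s corresponding row-$(n+1)$ addition at columns $\nu_{n+1} - r + 1, \dots, \nu_{n+1}$ and transfers one extra from row $n$ to row $n+1$. A symmetric analysis handles inner row-$(n+1)$ removals at columns $\nu_{n+1} - r, \dots, \lambda_{n+1} - r + 1$, showing via $\lambda_n \geq \nu_{n+1}$ that $\FG$ routes them to outer row-$(n+1)$ removals at columns $\nu_{n+1}, \dots, \lambda_{n+1} + 1$.

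The hard part will be the case analysis in rows $n, n+1$: identifying the exact ``transition'' moments and handling degenerate configurations (e.g.\ $\mu_{n-1} = \mu_n$, $\nu_n = \nu_{n+1}$, or $\lambda_n = \nu_{n+1}$), which require careful verification that $\FG$'s extremal choice matches the intended column and that $T^{(k-1)}$ can accommodate the intended move. Each sub-case should resolve by combining the invariant with the horizontal-strip inequalities $\mu_{n-1} \geq \nu_n$, $\nu_{n+1} \leq \mu_n$, $\lambda_n \geq \nu_{n+1}$, and $\lambda_{n+1} \leq \nu_{n+1}$.
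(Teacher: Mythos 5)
Your plan is correct: the invariant you propose (agreement of $\zeta^{(k)}$ and the inner sequence outside rows $n,n+1$, with the $r$ extra cells migrating from the end of row $n$ to the end of row $n+1$ during the addition phase and staying there through the removal phase) is exactly what the construction produces, and the flagged case analysis does close using $\nu_{n+1}\le\mu_n$, $\mu_{n-1}\ge\nu_n$, $\lambda_n\ge\nu_{n+1}$, $\lambda_{n+1}\le\nu_{n+1}$ — in particular the largest addable column $b\le a$ during the transfer steps is indeed row $n+1$'s corner, and the smallest removable column $b\ge a$ in the last phase is row $n+1$'s corner. The paper's own proof is just the one-line assertion that the lemma follows directly from the construction of $\FG$, so your argument is that same direct verification, merely written out in full.
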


\begin{proof}
    This follows directly from the construction of $\FG$.
\end{proof}

\begin{corollary}\label{cor: ohs jdt last row}
    For $T\in \SSOT(\lambda,\alpha)$ such that $\ell(\lambda)=n$ and $\alpha$ is a integer vector of length $n$, there exists a unique $\SSOT$ $T'$ of the shape $(\lambda_1,\dots,\lambda_{n-1})$ such that $\Aug(T',\lambda_n)=T$.
\end{corollary}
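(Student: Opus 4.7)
By Proposition~\ref{prop: computation of aug}, the requirement $\Aug(T',\lambda_n)=T$ is equivalent to a statement about the $\FG$ slide. Writing $F(T_1)=(k)$ and letting $\tilde{S}=\std(T_2)\cup\cdots\cup\std(T_n)$ denote the glued sequence starting at $(k)$ and ending at $\lambda$, one must find $T'$ for which $S=\std(T'_2)\cup\cdots\cup\std(T'_n)$, starting at $F(T'_1)=(k-\lambda_n)$ and ending at $(\lambda_1,\ldots,\lambda_{n-1})$, satisfies $\FG(S;(k))=\tilde{S}$. Here the starting shape $(k-\lambda_n)$ is forced by matching the weight of the first ohs (which has length $\alpha_1 - \lambda_n$), and $k\geq\lambda_n$ follows from iterating the horizontal strip inequalities $\mu^{(i)}_i\leq\mu^{(i-1)}_{i-1}$ (where $T_i=(\mu^{(i-1)},\nu^{(i)},\mu^{(i)})$) downward from $\mu^{(n)}_n=\lambda_n$.

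The plan is to invert the $\FG$-recursion step-by-step. Each step $\zeta^{(i+1)}=\FG(S_i,S_{i+1};\zeta^{(i)})$ is determined column-by-column by the rule recalled after Lemma~\ref{lem: ohs useful1}, and the inverse operation---recovering $S_{i+1}$ from the triple $(S_i,\zeta^{(i)},\zeta^{(i+1)})$---is similarly well-defined by an analogous column-matching: one reads off the column index of $\zeta^{(i+1)}/\zeta^{(i)}$ (or $\zeta^{(i)}/\zeta^{(i+1)}$) and selects the unique corresponding cell addition (or removal) producing $S_{i+1}$ from $S_i$. Setting $\zeta^{(i)}=\tilde{S}_i$ for all $i$ and starting from $S_0=(k-\lambda_n)$, I would iteratively compute each $S_i$.

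Next I would show that the resulting sequence $S$ groups consistently into standardizations $\std(T'_i)$ of valid ohs $T'_i$. Concretely, at each index $j$ where $\tilde{S}_j=F(T_i)$ is a boundary between consecutive ohs standardizations in $\tilde{S}$, one must verify that $S_j=F(T'_i)$ for some partition obtained from $F(T_i)$ by stripping $r_i$ cells from its last row, with $\sum_i r_i=\lambda_n$. The core combinatorial ingredient is Lemma~\ref{lem: ohs useful2}, which is exactly the single-ohs version of this reversal; the plan is to glue these local inversions together along the iterative structure of $S$. The first ohs $T'_1$ is then forced to be the unique ohs from $\emptyset$ to $(k-\lambda_n)$ of length $\alpha_1-\lambda_n$. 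Uniqueness of $T'$ is immediate: once the shape requirement pins down $F(T'_1)=(k-\lambda_n)$, the step-by-step $\FG$-inversion is deterministic.

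The main obstacle will be establishing the global consistency of the local inversions: verifying that the $r_i$'s produced by the inverse-$\FG$ are non-negative integers that sum to $\lambda_n$, and that the boundaries between the $\std(T'_i)$'s in $S$ align with those of $\std(T_i)$ in $\tilde{S}$. This reduces to a careful inductive argument that propagates the ``$n$-th row cell'' accounting through the horizontal strip conditions at each step, effectively composing several applications of Lemma~\ref{lem: ohs useful2} while tracking how $\nu^{(i)}_n$ and $\mu^{(i)}_n$ evolve along $\tilde{S}$.
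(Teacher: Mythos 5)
Your reduction via Proposition \ref{prop: computation of aug}, the identification $F(T'_1)=(k-\lambda_n)$, and the inequality $k\geq\lambda_n$ are all fine, but the engine of your argument --- a forward, step-by-step inversion of the $\FG$ recursion --- has a genuine gap: the local inverse you invoke is not well defined. At each step you must recover the bottom-end partition $S_{i+1}$ from the data $(S_i,\zeta^{(i)},\zeta^{(i+1)})$, i.e.\ find $\lambda$ with $\FG(S_i,\lambda;\zeta^{(i)})=\zeta^{(i+1)}$, and such $\lambda$ is in general not unique: for $\mu=(1)$ and $\zeta=(2)$, both $\lambda=(2)$ and $\lambda=(1,1)$ satisfy $\FG(\mu,\lambda;\zeta)=(2,1)$, since in either case the largest admissible column of $\zeta$ weakly to the left of the added cell is column $1$. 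So there is no ``unique corresponding cell addition,'' your construction is not deterministic going forward, and your uniqueness argument, which appeals to exactly this determinism, collapses as well. The determinism that does hold is in the opposite direction --- recovering the bottom \emph{start} from the bottom end, the seed and its image, as noted right after the definition of $\FG$ and exploited in Lemma \ref{lem: ohs useful3} --- so an inversion argument would have to run backwards from the prescribed final shape $(\lambda_1,\dots,\lambda_{n-1})$. Moreover, even granting some choice rule, you explicitly leave the global consistency (that the recovered sequence regroups into standardizations of genuine ohs' of the right lengths and weights) as an unresolved obstacle, and that is the actual content of the statement.

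The paper sidesteps all of this by exhibiting $T'$ explicitly: for $i\geq 2$, writing $T_i=(\mu,\nu,\tau)$ (so $\ell(\mu)=i-1$, $\ell(\tau)=i$ and $\tau_i\geq\lambda_n$), it sets $T'_i=\bigl((\mu_1,\dots,\mu_{i-2},\mu_{i-1}-\lambda_n),(\nu_1,\dots,\nu_{i-1},\nu_i-\lambda_n),(\tau_1,\dots,\tau_{i-1},\tau_i-\lambda_n)\bigr)$, and takes $T'_1$ to be the unique ohs of length $\alpha_1-\lambda_n$ from $\emptyset$ to $(k-\lambda_n)$. Lemma \ref{lem: ohs useful2} is tailor-made to verify this candidate: it says precisely that applying $\FG$ to $\std(T'_i)$ with the appropriate seed reproduces $\std(T_i)$, so $\Aug(T',\lambda_n)=T$ follows by concatenating these local identities, and uniqueness follows from the injectivity of a single $\FG$ step in its first argument (the backward determinism mentioned above). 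To repair your proposal you should either adopt this explicit candidate and verify it with Lemma \ref{lem: ohs useful2}, or carry out the $\FG$ inversion backwards from the prescribed final shape, where each local step is genuinely unique, and then separately prove existence at every backward step.
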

\begin{proof}
    Denote $T=(T_1,\dots,T_n)$, then for each $T_i=(\mu,\nu,\tau)$ we must have $\ell(\mu)=i-1$ and $\ell(\tau)=i$ with $\tau_i\geq \lambda_n$. For $i\geq 2$, let $T'_i=(\mu',\nu',\tau')$ where 
    \begin{align*}
        \mu'=(\mu_1,\dots,\mu_{i-2},\mu_{i-1}-\lambda_n),\quad \nu'=(\nu_1,\dots,\nu_{i-1},\nu_{i}-\lambda_n),\quad \text{and}\quad
        \tau'=(\tau_1,\dots,\tau_{i-1},\tau_{i}-\lambda_n).
    \end{align*}
    Additionally, let $T'_1$ be the unique ohs of length $(\alpha_1 - \lambda_n)$ such that $I(T'_1) = \emptyset$ and $F(T'_1) = k - \lambda_n$, where $F(T_1) = k$.
    Then, the SSOT $T' = (T'_1, \dots, T'_n)$ satisfies $\Aug(T', \lambda_n) = T$ by the successive application of Lemma \ref{lem: ohs useful2}. 
    The uniqueness follows immediately.
\end{proof}
Consider partitions $\zeta$, $\eta$, and $\lambda$ such that $|\zeta / \eta| = 1$ or $|\eta / \zeta| = 1$. We seek to determine whether there exists a partition $\mu$ (with $|\lambda / \mu| = 1$ or $|\mu / \lambda| = 1$) satisfying $\FG(\mu, \lambda; \zeta) = \eta$. More generally, given a sequence $S = (\zeta^{(0)}, \zeta^{(1)}, \dots, \zeta^{(m)})$, where $|\zeta^{(i+1)} / \zeta^{(i)}| = 1$ or $|\zeta^{(i)} / \zeta^{(i+1)}| = 1$ for all $i$, and a partition $\lambda$, we aim to determine whether there exists a sequence $S'$ such that $\FG(S'; \zeta^{(0)}) = S$ and the final partition of $S'$ is $\lambda$. This question is addressed by Lemma~\ref{lem: ohs useful3} and Corollary~\ref{cor: ohs jdt1}.
\begin{lem}\label{lem: ohs useful3}
    Let $\zeta$ and $\eta$ be partitions such that $|\zeta /\eta|=1$ or $|\eta / \zeta|=1$. 
    The following statements hold.
    \begin{enumerate}
        \item Given a partition $\lambda$, if $\zeta_1 + 1 = \eta_1$, there exists a partition $\mu$ such that $\FG(\mu, \lambda; \zeta) = \eta$ if and only if $\lambda_1 = \eta_1$ and $\eta / \lambda$ is a horizontal strip. 
        Otherwise, such a partition $\mu$ exists if and only if $\eta / \lambda$ is a horizontal strip.
        \item Consider partitions $\lambda$ and $\lambda'$ such that both $\eta / \lambda$ and $\eta / \lambda'$ are horizontal strips.
        Assume further that these horizontal strips have the same maximal column indices, i.e., their rightmost cells coincide.
        If there exists a partition $\mu$ such that $\FG(\mu, \lambda; \zeta) = \eta$, then there is also a partition $\mu'$ such that $\FG(\mu', \lambda'; \zeta) = \eta$ (the existence of $\mu$' is ensured by (1)).
        In this case, horizontal strips $\zeta / \mu$ and $\zeta / \mu'$ have the same maximal column indices.
    \end{enumerate}
\end{lem}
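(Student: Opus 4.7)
The plan is to treat Parts~(1) and (2) in sequence, with Part~(2) drawing on the explicit construction of $\mu$ from Part~(1). Throughout, the key observation I will exploit is that the rule defining $\FG(\mu,\lambda;\zeta)$ is an ``extremal nearest-neighbor'' rule: if $|\lambda/\mu|=1$ sits in column $a$, the resulting cell of $\eta/\zeta$ is placed in the \emph{largest} partition-preserving column $b\le a$; dually, if $|\mu/\lambda|=1$ sits in column $a$, the cell of $\zeta/\eta$ is placed in the \emph{smallest} partition-preserving column $b\ge a$. Reading this rule backwards from given $\zeta,\eta,\lambda$ produces an unambiguous candidate for the column of $\lambda\bigtriangleup\mu$, and hence a candidate $\mu$, whose validity must then be checked.

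For Part~(1), I would case-split on the sign of $|\eta|-|\zeta|$. In Case~A ($|\eta/\zeta|=1$), any valid $\mu$ satisfies $\mu\subset\lambda$ with $|\lambda/\mu|=1$ in some column $a$, and the column $b$ of $\eta/\zeta$ must be the largest allowed addition-column of $\zeta$ with $b\le a$. Combining $\mu\subset\zeta$ (from $\zeta/\mu$ being a horizontal strip) with $\mu\subset\lambda$ yields $\mu_i\le\min(\zeta_i,\lambda_i)$. When $b=\zeta_1+1$, i.e.\ $\zeta_1+1=\eta_1$, this is the overall maximal allowed column of $\zeta$, so $a\ge\zeta_1+1$; combined with $\mu_i\le\zeta_1$ this forces $a=\zeta_1+1$ and the new cell to lie in row~$1$, giving $\lambda_1=\eta_1$. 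In all other subcases the constraint reduces to $\eta/\lambda$ being a horizontal strip. The converse in each subcase is verified by an explicit construction: remove from $\lambda$ the cell whose column determines the correct $a$, then use the interlacing $\eta_{j+1}\le\lambda_j\le\eta_j$ to confirm that $\zeta/\mu$ is a horizontal strip and that $\FG(\mu,\lambda;\zeta)=\eta$. Case~B ($|\zeta/\eta|=1$) automatically falls in the ``otherwise'' branch since $\eta_1\le\zeta_1$, and its analysis is the symmetric dual using the smallest-column rule.

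Part~(2) then follows by combining Part~(1) with a short bookkeeping argument. Let $c$ be the common maximal column of the horizontal strips $\eta/\lambda$ and $\eta/\lambda'$. By Part~(1), the existence of $\mu$ depends on $\lambda$ only through the horizontal-strip condition on $\eta/\lambda$ and, in the special regime $\eta_1=\zeta_1+1$, through $\lambda_1=\eta_1$; in that regime one must have $c=\eta_1$, and the equality-of-rightmost-column hypothesis forces $\lambda'_1=\eta_1$ too, so Part~(1) applies to $\lambda'$ and $\mu'$ exists. For the final claim that $\zeta/\mu$ and $\zeta/\mu'$ share the same rightmost column, I would invoke the explicit constructions: the rightmost cell of $\zeta/\mu$ is determined by applying the $\FG$ rule backwards from the rightmost cell of $\eta/\lambda$ (or, when this cell coincides with the cell of $\eta/\zeta$, from the adjacent contributing cell), so the rightmost-column coincidence for $\eta/\lambda$ and $\eta/\lambda'$ propagates to $\zeta/\mu$ and $\zeta/\mu'$.

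The hardest part will be the detailed verification in Part~(1) that the explicitly constructed $\mu$ is a partition and that $\zeta/\mu$ is a horizontal strip; this rests on carefully chaining the interlacing inequalities forced by $\eta/\lambda$ being a horizontal strip together with the shape of $\zeta$, particularly in the boundary regime $\zeta_1+1=\eta_1$. The tracking of rightmost columns in Part~(2) also requires care when the two cells of $\lambda\bigtriangleup\mu$ and $\eta\bigtriangleup\zeta$ sit in adjacent rows, since then the rightmost cell of $\zeta/\mu$ may arise from either contribution and one must argue that the surviving column index is controlled by that of $\eta/\lambda$.
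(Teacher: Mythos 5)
Your proposal follows essentially the same route as the paper's proof: invert the extremal rule defining $\FG$, split on $|\eta/\zeta|=1$ versus $|\zeta/\eta|=1$, treat the boundary regime $\zeta_1+1=\eta_1$ separately (noting that the case $|\zeta/\eta|=1$ always lands in the ``otherwise'' branch), and obtain (2) by observing that existence depends on $\lambda$ only through the horizontal-strip condition (plus $\lambda_1=\eta_1$ in the boundary regime) and by tracking rightmost columns through the explicit inverse construction; the paper's own argument is a terser version of exactly this. One detail in your Part (2) is stated backwards: in the regime $\zeta_1+1=\eta_1$ with $\lambda_1=\eta_1$, the common maximal column $c$ of $\eta/\lambda$ and $\eta/\lambda'$ satisfies $c<\eta_1$ rather than $c=\eta_1$, since row $1$ of $\eta/\lambda$ is then empty and $\eta_2=\zeta_2\le\zeta_1<\eta_1$ rules out a cell of column $\eta_1$ in lower rows; it is precisely this inequality $c<\eta_1$ that forces $\lambda'_1=\eta_1$ (otherwise $\eta/\lambda'$ would contain the cell $(1,\eta_1)$ and have maximal column $\eta_1$), so your conclusion that $\mu'$ exists is correct once this slip is repaired.
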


\begin{proof}
    (1) Assume $|\eta / \zeta| = 1$ and let $a$ be the column index of this cell. 
    To find a partition $\mu$ such that $\FG(\mu, \lambda; \zeta) = \eta$, we need to select the smallest $b$ such that:
    \begin{itemize}
        \item $b \geq a$,
        \item deleting a cell from the $b$-th column of $\lambda$ results in a valid partition $\mu$, and $\zeta/\mu$ is a horizontal strip.
    \end{itemize}
    When $\zeta_1 + 1 = \eta_1$, we have $a = \eta_1$, and such $b$ does not exist if $\lambda_1 < \eta_1$.
    Assuming $\lambda_1 = \eta_1$ and $\eta / \lambda$ is a horizontal strip, we simply take $b=a$.
    Otherwise, if $a < \eta_1$, it is straightforward to check that a suitable $b$ exists when $\eta / \lambda$ is a horizontal strip.

    Now, assume $|\zeta / \eta| = 1$ and let $a$ be the column index of this cell. 
    We need to find the largest $b$ such that:
    \begin{itemize}
        \item $b \leq a$,
        \item adding a cell to the $b$-th column of $\lambda$ results in a valid partition $\mu$, and $\zeta/\mu$ is a horizontal strip.
    \end{itemize}
    Again it is easy to see that such $b$ always exists when $\eta / \lambda$ is a horizontal strip.

    (2) The process described in the proof of (1) outlines how to find $\mu$ such that $\FG(\mu, \lambda; \zeta) = \eta$ when given $\lambda$, $\zeta$, and $\eta$. 
    From this process, it is clear that if two horizontal strips $\eta / \lambda$ and $\eta / \lambda'$ share the same maximal column indices, horizontal strips $\zeta / \mu$ and $\zeta / \mu'$ will also have the same maximal column indices.
\end{proof}

\begin{corollary}\label{cor: ohs jdt1}
    Let $T$ be an $\SSOT$ of shape $\lambda$, and let $\Aug(T, r)$ be of shape $\tau$.
    For any partition $\lambda'$ such that the two horizontal strips $\tau / \lambda$ and $\tau / \lambda'$ have the same maximal column indices, there exists a unique $\SSOT$ $T'$ of shape $\lambda'$ satisfying
    \begin{equation*}
        \Aug(T, r) = \Aug(T', r'),
    \end{equation*}
    where $r' = |\tau| - |\lambda'|$.
\end{corollary}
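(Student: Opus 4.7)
My plan is to translate the statement into an inversion problem for the $\FG$-map via Proposition \ref{prop: computation of aug}, and to invert step by step using Lemma \ref{lem: ohs useful3}. Writing $F(T_1) = (k_0)$ and letting $S$ be the gluing of $\std(T_2), \ldots, \std(T_n)$, Proposition \ref{prop: computation of aug} identifies $\tilde T := \Aug(T, r)$ with the sequence $\tilde S := \FG(S; (k_0 + r))$, which starts at $(k_0+r)$ and ends at $\tau$. To produce $T'$, I will construct a new sequence $S'$ of the same length as $S$, ending at $\lambda'$, such that $\FG(S'; (k_0 + r)) = \tilde S$, then read off $T'_2, \ldots, T'_n$ from the blocks of $S'$ and supply a suitable $T'_1$.

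The construction of $S'$ proceeds by backward induction on the index. Denoting the last index by $m$ and setting $S'^{(m)} := \lambda'$, the hypothesis that the horizontal strips $\tau/\lambda$ and $\tau/\lambda'$ share the same maximal column indices is exactly the assumption required to apply Lemma \ref{lem: ohs useful3}(2) at the final step, so Lemma \ref{lem: ohs useful3}(1) provides a unique $S'^{(m-1)}$ with $\FG(S'^{(m-1)}, \lambda'; \tilde S^{(m-1)}) = \tau$. Lemma \ref{lem: ohs useful3}(2) then propagates the same-maximal-column-indices property one step back, so the induction proceeds down to $S'^{(0)}$. This endpoint is forced to be a one-row partition $(k_0')$ since $\tilde S^{(0)} = (k_0+r)$, and a cell count (using that $\FG$ preserves the add/remove nature at each step) gives $k_0' = k_0 + |\lambda'| - |\lambda|$, hence $k_0' + r' = k_0 + r$.

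The remaining key task is to show that $S'$ decomposes as the gluing of $\std(T'_2), \ldots, \std(T'_n)$ along the same block lengths $\mu_2, \ldots, \mu_n$ that split $S$. Since $\FG$ and its inverse preserve the add-or-remove nature at each step, the pattern of $s_i$ additions followed by $\mu_i - s_i$ removals within each block transfers verbatim from $S$ to $S'$; the nontrivial content is that the added (resp.\ removed) cells form a horizontal strip in increasing (resp.\ decreasing) column order within each block. I expect this to fall out of a careful iterated application of Lemma \ref{lem: ohs useful3}(2) that tracks maximal column indices of partial horizontal strips within each block, and this is the main obstacle. Granted this decomposition, $T'_1$ is uniquely defined as the ohs $(\emptyset, \nu', (k_0'))$ of length $\mu_1' := \mu_1 + r - r'$, with the parity condition $\mu_1' \equiv k_0' \pmod{2}$ following from the corresponding parity for $T$. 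The identity $\Aug(T', r') = \Aug(T, r)$ then follows from a second application of Proposition \ref{prop: computation of aug}: $\Aug(T', r')$ is described by $\FG(S'; (k_0' + r')) = \tilde S$, and the first ohs of both augmentations is uniquely determined by the common starting partition $(k_0+r)$ and common length $\mu_1 + r$. Uniqueness of $T'$ is inherited from the uniqueness in Lemma \ref{lem: ohs useful3}.
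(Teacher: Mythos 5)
Your proposal follows essentially the same route as the paper: identify $\Aug(T,r)$ with $\FG(S;(k+r))$ via Proposition \ref{prop: computation of aug}, then invert the growth rule step by step by consecutive applications of Lemma \ref{lem: ohs useful3}, with the same-maximal-column-indices hypothesis entering exactly where you place it and propagating backwards via part (2). The step you flag as the main obstacle --- that the inverted sequence $S'$ again decomposes into blocks of the form $\std(T'_i)$ --- is precisely what the paper's proof leaves implicit in its closing sentence (``the desired $\SSOT$ $T'$ can be uniquely constructed from $S'$''), so your argument matches the paper's level of detail and is not missing any idea the paper itself supplies.
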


\begin{proof}
    Let $T = (T_1, T_2, \dots, T_n)$ and let $S$ be the sequence of partitions obtained by gluing the sequences $\std(T_2), \dots, \std(T_n)$. 
    By Proposition \ref{prop: computation of aug}, $\Aug(T, r)$ is obtained from $\FG(S; (k + r))$, where $F(T_1) = (k)$.

    The last partition in the sequence $\FG(S; (k + r))$ is $\tau$. 
    By applying Lemma \ref{lem: ohs useful3} consecutively, there exists a unique sequence of partitions $S'$ such that:
    \begin{itemize}
        \item the last partition in $S'$ is $\lambda'$,
        \item $\FG(S'; (k + r')) = \FG(S; (k + r))$.
    \end{itemize}
    Therefore, the desired $\SSOT$ $T'$ can be uniquely constructed from $S'$.
\end{proof}

\begin{proof}[Proof of \eqref{eq: goal}]
Recall that we aim to prove
\begin{equation}\label{b}
    \sum_{i \geq 1} (-1)^{i-1} \sum_{T \in \SSOT_{g}(\oc(\lambda^{(i)}, g), \beta^{(i)})} q^{\overline{D}(T)}= \sum_{T \in \SSOT_{g}(\oc(\lambda, g), \overline{\oc}(\mu, g))} q^{\overline{D}(T)} ,
\end{equation}
where we use the shorthand $ \overline{D}(T) $ to denote $ \overline{D}(\phi_c(T)) $.

To proceed, we define sets $ G_1, \dots, G_n $ given by
\begin{equation*}
    G_i = \{ \Aug(T, g - \lambda_i + i - 1) : T \in \SSOT_g(\oc(\lambda^{(i)}, g), \beta^{(i)}) \}.
\end{equation*}
Note that for any $ T \in \SSOT_g(\oc(\lambda^{(i)}, g), \beta^{(i)})$, $ \Aug(T, g - \lambda_i + i - 1) $ is an $\SSOT$ with weight $ (g - \mu_1, \dots, g - \mu_n) = \overline{\oc}(\mu, g) $. Additionally during the computation of $ \Aug(T, g - \lambda_i + i - 1) $ there cannot be a newly created cell with column index $g+1$, hence we have $c(\Aug(T, g - \lambda_i + i - 1))\leq g$.
By Proposition \ref{prop: aug preserves energy}, we have the following equality for $ T \in \SSOT(\oc(\lambda^{(i)}, g), \beta^{(i)}) $
\begin{equation}\label{eq: augg}
    \overline{D}(T) = \overline{D}(\Aug(T, g - \lambda_i + i - 1)).
\end{equation}

We now refine each set $ G_i $ into two subsets, $ G_i^{(1)} $ and $ G_i^{(2)} $. 
For each $ T \in G_i $, let $ \tau $ be the shape of $ T $.
If $ \tau_{n - i + 1} \geq g - \lambda_i $, then $ T \in G_i^{(1)} $; otherwise, $ T \in G_i^{(2)} $. 
We claim that
\begin{equation}\label{a}
    G_i^{(2)} = G_{i+1}^{(1)}.
\end{equation}

To prove this claim, consider $ T = \Aug(T', g - \lambda_i + i - 1) \in G_i^{(2)} $, where $ \tau $ is the shape of $ T $. 
Then the skew shape $ \tau / \oc(\lambda^{(i)}, g) $ is a horizontal strip, and the condition $ \tau_{n - i + 1} \leq g - \lambda_i - 1 $ implies that $ \tau / \oc(\lambda^{(i+1)}, g) $ is also a horizontal strip. These two horizontal strips $ \tau / \oc(\lambda^{(i)}, g) $ and $ \tau / \oc(\lambda^{(i+1)}, g) $ differ at cells with column indices no greater than $g-\lambda_i-1$. As the size $| \tau / \oc(\lambda^{(i+1)}, g)|=g-\lambda_i+i-1 $ is greater than $g-\lambda_i-1$, we conclude that  $\tau / \oc(\lambda^{(i)}, g) $ and $ \tau / \oc(\lambda^{(i+1)}, g) $ share the same maximal column indices. Now Corollary \ref{cor: ohs jdt1} guarantees the existence of an SSOT $ T'' \in \SSOT(\oc(\lambda^{(i+1)}, g)$, $ \beta^{(i+1)}) $ such that $ T = \Aug(T'', g - \lambda_{i+1} + i) $. 
Since $ c(T) \leq g $, we clearly have $ c(T'') \leq g $, and therefore $ T \in G_{i+1}^{(1)} $. 
Thus we have shown that $ G_i^{(2)} \subseteq G_{i+1}^{(1)} $, and a similar argument shows that $ G_{i+1}^{(1)} \subseteq G_i^{(2)} $.

Using \eqref{eq: augg} and \eqref{a}, the left-hand side of \eqref{b} becomes
\begin{equation*}
    \sum_{i \geq 1} (-1)^{i-1} \sum_{T \in G_i} q^{\overline{D}(T)} = \sum_{T \in G_1^{(1)}} q^{\overline{D}(T)}.
\end{equation*}
Any $ T \in G_1^{(1)} $ has the shape $ \oc(\lambda, g) $, so $ G_1^{(1)} \subseteq \SSOT_{g}(\oc(\lambda, g), \overline{\oc}(\mu, g)) $.
By Corollary \ref{cor: ohs jdt last row}, the reverse inclusion holds, i.e., $ \SSOT_{g}(\oc(\lambda, g), \overline{\oc}(\mu, g)) \subseteq G_1^{(1)} $.
\end{proof}

\begin{example}
Keeping the notations from the proof of \eqref{eq: goal}, let $\lambda = (4,2)$, $\mu = (0,0)$, and $g = 7$. On the left, we list the elements $T^{(1)}, \dots, T^{(7)}$ of $\SSOT_7((5), (4,7))=\SSOT_g(\oc(\lambda^{(1)},g),\beta^{(1)})$ and their corresponding $\Aug(T, 3)$. On the right, we present the elements $T^{(8)}$ and $T^{(9)}$ of $\SSOT_7((2), (1,7))=\SSOT_g(\oc(\lambda^{(2)},g),\beta^{(2)})$ along with their $\Aug(T, 6)$. Each $\SSOT$ is represented by its image under the map $\phi_c$.
\[
\begin{array}{rl|rl}
    T^{(1)}: & 12345 \otimes \emptyset \hspace{4.5mm}\rightarrow \Aug(T^{(1)},3): 12345 \otimes 123 
    & T^{(8)}: & 23\bar{3} \otimes 1 \rightarrow \Aug(T^{(8)},6): 12\bar{7} \otimes 1234567 \\[5pt]
    T^{(2)}: & 34567\bar{7}\bar{6} \otimes 12 \rightarrow \Aug(T^{(2)},3): 12367\bar{7}\bar{6} \otimes 12345 
    & T^{(9)}: & 2 \otimes 1 \hspace{3.4mm}\rightarrow \Aug(T^{(9)},6): 1 \otimes 1234567 \\[5pt]
    T^{(3)}: & 3456\bar{6} \otimes 12 \hspace{3.5mm}\rightarrow \Aug(T^{(3)},3): 1236\bar{6} \otimes 12345 \\[5pt]
    T^{(4)}: & 345 \otimes 12 \hspace{6.9mm}\rightarrow \Aug(T^{(4)},3): 123 \otimes 12345 \\[5pt]
    T^{(5)}: & 567\bar{7}\bar{6} \otimes 1234 \rightarrow \Aug(T^{(5)},3): 123\bar{7}\bar{6} \otimes 1234567 \\[5pt]
    T^{(6)}: & 56\bar{6} \otimes 1234 \hspace{3.5mm}\rightarrow \Aug(T^{(6)},3): 12\bar{7} \otimes 1234567 \\[5pt]
    T^{(7)}: & 5 \otimes 1234 \hspace{6.9mm}\rightarrow \Aug(T^{(7)},3): 1 \otimes 1234567 \\[5pt]
\end{array}
\]
As predicted in the proof of \eqref{eq: goal}, we have $G_1^{(2)}=G_2^{(1)}=G_2$ and $G_1^{(1)}=\{\Aug(T^{(i)},3): 1\leq i\leq 5\}$ is precisely the set $\SSOT_g((\oc(\lambda,g)),\oc(\mu,g))=\SSOT_7((5,3),(7,7))$.
\end{example}

\subsection{Proof of Theorem \ref{thm: B lusztig}}\label{sub: B skecth}
In \cite[Theorem 3.2.1]{Lecouvey2006}, Morris type recurrence formula for type $B$ is derived. 
A straightforward generalization of the argument there leads to the following $q,t$ version: \footnote{For weights that are not spin weights, we do not have a recurrence for the $q,t$ version as elegant as \eqref{eq: B morris recurrence}}

\begin{equation}\label{eq: B morris recurrence}
    \KL^{B_n}_{\lambda^{\sharp},\mu^{\sharp}}(q,t) = \sum_{i \geq 1} (-1)^{i-1} \sum_{\substack{r+m = \lambda_i - \mu_1 + 1 - i \\ r, m \geq 0}} q^r t^m \sum_{(\lambda^{(i)}, \tau, \nu) \in \operatorname{ROHS}_{\leq n-1}(\lambda^{(i)}, r)} \KL^{B_{n-1}}_{\nu^{\sharp}, (\mu')^{\sharp}}(q,t).
\end{equation}
Here, $\lambda^{(i)}$ and $\mu'$ are given by \eqref{eq: lambdais}. We proceed with induction on $n$ to prove \ref{thm: B lusztig}. The base case $n=1$ is covered easily as in Example \ref{ex : C lusztig q=1}. Assuming Theorem \ref{thm: B lusztig} holds for $\KL^{B_{n-1}}_{\lambda^{\sharp}, \mu^{\sharp}}$, the right-hand side of \eqref{eq: B morris recurrence} becomes
\begin{equation}\label{noname1}
    \sum_{i \geq 1} (-1)^{i-1} \sum_{\substack{r+m = \lambda_i - \mu_1 + 1 - i \\ r, m \geq 0}} q^r t^m \sum_{(\lambda^{(i)}, \tau, \nu) \in \operatorname{ROHS}_{\leq n-1}(\lambda^{(i)}, r)} \left( \sum_{T \in \GSSOT_{g + \frac{1}{2}}(\oc(\nu, g), \overline{\oc}(\mu', g))} \energy_{q,t}(\phi_c(T)) \right)
\end{equation}
where $g$ is chosen to be sufficiently large, as discussed earlier. 
Lemma \ref{lem: add rohs to gssot} serves as the counterpart to Lemma \ref{lem: add rohs}, and Lemma \ref{lem: energy qr t m} corresponds to Lemma \ref{lem: energy r+m}.

\begin{lem}\label{lem: add rohs to gssot}
    Let $A^{(i)}$ be a set of pairs $(S,T)$ satisfying:
    \begin{itemize}
        \item $S\in \ROHS_{\leq n-1}(\lambda^{(i)},r)$ for some $r$ such that there exists $m\geq 0$ with $r+m=\lambda_i-\mu_1+1-i$,
        \item $T\in\GSSOT_{g+\frac{1}{2}}(\oc(\nu,g),\overline{\oc}(\mu',g))$ where $\nu=F(S)$.
    \end{itemize}
    Then there exists a bijection 
    \begin{equation*}
        \Phi^{(i)}: A^{(i)}\rightarrow \GSSOT_{g+\frac{1}{2}}(\oc(\lambda^{(i)},g),\gamma^{(i)})
    \end{equation*}
    where $\gamma^{(i)}=(g-\mu_2,\dots,g-\mu_n,\lambda_i-\mu_1+1-i)$.
\end{lem}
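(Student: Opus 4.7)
The plan is to adapt the proof of Lemma~\ref{lem: add rohs} by constructing $\Phi^{(i)}(S, T)$ as $T$ with an appropriately chosen gohs appended on the right. The main new ingredient is that here the extra length $m$ enters linearly in the balance $r + m = \lambda_i - \mu_1 + 1 - i$ (versus $r + 2m$ in the type $C$ case), which forces a parity-based case split between the two kinds of gohs allowed by Definition~\ref{def: gssot}.

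Given $(S, T) \in A^{(i)}$ with $S = (\lambda^{(i)}, \zeta, \nu)$ of length $r$, I would write $m = 2m' + \epsilon$ with $\epsilon \in \{0, 1\}$ and define the middle partition $\tau := (\oc(\zeta, g), m')$, which is a genuine partition because $g$ is sufficiently large. Applying orthogonal complement to the two horizontal strips inside $S$ produces horizontal strips $\tau / \oc(\nu, g)$ and $\tau / \oc(\lambda^{(i)}, g)$ whose sizes sum to $r + 2m' = r + m - \epsilon$. Thus $(\oc(\nu, g), \tau, \oc(\lambda^{(i)}, g))$ is a gohs of declared length $r + m$, of the exact-sum type when $\epsilon = 0$ and of the sum-off-by-one type when $\epsilon = 1$. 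Setting $\Phi^{(i)}(S, T)$ to be $T$ with this gohs appended produces an element of shape $\oc(\lambda^{(i)}, g)$ and weight $\gamma^{(i)}$.

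For $(g + \tfrac{1}{2})$-boundedness, the first entry of $\tau$ equals $g - \zeta_{n-1} \leq g$, which automatically yields $\tau_1 \leq g + \tfrac{1}{2}$ in the $\epsilon = 0$ case and $\tau_1 + \tfrac{1}{2} \leq g + \tfrac{1}{2}$ in the $\epsilon = 1$ case; the bound for the earlier gohs is inherited from $c(T) \leq g + \tfrac{1}{2}$. For the inverse, given $\overline{T} \in \GSSOT_{g + \tfrac{1}{2}}(\oc(\lambda^{(i)}, g), \gamma^{(i)})$, I would extract its last gohs $(\nu, \eta, \oc(\lambda^{(i)}, g))$ of length $\lambda_i - \mu_1 + 1 - i$. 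Because each step of a GSSOT can extend the length of the current shape by at most one, the shape $\nu$ after $n - 1$ steps has length at most $n - 1$, and the same holds for $\oc(\lambda^{(i)}, g)$; hence only the entry $\eta_n$ in $\eta$ can exceed this common length. Truncating to $\eta' := (\eta_1, \dots, \eta_{n-1})$ and taking orthogonal complements recovers an rohs $(\lambda^{(i)}, \oc(\eta', g), \oc(\nu, g))$ of length $r = \lambda_i - \mu_1 + 1 - i - 2\eta_n - \epsilon$, where $\epsilon \in \{0, 1\}$ records which of the two gohs subcases the last step occupies. Setting $m = 2\eta_n + \epsilon$ yields the required $(S, T)$, and the two procedures are manifestly mutually inverse.

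The main subtlety, compared to Lemma~\ref{lem: add rohs}, is the parity-dependent case split and the verification that $(g + \tfrac{1}{2})$-boundedness matches the half-integer convention built into the definition of gohs. Once this bookkeeping is in place, the shape and weight computations and the bijectivity claim reduce to routine checks along the lines of the type $C$ argument.
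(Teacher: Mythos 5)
Your proposal is correct and follows essentially the same route as the paper, which simply adapts the proof of Lemma \ref{lem: add rohs} by appending/removing the last (g)ohs after taking orthogonal complements; your choice of middle partition $(\oc(\zeta,g),\lfloor m/2\rfloor)$ with the parity flag $\epsilon$ is exactly the adaptation forced by the two allowed size-sums in Definition \ref{def: gssot}, and your inverse via truncating $\eta_n$ matches the type $C$ argument. The boundedness and weight checks you give are the right ones, so nothing is missing.
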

\begin{proof}
    Similar to the proof of Lemma \ref{lem: add rohs}.
\end{proof}

\begin{lem}\label{lem: energy qr t m}
Let $A^{(i)}$ be the set in Lemma \ref{lem: add rohs to gssot}. For $(S,T)\in A^{(i)}$ such that $S\in \ROHS_{\leq n-1}(\lambda^{(i)},r)$, we have
    \begin{equation*}
        \energy_{q,t}(\phi_c(\Phi^{(i)}(S,T)))=q^{r}t^m\energy_{q,t}(\phi_c(T))
    \end{equation*}
    where $m$ is given by $r+m=\lambda_i-\mu_1+1-i$.
\end{lem}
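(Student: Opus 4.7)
My plan is to follow closely the structure of the proof of Lemma \ref{lem: energy r+m}, the type $C$ analog. First, I would identify the appended gohs in $\Phi^{(i)}(S, T)$ explicitly. Writing $S = (\lambda^{(i)}, \zeta, \nu)$ and setting $k = \lfloor m/2 \rfloor$, the construction in the proof of Lemma \ref{lem: add rohs to gssot} (modeled on that of Lemma \ref{lem: add rohs}) should give the appended gohs as
\[
    Y = \bigl(\oc(\nu, g),\; (\oc(\zeta, g), k),\; \oc(\lambda^{(i)}, g)\bigr),
\]
of length $r + m$. When $m$ is even one has $|\nu/\mu| + |\nu/\lambda| = r + m$, and when $m$ is odd one has $|\nu/\mu| + |\nu/\lambda| = r + m - 1$, exploiting the ``vacancy option'' in the definition of a gohs. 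In either case the $(g + \tfrac{1}{2})$-boundedness of $Y$ is immediate since $g$ is chosen sufficiently large.

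I would next compute $\cind(Y)$. By a direct calculation parallel to the type $C$ case, $\cind(Y) = 1 \cdots k \, w_1 \cdots w_r \, \bar{k} \cdots \bar{1}$, where $w_1 \cdots w_r$ records the column indices contributed by the rohs $S$ and each letter $w_j$ satisfies $w_j \succ g - \nu_1$. Since $g$ is sufficiently large, $w_1 \cdots w_r$ is admissible, so the $\red$ map cancels the first $k$ and last $k$ letters pairwise, yielding $\red(\cind(Y)) = w_1 \cdots w_r \in B(\omega_r) \subset B^{r+m, 1}(\sboxone)$. This forces $\vac(\red(\cind(Y))) = (r + m) - r = m$; combined with the invariance of $\vac$ under the combinatorial $R$-matrix (as noted before Theorem \ref{thm: B lusztig}), prepending $\red(\cind(Y))$ to $\phi_c(T)$ increases the total vacancy by exactly $m$, accounting for the $t^m$ factor on the right-hand side.

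Since $\energy_{q,t}(b) = q^{(\overline{D}(b) - \vac(b))/2} \, t^{\vac(b)}$, the remaining task is to show that $\overline{D}$ increases by exactly $2r + m$. I plan to handle this via a splitting-map argument analogous to Proposition \ref{prop: B energy r+m} in the type $C$ case, now carried out in the kind $\sboxone$ setting: split $\red(\cind(Y))$ into $(B^{1,1}(\sboxone))^{\otimes (r+m)}$, invoke Lemma \ref{lem: splitting preserves energy}, and count the inversion contributions of the letters $w_1, \dots, w_r$ against the splittings of the remaining factors. The single-column contribution already yields the summand $m$ via $\overline{D}_{B^{r+m,1}(\sboxone)}(\red(\cind(Y))) = m$, and the remaining $2r$ should appear from the coefficient of $2$ in the tensor-product formula $\overline{D}(b) = \sum_i 2(n-i)\, \overline{H}_{\sboxone}(b_{i+1}, b_i) + \vac$. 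The principal obstacle is this final energy count: I expect to need a careful verification that $w_1, \dots, w_r$ contribute exactly $r$ inversions at the level of splittings (which then doubles to $2r$), mirroring the $r$ contribution in the type $C$ proof but with additional bookkeeping owing to the presence of $\emptyset$ letters in $(B^{1,1}(\sboxone))^{\otimes *}$ and the odd-$m$ case where a single gohs vacancy bit interacts with the splitting.
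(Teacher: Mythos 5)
Your route is essentially the paper's: the paper also constructs the appended gohs, observes that $\red(\cind(\cdot))$ produces a word $w_1\cdots w_r$ of large letters sitting in $B(\omega_r)\subset B^{r+m,1}(\sboxone)$ (so of vacancy $m$), and then obtains the energy increase $2r+m$ — hence the factor $q^rt^m$ — from the kind-$\sboxone$ splitting computation, which is exactly the content of Proposition \ref{prop:split D} that the paper cites in one line. The only caveats are minor: the splitting verification you defer is precisely that proposition (proved parallel to Proposition \ref{prop: B energy r+m}, with no special treatment needed for odd $m$, since the word has length $r$ in $B^{r+m,1}(\sboxone)$ regardless of parity), and for column KR crystals the relevant energy-preservation statement is Lemma \ref{lem: splitting preserves coenergy} (with its additive constant), not Lemma \ref{lem: splitting preserves energy}.
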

\begin{proof}
    Follows from Proposition \ref{prop:split D}
\end{proof}

By Lemma \ref{lem: add rohs to gssot} and \ref{lem: energy qr t m}, and applying the same reasoning as before, we find that \eqref{noname1} simplifies to
\begin{equation}\label{noname2}
    \sum_{i \geq 1} (-1)^{i-1} \sum_{T \in \GSSOT_{g + \frac{1}{2}}(\oc(\lambda^{(i)}, g), \beta^{(i)})} \energy_{q,t}(\phi_c(T)).
\end{equation}
Here, $\beta^{(i)} = (\lambda_i - \mu_1 + 1 - i, g - \mu_2, \dots, g - \mu_n)$. 
By Remark \ref{rmk: aug D_n}, we can rewrite \eqref{noname2} as
\begin{equation}\label{noname3}
    \sum_{i \geq 1} (-1)^{i-1} \sum_{T \in \GSSOT_{g + \frac{1}{2}}(\oc(\lambda^{(i)}, g), \beta^{(i)})} \energy_{q,t}(\phi_c(\Aug(T, g - \lambda_i + i - 1))).
\end{equation}
Note $\Aug(T,g-\lambda_i+i-1)$ is given where we regard a GSSOT $T$ as an SSOT (possibly with the different weight). At this point, the proof follows exactly as in the derivation of \eqref{eq: goal}, utilizing the decomposition given in \eqref{eq: gssot decomposition}.

\section{level-restricted $q$-weight multiplicities}\label{Sec: Ltilde}
In this section, we show Theorem \ref{thm: level formula}, which is a refinement of $X=K$ theorem for tensor products of row KR crystals \cite{S05, LS2007}.

For a dominant weight $\lambda\in P^{+}_n$, we denote the character corresponding to $\lambda$ by $s_{\lambda}^{\mathfrak{g}_n}(x)$. 
When $\mathfrak{g}_n=A_{n-1}$, $s_{\lambda}^{\mathfrak{g}_n}(x)$ is the Schur polynomial, which we simply denote by $s_{\lambda}(x)$ for brevity.
In general, according to the Weyl character formula,
\begin{align*}
    s^{\mathfrak{g}_n}_{\lambda}(x_1,x_2,\dots,x_n)=\frac{\sum_{w\in W}(-1)^w x^{w(\lambda+\rho)}}{\sum_{w\in W}(-1)^w x^{w(\rho)}}
\end{align*}
where the notation $x^{\beta}$ for a vector $\beta$ denotes $\prod_{i\geq 1}x_i^{\beta_i}$, $W$ is the Weyl group and $\rho=\sum_{\alpha\in R^{+}}\alpha/2$.
The coefficient $[x^{\mu}]s_{\lambda}^{\mathfrak{g}_n}$ is simply the weight multiplicity, therefore equals $\KL^{\mathfrak{g_n}}_{\lambda,\mu}(1)$.

We first establish the positivity of the level-restricted $q$-weight multiplicities (Proposition \ref{prop: tilted positivity}).
We begin by defining some terminologies and giving relevant background. 
For the remainder of this section, we denote $\hat{\lambda}$ for $\oc(\lambda,g)$ and $\bar{\lambda}$ for $\overline{\oc}(\lambda,g)$, where $ g $ will be clear from the context.

Given any map $ L: R^{+} \rightarrow \mathbb{Z} $, for sufficiently large $ k $ (with $ k \geq \frac{|\lambda| - |\mu|}{2} $ being sufficient), the value of $ \KL^{\mathfrak{g}_n, L}_{\lambda + (k^n), \mu + (k^n)}(q) $ stabilizes to the following expression \cite[Proposition 5]{LS2007}:
\[
    ^{\infty}\KL^{\mathfrak{g}_n, L}_{\lambda, \mu}(q) := \sum_{w \in \mathfrak{S}_n} (-1)^w[e^{w(\lambda + \rho) - (\mu + \rho)}] \prod_{\alpha \in R^+} \frac{1}{1 - q^{L(\alpha)} e^\alpha}.
\]

A straightforward generalization of the argument in \cite[Section 4]{LS2007} yields the following equation:
\begin{equation}\label{eq: level restricted stable}
    ^{\infty} \KL^{\mathfrak{g}_n, L_A}_{\lambda, \mu}(q) = \sum_{\nu} \KL^{A_{n-1}}_{\hat{\nu}, \hat{\mu}}(q) \sum_{\gamma \in P^{\mathfrak{g}_n}} c^{\hat{\nu}}_{\gamma, \hat{\lambda}},
\end{equation}
where 
\[
    P^{B_n} = P_n^{\sboxone} := \Par_n, \quad P^{C_n} = P_n^{\sboxtwo} := \{ \lambda \in \Par_n : \text{parts of } \lambda \text{ are even} \}, \quad P^{D_n} = P_n^{\sboxeleven} := \{ \lambda \in \Par_n : \text{parts of } \lambda^t \text{ are even} \},
\]
and $ c^{\lambda}_{\nu, \mu} := [s_{\lambda}](s_{\nu} s_{\mu}) $ denotes a Littlewood-Richardson coefficient.

We now present a refinement of \eqref{eq: level restricted stable} (Proposition \ref{prop: tilted positivity}).
\begin{definition}
    For $ \lambda \in P^{+}_n $, $s^{\mathfrak{g}_n}_{\lambda}$ is a Laurent polynomial. Given $g\geq \lambda_1$, $(x_1 \cdots x_n)^g s_{\lambda}^{\mathfrak{g}_n}(x_1^{-1}, x_2^{-1}, \dots, x_n^{-1})$ becomes symmetric polynomial, therefore we can expand in terms of Schur polynomials. 
    We denote the twisted branching coefficient $ d^{\mathfrak{g}_n}_{\lambda,\mu} \in \mathbb{Z}_{\geq 0} $ as the coefficient such that 
    \[
        (x_1 \cdots x_n)^g s_{\lambda}^{\mathfrak{g}_n}(x_1^{-1}, x_2^{-1}, \dots, x_n^{-1}) = \sum_{\mu} d^{\mathfrak{g}_n}_{\lambda, \mu} s_{\widehat{\mu}}(x).
    \]
    Since $ s_{\mu + (1^n)} = (x_1 \cdots x_n) s_{\mu} $, the value of $ d^{\mathfrak{g}_n}_{ \lambda, \mu} $ is independent of the choice of $ g \geq \lambda_1$.
\end{definition}

\begin{rmk}
    The character $ s_{\lambda}^{\mathfrak{g}_n} $ is invariant under the action of the Weyl group $ W $. 
    When $ \mathfrak{g}_n = B_n $ or $ \mathfrak{g}_n = C_n $, the Weyl group is $ \mathfrak{S}_n \ltimes (\mathbb{Z}/2\mathbb{Z})^n $, so we have
    $
        s_{\lambda}^{\mathfrak{g}_n}(x_1^{-1}, \dots, x_n^{-1}) = s_{\lambda}^{\mathfrak{g}_n}(x_1, \dots, x_n).
    $
    However, when $ \mathfrak{g}_n = D_n $, the Weyl group is $ \mathfrak{S}_n \ltimes (\mathbb{Z}/2\mathbb{Z})^{n-1} $, $ s_{\lambda}^{\mathfrak{g}_n}(x_1^{-1}, \dots, x_n^{-1}) $ generally differs from $ s_{\lambda}^{\mathfrak{g}_n}(x_1, \dots, x_n) $.
\end{rmk}

\begin{proposition}\label{prop: tilted positivity}
    We have the following identity
    \begin{equation*}
        \KL^{\mathfrak{g}_n,L_A}_{\lambda,\mu}(q) = \sum_{\nu} \KL^{A_{n-1}}_{\hat{\nu}, \hat{\mu}}(q) \cdot d^{\mathfrak{g}_n}_{\lambda, \nu}.
    \end{equation*}
\end{proposition}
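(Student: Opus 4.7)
The plan is to prove the identity by computing generating functions $\sum_{\mu}(\cdot)\,x^{\mu}$ on each side (summed formally over all integer vectors $\mu$ by extending \eqref{eq:q-analog}) and showing they agree. The starting point is the identity
$$\sum_{\mu}\KL^{\mathfrak{g}_n,L}_{\lambda,\mu}(q)\,x^{\mu}=s^{\mathfrak{g}_n}_{\lambda}(x)\prod_{\alpha\in R^+}\frac{1-x^{-\alpha}}{1-q^{L(\alpha)}x^{-\alpha}},$$
valid for any $L:R^+\to\mathbb{Z}$. I would derive it from \eqref{eq:q-analog} by reindexing via $\gamma=w(\lambda+\rho)-(\mu+\rho)$, then combining the Weyl character formula with the denominator identity $\sum_{w\in W}(-1)^w x^{w\rho}=x^{\rho}\prod_{\alpha\in R^+}(1-x^{-\alpha})$.

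Specializing to $L=L_A$, every factor indexed by $\alpha\notin\{\varepsilon_i-\varepsilon_j\}$ collapses to $1$, leaving uniformly across types $B$, $C$, $D$:
$$\sum_{\mu}\KL^{\mathfrak{g}_n,L_A}_{\lambda,\mu}(q)\,x^{\mu}=s^{\mathfrak{g}_n}_{\lambda}(x)\prod_{i<j}\frac{1-x_j/x_i}{1-qx_j/x_i}.$$
The same formula applied to $A_{n-1}$ with $L\equiv 1$ gives the classical Kostka-Foulkes generating function $\sum_{\tau}K_{\hat\nu,\tau}(q)\,x^{\tau}=s_{\hat\nu}(x)\prod_{i<j}(1-x_j/x_i)/(1-qx_j/x_i)$, which is the engine for handling the right-hand side.

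The key manipulation on the right-hand side is the substitution $y_j=x_{n+1-j}^{-1}$. A direct computation setting $\tau=\hat\mu$ shows that $x^{\hat\mu}=(x_1\cdots x_n)^g\,y^{\tau}$, that $s_{\hat\nu}(y)=s_{\hat\nu}(x^{-1})$ by $\mathfrak{S}_n$-symmetry of Schur polynomials, and that the reindexing $(i,j)\mapsto(n+1-j,n+1-i)$ preserves $\prod_{i<j}(1-x_j/x_i)/(1-qx_j/x_i)$. Applying these and summing over $\nu$ weighted by $d^{\mathfrak{g}_n}_{\lambda,\nu}$ yields
$$\sum_{\mu}\sum_{\nu}d^{\mathfrak{g}_n}_{\lambda,\nu}K_{\hat\nu,\hat\mu}(q)\,x^{\mu}=(x_1\cdots x_n)^{g}\prod_{i<j}\frac{1-x_j/x_i}{1-qx_j/x_i}\sum_{\nu}d^{\mathfrak{g}_n}_{\lambda,\nu}s_{\hat\nu}(x^{-1}).$$
Substituting $x\mapsto x^{-1}$ into the defining relation of $d^{\mathfrak{g}_n}_{\lambda,\nu}$ produces $\sum_{\nu}d^{\mathfrak{g}_n}_{\lambda,\nu}s_{\hat\nu}(x^{-1})=(x_1\cdots x_n)^{-g}s^{\mathfrak{g}_n}_{\lambda}(x)$, so the factors $(x_1\cdots x_n)^{\pm g}$ cancel and the right-hand side generating function equals the LHS. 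Extracting coefficients of $x^{\mu}$ proves the claim.

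The main subtlety is that for type $D$ one has $s^{D_n}_{\lambda}(x^{-1})\neq s^{D_n}_{\lambda}(x)$ in general, but this asymmetry is already absorbed in the definition of $d^{\mathfrak{g}_n}_{\lambda,\nu}$ (which uses $s^{\mathfrak{g}_n}_{\lambda}(x^{-1})$), so the argument runs uniformly across nonexceptional types. A secondary technical point is to verify that the formal manipulations of sums over all integer vectors $\mu$ are legitimate in a suitable completion of $\mathbb{Z}[q][x_1^{\pm 1},\dots,x_n^{\pm 1}]$; this is routine since each Laurent monomial appears only finitely often in every sum involved.
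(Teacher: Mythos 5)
Your proposal is correct, but it takes a genuinely different route from the paper's. The paper decomposes the Weyl-group sum over coset representatives $[W:\mathfrak{S}_n]$, invokes the stable identity \eqref{eq: level restricted stable} for ${}^{\infty}\KL^{\mathfrak{g}_n,L_A}$ (an input generalizing \cite{LS2007}) to bring in Kostka--Foulkes polynomials together with Littlewood--Richardson coefficients summed over $P^{\mathfrak{g}_n}$, and then collapses the alternating coset sum against $\sum_{\gamma\in P^{\mathfrak{g}_n}}s_{\gamma}$ via the Weyl character formula and the Littlewood identities, recognizing $(x_1\cdots x_n)^{g}s^{\mathfrak{g}_n}_{\lambda}(x_1^{-1},\dots,x_n^{-1})$ and hence the coefficients $d^{\mathfrak{g}_n}_{\lambda,\nu}$. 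You instead prove equality of full generating functions $\sum_{\mu}(\cdot)\,x^{\mu}$: the master identity $\sum_{\mu}\KL^{\mathfrak{g}_n,L}_{\lambda,\mu}(q)x^{\mu}=s^{\mathfrak{g}_n}_{\lambda}(x)\prod_{\alpha\in R^{+}}(1-x^{-\alpha})/(1-q^{L(\alpha)}x^{-\alpha})$ (a direct consequence of the definition plus the Weyl denominator formula) specializes at $L=L_A$ so that all non-type-$A$ factors cancel, while on the right-hand side the type-$A$ Kostka--Foulkes generating function, the change of variables $y_j=x_{n+1-j}^{-1}$, and the defining relation of $d^{\mathfrak{g}_n}_{\lambda,\nu}$ evaluated at $x\mapsto x^{-1}$ yield the same series; extracting the coefficient of $x^{\mu}$ at dominant $\mu$ finishes the argument. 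Both arguments are sound. Yours is more self-contained: it needs neither the stable multiplicities \eqref{eq: level restricted stable}, nor the sets $P^{\mathfrak{g}_n}$ and LR coefficients, nor the Littlewood identities. The paper's route, on the other hand, has the byproduct of expressing $d^{\mathfrak{g}_n}_{\lambda,\nu}$ as an alternating sum of the numbers $\sum_{\gamma\in P^{\mathfrak{g}_n}}c^{\hat{\nu}}_{\gamma,\hat{\tau}}$, which ties the proposition to the $X=K$ framework used in the rest of Section \ref{Sec: Ltilde}. Your treatment of type $D$ (no symmetry of $s^{D_n}_{\lambda}$ is needed because $d^{\mathfrak{g}_n}_{\lambda,\nu}$ is defined via $s^{D_n}_{\lambda}(x^{-1})$) and of formal convergence is right. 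One small slip: the bookkeeping should read $x^{\mu}=(x_1\cdots x_n)^{g}\,y^{\hat{\mu}}$ rather than $x^{\hat{\mu}}=(x_1\cdots x_n)^{g}\,y^{\tau}$; since the displayed identity you derive from it is the correct one, this is only a typo.
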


\begin{proof}
    Let $ W $ denote the Weyl group. 
    Consider the coset representatives $[W: \mathfrak{S}_n]$, which means that any element $ w \in W $ can be uniquely written as $ w = w_1 w_2 $, where $ w_1 \in \mathfrak{S}_n $ and $ w_2 \in [W: \mathfrak{S}_n]$.
    Then we have 
    \begin{align*}
        \KL^{\mathfrak{g}_n,L_A}_{\lambda, \mu}(q) &=  \sum_{w_1 \in \mathfrak{S}_n}\sum_{w_2 \in[W: \mathfrak{S}_n]} (-1)^{w_1 + w_2} [e^{w_1 w_2 (\lambda + \rho) - (\mu + \rho)}] \prod_{\alpha \in R^{+}} \frac{1}{1 - q^{L_A(\alpha)} e^\alpha} \\
        &= \sum_{w_2 \in[W: \mathfrak{S}_n]} (-1)^{w_2} \ ^{\infty} \KL^{\mathfrak{g}_n,L_A}_{w_2(\lambda + \rho) - \rho, \mu}(q) \\
        &= \sum_{\nu} \KL_{\hat{\nu}, \hat{\mu}}^{A_{n-1}}(q) \left( \sum_{\substack{w_2 \in [W: \mathfrak{S}_n]\\ \tau = w_2 (\lambda + \rho) - \rho}} (-1)^{w_2} \sum_{\gamma \in P^{\mathfrak{g}_n}} c^{\hat{\nu}}_{\gamma, \widehat{\tau}} \right),
    \end{align*}
    where the last equality follows from \eqref{eq: level restricted stable}. 
    Moreover, we have the identity
    \begin{align}\label{eq: nnn}
        \sum_{\substack{w_2 \in [W: \mathfrak{S}_n] \\
        \tau = w_2 (\lambda + \rho) - \rho}} (-1)^{w_2} \sum_{\gamma \in P_n^{\mathfrak{g}_n}} c^{\hat{\nu}}_{\gamma, \widehat{\tau}} = [s_{\hat{\nu}}] \left( \sum_{\substack{w_2 \in [W: \mathfrak{S}_n] \\ 
        \tau = w_2 (\lambda + \rho) - \rho}} (-1)^{w_2} s_{\hat{\tau}} \right) \left( \sum_{\gamma \in P^{\mathfrak{g}_n}} s_{\gamma} \right).
    \end{align}
    Note that    
    \begin{align*}
        \sum_{\substack{w_2 \in [W: \mathfrak{S}_n] \\ 
        \tau = w_2 (\lambda + \rho) - \rho}} (-1)^{w_2} s_{\hat{\tau}} &= \sum_{\substack{w_2 \in [W: \mathfrak{S}_n] \\ 
        \tau = w_2 (\lambda + \rho) - \rho}} (-1)^{w_2} s_{\hat{\tau}}(x_n, \dots, x_1) = \frac{\sum_{\substack{w_2 \in [W: \mathfrak{S}_n] \\ 
        \tau = w_2 (\lambda + \rho) - \rho}} \sum_{w_1 \in \mathfrak{S}_n} x^{w_1 (\bar{\tau} + \rho_A)}}{\prod_{i < j} (x_i - x_j)} \\
        &= (x_1 \dots x_n)^{g + M} \left( \frac{\sum_{\substack{w_2 \in [W: \mathfrak{S}_n] \\ 
        \tau = w_2 (\lambda + \rho) - \rho}} \sum_{w_1 \in \mathfrak{S}_n} x^{-w_1 (\tau + \rho)}}{\prod_{i < j} (x_i - x_j)} \right) = (x_1 \dots x_n)^{g + M} \left( \frac{\sum_{w \in W} x^{-w (\lambda + \rho)}}{\prod_{i < j} (x_i - x_j)} \right),
    \end{align*}
    where $ \rho_A = (0, 1, \dots, n - 1) $ and $ M $ is a positive number such that $ \rho + \rho_A = (M, \dots, M) $. 
    By the Littlewood identities \cite{LW50}, we have
    \begin{equation*}
        \sum_{w \in W} x^{-w (\rho)} = \frac{\prod_{i < j} (x_i - x_j)}{(x_1 \dots x_n)^M \sum_{\gamma \in P^{\mathfrak{g}_n}} s_{\gamma}}.
    \end{equation*}
    Therefore, we conclude that
    \begin{equation*}
        \left( \sum_{\substack{w_2 \in [W:\mathfrak{S}_n] \\ \tau = w_2 (\lambda + \rho) - \rho}} (-1)^{w_2} s_{\hat{\tau}} \right) \left( \sum_{\gamma \in P^{\mathfrak{g}_n}} s_{\gamma} \right) = (x_1 \dots x_n)^{g} s_{\lambda}^{\mathfrak{g}_n}(x_1^{-1}, \dots, x_n^{-1}).
    \end{equation*}
    Combining this with \eqref{eq: nnn}, the proof is complete.
\end{proof}

In \cite{S05, LS2007}, it is shown that
\begin{equation}\label{eq: x=k}
    q^{||\mu|||+\frac{|\mu|-|\lambda|}{2}}\left(\sum_{b\in \HW(B_{\mu}(\diamond),\lambda)}q^{-\frac{|\diamond|\overline{D}(b)}{2}}\right)=\sum_{\nu}\KL^{A_{n-1}}_{\nu,\mu}(q) \sum_{\gamma\in P^{\diamond}}c^{\nu}_{\gamma,\lambda}.
\end{equation}
This result, now commonly referred to as the $X = K$ theorem in the literature, was later generalized to any tensor products of KR crystals (see \cite{LOS2012}). We now present a refinement of this identity.

\begin{thm}\label{thm: level formula}
    We have
    \begin{align}\label{eq: tilted B}
        \KL^{\mathfrak{g}_n,L_A}_{\lambda,\mu}(q)&=\sum_{\nu}\KL^{A_{n-1}}_{\hat{\nu},\hat{\mu}}(q) d^{\mathfrak{g}_n}_{\lambda,\nu}= q^{||\hat{\mu}|||+\frac{|\hat{\mu}|-|\hat{\lambda}|}{2}}\left(\sum_{\substack{b\in \HW(B_{\hat{\mu}}(\diamond),\hat{\lambda})\\\epsilon_0(b)\leq \zeta g}}q^{-\frac{|\diamond|\overline{D}(b)}{2}}\right)
    \end{align}
\end{thm}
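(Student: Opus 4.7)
The first equality in Theorem \ref{thm: level formula} is exactly Proposition \ref{prop: tilted positivity}, so it suffices to prove the second equality. My plan is to deduce it by imposing the level restriction $\epsilon_0(b)\leq\zeta g$ in the $X=K$ theorem \eqref{eq: x=k} and matching the result against Proposition \ref{prop: tilted positivity}. Specifically, \eqref{eq: x=k}, applied to $B_{\hat\mu}(\diamond)$ with highest weight $\hat\lambda$, gives
\[
q^{||\hat\mu||+\frac{|\hat\mu|-|\hat\lambda|}{2}}\sum_{b\in\HW(B_{\hat\mu}(\diamond),\hat\lambda)}q^{-|\diamond|\overline{D}(b)/2}=\sum_{\nu}\KL^{A_{n-1}}_{\nu,\hat\mu}(q)\sum_{\gamma\in P^\diamond}c^{\nu}_{\gamma,\hat\lambda},
\]
and my aim is to show that the $\epsilon_0$-filter on the left corresponds exactly to restricting the outer sum on the right to partitions $\nu$ with $\nu_1\leq g$.

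The first step is to interpret the restriction $\epsilon_0(b)\leq\zeta g$ combinatorially. By Lemma \ref{lem: rind}, each $b\in\HW(B_{\hat\mu}(\diamond),\hat\lambda)$ corresponds to a combinatorial object $T$ (an $\SSOT$, $\GSSOT$, or $\SSROT$ depending on $\diamond$), with $\epsilon_0(b)$ equal to a fixed multiple of the statistic $c(T)$. The statistic $c(T)$ is the maximum width of intermediate shapes traversed by $T$. Under the $X=K$ bijection, which decomposes $T$ into an $A_{n-1}$-type SSYT of some shape $\nu$ together with a descent filling recording the passage $\nu\to\hat\lambda$, this maximum width is precisely $\nu_1$. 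Hence the left-hand side, with the restriction imposed, becomes the partial sum $\sum_{\nu:\,\nu_1\leq g}\KL^{A_{n-1}}_{\nu,\hat\mu}(q)\sum_{\gamma\in P^\diamond}c^{\nu}_{\gamma,\hat\lambda}$.

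The second step is to recognize this partial sum as $\sum_{\nu'}\KL^{A_{n-1}}_{\hat{\nu'},\hat\mu}(q)\,d^{\mathfrak{g}_n}_{\lambda,\nu'}$. Reparametrizing by $\nu=\hat{\nu'}$, the claim reduces to the character-level identity
\[
\sum_{\nu'}\KL^{A_{n-1}}_{\hat{\nu'},\hat\mu}(q)\,d^{\mathfrak{g}_n}_{\lambda,\nu'}=\sum_{\nu':\,\nu'_1\leq g}\KL^{A_{n-1}}_{\hat{\nu'},\hat\mu}(q)\sum_{\gamma\in P^\diamond}c^{\hat{\nu'}}_{\gamma,\hat\lambda},
\]
which I would derive by mimicking the manipulation in the proof of Proposition \ref{prop: tilted positivity}: expanding $(x_1\cdots x_n)^{g}\,s^{\mathfrak{g}_n}_\lambda(x^{-1})$ in the Schur basis via the Weyl character formula, invoking the Littlewood identity for $\sum_{\gamma\in P^\diamond}s_\gamma$ truncated to $n$ variables, and extracting the coefficient of $s_{\hat{\nu'}}$.

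The main obstacle will be the first step: rigorously identifying $\epsilon_0(b)$ with $\zeta\cdot\nu_1$ for the shape $\nu$ appearing in the $X=K$ bijection, separately for each kind $\diamond\in\{\sboxone,\sboxtwo,\sboxeleven\}$. This requires combining Lemma \ref{lem: rind} with the explicit combinatorial description of the $X=K$ bijection in \cite{LS2007,S05}, together with the growth-sequence description of $c(T)$ for $\SSOT$, $\GSSOT$, and $\SSROT$. Once this identification is in place, the remaining steps assemble routinely.
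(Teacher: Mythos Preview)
Your first step contains a genuine error. You assert that under the $X=K$ bijection the statistic $c(T)$ (equivalently $\epsilon_0(b)/\zeta$) equals $\nu_1$, the first part of the type-$A$ shape appearing in the bijection. This is false. In the paper's own Example \ref{ex:r of rcr}, a GSOT $G$ with $c(G)=3$ maps under $\Phi^{BC}$ to a pair $(P,Q)$ with $P$ of shape $(4,2,1,1,1)$, so $\nu_1=4\neq 3$. The actual characterization of the level condition (Proposition \ref{prop: GSOT LR}, which generalizes Sundaram's theorem) is that $c(G)\leq g$ translates into a condition on the \emph{column positions of the entries of the LR tableau $Q$}, not merely on the shape $\nu$. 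In particular, for a fixed $\nu$ with $\nu_1>g$ some LR fillings $Q$ can still satisfy the level restriction while others do not, so the cutoff cannot be ``$\nu_1\leq g$'' on the nose.

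Because of this, your second step never has a chance to get off the ground: the restricted sum is not the truncation of \eqref{eq: x=k} to $\nu_1\leq g$, and the identity you propose to verify is simply not the right one. The paper's route is structurally different. It first proves (Lemma \ref{lem: x=k filter}) that the $\epsilon_0$-restriction depends only on the LR component $Q$ of the $X=K$ bijection, which already requires nontrivial work: for $\diamond=\sboxone,\sboxtwo$ a generalization of Sundaram's theorem (Proposition \ref{prop: GSOT LR}, proved via Lemmas \ref{lem: useful lemma}--\ref{lem: gsot aux}), and for $\diamond=\sboxeleven$ an argument using the type-$D$ plactic monoid (Lemma \ref{lem: type D plactic}). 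This yields constants $\bar d^{\diamond}_{\hat\lambda,\hat\nu}(\zeta g)$ with the right $q$-structure but no direct combinatorial identification with $d^{\mathfrak{g}_n}_{\lambda,\nu}$. The identification is then forced by proving the theorem separately at $q=1$ (Section \ref{sub: level q=1}) via Cauchy identities and dual Pieri rules, and comparing coefficients. Your proposal short-circuits both of these substantial pieces with the incorrect identification $c(T)=\nu_1$.
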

where \begin{equation}\label{eq: diamond g_n}
\diamond=\begin{cases*}
    \sboxone \qquad&\text{if $\mathfrak{g}_n=B_n$}\\
    \sboxtwo \qquad&\text{if $\mathfrak{g}_n=C_n$}\\
    \sboxeleven \qquad&\text{if $\mathfrak{g}_n=D_n$,}
\end{cases*}\qquad \qquad \zeta=\begin{cases*}
    2 \qquad&\text{if $\mathfrak{g}_n=B_n$}\\
    1 \qquad&\text{if $\mathfrak{g}_n=C_n$}\\
    2 \qquad&\text{if $\mathfrak{g}_n=D_n$.}
\end{cases*}
\end{equation}
Note that the first identity in Theorem \ref{thm: level formula} is simply Proposition \ref{prop: tilted positivity}. Theorem holds as long as $\hat{\lambda} = \oc(\lambda, g)$ and $\hat{\mu} = \oc(\mu, g)$ are nonnegative integer vectors. Therefore, when $\lambda$ and $\mu$ are spin weights, we must take $g \in \mathbb{Z} + \frac{1}{2}$. We outline the proof strategy for Theorem \ref{thm: level formula}. In Section \ref{sub: filtering x=k}, we establish the following lemma. 

\begin{lem}\label{lem: x=k filter} For $\lambda,\mu\in \Par_n$ and a positive integer $M$, there exists a constant $\bar{d}^{\diamond}_{\lambda, \nu}(M) \in \mathbb{Z}_{\geq 0}$ such that
    \begin{equation*}
        q^{||\mu|| + \frac{|\mu| - |\lambda|}{2}} \left( \sum_{\substack{x \in \HW(B_{\mu}(\diamond), \lambda) \\ \epsilon_0(x) \leq M}} q^{-\frac{|\diamond| \overline{D}(x)}{2}} \right)
        = \sum_{\nu} \KL^{A_{n-1}}_{\nu, \mu}(q) \bar{d}^{\diamond}_{\lambda, \nu}(M).
    \end{equation*}
\end{lem}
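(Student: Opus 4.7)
The plan is to refine the $X=K$ identity \eqref{eq: x=k} by tracking the $\epsilon_0$ statistic through the bijective correspondence of Shimozono--Lecouvey--Schilling \cite{S05, LS2007} that underlies it. A key preliminary observation is that $\epsilon_0$ should be constant on classical components of $B_{\mu}(\diamond)$ and invariant under combinatorial $R$-matrices, parallel to the argument given for $\vac$ immediately preceding Theorem \ref{thm: B lusztig}; combined with the techniques of Appendix \ref{Sec: append A}, this would give invariance of $\epsilon_0$ under the splitting map $S_{\mu}$ as well.

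First I would recall that the proof of \eqref{eq: x=k} produces a bijection
\begin{equation*}
    \HW(B_\mu(\diamond), \lambda) \;\longleftrightarrow\; \bigsqcup_\nu \bigl\{(P, Q)\bigr\},
\end{equation*}
where $P$ ranges over objects whose generating function by an appropriate statistic gives $\KL^{A_{n-1}}_{\nu,\mu}(q)$, and $Q$ ranges over Littlewood-Richardson tableaux of shape $\nu/\lambda$ with content $\gamma \in P^{\diamond}$. After applying $S_{\mu}$ to reduce to $B_{(1^{|\mu|})}$, this correspondence becomes a Littlewood-Richardson-type factorization on words, in which the ``type $A$ part'' and the ``$\diamond$-part'' are cleanly separated.

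Second, I would argue that $\epsilon_0$ descends through this bijection to a statistic depending only on $Q$. Since $\epsilon_0$ is an affine invariant that is unchanged by combinatorial $R$-matrices and by $S_{\mu}$, it suffices to work in the splitting image. The explicit formulas for $\epsilon_0$ on $B^{1,\mu_n}(\diamond)$ recorded in the proof of Lemma \ref{lem: rind} show that $\epsilon_0$ depends only on the multiplicities of $1$ (and, for $\diamond = \sboxeleven$, also $2$ and $\bar 2$) in the word, which are precisely the letters contributed by the ``$\diamond$-portion'' of the factorization into the $A$-part and the LR-part. This should give a well-defined non-negative integer statistic $\epsilon_0^{\LR}(Q)$ such that $\epsilon_0(b) = \epsilon_0^{\LR}(Q)$ under the bijection.

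Finally, setting
\begin{equation*}
    \bar d^{\diamond}_{\lambda,\nu}(M) := \#\Bigl\{Q \in \textstyle\bigsqcup_{\gamma \in P^{\diamond}} \LR(\nu/\lambda, \gamma) : \epsilon_0^{\LR}(Q) \leq M\Bigr\} \in \mathbb{Z}_{\geq 0},
\end{equation*}
and restricting the $X=K$ bijection to $\{x : \epsilon_0(x) \leq M\}$ would yield the claimed identity by summing over $\nu$. The main obstacle I expect is the second step: pinning down $\epsilon_0^{\LR}$ explicitly and verifying that $\epsilon_0$ truly factors through $Q$ alone. This requires delicate bookkeeping of how the iterated splitting map and the $R$-matrix rectifications interact with the zero-node crystal operators, which act as ``reflected'' classical operators whose behavior differs across the three kinds $\sboxone$, $\sboxtwo$, $\sboxeleven$, and is what I would expect Appendix \ref{Sec: append B} to carry out in detail.
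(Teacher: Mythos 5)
Your high-level strategy coincides with the paper's: restrict the $X=K$ bijection $\eta_\mu$ of \cite{S05,LS2007} to $\{x:\epsilon_0(x)\le M\}$ and define $\bar d^{\diamond}_{\lambda,\nu}(M)$ by counting second components. But the entire content of the lemma is your second step --- that $\epsilon_0$ factors through the second component $Q$ --- and you have not proved it; moreover the heuristics you offer for it are flawed. First, $\epsilon_0$ is \emph{not} constant on classical components (only $\overline D$ and $\vac$ are; e.g.\ already in $B^{1,1}$ the letters $1$ and $2$ lie in one $J$-component with different $\epsilon_0$), so your ``key preliminary observation'' fails; invariance under combinatorial $R$-matrices is fine (they are affine isomorphisms), but the splitting map is only a classical embedding, so $\epsilon_0(S_\mu(b))=\epsilon_0(b)$ needs a separate argument rather than following from your stated reasons. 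Second, the formulas in the proof of Lemma \ref{lem: rind} compute $\epsilon_0$ on a \emph{single} factor $B^{1,s}$; on a tensor product $\epsilon_0$ is given by the max-recursion \eqref{eq: epsilon} and depends on the whole sequence of partial weights, not on global multiplicities of the letters $1$ (or $2,\bar 2$), so ``$\epsilon_0$ depends only on the $\diamond$-portion of the factorization'' does not follow from what you wrote.

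What actually closes this gap in the paper is substantial and type-dependent. For $\diamond=\sboxone,\sboxtwo$ the statement that the level bound can be read off from the transposed LR tableau $Q$ is Proposition \ref{prop: GSOT LR}, a genuine generalization of Sundaram's Theorem 9.4; its proof needs Lemma \ref{lem: Sundarma 9.3}, a Greene-theorem argument (Lemma \ref{lem: gsot aux}), and the RSK truncation identity $\alpha(T^{\le 2k+1})=\alpha(T)^{\le k+1}$ of Lemma \ref{lem: useful lemma} --- Appendix \ref{Sec: append B} is pure RSK/Burge combinatorics, not bookkeeping of zero-node operators as you anticipated. For $\diamond=\sboxeleven$ there is no LR-tableau statistic at all in the paper's argument: one transports highest weight elements to type $D_n$ words via $\theta_{(1^n)}$, uses \eqref{eq: sot epsilon} to show $\epsilon_0$ is invariant under Lecouvey's plactic relations $R_1$--$R_4$ (Lemma \ref{lem: type D plactic}), and the second component is an element of $G_{\nu,\lambda^*}$ (a branching datum), so your uniform definition of $\bar d^{\diamond}_{\lambda,\nu}(M)$ as a count of LR tableaux with a statistic $\epsilon_0^{\LR}$ is not available for this kind without proving a new analogue of Proposition \ref{prop: GSOT LR}. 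As it stands, your proposal records the correct skeleton but leaves the decisive step unproved and partially misdiagnosed.
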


In Section \ref{sub: level q=1}, we prove Theorem \ref{thm: level formula} at $q=1$. Then the proof for general $q$ follows as outlined below.

\begin{proof}[Proof of Theorem \ref{thm: level formula}]
We substitute $\hat{\mu}$ and $\hat{\lambda}$ for $\mu$ and $\lambda$, respectively, in Lemma \ref{lem: x=k filter}. To complete the proof, it suffices to show that $\bar{d}^{\diamond}_{\hat{\lambda}, \hat{\nu}}(\zeta g) = d^{\mathfrak{g}_n}_{\lambda, \nu}$. Since Theorem \ref{thm: level formula} holds for $q=1$ (as proved in Section \ref{sub: level q=1}), combined with Lemma \ref{lem: x=k filter}, we have the identity
    \begin{equation*}
        [x^{\mu}] s_{\lambda}^{\mathfrak{g}_n} = \sum_{\nu} \left( [x^{\hat{\mu}}] s_{\hat{\nu}} \right) \bar{d}^{\diamond}_{\hat{\lambda}, \hat{\nu}}(\zeta g).
    \end{equation*}
    This is equivalent to
    \begin{equation*}
        [x^{-\mu}] s_{\lambda}^{\mathfrak{g}_n}(x_1^{-1}, \dots, x_n^{-1}) = \sum_{\nu} \left( [x^{\hat{\mu}}] s_{\hat{\nu}} \right) \bar{d}^{\diamond}_{\hat{\lambda}, \hat{\nu}}(\zeta g),
    \end{equation*}
    and summing over all possible $\mu$ after multiplying both sides by $x^{-\mu}$ gives
    \begin{equation*}
        \sum_{\mu} x^{-\mu} \left( [x^{-\mu}] s_{\lambda}^{\mathfrak{g}_n}(x_1^{-1}, \dots, x_n^{-1}) \right) = \sum_{\mu} x^{-\mu} \sum_{\nu} \left( [x^{\hat{\mu}}] s_{\hat{\nu}} \right) \bar{d}^{\diamond}_{\hat{\lambda}, \hat{\nu}}(\zeta g).
    \end{equation*}
    The left-hand side simplifies to $s_{\lambda}^{\mathfrak{g}_n}(x_1^{-1}, \dots, x_n^{-1})$, while the right-hand side becomes
    \begin{equation*}
        \sum_{\nu} \sum_{\mu} x^{-\mu} \left( [x^{\hat{\mu}}] s_{\hat{\nu}} \right) \bar{d}^{\diamond}_{\hat{\lambda}, \hat{\nu}}(g) = \sum_{\nu} (x_1 \dots x_n)^{-g} \bar{d}^{\diamond}_{\hat{\lambda}, \hat{\nu}}(\zeta g) s_{\hat{\nu}}.
    \end{equation*}
    By the definition of $d^{\mathfrak{g}_n}_{\lambda, \nu}$, we conclude that 
        $\bar{d}^{\diamond}_{\hat{\lambda}, \hat{\nu}}(\zeta g) = d^{\mathfrak{g}_n}_{\lambda, \nu}.$
    \end{proof}

\subsection{Refining $X=K$ theorem for $B_{\mu}(\diamond)$}\label{sub: filtering x=k}
\subsubsection{$\diamond=\sboxone$ or $\sboxtwo$} \label{subsub: filtering x=k BC}
Our goal is to prove Lemma \ref{lem: x=k filter} when $\diamond = \sboxone$ or $\diamond = \sboxtwo$. The outline of the proof is as follows:

\begin{itemize}
    \item We first derive Proposition \ref{prop: GSOT LR}, which generalizes \cite[Theorem 9.4]{Sundaram1986}.
    \item We then complete the proof by utilizing the map introduced in \cite{S05}.
\end{itemize}

In particular, the derivation of Proposition \ref{prop: GSOT LR} relies only on combinatorics related to the RSK correspondence (see \cite[Chapter 7.11]{EC2} for the standard reference).
Thus we temporarily move beyond the realm of crystal theory. 
We begin by defining the necessary terminologies.  

Let $T$ be a semistandard Young tableau with distinct entries. The \emph{row-insertion} (or \emph{column-insertion}) of a number $x$ (distinct from entries in $T$) into $T$ is defined as follows:
\begin{enumerate}
    \item Insert $x$ into the first row (or column) of $T$. If all elements in the row (or column) are smaller than $x$, add $x$ to the end. Otherwise, bump the leftmost (or topmost) element that is greater than $x$.
    \item If $x$ bumps an element $x'$ from the first row (or column), insert $x'$ into the second row (or column) by repeating the same procedure.
    \item Continue this process until an element is inserted at the end of a row (or column).
\end{enumerate}
We denote the row-insertion of a number $x$ into $T$ by $(T \leftarrow x)$, and the column-insertion by $(x \rightarrow T)$. For the remainder of this section, we only consider a semistandard Young tableaux or a word with distinct entries. 

For a word of positive integers $w = w_1 \cdots w_n$, the row-insertion of $w$ into $T$ is defined as
\[
T \leftarrow w = (((T \leftarrow w_1) \leftarrow w_2) \leftarrow \cdots \leftarrow w_n),
\]
and the column-insertion of $w$ into $T$ is defined as
\[
w \rightarrow T = (w_1 \rightarrow \cdots \rightarrow (w_{n-1} \rightarrow (w_n \rightarrow T))).
\]
A classical result states that
$
\emptyset \leftarrow w = w \rightarrow \emptyset.
$
For a semistandard Young tableau $T$, row reading word of $T$ is a word given by: rows are read from bottom to top, and within each row the entries are read from right to left. Similarly column reading word of $T$ is given by: column are read from left to right, and within each column the entries are read from bottom to top. Note that we have $T=\emptyset \leftarrow w^{(1)}$=$ w^{(2)}\rightarrow\emptyset$ where $w^{(1)}$ is a column reading word and $w^{(2)}$ is a row reading word of $T$.

Now, let $y$ be a corner cell of $T$. The \emph{row-deletion} (or \emph{column-deletion}) of a cell $y$ in $T$ is defined as follows:
\begin{enumerate}
    \item Find and bump the rightmost (bottommost) entry $y'$ in the previous row (or column) that is smaller than $y$.
    \item Repeat this process until reaching the first row (or column).
\end{enumerate}
In other words, this process identifies $y'$ and $T'$ such that $T' \leftarrow y'$ (or $y' \rightarrow T'$) equals $T$, and the shape of $T'$ is obtained by removing the cell $y$ from $T$.

\begin{definition}\label{def: two line array}
    Let $\TL(r)$ denote the set of two-line arrays of positive integers of the form
    \[
    I = \begin{pmatrix}
    j_1 & \dots & j_r \\
    i_1 & \dots & i_r 
    \end{pmatrix},
    \]
    where the following conditions hold:
 \begin{itemize}
    \item $j_s \geq i_s$ for all $s$, \qquad \qquad \qquad
    $\bullet$ $j_1 < j_2 < \dots < j_r$,
    \item any two entries in $I$ are distinct except for the case $j_s = i_s$.
\end{itemize}
    For $I\in \TL(r)$, we define $\bar{I}$ as the two-line array obtained by adding columns $\begin{pmatrix}
        i_s \\
        j_s
    \end{pmatrix}$ when $j_s > i_s$, and then rearranging the columns according to the order of the first row. 
    We define $\hat{I}$ as the two-line array obtained from $I$ by removing columns $\begin{pmatrix}
        j_s \\
        i_s
    \end{pmatrix}$ when $j_s = i_s$.
    Lastly, for any two-line array $I = \begin{pmatrix}
        j_1 & \dots & j_r \\
        i_1 & \dots & i_r 
    \end{pmatrix}$, we associate the tableau
    \[
    \tab(I) := (i_r \dots i_1 \rightarrow \emptyset).
    \]
\end{definition}

Given a semistandard Young tableau $T$ with distinct entries, there exists a unique $I \in \TL(r)$ such that $\tab(\bar{I}) = T$ (see \cite[Corollary 7.13.7]{EC2}). 
For such $I$, we define $\alpha(T) = \tab(I)$ and $\beta(T) = \tab(\hat{I})$. 
The following lemma plays a key role in the proof of Proposition \ref{prop: GSOT LR}.
Its proof is provided in Appendix \ref{Sec: append B}.
The notation $T^{\leq m}$ refers to the tableau obtained by restricting $T$ to its first $m$ columns.

\begin{lem}\label{lem: useful lemma}
Let $T$ be a semistandard Young tableau with distinct entries and $k$ be a positive integer. Then we have
\[
\alpha(T^{\leq 2k+1}) = \alpha(T)^{\leq k+1}, \quad \beta(T^{\leq 2k}) = \beta(T)^{\leq k}.
\]
\end{lem}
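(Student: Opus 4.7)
The plan is to prove both equalities by induction on $|T|$, with trivial base case $T=\emptyset$. For the inductive step, I would establish a structural correspondence between the cells of $T$ and those of $\alpha(T)$ and $\beta(T)$, encoded through the associated two-line array $I$ with $\tab(\bar I)=T$: each cell of $\alpha(T)$ at row $r$, column $c$ arises from a column of $I$, which is either non-diagonal (generating a pair of cells in row $r$ at columns $2c-1$ and $2c$ of $T$) or diagonal (generating a single cell in row $r$ at column $2c-1$ of $T$). Cells of $\beta(T)$ correspond exactly to the non-diagonal contributions. One checks on shapes that this forces $\text{shape}(T)=\text{shape}(\alpha(T))+\text{shape}(\beta(T))$ (entrywise), which is consistent with Burge-type symmetric RSK and can be verified in small examples.

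Granting this correspondence, both truncation statements follow from a parity-bookkeeping argument. For $\alpha$: $T^{\leq 2k+1}$ retains all pairs and singletons with $c\le k$, plus the left halves of any pair or singleton at $c=k+1$. I would construct a two-line array $I'$ matching this truncated configuration by keeping the $i$-values of $I$ for columns with $c\leq k$, converting the $i$-values of non-diagonal boundary columns $(c=k+1)$ to diagonal columns (with diagonal value equal to the corresponding $\alpha$-cell value), and dropping all columns with $c>k+1$. Since $\tab$ depends only on the $i$-values of its argument, this immediately gives $\alpha(T^{\leq 2k+1})=\tab(I')=\tab(I)^{\leq k+1}=\alpha(T)^{\leq k+1}$. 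The identity $\beta(T^{\leq 2k})=\beta(T)^{\leq k}$ is strictly analogous: $T^{\leq 2k}$ retains only complete pairs with $c\leq k$ (singletons lie in odd columns and are excised by the even cutoff), so under $\hat{I'}$ one recovers $\beta(T^{\leq 2k})=\tab(\hat{I'})=\beta(T)^{\leq k}$.

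The main obstacle is proving the structural correspondence rigorously, especially the claim that the two column insertions arising from a symmetric pair in $\bar I$ always deposit their terminal cells in adjacent columns of $T$. This is not controlled by local invariants; rather, it emerges from the combined effect of prior insertions and requires careful bumping-path analysis. A subtle further point is that while only the $i$-values of $I'$ matter for computing $\alpha(T^{\leq m})=\tab(I')$, the $j$-values must still be chosen so that $\tab(\bar{I'})=T^{\leq m}$; small examples show that these $j$-values can shift nontrivially relative to those of $I$, but such shifts do not affect $\tab(I')$ thanks to a Knuth-type equivalence that must be separately verified. I expect the proof to proceed by nested induction on $|I|$, adding columns of $I$ in increasing order of $j_s$ and tracking the resulting bumping cascade in $\tab(\bar I)$; invoking Burge's symmetric RSK correspondence should help streamline the combinatorial bookkeeping.
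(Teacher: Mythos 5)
Your proposal rests on a structural correspondence that is false. Take the paper's own example $I=\begin{pmatrix}4&5&7\\2&5&3\end{pmatrix}$, so that $T=\tab(\bar I)$ has rows $(2,4,7)$ and $(3,5)$, while $\alpha(T)=\tab(I)$ has rows $(2,5)$ and $(3)$. In Burge's algorithm the non-diagonal column $\binom{7}{3}$ creates the two cells of $T$ at positions $(2,2)$ (from column-inserting $3$) and $(1,3)$ (the cell receiving $7$): adjacent columns, but different rows, and neither sits at the positions $(r,2c-1),(r,2c)$ your scheme predicts, since the corresponding cell of $\alpha(T)$ is created at $(1,2)$. The adjacency you single out as the "main obstacle" is in fact automatic from Burge's algorithm; what fails is the same-row and column-parity bookkeeping. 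Only the shape-level statement survives, namely $\lambda_i=\lceil\lambda_i/2\rceil+\lfloor\lambda_i/2\rfloor$, which is the generalized Sundaram lemma (Lemma \ref{lem: shape alpha beta}); there is no positional, row-preserving pairing of the cells of $T$ indexed by the cells of $\alpha(T)$, because the $j$-cell of a Burge step is generally deposited in a higher row than the $i$-cell and later insertions relocate entries.

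Because of this, the truncation argument does not get off the ground: $T^{\leq 2k+1}$ is not "all pairs and singletons with $c\le k$ plus left halves at $c=k+1$", and the array $I'$ you propose (keep the $i$-values of columns with $c\le k$, diagonalize the boundary columns, drop the rest) need not satisfy $\tab(\bar{I'})=T^{\leq 2k+1}$, nor is $\tab(I')=\alpha(T)^{\leq k+1}$ justified. The relation between the array attached to $T$ and the one attached to its column truncation is genuinely delicate: both rows of the array can change, in a way governed by the bumping chains of the largest entries. This is exactly what the paper's proof controls, via a double induction (on $k$ and on $|T|$) that removes the largest entry $M$, reverses one Burge step, and upon re-insertion uses the description of $\lseq(b_s\cdots b_1M)$ in terms of $I$ (Lemma \ref{lem: lseq description for I}) together with the word-rearrangement lemma for $\second$ (Lemma \ref{lem: insert to the second row}) to produce an explicitly modified array $J$ — shifting $j$-entries along the longest-increasing-subsequence chain and creating or absorbing a diagonal column — for which $\tab(\bar J)$ is the truncated tableau and $\tab(J)$ the desired truncation of $\alpha(T)$. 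The Knuth-type verification you defer is thus not a technical afterthought but the core of the proof, and the incidence pattern you would need it to preserve is not the one that actually occurs.
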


\begin{example}
For the tableau \(T\) below, we list \(T^{\leq k}\), \(\alpha(T^{\leq k})\), and \(\beta(T^{\leq k})\) for some \(k\)'s:
\begin{align*}
T &\quad = \begin{ytableau}
1 & 2 & 7 & 9 & 10 & 13 \\
3 & 4 & 11 & 14 & 15 \\
5 & 8 & 12 \\
6
\end{ytableau}, & 
\alpha(T) &\quad = \begin{ytableau}
1 & 2 & 7 \\
3 & 4 & 12 \\
5 & 8 \\
6
\end{ytableau}, & 
\beta(T) &\quad = \begin{ytableau}
1 & 5 & 7 \\
3 & 8 \\
6  
\end{ytableau}, \\[1.5ex]
T^{\leq 5} &\quad = \begin{ytableau}
1 & 2 & 7 & 9 & 10 \\
3 & 4 & 11 & 14 & 15 \\
5 & 8 & 12 \\
6
\end{ytableau}, & 
\alpha(T^{\leq 5}) &\quad = \begin{ytableau}
1 & 2 & 7 \\
3 & 4 & 12 \\
5 & 8 \\
6
\end{ytableau}, & 
\beta(T^{\leq 5}) &\quad = \begin{ytableau}
1 & 5 \\
3 & 8 \\
6  
\end{ytableau}, \\[1.5ex]
T^{\leq 4} &\quad = \begin{ytableau}
1 & 2 & 7 & 9  \\
3 & 4 & 11 & 14  \\
5 & 8 & 12 \\
6
\end{ytableau}, & 
\alpha(T^{\leq 4}) &\quad = \begin{ytableau}
1 & 2  \\
3 & 4 \\
5 & 8 \\
6
\end{ytableau}, & 
\beta(T^{\leq 4}) &\quad = \begin{ytableau}
1 & 5 \\
3 & 8 \\
6  
\end{ytableau}.
\end{align*}
\end{example}

For a skew shape $\lambda / \mu$, a \emph{Littlewood-Richardson tableau} (LR tableau for short) is a semistandard filling of the shape $\lambda / \mu$ such that the row reading word is a lattice permutation. For the technical reason, we consider \emph{transposed-LR tableau} which is simply a transpose of an LR tableau. The set of transposed-LR tableaux on $\lambda / \mu$ with weight $\gamma$ is denoted by $\LR^{t}(\lambda /\mu;\gamma)$.

Let $\GSSOT(\lambda, (1^n))$ be denoted by $\GSOT(\lambda,n)$, where each $G \in \GSOT(\lambda, n)$ is identified with a sequence of partitions $(G_0, G_1, \dots, G_n)$ such that $G_0 = \emptyset$, $G_n = \lambda$, and two consecutive partitions differ by at most one cell. 
Specifically, $|G_i / G_{i-1}| = 1$, $|G_{i-1} / G_i| = 1$, or $G_i = G_{i-1}$. Similarly, we define the set $\SOT(\lambda, n):=\SSOT(\lambda,(1^n))$, which is naturally a subset of $\GSOT(\lambda, n)$.

We now describe the bijection
\begin{equation*}
    \Phi^{BC}: \GSOT(\lambda,n) \to \bigcup_{\mu \vdash n} \bigcup_{\gamma \in P^{\sboxone}_n} \SYT(\mu) \times \LR^{t}(\mu / \lambda; \gamma^{t}),
\end{equation*}
where $\SYT(\mu)$ is a set of standard Young tableaux on a shape $\mu$. 
This map was introduced in \cite{Sundaram1986, delest1988analogue} for the set $\SOT(\lambda, n)$\footnote{The maps in \cite{Sundaram1986, delest1988analogue} are essentially the same, except for a transpose of a tableau.}.
Specifically, restricting $\Phi^{BC}$ to $\SOT(\lambda, n)$ within $\GSOT(\lambda, n)$ gives the bijection:
\begin{equation}\label{eq: gsot to sot restriction}
    \Phi^{BC}: \SOT(\lambda,n) \to \bigcup_{\mu \vdash n} \bigcup_{\gamma \in P^{\sboxtwo}_n} \SYT(\mu) \times \LR^{t}(\mu / \lambda; \gamma^{t}).
\end{equation}
Shimozono later extended this map to the entire set $\GSOT(\lambda,n)$ in \cite{S05}.

The description of $\Phi^{BC}$ is two-fold: 
first, it involves applying the map $\Phi_1^{BC}$, followed by applying the map $\Phi_2^{BC}$. 
Given a sequence $G = (G_0, G_1, \dots, G_n) \in \GSOT(\lambda, n)$, at each step, we associate a pair $(T^{(j)}, I^{(j)})$, where $T^{(j)}$ is a semistandard Young tableau (with distinct entries) of the same shape with $G_j$, and $I^{(j)}\in \TL(r)$ for some $r$.
We initialize by setting $T^{(0)} = \emptyset$ and $I^{(0)} = \emptyset\in \TL(0)$.
At each $j$-th step, we perform the following:
\begin{enumerate}
    \item If $|G_j / G_{j-1}| = 1$, we insert a new cell on $G_j / G_{j-1}$ with the entry $j$ into $T^{(j-1)}$ to obtain $T^{(j)}$ and $I^{(j)}=I^{(j-1)}$.
    \item If $G_j = G_{j-1}$, $I^{(j)}$ is obtained by appending the column
        $\begin{pmatrix}
            j \\
            j
        \end{pmatrix}$
    to $I^{(j-1)}$, and we set $T^{(j)} = T^{(j-1)}$.
    \item If $|G_{j-1} / G_j| = 1$, we perform a row-deletion on the cell $G_{j-1} / G_j$. Let the resulting tableau be $T^{(j)}$. Denote the bumped out entry as $x$, and append the column
        $\begin{pmatrix}
            j \\
            x
        \end{pmatrix}$
    to $I^{(j-1)}$ to obtain $I^{(j)}$.
\end{enumerate}

After completing all $n$ steps, we obtain a pair $(T, I) = (T^{(n)}, I^{(n)})$, which we set as $\Phi_1^{BC}(G)$.
Next, denote the column reading word of $\tab(\bar{I})$ by $w$ and let $P = T \leftarrow w$. If $P$ has shape $\mu$, we define $Q \in \LR^{t}(\mu / \lambda)$ by assigning the integer $i$ to the newly created cells when inserting letters in the $i$-th column of $\tab(\bar{I})$ during the process $T \leftarrow w$.
Finally, we set $(P, Q) = \Phi_2^{BC}(T, I) = \Phi^{BC}(G)$.

\begin{example}\label{ex:r of rcr}
For $G\in \GSOT((3,1),9)$ given by
  \begin{align*}
        G = (\emptyset, \ydiagram{1}, \ydiagram{2}, \ydiagram{2,1}, \ydiagram{1,1}, \ydiagram{1,1}, \ydiagram{2,1}, \ydiagram{2}, \ydiagram{3}, \ydiagram{3,1}),
    \end{align*}
we go over the following process as described above
    \begin{align*}
        \emptyset\quad \ydiagram{1}\quad \ydiagram{2}\quad \ydiagram{2,1}\quad \ydiagram{1,1}\quad \ydiagram{1,1}\quad \ydiagram{2,1}\quad \ydiagram{2}\quad \ydiagram{3}\quad \ydiagram{3,1}\\
        \emptyset
        \quad
        \begin{ytableau}
            1
        \end{ytableau}
        \quad
        \begin{ytableau}
           1&2
        \end{ytableau}
        \quad
        \begin{ytableau}
            1&2\\
            3
        \end{ytableau}
        \quad
        \begin{ytableau}
            1\\
            3
        \end{ytableau}
        \quad
        \begin{ytableau}
            1\\
            3
        \end{ytableau}
        \quad
        \begin{ytableau}
            1&6\\
            3
        \end{ytableau}
        \quad
        \begin{ytableau}
            1 &3
        \end{ytableau}
        \quad
        \begin{ytableau}
            1 &3 &8
        \end{ytableau}
        \quad
        \begin{ytableau}
            1 & 3  &8\\
            9
        \end{ytableau}
    \end{align*}
    to obtain $T=\begin{ytableau} 1&3&8\\ 9\end{ytableau}$ and $I = 
    \begin{pmatrix}
        4 & 5 & 7  \\
        2 & 5 & 6 
    \end{pmatrix}$ such that $\Phi^{BC}_1(G)=(T,I)$. We have
    \begin{align*} 
    \bar{I}=\begin{pmatrix}
        2 & 4 & 5 & 6 & 7  \\
        4 & 2 & 5 & 7 & 6 
    \end{pmatrix}
    \qquad \text{and}\qquad 
     \tab(\bar{I})=\begin{ytableau}
            2 & 4  \\
        5 & 7  \\
        6\\
        \end{ytableau}.
    \end{align*}
    Column reading word of $\tab(\bar{I})$ is $65274$ and we have
    \begin{align*}
         T\leftarrow 652=\begin{ytableau}
            1&3&8\\
            9
        \end{ytableau}
        \leftarrow
        652=\begin{ytableau}
            1&2&5\\
            3\\
            6\\
            8\\
            9
        \end{ytableau}
        \qquad \text{and} \qquad \begin{ytableau}
            1&2&5\\
            3\\
            6\\
            8\\
            9
        \end{ytableau}\leftarrow 74=\begin{ytableau}
            1&2&4&7\\
            3 &5\\
            6 \\
            8 \\
            9
        \end{ytableau}.
    \end{align*}
 Therefore we obtain
 \begin{align*}
        P=\begin{ytableau}
            1&2&4&7\\
            3 &5\\
            6 \\
            8 \\
            9
        \end{ytableau}\qquad \text{and}\qquad
        Q=\begin{ytableau}
            \  &\  &\ &2\\
            \  &2\\
            1 \\
            1 \\
            1
        \end{ytableau}.
    \end{align*}
    where $\Phi_2^{BC}(T,I)=(P,Q)$.
\end{example}

Now, we state a generalization of \cite[Theorem 9.4]{Sundaram1986}.

\begin{proposition}\label{prop: GSOT LR}
   Let $G \in \GSOT(\lambda, n)$, and let $(P, Q) = \Phi^{BC}(G)$. Then $c(G) \leq g$ if and only if the following conditions hold:
   \begin{itemize}
       \item The entries $(2i+1)$ in $Q$ appear in the $(m+i)$-th column or to the left.
       \item The entries $(2i)$ in $Q$ appear in the $(m+i-k)$-th column or to the left.
       \item $\lambda_1 \leq m-k$.
   \end{itemize} 
   Here $2g = 2m - k$, where $m$ is an integer and $k = 0$ or $1$.
\end{proposition}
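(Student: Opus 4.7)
The plan is to trace the bijection $\Phi^{BC} = \Phi_2^{BC} \circ \Phi_1^{BC}$ carefully and translate the $g$-bounded condition on $G$ into combinatorial constraints on entries of $Q$. First I would unpack $c(G) \leq g$ using Definition~\ref{def: gssot}: since each intermediate $T^{(j)}$ shares its shape with $G_j$, the condition becomes $G_j^1 \leq \lfloor g \rfloor$ for all add and remove steps, together with the stricter $G_j^1 \leq \lfloor g - \tfrac{1}{2} \rfloor$ for all stay steps. The parity parameter $k \in \{0,1\}$ distinguishes integer $g$ (where stay steps impose a strictly stronger bound) from half-integer $g$ (where all steps obey the same integer bound); the three constraints in the proposition package both cases uniformly.

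Next I would analyze how $\max_j T^{(j)}_1$ is encoded in the pair $(T, I)$. Add steps lengthen the first row by one, remove steps from the first row shorten it by one, and stay steps leave it unchanged; in $\Phi_1^{BC}$, remove steps produce off-diagonal columns $\binom{j}{i}$ ($j > i$) of $I$ while stay steps produce diagonal columns $\binom{j}{j}$. The symmetrization doubles off-diagonal columns but keeps diagonal columns single in $\bar{I}$, and this asymmetry is what ultimately drives the parity distinction between even- and odd-indexed columns of $\tab(\bar{I})$, and hence between labels $2i$ and $2i+1$ in $Q$. By tracing the row-insertions and row-deletions that make up $\Phi_1^{BC}$, I would express $\max_j T^{(j)}_1$ as a function of $T$ and appropriate initial segments of the column reading word $w$ of $\tab(\bar{I})$.

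For the second half of the bijection, I would apply Lemma~\ref{lem: useful lemma} to decompose the column-by-column insertion $T \leftarrow w$ into contributions from odd-indexed versus even-indexed columns of $\tab(\bar{I})$. Cells in $Q$ labeled $r$ are created exactly during the insertion of the $r$-th column of $\tab(\bar{I})$, so the maximum column position of such a cell records how far those contributions extended the first row of an intermediate tableau. The identities $\alpha(\tab(\bar{I})^{\leq 2k+1}) = \alpha(\tab(\bar{I}))^{\leq k+1}$ and $\beta(\tab(\bar{I})^{\leq 2k}) = \beta(\tab(\bar{I}))^{\leq k}$ give precisely the separation needed to match the rightmost positions of labels $2i+1$ (bounded by $m+i$) and $2i$ (bounded by $m+i-k$) to the intermediate first-row bounds arising from non-stay and stay steps respectively. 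The constraint $\lambda_1 \leq m-k$ then corresponds to the final-step bound $G_n^1 = \lambda_1$, adjusted by parity.

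The main obstacle is the bookkeeping that pinpoints which intermediate first-row value $T^{(j)}_1$ is reflected by the rightmost cell of $Q$ labeled $r$, for each $r$. This requires reconciling the dynamic left-to-right construction of $\Phi_1^{BC}$ with the static column-by-column insertion of $\Phi_2^{BC}$; Lemma~\ref{lem: useful lemma} is the key algebraic tool, as it reduces column restrictions of the mixed object $\tab(\bar{I})$ to cleaner column restrictions of the pair $(\alpha, \beta)$. Both cases $k=0$ and $k=1$ must be handled in parallel, which naturally packages the conditions into the unified bound $d_r \leq \lfloor r/2 + g \rfloor$ for all $r \geq 1$ together with $\lambda_1 \leq \lfloor g \rfloor$, where $d_r$ denotes the maximum column index of a cell labeled $r$ in $Q$.
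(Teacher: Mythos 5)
Your outline points in the same general direction as the paper's argument (pass through $\Phi^{BC}_1$, then control the insertion $T\leftarrow \tab(\bar I)$ column by column using Lemma~\ref{lem: useful lemma}), but two essential ingredients are missing, and at exactly the places you label ``bookkeeping'' the real work lies.

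First, you never establish the bridge between the dynamic condition $c(G)\le g$ and the static data $(T,I)=\Phi_1^{BC}(G)$. Your plan is to ``express $\max_j T^{(j)}_1$ as a function of $T$ and appropriate initial segments of the column reading word of $\tab(\bar I)$,'' but no such direct formula is available: the intermediate shapes $G_j$ arise during the forward construction, whereas the insertions you want to control are performed into the \emph{final} tableau $T$ after the construction is finished. The paper's route is a genuine induction on the number of columns of $I$ (its generalization of Sundaram's Lemma 9.3), proving that $c(G)\le g$ is equivalent to the two bounds $(T\leftarrow \tab(I))_1\le m$ and $(T\leftarrow \tab(\hat I))_1\le m-k$; the parity split between diagonal columns of $I$ (stay steps) and off-diagonal columns (remove steps) is handled inside that induction, not by inspection. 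Without this equivalence, your translation of the $g$-bounded condition has nothing to attach to.

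Second, Lemma~\ref{lem: useful lemma} alone does not give ``precisely the separation needed.'' It is an identity of tableaux, $\alpha(Y^{\le 2i+1})=\alpha(Y)^{\le i+1}$ and $\beta(Y^{\le 2i})=\beta(Y)^{\le i}$, and says nothing quantitative about first-row lengths after inserting these truncations into $T$. What the argument actually requires is the comparison $(T\leftarrow Y^{\le 2i+1})_1-(T\leftarrow \alpha(Y)^{\le i+1})_1 = i$ (with an inequality in general and an equality criterion), and the analogous statement for $\beta$; this is a separate lemma in the paper, proved via Greene's theorem on longest increasing subsequences of the concatenated words coming from $\bar I$ versus $I$. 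Only by combining that quantitative lemma with Lemma~\ref{lem: useful lemma} and the inductive equivalence above do the column bounds $m+i$ and $m+i-k$ on the entries $2i+1$ and $2i$ of $Q$ fall out. As written, your proposal asserts the conclusion of both missing lemmas rather than proving them.
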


In Example \ref{ex:r of rcr}, we have $c(G)=3$. We can check that $1$'s in $Q$ appear in the third column or to the left and $2$'s in $Q$ appear in the fourth column or to the left.

To prove Proposition \ref{prop: GSOT LR}, we establish the following two lemmas. In particular, Lemma \ref{lem: Sundarma 9.3} generalizes \cite[Lemma 9.3]{Sundaram1986}. For a tableau $T$, we denote $(T)_1$ as the length of the first row of the shape of $T$. Additionally, for two semistandard Young tableaux $P$ and $Q$, the notation $P \leftarrow Q$ represents $P \leftarrow w$, where $w$ is a column reading word of $Q$. 

\begin{lem}\label{lem: Sundarma 9.3}
    For $G \in \GSOT(\lambda,n)$, let $(T,I)=\Phi^{BC}_{1}(G)$ and denote  $I$ by 
    $\begin{pmatrix}
        j_1 & \dots & j_r \\
        i_1 & \dots & i_r 
    \end{pmatrix}$ 
    and  $\hat{I}$ by 
    $\begin{pmatrix}
        c_1 & \dots & c_s \\
        d_1 & \dots & d_s 
    \end{pmatrix}$. 
    Consider tableaux $Z^{(1)}=(T \leftarrow i_r \cdots i_1)$ and $Z^{(2)}=(T\leftarrow d_s\dots d_1)$. Then $c(G) \leq g$ if and only if $(Z^{(1)})_1 \leq m$ and  $(Z^{(2)})_1\leq m-k$, where we represent $2g=2m-k$ for $k=0$ or $1$ and an integer $m$.
\end{lem}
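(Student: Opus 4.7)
The plan is to induct on $n$, and along the way establish the two explicit identities
\[
(Z^{(1)})_1 = \max\bigl(M_A,\ M_B + 1\bigr), \qquad (Z^{(2)})_1 = M_A,
\]
where $M_A := \max_{i \text{ in case A}}(\nu_i)_1$ ranges over steps with $|G_i / G_{i-1}| = 1$ or $|G_{i-1} / G_i| = 1$ (so $c(T_i) = (\nu_i)_1$), and $M_B := \max_{i \text{ in case B}}(\nu_i)_1$ ranges over steps with $G_i = G_{i-1}$ (so $c(T_i) = (\nu_i)_1 + \tfrac12$). Since $c(G) = \max(M_A,\, M_B + \tfrac12)$, splitting on whether $g$ is an integer ($k=0, m=g$) or a half-integer ($k=1, m=g+\tfrac12$) and using that each $(\nu_i)_1$ is an integer, these two identities translate into the claimed biconditional by elementary arithmetic.

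For the induction, fix $G' = (G_0, \dots, G_{n-1})$ with $(T', I') = \Phi_1^{BC}(G')$ and corresponding $Z'^{(1)}, Z'^{(2)}, M_A', M_B'$, then split on the type of the $n$-th step. If it is an insertion ($|G_n/G_{n-1}| = 1$), then $I = I'$ and $T$ is $T'$ with $n$ placed in the new cell; whether this affects first rows depends on whether the new cell lies in row $1$. If it is a deletion ($|G_{n-1}/G_n| = 1$) with bumped letter $x$, then $I = I' \cup \binom{n}{x}$ with $T \leftarrow x = T'$, and since $x < n$ the column persists in $\hat I$, so $Z^{(1)} = Z'^{(1)}$ and $Z^{(2)} = Z'^{(2)}$ outright. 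If it is case B ($G_n = G_{n-1}$), then $T = T'$ and the new column $\binom{n}{n}$ is removed in $\hat I$, hence $Z^{(2)} = Z'^{(2)}$; meanwhile $Z^{(1)}$ becomes $(T \leftarrow n) \leftarrow i'_{r'} \cdots i'_1$, and $T \leftarrow n$ appends $n$ at the end of row $1$.

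The key technical ingredient is the following row-insertion lemma: if $\widetilde T$ is obtained from $T'$ by appending an entry $n$ at the end of row $1$ that is strictly larger than every entry in $T'$ and every letter of a word $w$, then
\[
(\widetilde T \leftarrow w)_1 = \max\bigl((T' \leftarrow w)_1,\ (\widetilde T)_1\bigr).
\]
One verifies this by tracing the insertions: each $w_i$ either bumps an entry already in $T'$'s row $1$, in which case row $1$ of $\widetilde T \leftarrow w_i$ agrees with that of $T' \leftarrow w_i$ together with a trailing $n$; or would have been appended in $T'$, in which case it bumps $n$ down to row $2$ and the two first rows coincide from then on. Applied to the insertion-into-row-$1$ and case~B scenarios, this yields $(Z^{(1)})_1 = \max((Z'^{(1)})_1, (G_n)_1)$ and $\max((Z'^{(1)})_1, (G_{n-1})_1 + 1)$ respectively; combined with the inductive formula for $(Z'^{(1)})_1$, this produces the updated $(Z^{(1)})_1 = \max(M_A, M_B + 1)$. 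The argument for $(Z^{(2)})_1 = M_A$ is parallel.

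The main obstacle will be verifying that in the cases where $(Z^{(1)})_1, (Z^{(2)})_1$ do not visibly change (deletion and insertion into row $\geq 2$), the maxima $M_A, M_B$ do not increase beyond the inductive values. This reduces to the bookkeeping observation $(G_{n-1})_1 \leq \max(M_A', M_B' + 1)$, which holds because the first row of $G_j$ only grows via case-A row-$1$ insertions; any such step contributes $(G_j)_1$ to $M_A'$, whence $(G_{n-1})_1 \leq M_A'$ once any insertion has occurred. With this bookkeeping together with the row-insertion lemma in hand, the induction closes cleanly.
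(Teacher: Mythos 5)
Your proof is correct, but it takes a genuinely different route from the paper. The paper inducts on the number $r$ of columns of $I$, peeling off the last column (splitting on $j_r=i_r$ versus $j_r>i_r$, i.e.\ restricting $G$ to the prefix before $j_r$) and arguing the two implications of the inequality directly. You instead induct on $n$ and prove the sharper exact identities $(Z^{(1)})_1=\max(M_A,M_B+1)$ and $(Z^{(2)})_1=M_A$, where $M_A$ (resp.\ $M_B$) records the running maximum of $\nu_1$ over the add/delete (resp.\ stationary) steps, so that the stated biconditional falls out by pure arithmetic from $c(G)=\max(M_A,M_B+\tfrac12)$. Your key tool, the append-a-maximal-letter lemma $(\widetilde T\leftarrow w)_1=\max((T'\leftarrow w)_1,(\widetilde T)_1)$, is correct (the trailing maximal entry either survives at the end of row $1$ or is bumped to row $2$, after which the two first rows evolve identically), and the case analysis closes: in the deletion case $T\leftarrow x=T'$ with the new column inserted first gives $Z^{(i)}=Z'^{(i)}$ verbatim, and in the deletion/row-$\geq 2$-insertion cases the needed bookkeeping is precisely $(G_{n-1})_1\leq M_A'$ (not just $\leq\max(M_A',M_B'+1)$), which you correctly justify since the first row can only reach its current length through a row-$1$ insertion, a case-A step contributing that value to $M_A'$; one should also note the degenerate situation where no case-A step has yet occurred, where $(G_{n-1})_1=0$ and the claim is vacuous. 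What each approach buys: the paper's induction is shorter because it only tracks inequalities, while yours yields an equality-type refinement (closer in spirit to Sundaram's original Lemma 9.3) that pinpoints exactly where the half-integer shift for stationary steps, i.e.\ the parity parameter $k$, enters, at the cost of carrying the extra invariants $M_A,M_B$ through the induction.
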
 

\begin{proof}
    Let $G = (G_0, \dots, G_n)$. We proceed by induction on $r$. The base case $r = 0$ is trivial. Now assume $r \geq 1$, and that the statement holds for $r - 1$. Denote $G' = (G_0, \dots, G_{j_r - 1})$.

    First, consider the case $k = 0$, i.e., $g = m \in \mathbb{Z}$, and assume $c(G) \leq m$. 

    (Case 1: $j_r = i_r$) 
      In this case, $G^{(j_r - 1)} = G^{(j_r)}$. Since $(G^{(j_r - 1)})_1 > m - 1$ would imply $c(G) \geq m + \frac{1}{2}$, we must have $(G^{(j_r - 1)})_1 \leq m - 1$. Consequently, $(G^{(j_r - 1)} \leftarrow i_r)_1 \leq m$. By the induction hypothesis, $(G^{(j_r - 1)} \leftarrow i_{r - 1} \cdots i_1)_1 \leq m$. Since $i_r$ is the largest among $\{i_1, \dots, i_r\}$, it follows that $(G^{(j_r - 1)} \leftarrow i_r \cdots i_1)_1 \leq m$.  
      Now, note that $T$ contains $G^{(j_r)}$, and $T / G^{(j_r)}$ consists of letters strictly larger than $j_r$. By the assumption $c(G) \leq m$, we conclude $(T)_1 \leq m$, and therefore, $(T \leftarrow i_r \cdots i_1)_1 \leq m$.  
      Similarly, by the induction hypothesis, $(G^{(j_r - 1)} \leftarrow d_s \cdots d_1)_1 \leq m$, and the same reasoning gives $(T \leftarrow d_s \cdots d_1)_1 \leq m$.  

    (Case 2: $j_r > i_r$)  
      Here, $G^{(j_r - 1)} = (G^{(j_r - 1)} \leftarrow i_r)$. Combining this with the induction hypothesis, we conclude $(G^{(j_r)} \leftarrow i_r \cdots i_1)_1 \leq m$ and $(G^{(j_r)} \leftarrow d_s \cdots d_1)_1 \leq m$ (note that $d_s = i_r$ in this case). By the same reasoning, $(T \leftarrow i_r \cdots i_1)_1 \leq m$ and $(T \leftarrow d_s \cdots d_1)_1 \leq m$.

    Conversely, assume $(Z^{(1)})_1 \leq m$ and $(Z^{(2)})_1 \leq m$.  
    If $j_r = i_r$, we must have $(G^{(j_r)})_1 \leq m - 1$, as $(G^{(j_r)} \leftarrow i_r)_1 > m$ would contradict $(Z^{(1)})_1 \leq m$. By the induction hypothesis, $(G^{(j_r - 1)} \leftarrow i_{r - 1} \cdots i_1)_1 \leq m$ and $(G^{(j_r - 1)} \leftarrow d_{s - 1} \cdots d_1)_1 \leq m$, so $c(G') \leq m$. Combined with $(G^{(j_r)})_1 \leq m - 1$, we conclude $c(G) \leq m$.  
    If $j_r > i_r$, we trivially have $(G^{(j_r)})_1 \leq m$. Since $(G^{(j_r - 1)} \leftarrow i_{r - 1} \cdots i_1)_1 = (G^{(j_r)} \leftarrow i_r \cdots i_1)_1 \leq (Z^{(1)})_1 \leq m$ and $(G^{(j_r - 1)} \leftarrow d_s \cdots d_1)_1 \leq (Z^{(2)})_1 \leq m$, the induction hypothesis gives $c(G') \leq m$. Combining this with $(G^{(j_r)})_1 \leq m$, we conclude $c(G) \leq m$.

    The proof for the case $k = 1$, i.e., $g \in \mathbb{Z}+\frac{1}{2}$, follows similarly.
\end{proof}

\begin{lem}\label{lem: gsot aux}
    Let $T$ be a semistandard Young tableau with distinct entries and $P$ be any semistandard Young tableau. 
    If $(T)_1$ is an odd number $2k+1$, then $(P\leftarrow T)_1-(P\leftarrow \alpha(T))_1\leq k$ and equality holds when $(P\leftarrow \alpha(T))_1-(P\leftarrow \alpha(T)^{\leq k})_1>0$.
    If $(T)_1$ is an even number $2k$, then $(P\leftarrow T)_1-(P\leftarrow \beta(T))_1\leq k$ and equality holds when $(P\leftarrow \beta(T))_1-(P\leftarrow \beta(T)^{\leq k-1})_1>0$.
\end{lem}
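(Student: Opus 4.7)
The plan is induction on $(T)_1$, handling the odd and even cases in parallel, with base case $(T)_1 = 0$ trivial (since $T = \alpha(T) = \beta(T) = \emptyset$ and $k = 0$). The elementary fact driving the induction is that row-inserting the bottom-to-top reading of a single column of length $\ell$ (which is a strictly decreasing word) into any tableau produces a shape that differs by a vertical strip of size $\ell$; in particular, the first row grows by at most one. Hence if $T = T' \sqcup c$ with $c$ the rightmost column of $T$, then $(P \leftarrow T)_1 \leq (P \leftarrow T')_1 + 1$.

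For the odd case $(T)_1 = 2k+1$, I take $T' = T^{\leq 2k}$, so $(T')_1 = 2k$ and the even-case inductive hypothesis gives $(P \leftarrow T')_1 \leq (P \leftarrow \beta(T'))_1 + k$. By Lemma \ref{lem: useful lemma}, $\beta(T') = \beta(T^{\leq 2k}) = \beta(T)^{\leq k}$, and the other half yields $\alpha(T) = \alpha(T)^{\leq k+1}$, so $\alpha(T)$ has at most $k+1$ columns. Combining, $(P \leftarrow T)_1 \leq (P \leftarrow \beta(T)^{\leq k})_1 + k + 1$. The crucial sub-claim, which converts this into the desired $(P \leftarrow T)_1 \leq (P \leftarrow \alpha(T))_1 + k$, is the inequality
\[
    (P \leftarrow \alpha(T))_1 \geq (P \leftarrow \beta(T)^{\leq k})_1 + 1
\]
in the case that the rightmost column of $T$ actually contributes first row growth; when it does not, the extra slack from the unused unit handles the bound via a direct comparison. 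This sub-claim is established by a structural analysis of the two-line arrays $I$ and $\hat{I}$ underlying $\alpha(T)$ and $\beta(T)$: the $(k+1)$-th column of $\alpha(T)$ beyond $\beta(T)^{\leq k}$ encodes the diagonal columns of $I$, and this encoding forces the first row of $(P \leftarrow \alpha(T))$ to grow precisely when the rightmost column of $T$ forces first row growth on the insertion side. The even case $(T)_1 = 2k$ proceeds analogously by writing $T = T^{\leq 2k-1} \sqcup c$ and invoking the odd case on $T^{\leq 2k-1}$ together with $\alpha(T^{\leq 2k-1}) = \alpha(T)^{\leq k}$ from Lemma \ref{lem: useful lemma}. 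The equality statement in each case is obtained by carefully tracking when each single-unit growth is tight: the hypothesis on the strict increase in $(P \leftarrow \alpha(T))_1$ past $(P \leftarrow \alpha(T)^{\leq k})_1$ (and its $\beta$-analogue in the even case) forces all the relevant inequalities to saturate simultaneously.

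The main obstacle is the crucial sub-claim comparing $(P \leftarrow \alpha(T))_1$ and $(P \leftarrow \beta(T)^{\leq k})_1$: these two insertions use distinct but closely related tableaux (arising from the upper triangular and strict upper triangular parts of the symmetric two-line array $\bar{I}$), and establishing the precise combinatorial relationship---likely via Greene's theorem applied to the respective column reading words, or by a secondary induction on the number of diagonal columns of $I$---is the most delicate step of the proof.
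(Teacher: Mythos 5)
There is a genuine gap, and in fact the bridging sub-claim on which your induction rests is false as stated. Take $T$ to be the single row with entries $1,2,3$ (so $(T)_1=3$, $k=1$); here $I=\begin{pmatrix}2&3\\2&1\end{pmatrix}$, $\alpha(T)$ is the row $1\,2$ and $\beta(T)$ is the single cell $1$. Let $P$ be the single row $10\,20$. Then $(P\leftarrow T)_1=3$ while $(P\leftarrow T^{\leq 2})_1=2$, so the rightmost column of $T$ does contribute first-row growth; yet $(P\leftarrow \alpha(T))_1=2=(P\leftarrow \beta(T)^{\leq k})_1$, contradicting your claimed inequality $(P\leftarrow \alpha(T))_1\geq (P\leftarrow \beta(T)^{\leq k})_1+1$. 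The lemma itself still holds in this example only because the \emph{other} link in your chain, the inductive bound $(P\leftarrow T^{\leq 2k})_1\leq (P\leftarrow\beta(T^{\leq 2k}))_1+k$, happens to be slack. So what your plan really needs is that the column-by-column bound and the inductive bound can never be simultaneously tight, and establishing that is exactly as hard as the lemma; it is not supplied by the "structural analysis of $I$ and $\hat I$" you gesture at, nor is the equality statement, which you only address by "careful tracking."

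By contrast, the paper's proof is direct and non-inductive: writing $\bar I=\begin{pmatrix}a_1&\dots&a_s\\ b_1&\dots&b_s\end{pmatrix}$ with $\tab(\bar I)=T$, it applies Greene's theorem to the concatenated words, so that $(P\leftarrow T)_1$ and $(P\leftarrow\alpha(T))_1$ are lengths of longest increasing subsequences of $wb_s\cdots b_1$ and $wi_r\cdots i_1$ respectively. Any increasing subsequence can use at most $k$ letters from $\{j_1,\dots,j_r\}\setminus\{i_1,\dots,i_r\}$, since $k+1$ of them would produce an increasing subsequence $(i_{u_{k+1}},\dots,i_{u_1},j_{u_1},\dots,j_{u_{k+1}})$ of $b_s\cdots b_1$ of length $2k+2$, contradicting $(T)_1=2k+1$; deleting those letters gives the bound by $k$. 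For the equality, the hypothesis forces the terminal entry $M$ of an optimal subsequence for $\alpha(T)$ to lie in column $k+1$, whence one extracts $k$ pairs $(i_{z_t},j_{z_t})$ and appends $j_{z_k},\dots,j_{z_1}$ to lengthen the subsequence by exactly $k$. If you want to salvage your inductive framework, you would essentially have to reprove this pairing argument inside the "simultaneous tightness" analysis, so the direct route is the one to follow.
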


\begin{proof}
    We will prove the case when $(T)_1$ is an odd number $2k+1$; the proof for the even case follows similarly.   
    We take $I= 
    \begin{pmatrix}
        j_1 & \dots & j_r \\
        i_1 & \dots & i_r 
    \end{pmatrix}
    \in \TL(r)$ such that $\tab(\bar{I})=T$ and denote $\bar{I}=
    \begin{pmatrix}
        a_1 & \dots & a_s \\
        b_1 & \dots & b_s 
    \end{pmatrix}$. 
    Let $w$ be a word such that $T=\emptyset\leftarrow w$, then $(P\leftarrow T)=(\emptyset\leftarrow w b_s\dots b_1)$. 
    By Greene's Theorem \cite[A1.1.1]{EC2}, $(P\leftarrow T)_1$ equals the length of the longest increasing subsequence of $w b_s\dots b_1$ and denote this subsequence as $(w_{c_1},\dots,w_{c_m},b_{d_1},\dots,b_{d_{\ell}})$.
    If the set $\{b_{d_1},\dots,b_{d_{\ell}}\}$ contains more than $(k+1)$-many elements in $\{j_1,\dots,j_r\}\setminus\{i_1,\dots,i_r\}$ denoted by $j_{u_{1}}<\dots<j_{u_{k+1}}$ then we have the length $(2k+2)$ increasing subsequence $(i_{u_{k+1}} , \dots , i_{u_1} , j_{u_1} , \dots,j_{u_{k+1}})$ of the word $b_s\dots b_1$. 
    This is the contradiction as  $(T)_1=2k+1<2k+2$. 
    Therefore the  $\{b_{d_1},\dots,b_{d_{\ell}}\}$ contains at most $k$-many elements in $\{j_1, \dots ,j_r\}\setminus\{i_1,\dots,i_r\}$ and removing these yields an increasing subsequence of $w i_r\dots i_1$. Hence, we conclude $(P\leftarrow T)_1-(P\leftarrow \alpha(T))_1\leq k$.

    Next, note that $\alpha(T)_1 = k+1$ (see Lemma~\ref{lem: shape alpha beta}). Denote the reading word of $\alpha(T)$ as $C_1 \dots C_{k+1}$, where $C_i$ is the word of the $i$-th column. Let $M$ be the rightmost entry in the first row of $(P \leftarrow C_1 \dots C_{k+1})$. The condition 
    $
    (P \leftarrow C_1 \dots C_{k+1})_1 - (P \leftarrow C_1 \dots C_k)_1 > 0
    $  
    implies that $M$ belongs to $C_{k+1}$. Among the longest increasing subsequences of $w i_r \dots i_1$, let $u = (u_1, \dots, u_y)$ such that $u_y = M$.  

    Since $M$ is in $C_{k+1}$, during the insertion process $\emptyset \leftarrow i_r \dots i_1$, $M$ is placed in the $(k+1)$-th column of the first row (otherwise, $M$ could not remain in the $(k+1)$-th column after insertion). Hence, among the longest subsequences of $i_r \dots i_1$, we can select $(i_{z_1}, \dots, i_{z_k}, M)$. Appending the sequence $(j_{z_k}, \dots, j_{z_1})$ to $u$ forms an increasing subsequence of $w b_s \dots b_1$. Therefore, the equality 
    $
    (P \leftarrow T)_1 - (P \leftarrow \alpha(T))_1 = k
    $ 
    holds.   
\end{proof}

\begin{proof}[Proof of Proposition \ref{prop: GSOT LR}]
    We retain the notations from Lemma \ref{lem: Sundarma 9.3} and set $Y = \tab(\bar{I})$.  First, assume $c(G) \leq g$.  
    Suppose there exists $(2i+1)$ in $Q$ appearing in the $(m+i+1)$-th column or further to the right. This is equivalent to $(T \leftarrow Y^{\leq 2i+1})_1 > m + i$. By Lemma \ref{lem: useful lemma} and \ref{lem: gsot aux}, we have 
    $
    (T \leftarrow \alpha(Y)^{\leq i+1})_1 > m,
    $
    which implies 
    $
    (Z^{(1)})_1 = (T \leftarrow \alpha(Y))_1 > m.
    $ 
    This contradicts Lemma \ref{lem: Sundarma 9.3}.  
 A similar argument shows that if there exists $(2i)$ in $Q$ appearing in the $(m + i + 1 - k)$-th column or further to the right, we also reach a contradiction. Finally, the condition $\lambda_1 \leq m - k$ holds trivially.

Now assume the three conditions in Proposition \ref{prop: GSOT LR} hold.
    Suppose $(Z^{(1)})_1 > m$, then there exists $i$ such that 
    $
    (T \leftarrow \alpha(Y)^{\leq i})_1 = m$ and $(T \leftarrow \alpha(Y)^{\leq i+1})_1 = m + 1$,
    since $(T)_1 = \lambda_1 \leq m$.  
    By Lemma \ref{lem: useful lemma} and \ref{lem: gsot aux}, we have 
    $
    (T \leftarrow Y^{\leq 2i+1})_1 = (T \leftarrow \alpha(Y)^{\leq i+1})_1 + i = m + i + 1.
    $
    and this contradicts the first condition. Thus, we deduce that $(Z^{(1)})_1 \leq m$.    Similarly, we can show that $(Z^{(2)})_1 \leq m - k$ by applying the same reasoning.  
\end{proof}

\begin{proof}[Proof of Lemma \ref{lem: x=k filter} when $\diamond=\sboxone$ or $\sboxeleven$]
    For $\lambda\in \Par_n$ and $\mu\vdash n$, there exists a bijection $\eta_\mu$ with the following commutative diagram \cite[Proposition 38 and 40]{S05}:   
    $$
    \begin{tikzcd}
        \HW(B_{\mu}(\diamond),\lambda) \arrow[r, "\eta_{\mu}"] \arrow[d, "S^{\diamond}_{\mu}"'] & \bigcup_{\gamma\in P_n^{\diamond}} \bigcup_{\nu} \HW(B_{\mu}(\emptyset),\nu) \times \LR^{t}(\nu / \lambda;\gamma^{t}) \arrow[d, "S^{\emptyset}_{\mu} \times \text{id}"] \\
        \HW(B_{(1^n)}(\diamond),\lambda) \arrow[r, "\eta_{(1^n)}"] & \bigcup_{\gamma\in P_n^{\diamond}} \bigcup_{\nu} \HW(B_{(1^n)}(\emptyset),\nu) \times \LR^{t}(\nu / \lambda;\gamma^{t})
    \end{tikzcd}
    $$
    where $S^{\diamond}_{\mu}$ is the splitting map. Additionally, the following properties hold:
    \begin{itemize}
        \item For $b \in \HW(B_{\mu}(\diamond),\lambda)$, if $\eta_{\mu}(b) = (b', Q)$, then 
        $\frac{|\mu| - |\lambda|}{2} - \frac{|\diamond|}{2} \overline{D}(b) = -\overline{D}(b').$
        \item Identify $\HW(B_{(1^n)}(\sboxone),\lambda)$ with $\GSOT(\lambda,n)$, $\HW(B_{(1^n)}(\sboxtwo),\lambda)$ with $\SOT(\lambda,n)$, and $\HW(B_{(1^n)}(\emptyset),\nu)$ with $\SYT(\nu)$ via the map $\phi_r$. Then $\eta_{(1^n)}$ coincides with $\Phi^{BC}$ when $\diamond = \sboxone$ and its restriction \eqref{eq: gsot to sot restriction} when $\diamond = \sboxtwo$.
    \end{itemize}

    First, assume $\diamond = \sboxone$. 
    By Proposition \ref{prop: GSOT LR}, for any $G \in \GSOT(\lambda,n)$, the value of $c(G)$ can be determined precisely from the transposed LR tableau $Q$ obtained via $\Phi^{BC}(G) = (P, Q)$. 
    For any positive integer $M$, define a subset $\left(\LR^{t}(\nu / \lambda; \gamma^{t})\right)^{\leq M}$ of $\LR^{t}(\nu / \lambda; \gamma^{t})$ such that $c(G) \leq \frac{M}{2}$ if and only if $Q \in \left(\LR^{t}(\nu / \lambda; \gamma^{t})\right)^{\leq M}$, where $\Phi^{BC}(G) = (P, Q)$. 
    Since $\epsilon_0(S^{\sboxone}_{\mu}(b)) = \epsilon_0(b)$, the bijection $\eta_{\mu}$ restricts to:
    $$
    \{b \in \HW(B_{\mu}(\sboxone), \lambda) : \epsilon_0(b) \leq M \} \rightarrow \bigcup_{\gamma \in P_n^{\sboxone}} \bigcup_{\nu} \HW(B_{\mu}(\emptyset), \nu) \times \left(\LR^{t}(\nu / \lambda; \gamma^{t})\right)^{\leq M}.
    $$
    Thus, we have
    \begin{align*}
        q^{||\mu|| + \frac{|\mu| - |\lambda|}{2}} \left( \sum_{\substack{b \in \HW(B_{\mu}(\sboxone), \lambda) \\ \epsilon_0(b) \leq M}} q^{-\frac{|\diamond| \overline{D}(b)}{2}} \right) 
        &= \sum_{\nu} \sum_{b' \in \HW(B_{\mu}(\emptyset), \nu)} q^{||\mu|| - \overline{D}(b')} \left( \sum_{\gamma \in P^{\sboxone}_{n}} |\left(\LR^{t}(\nu / \lambda; \gamma^{t})\right)^{\leq M}| \right) \\
        &= \sum_{\nu} \KL^{A_{n-1}}_{\nu, \mu}(q) \left( \sum_{\gamma \in P^{\sboxone}_{n}} |\left(\LR^{t}(\nu / \lambda; \gamma^{t})\right)^{\leq M}| \right),
    \end{align*}
    where we used 
    $$\sum_{b' \in \HW(B_{\mu}(\emptyset), \nu)} q^{||\mu|| - \overline{D}(b')} = \KL^{A_{n-1}}_{\nu, \mu}(q)$$ 
    from \cite{NY1997}. 
    This completes the proof. The case $\diamond = \sboxtwo$ follows in exactly the same manner.
\end{proof}

\subsubsection{$\diamond=\sboxeleven$}
We begin by summarizing the results from \cite{LS2007}.
For a nonnegative integer vector $\mu$, we denote $B^{D_n}_{\mu}$ for $B^{D_n}(\mu_{\ell}\omega_1)\otimes \dots \otimes B^{D_n}(\mu_{1}\omega_1)$ where each $B^{D_n}(s \omega_1)$ is the simple $D_n$-crystal with the highest weight $s\omega_1$. 
Obviously, for partitions $\mu$ and $\nu$ such that $|\mu| = |\nu|$, there exists a bijection
\[
    \Psi: \HW(B_{\mu}(\emptyset), \nu) \rightarrow \HW(B^{D_n}_{\mu}, \nu),
\]
which is defined by interpreting each $b \in \HW(B_{\mu}(\emptyset), \nu)$ as an element of $B^{D_n}_{\mu}$. 
Next, we define the set $E_{\lambda, \mu}$ as the collection of $b \in B^{D_n}_{\mu}$ that satisfies the following conditions:
\begin{itemize}
    \item $b$ is an $A_{n-1}$-highest weight element, i.e., $e_i(b) = 0$ for $1 \leq i \leq n-1$,
    \item $\wt(b) = \lambda$,
    \item $\wt(\hw(b))$ is a partition $\nu$ such that $|\nu| = |\mu|$.
\end{itemize}

\begin{proposition}\cite[Proposition 35 and 40]{LS2007}\label{prop: LS2007}
    For $\lambda\in \Par_n$ and $\mu\vdash n$, there exists a bijection $\theta_{\mu}: \HW(B_{\mu},\lambda)\rightarrow E_{\lambda^{*},\mu}$ where $\lambda^{*}:=(-\lambda_n,\dots,-\lambda_2,-\lambda_1)$\footnote{In \cite{LS2007} they used $D_n^{\dagger}$ convention instead of the usual $D_n$.
    Given $\nu$ is a consequence of the modification. } and a classical crystal embedding $L_{\mu}: B^{D_n}_{\mu}\rightarrow B^{D_n}_{(1^n)}$ with a commutative diagram 
    \[
    \begin{tikzcd}
        \HW(B_{\mu}(\hspace{0.4mm}\sboxeleven\hspace{0.4mm}),\lambda) \arrow[r, "\theta_{\mu}"] \arrow[d, "S_{\mu}"'] & E_{\lambda^{*},\mu} \arrow[d, "L_{\mu}"] \\
        \HW(B_{(1^n)}(\hspace{0.4mm}\sboxeleven\hspace{0.4mm}),\lambda) \arrow[r, "\theta_{(1^n)}"] & E_{\lambda^{*},(1^n)}.
    \end{tikzcd}
    \]
    
    Moreover, for $b\in\HW(B_{\mu}(\hspace{0.4mm}\sboxeleven\hspace{0.4mm}),\lambda)$ we have  
    \begin{equation}
        \overline{D}(b)=\overline{D}(b')+\frac{|\mu|-|\lambda|}{2}
    \end{equation}
    where $b'\in \HW(B_{\mu}(\emptyset))$ such that $\Psi(b')=\hw(\theta_{\mu}(b))$.
\end{proposition}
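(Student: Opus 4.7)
The plan is to establish the three components of the proposition---the bijection $\theta_{\mu}$, the commutative diagram with the classical embedding $L_{\mu}$, and the energy identity---by reducing each to the base case $\mu=(1^n)$ via the splitting maps on both sides.

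For the base case, I identify the type $B^{(1)}_N$ crystal $B^{1,1}(\hspace{0.4mm}\sboxeleven\hspace{0.4mm})$ with the $D_n$ vector representation $B^{D_n}(\omega_1)$ at the level of underlying sets. Since $N$ is sufficiently large, any $b\in\HW(B_{(1^n)}(\hspace{0.4mm}\sboxeleven\hspace{0.4mm}),\lambda)$ uses only letters from $\{1,\dots,n,\bar n,\dots,\bar 1\}$, and on this common alphabet the $B_N$-crystal operators $e_i,f_i$ for $1\leq i\leq n-1$ agree with the $D_n$-crystal operators. Reinterpreting $b$ accordingly yields an $A_{n-1}$-highest weight element in $B^{D_n}_{(1^n)}$; the weight switches from $\lambda$ to $\lambda^*$ due to the opposing sign conventions on the shared alphabet. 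The map $L_\mu$ is taken to be the standard classical $D_n$-splitting, which unfolds $v_1\cdots v_s\in B^{D_n}(s\omega_1)$ to $v_s\otimes\cdots\otimes v_1$.

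For general $\mu$, define $\theta_\mu$ by requiring the stated commutativity $L_\mu\circ\theta_\mu=\theta_{(1^n)}\circ S_\mu$. Well-posedness follows from the observation that the $\bar 1\otimes 1$ pairs produced by $\hat S$ in $S_\mu$ correspond, after the alphabet identification, to pairs that are trivially cancelled when forming the $D_n$ unfolding; the weight and highest-weight conditions then guarantee that the image lies in $E_{\lambda^*,\mu}$, and bijectivity is inherited from the base case together with injectivity of the splitting maps. Compatibility with the classical crystal structure on both sides is checked by tracing through the rank-one pieces.

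For the energy identity, by Lemma \ref{lem: splitting preserves energy} it suffices to establish it when $\mu=(1^n)$. There, both $\overline{D}(b)$ and $\overline{D}(b')$ have explicit formulas as sums $\sum_{i<j}(n-i)\overline{H}_{\bullet}(b_{j},b_i)$ using the respective local energy functions $\overline{H}_{\hspace{0.4mm}\sboxeleven\hspace{0.4mm}}$ and $\overline{H}_{\emptyset}$. The word $b'$ is obtained from $b$ by iteratively cancelling $(i,\bar i)$ pairs via the $\red$ map of Section \ref{Sec: Prel}, and each cancellation removes $2$ from $|\lambda|$; the claim thus becomes that each cancellation decreases $\overline{D}(b)-\overline{D}(b')$ by exactly $1$. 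The main obstacle is this last bookkeeping step: one must induct on the number of cancelled pairs, using the $A_{n-1}$-highest weight condition to pin down the positions of successive cancellations and to verify that the local-energy discrepancies---most notably the $\overline{H}_{\hspace{0.4mm}\sboxeleven\hspace{0.4mm}}(\bar 1,1)=2$ contribution and the reordering effects induced under \eqref{eq: corder def}---aggregate to exactly $\tfrac{|\mu|-|\lambda|}{2}$.
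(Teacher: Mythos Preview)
This proposition is not proved in the paper; it is quoted from \cite[Propositions 35 and 40]{LS2007} and used as a black box. The only additional content the paper supplies is the explicit formula for $\theta_{(1^n)}$ (citing \cite[Proposition 33]{LS2007}): the letter $c$ is sent to $\overline{n+1-c}$ and $\bar c$ is sent to $n+1-c$. So there is no ``paper's proof'' to compare your proposal against.

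That said, your sketch has two genuine gaps. First, your description of $\theta_{(1^n)}$ as a direct reinterpretation on the shared alphabet is not the map in question: the actual map involves the twist $c\mapsto\overline{n+1-c}$, not merely a sign convention. Under a naive identification the weight stays $\lambda$, not $\lambda^*$; and more importantly, the third condition defining $E_{\lambda^*,\mu}$---that $\wt(\hw(b))$ be a partition of size $|\mu|$---fails in general without the twist, since passing to the $D_n$-highest weight in $(B^{D_n}(\omega_1))^{\otimes n}$ can land in a component of smaller weight size.

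Second, your account of $b'$ is incorrect. The element $b'$ lives in $\HW(B_\mu(\emptyset))$, so for $\mu=(1^n)$ it has $n$ tensor factors, not fewer; it is obtained by applying $\theta_\mu$, then taking the $D_n$-highest weight $\hw$, then reinterpreting via $\Psi^{-1}$ as a type $A$ element. It is not built from $b$ by deleting $(i,\bar i)$ pairs via $\red$, and your proposed induction on ``number of cancellations'' does not track the actual construction. The energy identity in \cite{LS2007} is proved by comparing the local energy $\overline{H}_{\sboxeleven}$ on $B^{1,1}\otimes B^{1,1}$ with the type $A$ local energy on the $D_n$ side after the twist, together with constancy of the energy on $D_n$-classical components---not by a pair-cancellation bookkeeping.
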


In particular, we provide an explicit description of $\theta_{(1^n)}$ \cite[Proposition 33]{LS2007}.
Given an element $a_n \otimes \dots \otimes a_2 \otimes a_1 \in \HW(B_{(1^n)}(\hspace{0.4mm}\sboxeleven\hspace{0.4mm}), \lambda)$, we associate to each $a_i$ an element $a'_i$ as follows: if $a_i = c$ is an unbarred letter, then set $a'_i = \overline{n+1-c}$, if $a_i = \bar{c}$ is a barred letter, then set $a'_i = n+1-c$. We have $\theta_{(1^n)}(a_n\otimes \dots \otimes a_1)=a'_n\otimes \dots \otimes a'_1$.

Next, we introduce the Plactic monoid relation as described in \cite{Lecouvey2003}.
For an element $w = w_m \otimes \dots \otimes w_2 \otimes w_1 \in B^{D_n}_{(1^m)} = (B^{D_n}(\omega_1))^{\otimes m}$, we identify $w$ with the word $w_m \dots  w_1$. 
Lecouvey demonstrated that two elements $v, w \in B^{D_n}_{(1^m)}$ lie in the same position within two isomorphic classical components of $B^{D_n}_{(1^m)}$ if and only if $v$ and $w$ are related by the following Plactic monoid relation \cite[Definition 3.2.3]{Lecouvey2003}:

$R_1$: If $x\neq \bar{z}$
\begin{equation*}
    xzy\equiv zxy \quad \text{for $x\preceq y \prec z$} \quad \text{and} \quad yzx\equiv yxz \quad \text{for $x\prec y \preceq z$}.
\end{equation*}

$R_2$: If $1\prec x \preceq n$ and $x\preceq y \preceq \bar{x}$
\begin{equation*}
    (x-1)\overline{(x-1)}y\equiv x\overline{x}y \quad \text{and} \quad y\overline{x}x\equiv y(x-1)\overline{x-1}.
\end{equation*}

$R_3$: If $x\preceq n-1$
\begin{align*}
    \begin{cases*}
        n\bar{x}\bar{n}\equiv n\bar{n}\bar{x}\\
        \bar{n}\bar{x}n\equiv\overline{n}n\overline{x}
    \end{cases*}\quad \text{and} \quad
    \begin{cases*}
        xn\overline{n}\equiv nx\overline{n}\\
        x\bar{n}n\equiv\overline{n}xn
    \end{cases*}.
\end{align*}

$R_4$: 
\begin{align*}
    \begin{cases*}
        \bar{n}\bar{n}n\equiv \bar{n}(n-1)\overline{(n-1)}\\
        nn\bar{n}\equiv n(n-1)\overline{(n-1)}
    \end{cases*}\quad \text{and} \quad
    \begin{cases*}
         (n-1)\overline{(n-1)}\bar{n}\equiv n\bar{n}\bar{n}\\
         (n-1)\overline{(n-1)}\equiv\overline{n}nn
    \end{cases*}.
\end{align*}

\begin{rmk} 
    There is a relation $R_5$ in \cite[Definition 3.2.3]{Lecouvey2003}, referred to as the contraction relation. 
    However, since we are only considering two elements $v, w \in B^{D_n}_{(1^m)}$, words of the same length, we do not require the relation $R_5$. 
\end{rmk}

Note that we can naturally identify the two sets $\SOT(\lambda, n)$ and $\HW(B_{(1^n)}(\hspace{0.4mm}\sboxeleven\hspace{0.4mm}), \lambda)$ via the map $\phi_r$ given in Lemma \ref{lem: rind} (3).
From this viewpoint, we can describe $\epsilon_0(b)$ for $b \in \HW(B_{(1^n)}(\hspace{0.4mm}\sboxeleven\hspace{0.4mm}), \lambda)$ in terms of the associated element in $T \in \SOT(\lambda, n)$: Let $T = (\lambda^{(0)}, \dots, \lambda^{(n)}) \in \SOT(\lambda, n)$, then we have
\begin{equation}\label{eq: sot epsilon}
\epsilon_0(b) = 2c(T) = \max(\lambda^{(i)}_1 + \lambda^{(i)}_2 : 0 \leq i \leq n).
\end{equation}

The following lemma is straightforward to verify if we utilize \eqref{eq: sot epsilon}, we leave it as an exercise.

\begin{lem}\label{lem: type D plactic}
    For $b \in \HW(B_{(1^n)}(\hspace{0.4mm}\sboxeleven\hspace{0.4mm}), \lambda)$, let $v$ be an element in $B^{D_n}_{(1^n)}$ that is related to $\theta_{(1^n)}(b)$ by one of the relations $R_1, \dots, R_4$. 
    Then we have $v = \theta_{(1^n)}(b')$ for some $b' \in \HW(B_{(1^n)}(\hspace{0.4mm}\sboxeleven\hspace{0.4mm}), \lambda)$, and moreover we have $\epsilon_0(b) = \epsilon_0(b')$.
\end{lem}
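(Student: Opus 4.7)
My plan is to identify $b$ with its SOT via $\phi_r$ (Lemma~\ref{lem: rind}(3)), so that $b$ corresponds to a sequence of partitions $T = (\lambda^{(0)}, \lambda^{(1)}, \ldots, \lambda^{(n)})$ with $\lambda^{(0)} = \emptyset$, $\lambda^{(n)} = \lambda$, and any two consecutive partitions differing by exactly one cell. The $i$-th letter $a_i$ in the word representation $b = a_n \otimes \cdots \otimes a_1$ is the (unbarred) row index of the cell added when $\lambda^{(i-1)} \subset \lambda^{(i)}$, and the barred row index of the cell removed when $\lambda^{(i-1)} \supset \lambda^{(i)}$. With this setup we apply the identity \eqref{eq: sot epsilon}, $\epsilon_0(b) = \max_{0 \leq j \leq n}(\lambda^{(j)}_1 + \lambda^{(j)}_2)$.

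For the existence of $b'$, I would show that $E_{\lambda^{*},(1^n)}$ is preserved under each of the relations $R_1$--$R_4$. Any such relation relates $\theta_{(1^n)}(b)$ and $v$ via a position-preserving $D_n$-crystal isomorphism between their respective classical components \cite{Lecouvey2003}; hence $\wt(v) = \wt(\theta_{(1^n)}(b)) = \lambda^{*}$, the $D_n$-highest weight $\wt(\hw(v))$ remains a partition of $n$, and $v$ is still an $A_{n-1}$-highest weight element (since $\{e_1, \ldots, e_{n-1}\}$ generates the $A_{n-1}$-subcrystal structure embedded within the $D_n$-crystal structure, and this property is invariant under $D_n$-crystal isomorphism between components). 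Thus $v \in E_{\lambda^{*},(1^n)}$, and the bijectivity of $\theta_{(1^n)}$ from Proposition~\ref{prop: LS2007} yields a unique $b' \in \HW(B_{(1^n)}(\hspace{0.4mm}\sboxeleven\hspace{0.4mm}),\lambda)$ with $\theta_{(1^n)}(b') = v$.

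For the equality $\epsilon_0(b) = \epsilon_0(b')$, I would run a direct case analysis. Each of the relations $R_1$--$R_4$ modifies only three consecutive letters of $\theta_{(1^n)}(b)$; since $\theta_{(1^n)}$ acts letterwise, only three consecutive letters $a_i, a_{i+1}, a_{i+2}$ of $b$ are modified, so only the intermediate partitions $\lambda^{(i)}, \lambda^{(i+1)}$ in the SOT of $b$ may differ from the analogous $\lambda'^{(i)}, \lambda'^{(i+1)}$ in the SOT of $b'$, while the boundary partitions $\lambda^{(i-1)}$ and $\lambda^{(i+2)}$ are preserved. By \eqref{eq: sot epsilon}, it suffices to verify in each case that $\max_{j \in \{i, i+1\}}(\lambda^{(j)}_1 + \lambda^{(j)}_2)$ agrees with $\max_{j \in \{i, i+1\}}(\lambda'^{(j)}_1 + \lambda'^{(j)}_2)$, or that both are dominated by the common boundary value $\max(\lambda^{(i-1)}_1 + \lambda^{(i-1)}_2,\; \lambda^{(i+2)}_1 + \lambda^{(i+2)}_2)$. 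The main obstacle is the bookkeeping of the case split: $R_1$ branches on the strict/weak inequalities among $x, y, z$, while $R_3$ and $R_4$ involve the letters $n, \bar{n}$, whose preimages under $\theta_{(1^n)}$ are $\bar{1}, 1$---that is, cell additions and removals in row $1$, the row that dominates in \eqref{eq: sot epsilon}. For each sub-case one writes down the shapes of the intermediate partitions produced by the prescribed sequence of additions and removals on $\lambda^{(i-1)}$ and performs a short arithmetic check on the first two parts.
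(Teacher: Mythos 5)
Your proposal is correct and, for the heart of the lemma, follows the same route the paper intends: the paper's own ``proof'' is a one-line remark that the statement is straightforward from \eqref{eq: sot epsilon} and is left as an exercise, and your reduction --- identify $b$ with its $\SOT$ via $\phi_r$, note that a relation $R_1$--$R_4$ alters only three consecutive letters, hence only the two intermediate partitions $\lambda^{(i)},\lambda^{(i+1)}$ while $\lambda^{(i-1)},\lambda^{(i+2)}$ are fixed, then compare $\lambda^{(j)}_1+\lambda^{(j)}_2$ case by case --- is precisely that exercise, set up correctly. The one place where you genuinely deviate is the existence of $b'$: instead of checking directly that the transformed word is again a classical highest weight element of weight $\lambda$, you invoke Lecouvey's characterization (plactic-related words occupy the same position in isomorphic classical components), deduce that membership in $E_{\lambda^{*},(1^n)}$ (weight, $A_{n-1}$-highest weightness, and $\wt(\hw(\cdot))$ being a partition of $n$) is preserved, and then use the bijectivity of $\theta_{(1^n)}$ from Proposition \ref{prop: LS2007}. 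This is a clean structural shortcut that eliminates half of the would-be case analysis; its only cost is that it leans on Proposition \ref{prop: LS2007} and the ``same position'' reading of Lecouvey's theorem, both of which the paper already supplies. The remaining $\epsilon_0$ verification is only sketched in your write-up (as in the paper), but your sufficient criterion --- equality of the local maxima or domination by the common boundary value --- is the right one, so nothing essential is missing.
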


\begin{example}
    Consider $T=(\emptyset,\ydiagram{1},\ydiagram{1,1},\ydiagram{1},\emptyset)\in \SOT(\emptyset,n)$ for $n=4$ which goes to $b=\bar{1}\otimes \bar{2}\otimes 2\otimes 1$ via the map $\phi_r$.
    We have 
    \begin{equation*}
        \theta_{(1^n)}(b)=n\otimes (n-1)\otimes \overline{n-1}\otimes \bar{n}\equiv n\otimes n \otimes \bar{n} \otimes\bar{n}
    \end{equation*} 
    via the relation $R_4$.
    Note that for $T'=(\emptyset,\ydiagram{1},\ydiagram{2},\ydiagram{1},\emptyset)\in \SOT(\emptyset,n)$ we have $\theta_{(1^n)}(\phi_r(T'))=n\otimes n \otimes \bar{n} \otimes\bar{n}$. 
    The value $c(T)$ is attained at the partition $\ydiagram{1,1}$ which is $\frac{2}{2}=1$ and $c(T')$ is attained at the partition $\ydiagram{2}$ which is also $\frac{2}{2}=1$.
    We have $c(T)=c(T')$ which is consistent with Lemma \ref{lem: type D plactic}.
\end{example}

\begin{proof}[Proof of Lemma \ref{lem: x=k filter} when $\diamond=\sboxeleven$]
    We retain the notation from Proposition \ref{prop: LS2007}. 
    Let $G_{\nu,\tau}$ denote the set of $A_{n-1}$-highest weight elements $b \in B^{D_n}(\nu)$ such that $\wt(b) = \tau$. 
    There exists a natural bijection
    \[
        W_{\mu}: E_{\lambda^{*},\mu} \to \bigcup_{\nu \vdash n} \HW(B_{\mu}(\emptyset), \nu) \times G_{\nu,\lambda^{*}}
    \]
    which associates to each $b \in E_{\lambda^{*},\mu}$ a pair $(b', c)$.
    Here, $b' \in \HW(B_{\mu}(\emptyset), \nu)$ is such that $\Psi(b') = \hw(b)$, and $c$ is the image of $b$ under the isomorphism between the classical component containing $b$ and $B^{D_n}(\nu)$.
    As $L_{\mu}$ is a classical crystal embedding, for each $\nu$, $L_{\mu}$ induces an embedding 
    \begin{equation*}
        \HW(B_{\mu}(\emptyset),\nu)\times G_{\nu,\lambda^{*}} \rightarrow \HW(B_{(1^n)}(\emptyset),\nu)\times G_{\nu,\lambda^{*}}
    \end{equation*}
    acting as an identity on the second component $G_{\nu,\lambda^{*}}$
    
    Now, consider two elements $b, b' \in \HW(B_{(1^n)}(\hspace{0.4mm}\sboxeleven\hspace{0.4mm}),\lambda)$.
    If the second entries of $W_{(1^{n})} \circ \theta_{(1^{n})}(b)$ and $W_{(1^{n})} \circ \theta_{(1^{n})}(b')$ are identical, then we must have $\theta_{(1^{n})}(b) \equiv\theta_{(1^{n})}(b')$.
    By Lemma \ref{lem: type D plactic}, this implies that $\epsilon_0(b) = \epsilon_0(b')$. 
    Therefore there exists a subset $G^{\leq M}_{\nu, \lambda^{*}} \subset G_{\nu, \lambda^{*}}$ such that $\epsilon_0(b) \leq M$ if and only if $Q \in G^{\leq M}_{\nu,\lambda^{*}}$, where $W_{(1^{n})} \circ \theta_{(1^{n})}(b) = (b', Q)$.
    From this point on, the proof follows identically to the case when $\diamond = \sboxone$ or $\sboxtwo$.
\end{proof}

\subsection{Proof of Theorem \ref{thm: level formula} when $q=1$}\label{sub: level q=1}
We prove Theorem \ref{thm: level formula} at $q=1$. 
By Lemma \ref{lem: rind}, we need to show
   \begin{align}\label{eq: q=1 B}
        |\GSSOT_g(\hat{\lambda},\hat{\mu})|=[x^\mu]s^{B_n}_{\lambda}  \\ 
        \label{eq: q=1 C}
        |\SSOT_g(\hat{\lambda},\hat{\mu})|=[x^\mu]s^{C_n}_{\lambda}    \\
        |\SSROT_g(\hat{\lambda},\hat{\mu})|=[x^\mu]s^{D_n}_{\lambda}   \label{eq: q=1 D}
    \end{align}
where $g$ is given so that $\hat{\lambda}=\oc(\lambda,g)$ and $\hat{\mu}=\oc(\mu,g)$ are nonnegative integer vectors. Note that \eqref{eq: q=1 C} was proved in \cite{Lee2023} by presenting an explicit bijection with a certain set of King tableaux, which is equinumerous to $[x^\mu]s^{C_n}_{\lambda}$ \cite{King75}. It remains to prove \eqref{eq: q=1 B} and \eqref{eq: q=1 D}. For each equation, we proceed separately depending on whether $\lambda$ and $\mu$ are spin weights or not. Recall that we need to take $g\in \mathbb{Z}+\frac{1}{2}$ when $\lambda$ and $\mu$ are spin weights.

Our main ingredients for the proof are Cauchy identities \cite{BG2006} and dual Pieri rules \cite{Okada2016}. 
We set up notations before presenting the proof. 
\begin{definition}
    For integers $0\leq r\leq n$, define polynomials 
    \begin{equation*}
        e^{(n)}_r(x_1,x_2,\dots,x_n) = \sum_{i=0}^{r} \left(\sum_{\substack{S\subseteq[n]\\|S|=i}} \sum_{\substack{S'\subseteq[n]\\|S'|=r-i}} (\prod_{s\in S}x_s \prod_{s'\in S'} x^{-1}_{s'}) \right).
    \end{equation*}
    Expressing $e^{(n)}_r$ in terms of characters for each type, we have
    \begin{align*}
        &e^{(n)}_r=\sum_{i=0}^{r}(-1)^{i}s^{B_n}_{({1^{r-i}})}=\sum_{i=0}^{r/2}s^{C_n}_{({1^{r-2i}})}=\begin{cases*}
            s^{D_n}_{(1^{r})} \qquad &\text{if $r<n$}\\
            s^{D_n}_{(1^{n})}+ s^{D_n}_{(1^{n-1},-1)} \qquad &\text{if $r=n$.}
        \end{cases*}       
    \end{align*}
\end{definition}
From now on, we fix positive integers $g$ and $n$. 
For a partition $\lambda$ inside a rectangle $(g^n)$, i.e., $\ell(\lambda)\leq n$ and $\lambda_1\leq g$, we denote $\tilde{\lambda}$ to be a conjugate partition of $\oc(\lambda,g)$.
For $\lambda\in \Par_n$ we let $m_{\lambda}(x)$ to be an $W$-invariant Laurent polynomial given by $m_{\lambda}(x_1,x_2,\dots,x_n):=\sum_{w\in W}w(x^{\lambda})$ where $W=\mathfrak{S}_n \ltimes (\mathbb{Z}/2\mathbb{Z})^n$ (the Weyl group for type $B_n$ or $C_n$). 

For partitions $\lambda$ and $\mu$, we define
\begin{align*}
    &\kappa^{B}(\lambda,\mu,r;g)=\text{set of $g$-bounded gohs $T$ of length $r$ such that $I(T)=\mu$ and $F(T)=\lambda$},\\
    &\kappa^{C}(\lambda,\mu,r;g)=\text{set of $g$-bounded ohs $T$ of length $r$ such that $I(T)=\mu$ and $F(T)=\lambda$},\\
    &\kappa^{D}(\lambda,\mu,r;g)=\text{set of $g$-bounded rohs $T$ of length $r$ such that $I(T)=\mu$ and $F(T)=\lambda$}.
\end{align*}

\subsubsection{Proof of \eqref{eq: q=1 B} when $\lambda$ and $\mu$ are not spin weights}
We state the Cauchy identity of type $B$ given in \cite{BG2006}.
\begin{thm}\cite[Lemma 6]{BG2006}\label{thm: Cauchy type B}
For indeterminates $x_1,\dots x_n$ and $t_1,\dots,t_g$ we have
    \begin{equation*}
        \sum_{\lambda\subseteq (g^n)}(-1)^{|\tilde{\lambda}|}m_{\lambda}(x)\prod_{i=1}^{n}e^{(g)}_{g-\lambda_i}(t)=\sum_{\lambda\subseteq (g^n)}(-1)^{|\tilde{\lambda}|}s^{B_n}_{\lambda}(x) s^{B_g}_{\tilde{\lambda}}(t).
    \end{equation*}
\end{thm}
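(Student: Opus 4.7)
The plan is to derive this finite Cauchy identity by clearing Weyl denominators on both sides and reducing the problem to a Cauchy--Binet-type determinantal identity. First, I would multiply through by the Weyl denominator $\prod_i(x_i^{1/2}-x_i^{-1/2})\prod_{i<j}(x_i-x_j)(x_ix_j-1)$ of type $B_n$ (and record a parallel denominator in $t$). By the Weyl character formula, this converts $s^{B_n}_{\lambda}(x)$ into the alternant $A_{\lambda+\rho^{B_n}}(x):=\sum_{w\in W_n}(-1)^w x^{w(\lambda+\rho^{B_n})}$, and similarly converts $s^{B_g}_{\tilde\lambda}(t)$ into $A_{\tilde\lambda+\rho^{B_g}}(t)$, turning both sides into alternating Laurent polynomials that can be compared coefficient-wise.

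Next, I would interpret each resulting side as a double sum indexed by $(w_1,w_2)\in W_n\times W_g$. On the right, the substitution $\lambda\mapsto\tilde\lambda$ together with the sign $(-1)^{|\tilde\lambda|}$ would be used to recognize the alternating sum over $\lambda\subseteq(g^n)$ as the expansion of a single determinant via the Cauchy--Binet formula; this is where the truncation to the $g\times n$ rectangle plays its role, converting the bounded sum into a genuinely finite-dimensional determinantal identity. The resulting closed form should factor as a product over pairs $(i,j)\in[n]\times[g]$ involving binomials of the form $(1-x_it_j)(1-x_it_j^{-1})(1-x_i^{-1}t_j)(1-x_i^{-1}t_j^{-1})$, up to a monomial prefactor coming from $\rho^{B_n}+\rho^{B_g}$.

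On the left, after clearing the denominator in $x$, the factor $m_\lambda(x)$ expands as a sum over the Weyl orbit $W_n\cdot x^\lambda$, while $\prod_i e^{(g)}_{g-\lambda_i}(t)$ admits an explicit product expansion since $e^{(g)}_r(t)$ is a truncated coefficient of $\prod_j(1+yt_j)(1+yt_j^{-1})$. Combining these two expansions and summing over $\lambda\subseteq(g^n)$ with sign $(-1)^{|\tilde\lambda|}$, one should obtain the same determinantal product as on the right-hand side, by identifying the generating function on the left as a signed Weyl-orbit average of the same factored form.

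The main obstacle will be reconciling the sign $(-1)^{|\tilde\lambda|}$ and the finite cutoff $\lambda\subseteq(g^n)$ with the standard (infinite) Cauchy identities for classical groups: these features ensure that only box-bounded partitions contribute, so one must carefully track the sign induced by the $W_n\times W_g$ action on $(\lambda+\rho^{B_n},\tilde\lambda+\rho^{B_g})$ and verify that non-partition terms cancel in pairs under the action of the symmetry group on the rectangle $(g^n)$. An alternative approach would be a representation-theoretic derivation via the branching of a $B_n\times B_g$-module whose character equals the product side, but the determinantal route outlined above is more elementary and matches the spirit of the Bressoud--Goulden argument.
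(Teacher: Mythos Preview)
The paper does not prove this theorem; it is quoted verbatim from \cite[Lemma 6]{BG2006}. The only original content the paper adds is the remark immediately following the statement, namely that the left-hand side coincides with the closed product
\[
\prod_{i=1}^{n}\prod_{j=1}^{g}\bigl(x_i+x_i^{-1}-t_j-t_j^{-1}\bigr),
\]
which is the form in which Bressoud--Goulden state it; this rewriting is elementary (expand each factor $\prod_j(x_i+x_i^{-1}-t_j-t_j^{-1})$ as $\sum_r (-1)^{g-r} e^{(g)}_{g-r}(t)\,(x_i^r+\cdots+x_i^{-r})$-type terms and collect over $\lambda$).

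Your proposed strategy---clear the Weyl denominators, convert both sides to alternants, and identify a Cauchy--Binet determinant---is exactly the route taken in \cite{BG2006}, so in spirit you are reconstructing the cited proof rather than offering an alternative. One correction: the product you predict, $\prod_{i,j}(1-x_it_j)(1-x_it_j^{-1})(1-x_i^{-1}t_j)(1-x_i^{-1}t_j^{-1})$, is not the right closed form here. Each factor $x_i+x_i^{-1}-t_j-t_j^{-1}$ equals $x_i^{-1}(x_i-t_j)(x_i-t_j^{-1})$, so the correct product is linear (not quartic) in each pair $(x_i,t_j^{\pm1})$; this is what makes the Cauchy--Binet step go through for type $B$. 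With that adjustment your outline is sound.
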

\begin{rmk}
    In \cite{BG2006}, the left-hand side of Theorem \ref{thm: Cauchy type B} is stated as $\prod_{i=1}^{n}\prod_{j=1}^{g}(x_i+x_i^{-1}-t_{j}-t^{-1}_j).$
    It is elementary to check that this equals $\sum_{\lambda\subseteq (g^n)}(-1)^{|\tilde{\lambda}|}m_{\lambda}(x)\prod_{i=1}^{n}e^{(g)}_{g-\lambda_i}(t).$
\end{rmk}
 By Theorem \ref{thm: Cauchy type B}, we have $[m_{\mu}]s_\lambda^{B_n}= (-1)^{|\tilde{\lambda}|-|\tilde{\mu}|}[s_{\tilde{\lambda}}^{B_g}] \prod_{i=1}^n e_{g-\mu_i}^{(g)}$. Now we investigate the coefficient $[s^{B_n}_{\lambda}](e^{(n)}_r s^{B_n}_{\mu})$ (Proposition \ref{prop: type B dual pieri} and Lemma \ref{lem: 5.9 coeff}).
\begin{proposition}\cite[Theorem 4.1 (2)]{Okada2016}\label{prop: type B dual pieri}
    For $r\leq n$ and $\lambda\in \Par_n$, we have
    \begin{equation*}
        s^{B_n}_{(1^r)}s_{\mu}^{B_n}=\sum_{\lambda}K^{B_n}_{\lambda,\mu}(r)s_{\lambda}^{B_n}
    \end{equation*}
    where $K^{B_n}_{\lambda,\mu}(r)$ is the number of partitions $\xi$ satisfying:
    \begin{enumerate}
        \item $\ell(\xi)\leq n$
        \item $\xi / \mu$ and $\xi / \lambda$ are both vertical strips
        \item $|\xi / \mu|+|\xi / \lambda|=r$ or $r-1$
        \item if $\ell(\mu)<n$ one of the following holds:
        \begin{enumerate}
            \item $|\xi / \mu|+|\xi / \lambda|=r$ and $\ell(\xi)=\max(\ell(\mu),\ell(\lambda)),$
            \item $|\xi / \mu|+|\xi / \lambda|=r-1$ and $\ell(\xi)=n$.
        \end{enumerate}
    \end{enumerate}
\end{proposition}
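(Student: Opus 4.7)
The strategy is to work at the level of antisymmetrized monomials via the Weyl character formula. Let $W = \mathfrak{S}_{n}\ltimes (\mathbb{Z}/2)^{n}$ be the Weyl group of $B_{n}$, $\rho = (n{-}\tfrac12, n{-}\tfrac32, \dots, \tfrac12)$, and $a_{\nu} := \sum_{w \in W}(-1)^{w} x^{w\nu}$, so that $s^{B_n}_{\lambda} = a_{\lambda+\rho}/a_{\rho}$. It suffices to prove the refined identity
\[
    s^{B_n}_{(1^{r})} \cdot a_{\mu + \rho} \;=\; \sum_{\lambda} K^{B_n}_{\lambda,\mu}(r)\, a_{\lambda + \rho}.
\]

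First I would invert the paper's identity $e^{(n)}_{r} = \sum_{i=0}^{r}(-1)^{i} s^{B_n}_{(1^{r-i})}$ via generating functions: writing $E(t) = \sum_{r} e^{(n)}_{r} t^{r}$ and $H(t) = \sum_{r} s^{B_n}_{(1^{r})} t^{r}$, this reads $E(t) = H(t)/(1+t)$, yielding $s^{B_n}_{(1^{r})} = e^{(n)}_{r} + e^{(n)}_{r-1}$ for $r \geq 1$. Expanding each $e^{(n)}_{j}(x) = \sum_{|S|+|S'|=j}x^{e_{S} - e_{S'}}$ over disjoint $S, S' \subseteq [n]$ (with $e_{S} = \sum_{i\in S}\varepsilon_{i}$) and exploiting the $W$-invariance of $e^{(n)}_{j}$ to pull it through the antisymmetrization, one obtains
\[
    s^{B_n}_{(1^{r})} \cdot a_{\mu+\rho} \;=\; \sum_{\substack{S, S' \subseteq [n],\ S \cap S' = \emptyset \\ |S|+|S'| \in \{r-1,\,r\}}} a_{\mu + \rho + e_{S} - e_{S'}}.
\]
The restriction $|S|+|S'| \in \{r-1, r\}$ already matches the size condition on $\xi$ appearing in $K^{B_n}_{\lambda,\mu}(r)$, which is a promising consistency check.

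Next, for each pair $(S, S')$ I would set $\xi := \mu + e_{S}$ and $\lambda := \xi - e_{S'}$. When both $\xi$ and $\lambda$ are genuine partitions -- equivalently, when $\xi/\mu$ and $\xi/\lambda$ are vertical strips -- the vector $\mu + \rho + e_{S} - e_{S'}$ equals $\lambda + \rho$ and is already strictly dominant, contributing $+a_{\lambda+\rho}$ directly; these account for the case-(a) terms with $\ell(\xi) = \max(\ell(\mu), \ell(\lambda))$. All other pairs give either zero (two coordinates of $\mu + \rho + e_{S} - e_{S'}$ coinciding in absolute value, which happens precisely when the vertical-strip condition fails due to equal consecutive rows of $\mu$) or $\pm a_{\lambda'+\rho}$ after a sign flip at the last coordinate, which occurs only when $\mu_{n} = 0$, $n \notin S$, and $n \in S'$ (so $v_{n} = -\tfrac12$ and $\lambda' = (\lambda_{1}, \dots, \lambda_{n-1}, 0)$). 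The plan is then a sign-reversing involution that toggles membership of $n$ in $S'$ whenever $\mu_{n} = 0$ and $n \notin S$: pairs of size $r$ with $n \in S'$ cancel against pairs of size $r-1$ with $n \notin S'$ in the same partition class, and the unmatched survivors (pairs of size $r-1$ with $n \in S$, so $\ell(\xi) = n$) are exactly the case-(b) contributions.

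The main obstacle will be formulating the involution rigorously and verifying that all wall and sign-flip contributions cancel in matched pairs of opposite sign. A delicate point is the stringent condition $\ell(\xi) = \max(\ell(\mu), \ell(\lambda))$ in case (a), which excludes ``phantom'' extensions of $\xi$ into row $n$ when $\ell(\lambda) < n$; these phantom shapes are precisely those eliminated by the $n$-toggle against would-be case-(b) terms. Moreover, the two boundary regimes $\ell(\mu) < n$ (where the toggle is active and produces case (b)) and $\ell(\mu) = n$ (where $\mu_{n} > 0$ disables the toggle and conditions (a), (b) impose no further restriction) must be treated separately, and ensuring the combinatorial bookkeeping balances across these regimes is the core of the argument.
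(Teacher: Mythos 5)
The paper does not actually prove this proposition; it is quoted from Okada, so the only thing to assess is whether your argument stands on its own. It does not: there is a genuine gap at the very first expansion step. Your inversion $s^{B_n}_{(1^r)}=e^{(n)}_r+e^{(n)}_{r-1}$ is fine, but the paper's $e^{(n)}_j$ is the elementary symmetric polynomial in the $2n$ variables $x_1^{\pm1},\dots,x_n^{\pm1}$: its monomials come from \emph{all} pairs $(S,S')$ with $|S|+|S'|=j$, including overlapping ones, where an index $i\in S\cap S'$ contributes $x_i x_i^{-1}=1$. Restricting to disjoint $(S,S')$ throws away exactly these overlap terms, which carry weights with extra zero coordinates and multiplicities $\binom{z}{k}$, and they do \emph{not} vanish after antisymmetrizing against $a_{\mu+\rho}$. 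Concretely, take $n=2$, $r=2$, $\mu=(0,0)$, $\rho=(\tfrac32,\tfrac12)$. Then $s^{B_2}_{(1,1)}$ is the adjoint character (zero weight of multiplicity $2$), and one checks directly that
\begin{equation*}
    s^{B_2}_{(1,1)}\,a_{\rho}=a_{(1,1)+\rho},
    \qquad\text{while}\qquad
    \sum_{\substack{S\cap S'=\emptyset\\ |S|+|S'|\in\{1,2\}}} a_{\rho+e_S-e_{S'}}
    = a_{(1,1)+\rho}-2a_{\rho},
\end{equation*}
since the terms $S'=\{2\}$ (size $1$) and $S=\{2\},S'=\{1\}$ (size $2$) each give $-a_{\rho}$ and nothing in your index set supplies the compensating $+2a_{\rho}$ coming from the multiplicity-two zero weight. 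So your displayed intermediate identity is false, and the subsequent sign-reversing involution (toggling $n\in S'$) is built on it; no involution on the disjoint-pair index set can create the missing terms.

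The repair is not cosmetic: a correct proof along these lines must keep the overlap data, i.e.\ work with triples $(S,S',Z)$ where $Z$ records the doubled indices, so each weight $v$ appears with multiplicity $\binom{z}{k}$, and then cancel the resulting signed sum against the Weyl-group degenerations (coordinate swaps as well as sign flips, not only at the last coordinate — note $a_{(1/2,3/2)}=-a_{\rho}$ in the example arises from a transposition). This extra bookkeeping is precisely what produces the delicate condition (4) in the statement (e.g.\ it is what forces $K^{B_2}_{(0,0),(0,0)}(2)=0$ rather than $1$), and it is the substance of Okada's proof. As written, your plan would need to be redesigned from the expansion step onward; alternatively, since the paper only uses the statement as quoted, citing Okada's Theorem 4.1(2) is the intended justification.
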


\begin{lem}\label{lem: 5.9 coeff}
    Let $K^{B_n}_{\lambda,\mu}(r)$ be the number defined in the previous proposition. Then we have 
    \begin{equation*}
        \sum_{i=0}^{r}(-1)^{i}K^{B_n}_{\lambda^t,\mu^t}(r-i)=(-1)^{|\lambda|-|\mu|-r}|\kappa^{B}(\lambda,\mu,r;n)|.
    \end{equation*}
\end{lem}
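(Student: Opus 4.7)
The plan is to proceed by induction on $r$. Write $f(r) := \sum_{i=0}^r (-1)^i K^{B_n}_{\lambda^t,\mu^t}(r-i)$ for the left-hand side; from the definition one immediately has the recurrence $f(r) + f(r-1) = K^{B_n}_{\lambda^t,\mu^t}(r)$, with the convention $f(-1)=0$.

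First I would reformulate both sides as counts of the same species. Set $m := \max(\mu_1, \lambda_1)$ and let $V_L(s)$ denote the number of partitions $\xi$ with $\ell(\xi) = L$, both $\xi/\mu^t$ and $\xi/\lambda^t$ vertical strips, and $|\xi/\mu^t| + |\xi/\lambda^t| = s$. Proposition \ref{prop: type B dual pieri} then translates uniformly to $K^{B_n}_{\lambda^t,\mu^t}(r) = V_m(r) + V_n(r-1)$ --- when $\mu_1 = n$, condition (4) is vacuous but $\xi \supseteq \mu^t$ forces $\ell(\xi) = n$, so the formula still holds. Conjugating gohs turns horizontal strips into vertical strips: setting $V_{\leq L}(s) := \sum_{L' \leq L} V_{L'}(s)$, the gohs count becomes $|\kappa^B(\lambda,\mu,r;n)| = V_{\leq n}(r)$ when $|\lambda|+|\mu|+r$ is even and $V_{\leq n-1}(r-1)$ when it is odd. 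The parity identity $|\xi/\mu^t|+|\xi/\lambda^t| = 2|\xi|-|\mu|-|\lambda|$ shows $V_L(s) = 0$ unless $s \equiv |\mu|+|\lambda|\pmod 2$, which drives the case split.

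The base case $r=0$ is direct: both sides equal $1$ if $\mu=\lambda$ and $\mu_1 \leq n$, else $0$. In the inductive step, substituting the hypothesis $f(r-1) = (-1)^{|\lambda|+|\mu|+r-1}|\kappa^B(\lambda,\mu,r-1;n)|$ into the recurrence reduces the statement to
\[
K^{B_n}_{\lambda^t,\mu^t}(r) \;=\; (-1)^{|\lambda|+|\mu|+r}\bigl(|\kappa^B(\lambda,\mu,r;n)| - |\kappa^B(\lambda,\mu,r-1;n)|\bigr).
\]
When $|\lambda|+|\mu|+r$ is odd, parity forces $V_m(r) = 0$ and the right-hand side collapses to $V_{\leq n}(r-1) - V_{\leq n-1}(r-1) = V_n(r-1)$, matching $K(r) = V_n(r-1)$ trivially. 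When $|\lambda|+|\mu|+r$ is even, parity forces $V_n(r-1) = 0$ and the identity reduces to
\[
V_m(r) \;=\; V_{\leq n}(r) - V_{\leq n-1}(r-2),
\]
which is the heart of the argument.

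To prove this last identity I would exhibit the ``append a row of length $1$'' bijection
\[
\phi : V_{\leq n-1}(r-2) \longrightarrow V_{\leq n}(r) \setminus V_m(r), \qquad \phi(\xi) := (\xi_1,\ldots,\xi_{\ell(\xi)}, 1).
\]
Since any valid $\xi$ satisfies $\ell(\xi) \geq m$, the appended row lies strictly past the supports of $\mu^t$ and $\lambda^t$, so both vertical-strip conditions are preserved and the strip size grows by exactly $2$. The inverse removes the bottom row, which is well-defined because any $\xi' \in V_{\leq n}(r)\setminus V_m(r)$ has $\ell(\xi') > m$, and there the vertical-strip constraint $\xi'_{\ell(\xi')} \leq \mu^t_{\ell(\xi')}+1 = 1$ together with $\xi'_{\ell(\xi')} \geq 1$ forces $\xi'_{\ell(\xi')} = 1$. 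The main obstacle is the parity-based case analysis and lining up the signs on both sides; once this bookkeeping is straightened out the actual bijection is elementary.
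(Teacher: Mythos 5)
Your proof is correct, but it follows a genuinely different route from the paper's. The paper evaluates the alternating sum directly, without induction: it sets $|A(i)|=K^{B_n}_{\lambda^t,\mu^t}(i)$, uses the parity of $|\lambda|-|\mu|-r$ together with condition (4) of Proposition \ref{prop: type B dual pieri} to pin down each $A(i)$ exactly (intermediate partition with first part $n$ versus $\max(\mu_1,\lambda_1)$, treating $\mu_1=n$ separately), and then cancels the two alternating families by the map $\sigma$ that replaces the first part $n$ by $\max(\mu_1,\lambda_1)$ in the horizontal-strip picture; the uncancelled set $B$ is identified with $\kappa^{B}(\lambda,\mu,r;n)$. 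You instead telescope via the recurrence $f(r)+f(r-1)=K^{B_n}_{\lambda^t,\mu^t}(r)$ and induct on $r$, reducing the lemma to the single identity $K^{B_n}_{\lambda^t,\mu^t}(r)=(-1)^{|\lambda|+|\mu|+r}\bigl(|\kappa^{B}(\lambda,\mu,r;n)|-|\kappa^{B}(\lambda,\mu,r-1;n)|\bigr)$; your uniform rewriting $K^{B_n}_{\lambda^t,\mu^t}(r)=V_m(r)+V_n(r-1)$ absorbs condition (4) cleanly (so the $\mu_1=n$ case needs no separate treatment), parity kills one of the two terms in each case, and the surviving identity $V_m(r)=V_{\leq n}(r)-V_{\leq n-1}(r-2)$ is settled by the append/remove-a-bottom-row-of-length-one bijection, which works precisely because every admissible $\xi$ has $\ell(\xi)\geq m$ and the vertical-strip condition forces the bottom row of any longer $\xi$ to equal $1$. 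What the paper's argument buys is a direct combinatorial description of which terms of the full alternating sum survive cancellation, in the same spirit as the sign-reversing constructions used elsewhere in the paper; what yours buys is that the combinatorial core shrinks to a one-cell-per-strip bijection in the conjugate picture and the induction handles the bookkeeping. The only point worth making explicit in your write-up is the degenerate boundary $r=1$ of the even case, where $V_{\leq n-1}(r-2)$ is empty; your inverse-map argument already shows $V_{\leq n}(1)\setminus V_m(1)$ is empty (removing the forced bottom row would give negative strip size), so this is a remark rather than a gap.
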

\begin{proof}
Let $A(i) = \{\xi : \text{$(\mu^t, \xi^t, \lambda^t)$ satisfies the conditions in Proposition \ref{prop: type B dual pieri} for $i$}\}$ so that $|A(i)| = K^{B_n}_{\lambda^t, \mu^t}(i)$.

First, consider the case $\mu_1 = n$. In this scenario, condition (4) of Proposition \ref{prop: type B dual pieri} becomes redundant. As a result, each $A(i)$ represents a collection of $\xi$ such that $(\mu, \xi, \lambda)$ forms an $n$-bounded ohs of length either $i$ or $i-1$. Therefore, the statement follows immediately.

Now, consider the case $\mu_1 < n$ and assume further that $|\lambda| - |\mu| - r$ is an odd number. In this case, $A(r-2i)$ is the set of $\xi$ such that $(\mu, \xi, \lambda)$ forms an $n$-bounded ohs of length $(r-2i-1)$ with $\xi_1 = n$, while $A(r-2i-1)$ is the set of $\xi$ such that $(\mu, \xi, \lambda)$ forms an $n$-bounded ohs of length $(r-2i-1)$ with $\xi_1 = \max(\mu_1, \lambda_1)$.  For each $\xi \in A(r-2i)$, we define a mapping $\sigma(\xi) \in A(r-2j-1)$, where $\sigma(\xi) = (\max(\mu_1, \lambda_1), \xi_2, \xi_3, \dots)$. Let $B$ denote the subset of $\xi \in \bigcup_{i \geq 0} A(r-2i-1)$ such that $\xi$ is not in the image of $\sigma$. Then $\xi \in B$ if and only if
\begin{itemize}
    \item $(\mu, \xi, \lambda)$ is an $n$-bounded ohs of length less than $r$, 
    \item for $\xi' = (n, \xi_2, \xi_3, \dots)$, the ohs $(\mu, \xi', \lambda)$ has a length greater than $(r-1)$.
\end{itemize}
Observe that $\kappa^{B}(\lambda, \mu, r; n)$ is the collection of $(n-1)$-bounded ohs' $(\mu, \xi, \lambda)$ with length $r-1$. For any $(\mu, \xi, \lambda) \in \kappa^{B}(\lambda, \mu, r; n)$, we can associate an element $\xi' = (\max(\mu_1, \lambda_1), \xi_2, \xi_3, \dots) \in B$. This implies that $|B| = |\kappa^{B}(\lambda, \mu, r; n)|$.

The proof for the case where $|\lambda| - |\mu| - r$ is an even number follows similarly.
\end{proof}

\begin{proof}[Proof of \eqref{eq: q=1 B} when $\lambda$ and $\mu$ are not spin weights]
    Let $C(\lambda,\mu)=[s_{\lambda^t}^{B_g}] \prod_{i=1}^n e_{\mu_i}^{(g)}$.
    Note that $$|\GSSOT_{g}(\lambda,\mu)| = \sum_{\nu} |\kappa^{B}(\lambda,\nu,\mu_n;g)| |\GSSOT_{g}(\nu,\mu')|$$ where $\mu'=(\mu_1,\dots,\mu_{n-1})$.
    On the other hand, by Proposition \ref{prop: type B dual pieri} and Lemma \ref{lem: 5.9 coeff} we have
    $$C(\lambda,\mu)=\sum_\nu (-1)^{|\lambda|-|\nu|-|\mu_n|}|\kappa^{B}(\lambda,\nu,\mu_n;g)| C(\nu,\mu').$$ By induction it follows that $C(\lambda,\mu) = (-1)^{|\lambda|-|\mu|}|\GSSOT_g(\lambda,\mu)|$.
    Therefore, we conclude that $[m_\mu]s_\lambda^{B_n} = (-1)^{|\tilde{\lambda}|-|\tilde{\mu}|}[s_{\tilde{\lambda}}^{B_g}] \prod_{i=1}^n e_{g-\mu_i}^{(g)}= (-1)^{|\tilde{\lambda}|-|\tilde{\mu}|}C(\hat{\lambda},\hat{\mu})=|\GSSOT_g(\hat{\lambda},\hat{\mu})|$.
\end{proof}
\subsubsection{Proof of \eqref{eq: q=1 B} when $\lambda$ and $\mu$ are spin weights}
Our goal is to show 
\begin{equation*}
    |\GSSOT_{g+\frac{1}{2}}(\hat{\lambda},\hat{\mu})|=[x^{\mu^{\sharp}}]s^{B_n}_{\lambda^{\sharp}}
\end{equation*}
for $\lambda,\mu\in \Par_n$ and $\hat{\lambda}=\oc(\lambda,g)$, $\hat{\mu}=\oc(\mu,g)$. In \cite{BS1991}, it is shown that
\begin{equation*}
    [x^{\mu^{\sharp}}]s^{B_n}_{\lambda^{\sharp}}=\sum_{S\subseteq [n]}[x^{\mu+\varepsilon_S}]s^{C_n}_{\lambda}
\end{equation*}
where $\varepsilon_S=\sum_{i\in S}\varepsilon_i$ is a 0-1 vector given according to $S$. Now, \eqref{eq: q=1 C} and \eqref{eq: gssot decomposition} completes the proof.

\subsubsection{Proof of \eqref{eq: q=1 D} when $\lambda$ and $\mu$ are not spin weights}\label{subsub: D}
For $\lambda=(\lambda_1,\lambda_2,\dots,\lambda_n)\in \Par_n$, we let $\lambda^{+}=\lambda$ and $\lambda^{-}=(\lambda_1,\lambda_2,\dots,\lambda_{n-1},-\lambda_n)$
and define
\begin{equation*}
    \ps^{D_n}_{\lambda}:=s^{D_n}_{\lambda^{+}}+s^{D_n}_{\lambda^{-}}, \qquad \ns^{D_n}_{\lambda}:=s^{D_n}_{\lambda^{+}}-s^{D_n}_{\lambda^{-}}.
\end{equation*}
Note that $\ps^{D_n}_{\lambda}$ is now $W$-invariant Laurent polynomial for $W=\mathfrak{S}_n \ltimes (\mathbb{Z}/2\mathbb{Z})^n$. 
We also define $\flip(\lambda,g):=(2g-\lambda_1,\lambda_2,\dots,\lambda_n)$ and then  define
    \begin{equation*}
        P(\lambda,\mu)=|\SSROT_g(\lambda,\mu)|+|\SSROT_g(\flip(\lambda,g),\mu)|, \qquad  \quad N(\lambda,\mu)=|\SSROT_g(\lambda,\mu)|-|\SSROT_g(\flip(\lambda,g),\mu)|.
\end{equation*}
It is enough to show that $P(\hat{\lambda},\hat{\mu}) = [x^{\mu}]\ps^{D_n}_{\lambda}$ and $N(\hat{\lambda},\hat{\mu}) = [x^{\mu}]\ns^{D_n}_{\lambda}$. The outline of the proof is as follows. First, we prove Lemma \ref{lem: flip}, which enables us to write a recursive formula for $P(\lambda,\mu)$ and $N(\lambda,\mu)$. Then, we derive a recursive formula for $[x^{\mu}]\ps^{D_n}_{\lambda}$ by exploiting Theorem \ref{thm: Cauchy type D} and Proposition \ref{prop: type D dual pieri}, and a recursive formula for $[x^{\mu}]\ns^{D_n}_{\lambda}$ by exploiting \eqref{eq: weyl D}. Finally, we find that each recursive formula is identical by Lemma \ref{lem: rohs okada} and \ref{lem: dohs minus}.

\begin{lem}\label{lem: flip}
    For partitions $\lambda$ and $\mu$, assume further that $\lambda'=\flip(\lambda,g)$ and $\mu'=\flip(\mu,g)$ are partitions. Then we have $|\kappa^{D}(\lambda,\mu,r;g)|=|\kappa^{D}(\lambda',\mu',r;g)|$.
\end{lem}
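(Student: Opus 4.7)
The plan is to exhibit an explicit involutive bijection $\Phi\colon \kappa^{D}(\lambda,\mu,r;g)\to \kappa^{D}(\lambda',\mu',r;g)$ that flips only the first part of the middle partition of an rohs. Concretely, for $(\mu,\nu,\lambda)\in \kappa^{D}(\lambda,\mu,r;g)$, I set $\Phi(\mu,\nu,\lambda):=(\mu',\nu',\lambda')$ with
\[
\nu':=(2g-\mu_1-\lambda_1+\nu_1,\,\nu_2,\,\nu_3,\,\dots).
\]
The formula is forced by requiring that the length $|\mu'/\nu'|+|\lambda'/\nu'|$ remain equal to $r$ and that the map be involutive, since one computes directly that $2g-\mu'_1-\lambda'_1+\nu'_1=\nu_1$.

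The bulk of the work is checking $\Phi$ lands in the target set. First I would record the two structural inequalities imposed by the horizontal-strip conditions on $\mu/\nu$ and $\lambda/\nu$, namely
\[
\max(\mu_2,\lambda_2)\leq \nu_1\leq \min(\mu_1,\lambda_1).
\]
The upper bound yields $\nu'_1\leq 2g-\mu_1=\mu'_1$ and $\nu'_1\leq 2g-\lambda_1=\lambda'_1$, while the $g$-boundedness of $(\mu,\nu,\lambda)$ rewrites as exactly $\nu'_1\geq \max(\mu_2,\lambda_2)$. Combining the latter with $\nu_2\leq \mu_2$ shows $\nu'_1\geq \nu'_2$, so $\nu'$ is a partition, and the inequalities above are precisely the content of $\mu'/\nu'$ and $\lambda'/\nu'$ being horizontal strips. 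The length condition is straightforward: rows of index $\geq 2$ are untouched, and $(\mu'_1-\nu'_1)+(\lambda'_1-\nu'_1)=(\mu_1-\nu_1)+(\lambda_1-\nu_1)$.

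For the $g$-boundedness of $\Phi(\mu,\nu,\lambda)$, direct substitution yields
\[
\mu'_1+(\lambda'_1-\nu'_1)+\max(\mu'_2,\lambda'_2)=2g-\nu_1+\max(\mu_2,\lambda_2),
\]
which is at most $2g$ by the lower bound $\nu_1\geq \max(\mu_2,\lambda_2)$ coming from the horizontal-strip structure. Involutivity is then immediate from $2g-\mu'_1-\lambda'_1+\nu'_1=\nu_1$, so $\Phi$ is a self-inverse bijection and the lemma follows. There is no real obstacle; the crux is the pleasant observation that the asymmetric-looking $g$-boundedness condition in Definition~\ref{def: ssrot} is exactly what is needed to make the row-$1$ flip preserve the partition, strip, and boundedness conditions simultaneously.
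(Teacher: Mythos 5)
Your proof is correct and is essentially the paper's argument: the paper realizes the same bijection on the level of the multiset $\rind$ (assuming WLOG $\lambda_1\geq\mu_1$, it turns $(\lambda_1-\mu_1)$ of the $1$'s into $\bar{1}$'s), which amounts exactly to replacing $\nu_1$ by $2g-\mu_1-\lambda_1+\nu_1$ as you do, and its two key checks (the $g$-boundedness of the source giving the row-one strip condition of the target, and $\nu_1\geq\max(\mu_2,\lambda_2)$ giving the $g$-boundedness of the target) coincide with yours. Your partition-level formulation is fine as stated and has the minor advantage of being manifestly involutive without a WLOG.
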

\begin{proof}
Without loss of generality, we may assume $\lambda_1 \geq \mu_1$. Consider an rohs $(\mu, \xi, \lambda)$ in $\kappa^{D}(\lambda, \mu, r; g)$, and let $W = \rind(\mu, \xi, \lambda)$. Define $\psi(W)$ as the multi-set obtained from $W$ by replacing $(\lambda_1 - \mu_1)$-many $1$'s into $(\lambda_1 - \mu_1)$-many $\bar{1}$'s. We claim that $\psi(W) = \rind(T')$ for some $T' \in \kappa^{D}(\lambda', \mu', r; g)$.

Observe that $W$ contains $(\mu_1 - \xi_1)$ occurrences of $\bar{1}$, so $\psi(W)$, by construction, will have $(\lambda_1 - \xi_1)$ occurrences of $\bar{1}$. To ensure the existence of an rohs $T'$, the condition 
\[
(2g - \mu_1) - (\lambda_1 - \xi_1) \geq \max(\lambda_2, \mu_2)
\]
must hold. This condition is equivalent to the $g$-boundedness requirement for $(\mu, \xi, \lambda)$. It remains to verify that $T'$ satisfies the $g$-bounded condition. This follows from the inequality 
\[
(2g - \mu_1) + (\mu_1 - \xi_1) + \max(\mu_2, \lambda_2) \leq 2g,
\] 
which is valid because $\xi_1 \geq \max(\lambda_2, \mu_2)$. Finally, the map $\psi$ is clearly invertible, establishing a bijection.
\end{proof}

\begin{rmk}\label{rmk: D spin}
Lemma \ref{lem: flip} still holds if we replace $g$ by $g'=g+\frac{1}{2}$. The proof is identical.
\end{rmk}
\begin{thm}\cite[Lemma 5]{BG2006}\label{thm: Cauchy type D}
    For indeterminates $x_1,\dots x_n$ and $t_1,\dots,t_g$ we have
    \begin{equation*}
        \sum_{\lambda\subseteq (g^n)}(-1)^{|\tilde{\lambda}|}m_{\lambda}(x)\prod_{i=1}^{n}e^{(g)}_{g-\lambda_i}(t)=\sum_{\lambda\subseteq (g^n)}\frac{(-1)^{|\tilde{\lambda}|}\ps^{D_n}_{\lambda}(x) \ps^{D_g}_{\tilde{\lambda}}(t)}{2}.
    \end{equation*}
\end{thm}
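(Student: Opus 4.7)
The plan is to mimic the Vandermonde--Laplace strategy underlying the classical dual Cauchy identity, adapted to the $D$-type symmetry $z \leftrightarrow z^{-1}$. By exactly the expansion described in the remark after Theorem~\ref{thm: Cauchy type B}, the left-hand side collapses to the compact product
$$F(x,t) \;:=\; \prod_{i=1}^{n}\prod_{j=1}^{g}\bigl(x_i+x_i^{-1}-t_j-t_j^{-1}\bigr),$$
so the real task is to identify $F(x,t)$ with the bialternant sum on the right. One could alternatively compare Theorem~\ref{thm: Cauchy type B} with the present claim to reduce to a direct character identity relating $s^{B_n}_\lambda s^{B_g}_{\tilde\lambda}$ to $\ps^{D_n}_\lambda\ps^{D_g}_{\tilde\lambda}/2$, but the determinantal route below is more self-contained.

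First I would set $(z_1,\ldots,z_{n+g}) := (x_1,\ldots,x_n,t_1,\ldots,t_g)$, put $u_k := z_k+z_k^{-1}$, and consider the $(n+g)\times(n+g)$ matrix $M$ with $M_{a,k} = z_k^{a-1}+z_k^{-(a-1)}$. Since each row is a polynomial in $u_k$ of degree $a-1$ with leading coefficient $1$, a standard Vandermonde reduction yields $\det(M) = 2\prod_{1\le i<j\le n+g}(u_j-u_i)$; splitting this product according to whether both indices fall in $\{1,\ldots,n\}$, both in $\{n+1,\ldots,n+g\}$, or one of each gives
$$\det(M) \;=\; 2(-1)^{ng}\,\Delta_x\,\Delta_t\,F(x,t), \qquad \Delta_x := \!\!\prod_{1\le i<j\le n}\!\!(u_j-u_i),$$
with $\Delta_t$ defined analogously. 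Laplace-expanding $\det(M)$ along the first $n$ columns gives a second expression: $n$-element subsets $S \subseteq \{1,\ldots,n+g\}$ biject with partitions $\lambda \subseteq (g^n)$ via $S = \{\lambda_{n+1-k}+k : 1\le k\le n\}$, with complement $S^c$ encoding $\tilde\lambda$ in the same way. After reversing rows to put exponents in decreasing order, the two block determinants become $(-1)^{\binom{n}{2}}D^+_{\lambda+\rho^{D_n}}(x)$ and $(-1)^{\binom{g}{2}}D^+_{\tilde\lambda+\rho^{D_g}}(t)$, where $D^+_\mu(z) := \det(z_j^{\mu_i}+z_j^{-\mu_i})$, and the Laplace sign contributes $(-1)^{|\lambda|}$.

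Next I would convert $D^+_{\lambda+\rho^{D_n}}(x)$ into the character $\ps^{D_n}_\lambda(x)$ via the type $D$ bialternant formula $D^+_{\lambda+\rho^{D_n}}(x) = \tfrac{1}{2}\,\Delta^{D_n}(x)\,\ps^{D_n}_\lambda(x)$, where $\Delta^{D_n}(x) := \det(x_j^{n-i}+x_j^{-(n-i)}) = 2(-1)^{\binom{n}{2}}\Delta_x$; the $\tfrac{1}{2}$ reflects that $\ps^{D_n}_\emptyset = 2$ because of the identification $\lambda^+ = \lambda^-$ when $\lambda_n = 0$. Equating the two evaluations of $\det(M)$, the factors $\Delta^{D_n}(x)\Delta^{D_g}(t)$ cancel, the signs $(-1)^{\binom{n}{2}+\binom{g}{2}}$ match on both sides, and the two $\tfrac{1}{2}$'s from the bialternant combine with the Laplace factor $2$ to leave an overall $\tfrac{1}{2}$; the identity $|\lambda|+|\tilde\lambda| = ng$ then converts $(-1)^{ng+|\lambda|}$ into $(-1)^{|\tilde\lambda|}$, producing the desired
$$F(x,t)\;=\;\sum_{\lambda\subseteq(g^n)}\frac{(-1)^{|\tilde\lambda|}\ps^{D_n}_\lambda(x)\ps^{D_g}_{\tilde\lambda}(t)}{2}.$$

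The main obstacle will be the bookkeeping of the factors of $2$. The $D$-type Weyl denominator is $\tfrac{1}{2}\Delta^{D_n}(x)$ rather than the full determinant $\Delta^{D_n}(x)$, because the anomalous $a=1$ row of $M$ consists entirely of $2$'s rather than $1$'s; and $\ps^{D_n}_\lambda$ equals twice the $SO(2n)$-irreducible character precisely when $\lambda_n = 0$. Getting the final $\tfrac{1}{2}$ in the theorem (rather than $2$ or $\tfrac{1}{8}$) depends on these cancellations working out exactly, so the $n = g = 1$ sanity check, which reduces the identity to $(x_1+x_1^{-1})-(t_1+t_1^{-1}) = \tfrac{1}{2}\bigl[2(x_1+x_1^{-1}) - 2(t_1+t_1^{-1})\bigr]$, is essential before attempting the general case.
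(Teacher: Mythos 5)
Your proposal is correct, and it is worth noting that the paper itself offers no proof of this statement at all: Theorem \ref{thm: Cauchy type D} is quoted verbatim from \cite[Lemma 5]{BG2006}, so your determinantal argument supplies a self-contained proof where the paper has only a citation. Your route is the classical minor-summation one (and very likely the one behind the cited lemma): reduce the left-hand side to $F(x,t)=\prod_{i,j}(x_i+x_i^{-1}-t_j-t_j^{-1})$ via the remark following Theorem \ref{thm: Cauchy type B}, evaluate $\det\bigl(z_k^{a-1}+z_k^{-(a-1)}\bigr)=2\prod_{i<j}(u_j-u_i)$ by the monic-in-$u$ row reduction, and compare with the Laplace expansion along the $x$-columns, where the row sets $\{\lambda_i+n-i\}$ and their complements $\{\tilde\lambda_j+g-j\}$ index exactly the pairs $(\lambda,\tilde\lambda)$ in the sum. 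I checked the bookkeeping: the Laplace sign is $(-1)^{|\lambda|}$, each block minor is $(-1)^{\binom{n}{2}}D^{+}_{\lambda+\rho^{D_n}}(x)$ (resp.\ $(-1)^{\binom{g}{2}}D^{+}_{\tilde\lambda+\rho^{D_g}}(t)$), the type $D$ bialternant $D^{+}_{\lambda+\rho^{D_n}}=\tfrac12\,D^{+}_{\rho^{D_n}}\,\ps^{D_n}_{\lambda}$ with $D^{+}_{\rho^{D_n}}=2(-1)^{\binom{n}{2}}\Delta_x$ is the correct form of the Weyl character formula for $\ps^{D_n}_\lambda=s^{D_n}_{\lambda^+}+s^{D_n}_{\lambda^-}$ (the denominator $D^{-}_{\rho}$ vanishes), and equating the two evaluations gives $2(-1)^{ng}F=\sum_\lambda(-1)^{|\lambda|}\ps^{D_n}_\lambda(x)\ps^{D_g}_{\tilde\lambda}(t)$, hence the stated identity with the overall $\tfrac12$ after using $|\lambda|+|\tilde\lambda|=ng$. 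The only imprecision is cosmetic: the surviving factor $2$ is not a ``Laplace factor'' but comes from the all-$2$'s top row in the Vandermonde reduction of $\det(M)$ (the $2$'s inside $\Delta^{D_n}$ and $\Delta^{D_g}$ cancel against the two bialternant $\tfrac12$'s); your final constant and the $n=g=1$ sanity check are nevertheless right.
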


\begin{proposition}\cite[Theorem 4.1 (3)]{Okada2016}\label{prop: type D dual pieri}
    For $r\leq n$ and $\lambda\in \Par_n$ we have
    \begin{equation*}
        e^{(n)}_r \ps_{\mu}^{D_n}=\sum_{\lambda}M(\lambda,\mu,n)K^{D_n}_{\lambda,\mu}(r)\ps_{\lambda}^{D_n}
    \end{equation*}
    where $K^{D_n}_{\lambda,\mu}(r)$ is the number of partitions $\xi$ satisfying:
    \begin{enumerate}
        \item $\ell(\xi)\leq n$
        \item $\xi / \mu$ and $\xi / \lambda$ are both vertical strips
        \item $|\xi / \mu|+|\xi / \lambda|=r$
        \item $\ell(\xi)\in\{n,\ell(\mu),\ell(\lambda)\}$
    \end{enumerate}
    and 
    \begin{align*}
        M(\lambda,\mu,n)=
        \begin{cases*}
            2 \qquad &\text{if $\ell(\mu)=n$ and $\ell(\lambda)<n$,}\\
            1 \qquad &\text{otherwise.}
        \end{cases*}
    \end{align*}
\end{proposition}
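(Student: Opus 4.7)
The plan is to reduce the statement to the classical Pieri rule for type $D$ groups. First, I would rewrite $e^{(n)}_r$ as a combination of $D_n$-characters: for $r < n$ we have $e^{(n)}_r = s^{D_n}_{(1^r)}$, the character of the fundamental representation $V(\omega_r)$, while for $r = n$ the splitting $e^{(n)}_n = s^{D_n}_{(1^n)} + s^{D_n}_{(1^{n-1},-1)}$ reflects the decomposition of $\bigwedge^n \mathbb{C}^{2n}$ into the two top fundamental $D_n$-modules. This converts the claimed identity into a statement about tensor products $V(\omega_r) \otimes V(\mu^{\pm})$.

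Next, I would apply the classical $D_n$ Pieri rule for tensoring with an exterior power: the product $s^{D_n}_{(1^r)} \cdot s^{D_n}_{\mu^{\pm}}$ decomposes as a sum of $s^{D_n}_{\lambda^{\pm}}$ (with a possible sign-flipped partner in the boundary case $\ell(\xi) = n$) weighted by the number of intermediate partitions $\xi$ such that $\xi/\mu$ and $\xi/\lambda$ are vertical strips with total size $r$. Summing the $+$ and $-$ contributions and collecting into $\ps^{D_n}_\lambda = s^{D_n}_{\lambda^+} + s^{D_n}_{\lambda^-}$ then yields the desired formula, up to checking the multiplicity factor $M(\lambda,\mu,n)$. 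The constraint $\ell(\xi) \in \{n, \ell(\mu), \ell(\lambda)\}$ arises naturally because the two vertical strips forced by $\mu, \lambda \subseteq \xi$ together with $\ell(\xi) \leq n$ permit $\xi$ to extend at most one of $\mu, \lambda$ in the last available row.

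The main obstacle is the careful handling of the sign ambiguity in the labelling: when $\ell(\lambda) < n$ one has $\lambda^+ = \lambda^-$, so $s^{D_n}_{\lambda^+} = s^{D_n}_{\lambda^-}$ is a single irreducible, but when $\ell(\mu) = n$ the two characters $s^{D_n}_{\mu^+}$ and $s^{D_n}_{\mu^-}$ are genuinely distinct and can both contribute to the same $\ps^{D_n}_\lambda$. Tracking this gives the doubling factor $M(\lambda,\mu,n) = 2$ precisely in the case $\ell(\mu) = n$ and $\ell(\lambda) < n$, and $M = 1$ otherwise. A clean way to manage the bookkeeping is to work in the ring of Laurent polynomials invariant under the hyperoctahedral group $\mathfrak{S}_n \ltimes (\mathbb{Z}/2\mathbb{Z})^n$, where each $\ps^{D_n}_\lambda$ becomes a single monomial orbit sum, and then reduce the identity to a combinatorial count of vertical strip pairs via the Weyl character formula. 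This is essentially the approach used by Okada \cite{Okada2016}.
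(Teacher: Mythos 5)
The paper does not actually prove this proposition: it is imported verbatim, with citation, from Okada (Theorem 4.1(3) of \cite{Okada2016}), so the only fair comparison is whether your sketch would independently establish the statement. As written, it would not. Your central step is to ``apply the classical $D_n$ Pieri rule for tensoring with an exterior power,'' but the precise form of that rule --- the count of intermediate partitions $\xi$ with conditions (1)--(4), together with the multiplicity factor $M(\lambda,\mu,n)$ --- \emph{is} the theorem being proved; invoking it makes the argument circular, and the genuinely hard bookkeeping (the boundary case $r=n$ where $e^{(n)}_n=s^{D_n}_{(1^n)}+s^{D_n}_{(1^{n-1},-1)}$, the $\lambda^{\pm}$ labelling, and why $M=2$ exactly when $\ell(\mu)=n>\ell(\lambda)$) is asserted rather than carried out.

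Moreover, the one piece of justification you do offer --- that condition (4), $\ell(\xi)\in\{n,\ell(\mu),\ell(\lambda)\}$, ``arises naturally because $\xi$ can extend at most one of $\mu,\lambda$ in the last available row'' --- is not correct. Take $n=5$, $\mu=\lambda=(2)$, $r=2$ and $\xi=(2,1)$: both $\xi/\mu$ and $\xi/\lambda$ are vertical strips of total size $2$, there is ample room below, yet this $\xi$ must be excluded (otherwise the multiplicity of $\ps^{D_5}_{(2)}$ in $e^{(5)}_2\,\ps^{D_5}_{(2)}$ would come out as $2$ instead of the correct $1$). The exclusion reflects the type-$D$ contraction/modification phenomena in $\bigwedge^r\mathbb{C}^{2n}\otimes V(\mu)$, which Okada handles through minor-summation/Weyl-character computations, not through an availability-of-rows argument. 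So either cite Okada outright, as the paper does, or supply a genuine derivation of conditions (3)--(4) and of $M(\lambda,\mu,n)$; the present sketch leaves exactly that content unproved.
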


\begin{lem}\label{lem: rohs okada}
    For partitions $\lambda$ and $\mu$ with $\lambda_1,\mu_1\leq g$, we have
    \begin{equation*}
        |\kappa^{D}(\lambda,\mu,r;g)|+|\kappa^{D}(\flip(\lambda,g),\mu,r;g)| = M(\lambda^{t},\mu^{t},g)K^{D_g}_{\lambda^{t},\mu^{t}}(r).
    \end{equation*}
\end{lem}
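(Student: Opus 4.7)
The plan is to establish the identity via an explicit bijection after reformulating the right-hand side. Transposing $\xi \mapsto \zeta := \xi^t$ turns the vertical-strip conditions in the definition of $K^{D_g}_{\lambda^t, \mu^t}(r)$ into horizontal-strip conditions; the constraints $\ell(\xi) \leq g$ and $\ell(\xi) \in \{g, \ell(\lambda^t), \ell(\mu^t)\}$ become $\zeta_1 \leq g$ and $\zeta_1 \in \{g, \mu_1, \lambda_1\}$. Hence $K^{D_g}_{\lambda^t, \mu^t}(r)$ counts the set $\mathcal{O}$ of ohs $(\mu, \zeta, \lambda)$ of length $r$ subject to these first-row constraints.

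I would then associate to each rohs $(\mu, \xi, \lambda) \in \kappa^D(\lambda, \mu, r; g)$ the candidate ohs $(\mu, \zeta, \lambda)$ defined by $\zeta_i := \mu_i + \lambda_i - \xi_i$. The horizontal-strip structure of $\mu/\xi$ and $\lambda/\xi$ makes $\zeta/\mu$ and $\zeta/\lambda$ into horizontal strips of total size $r$, and the $g$-boundedness $\mu_1 + (\lambda_1 - \xi_1) + \max(\mu_2,\lambda_2) \leq 2g$ rewrites as $\zeta_1 + \max(\mu_2,\lambda_2) \leq 2g$; combined with $\zeta_1 \geq \max(\mu_1, \lambda_1)$ and the length requirement, this forces $\zeta_1 \in \{g, \max(\mu_1, \lambda_1)\}$, placing $\zeta$ in $\mathcal{O}$. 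For a rohs $(\mu, \xi', \flip(\lambda, g)) \in \kappa^D(\flip(\lambda, g), \mu, r; g)$ I would use the parallel recipe $\zeta'_i = \mu_i + \lambda_i - \xi'_i$ for $i \geq 2$, with $\zeta'_1$ (typically $\zeta'_1 = g$) chosen so that the row-one cell counts agree after reflecting across $g$; this is analogous to the map $\Gamma$ of Lemma \ref{lem: rohs ohs connection} but with $\flip$ encoding the first-row data.

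Finally, I would verify that the combined map $\kappa^D(\lambda, \mu, r; g) \sqcup \kappa^D(\flip(\lambda, g), \mu, r; g) \to \mathcal{O}$ is surjective and that each fiber has size $M(\lambda^t, \mu^t, g)$. The multiplicity factor $2$ arises precisely when $\mu_1 = g$ and $\lambda_1 < g$: in that regime $\flip(\lambda, g) \neq \lambda$, yet the same ohs with $\zeta_1 = g = \mu_1$ is hit both by a non-flipped and by a flipped rohs, the two preimages being produced from the inverse formula $\xi_i = \mu_i + \lambda_i - \zeta_i$ by choosing whether the rohs ends at $\lambda$ or at $\flip(\lambda, g)$; in all other cases the correspondence is bijective.

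The main obstacle is the careful first-row case analysis needed to match rohs (on $\lambda$ or on $\flip(\lambda, g)$) with ohs and to verify the multiplicity $M$ in each case. In particular, one must handle the boundary case $\flip(\lambda, g) = \lambda$ (equivalently $\lambda_1 = g$), where the two rohs sets coincide, together with the interplay between the three distinguished candidate values $g, \mu_1, \lambda_1$ for $\zeta_1$, which may partially or fully coincide.
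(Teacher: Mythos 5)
Your high-level strategy is the same as the paper's: transpose Okada's conditions so that $K^{D_g}_{\lambda^t,\mu^t}(r)$ counts ohs' $(\mu,\zeta,\lambda)$ of length $r$ with $\zeta_1\le g$ and $\zeta_1\in\{g,\mu_1,\lambda_1\}$, split according to whether $\zeta_1=g$ or $\zeta_1=\max(\mu_1,\lambda_1)$, send the $\zeta_1=g$ part to rohs' ending at $\flip(\lambda,g)$, and observe that the factor $M=2$ occurs exactly when $\mu_1=g>\lambda_1$. However, the map you use to realize the correspondence is wrong. The row-by-row complementation $\zeta_i:=\mu_i+\lambda_i-\xi_i$ does not in general produce an ohs: take $\mu=\lambda=(2,2)$, $\xi=(2,1)$, $g=2$, a legitimate $g$-bounded rohs of length $2$, and you get $\zeta=(2,3)$, which is not even a partition. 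Likewise your claim that $g$-boundedness together with $\zeta_1\ge\max(\mu_1,\lambda_1)$ "forces" $\zeta_1\in\{g,\max(\mu_1,\lambda_1)\}$ is a non sequitur and false for this map: for $\mu=\lambda=(3)$, $\xi=(1)$, $g=4$ one gets $\zeta_1=5>g$, so the image does not even land in the set $\mathcal{O}$ you defined. The failure occurs precisely when the rohs has a removed cell and an added cell in the same row (a pair $i,\bar i$ in $\rind$), which is the generic situation.

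The missing ingredient is the pairing/shifting mechanism of Lemma \ref{lem: rohs ohs connection}: in the rohs-to-ohs direction one must replace each pair $i,\bar i$ in the row-index multiset by $i+1,\overline{i+1}$ (pushing paired cells down a row), not reflect row by row. This is exactly how the paper argues: for $\zeta_1<g$ it applies $\Gamma$ (equivalently its inverse) directly, checking that no pair $1,\bar1$ occurs so the length is preserved and that $g$-boundedness amounts to $\zeta_1+\zeta_2\le 2g$; for $\zeta_1=g$ it first replaces the first row of $\zeta$ by $2g-\lambda_1$ and $\lambda$ by $\flip(\lambda,g)$, and then applies $\Gamma$, the boundedness check becoming $2g-\lambda_1+\zeta_2\le 2g$ via $\zeta_2\le\lambda_1$. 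Your sketch invokes $\Gamma$ only vaguely and only for the flipped branch, while the unflipped branch rests entirely on the invalid complementation formula (and its equally invalid inverse $\xi_i=\mu_i+\lambda_i-\zeta_i$ in your multiplicity argument), so as written the central bijection does not exist and the proof does not go through.
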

\begin{proof}
    Let $A=\{(\mu,\xi,\lambda) : \text{ $(\mu^t,\xi^t,\lambda^t)$ satisfies conditions in  Proposition \ref{prop: type D dual pieri}} \}$. We will prove the case where $\lambda_1<g$ and $\mu_1<g$ as remaining cases can be handled similarly.

    We divide $A=A_1 \cup A_2$, where $A_1 = \{(\mu,\xi,\lambda) \in A :  \xi_1<g \}$ and $A_2 = \{(\mu,\xi,\lambda) \in A :  \xi_1 = g\}$. For each $(\mu,\xi,\lambda)\in A_2$ we consider an ohs (of the same length) $T=(\mu,\xi',\flip(\lambda,g))$ where $\xi'=(2g-\lambda_1,\xi_2,\xi_3,\dots)$. Then applying the map $\Gamma$ in Lemma \ref{lem: rohs ohs connection}, we see that $\Gamma(T)$ is an rohs of the same length as there is no paired $1$ and $\bar{1}$ in $\rind(T)$. Number of $1$'s in $\rind(\Gamma(T))$ equals $\left(2g-\lambda_1-\mu_1+\min(\xi_2-\mu_2,\xi_2-\lambda_2)\right)$, therefore we have 
    \begin{equation*}
        \mu_1+\left(2g-\lambda_1-\mu_1+\min(\xi_2-\mu_2,\xi_2-\lambda_2)\right)+\max(\mu_2,\lambda_2)=2g-\lambda_1+\xi_2\leq 2g.
    \end{equation*}
    We deduce $\Gamma(T)$ is a $g$-bounded rohs and clearly this process is invertible. Hence $|A_2|=|\kappa^{D}(\flip(\lambda,g),\mu,r;g)|$.

    Given $(\mu,\xi,\lambda)\in A_1$, we consider $T=\Gamma(\mu,\xi,\lambda)$ and $T$ is again an rohs of the same length. Also, number of $1$'s in $\rind(\Gamma(T))$ equals $\xi_1-\mu_1+ \min(\xi_2-\mu_2,\xi_2-\lambda_2)$, therefore we have 
    \begin{equation*}
        \mu_1+(\xi_1-\mu_1+ \min(\xi_2-\mu_2,\xi_2-\lambda_2))+\max(\mu_2,\lambda_2)=\xi_1+\xi_2 <2g.
    \end{equation*} 
    Again we deduce that $\Gamma(T)$ is a $g$-bounded rohs and $|A_1|=|\kappa^{D}(\lambda,\mu,r;g)|$. 
\end{proof}

By the Weyl character formula (see \cite[Equation (43) and (65)]{BG2006} for example), we have  
\begin{equation}\label{eq: weyl D}
    \ns_{\lambda}^{D_n}=\frac{\prod_{i=1}^{n}(x^2_i-1)}{\prod_{i=1}^{n}x_i}s^{C_n}_{\lambda-(1^{n})}.
\end{equation}
\begin{lem}\label{lem: dohs minus}
    For partitions $\lambda$ and $\mu$ with $\lambda_1,\mu_1\leq g$, we have
    \begin{equation*}
        |\kappa^{D}(\lambda,\mu,r;g)|-|\kappa^{D}(\flip(\lambda,g),\mu,r;g)| = |\kappa^{C}(\lambda,\mu,r;g-1)| - |\kappa^{C}(\lambda,\mu,r-2;g-1)|.
    \end{equation*}
\end{lem}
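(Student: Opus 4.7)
The plan is to translate the question to counts of oscillating horizontal strips via Lemma~\ref{lem: rohs ohs connection}, parametrize everything by the first row, and verify an algebraic identity by telescoping.

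First, I will show that the length-preserving restriction of $\Gamma$ gives a bijection between $g$-bounded rohs $(\mu, \xi, \lambda)$ of length $r$ and ohs $(\mu, \nu, \lambda)$ of length $r$ satisfying $\nu_1 = \max(\mu_1, \lambda_1)$ and $\nu_1 + \nu_2 \leq 2g$. Length preservation corresponds to $k = 0$ in Lemma~\ref{lem: rohs ohs connection} (no $1, \bar{1}$ pair in the ohs $\rind$), which is equivalent to $\nu_1 = \max(\mu_1, \lambda_1)$. Writing $a_i = \nu_i - \mu_i$ and $a'_i = \nu_i - \lambda_i$, the formula $\lambda_1 - \xi_1 = b_1 = a_1 - \min(a_1, a'_1) + \min(a_2, a'_2)$ from the proof of Lemma~\ref{lem: rohs ohs connection}, together with $\min(a_1, a'_1) = 0$ and the identity $\min(a_2, a'_2) + \max(\mu_2, \lambda_2) = \nu_2$, reduces the $g$-bound $\mu_1 + (\lambda_1 - \xi_1) + \max(\mu_2, \lambda_2) \leq 2g$ to $\nu_1 + \nu_2 \leq 2g$.

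Next, I will factor the first row. Let $\bar{\mu} = (\mu_2, \mu_3, \ldots)$ and similarly define $\bar{\nu}, \bar{\lambda}$; an ohs $(\mu, \nu, \lambda)$ is the same as a choice of $\nu_1$ together with a rest ohs $(\bar{\mu}, \bar{\nu}, \bar{\lambda})$, coupled only by the horizontal-strip condition $\bar{\nu}_1 = \nu_2 \leq \min(\mu_1, \lambda_1)$. Crucially, $\flip(\lambda, g)$ alters only the first entry, so the rest ohs and its count agree on both sides. Let $N(\ell, u)$ count rest ohs of length $\ell$ with $\bar{\nu}_1 \leq u$, and put $M = \max(\mu_1, \lambda_1)$, $m = \min(\mu_1, \lambda_1)$, with $M', m'$ the analogues for $\lambda' = \flip(\lambda, g)$. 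Then $|\kappa^{D}(\lambda, \mu, r; g)| = N(r - (M - m), m)$ and $|\kappa^{C}(\lambda, \mu, r; g-1)| = \sum_{c = M}^{g-1} N(r - 2c + M + m, m)$. For the $\flip$ side, combining $\bar{\nu}_1 \leq m'$ with $\bar{\nu}_1 \leq 2g - M'$ (from $\nu_1 + \nu_2 \leq 2g$) collapses to $\bar{\nu}_1 \leq \min(\mu_1, \lambda_1, 2g - \mu_1, 2g - \lambda_1) = m$ since $\mu_1, \lambda_1 \leq g$, so $|\kappa^{D}(\flip(\lambda, g), \mu, r; g)| = N(r - (M' - m'), m)$.

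The lemma now drops out by telescoping: the difference $|\kappa^{C}(\lambda, \mu, r; g-1)| - |\kappa^{C}(\lambda, \mu, r-2; g-1)|$ reduces to $N(r - (M - m), m) - N(r - 2g + M + m, m)$, since after shifting $c \to c+1$ the two sums agree except at their boundary terms. Finally $\mu_1 + \lambda_1 \leq 2g$ gives $M' - m' = 2g - M - m$, matching this against $|\kappa^{D}(\lambda, \mu, r; g)| - |\kappa^{D}(\flip(\lambda, g), \mu, r; g)|$. The main obstacle I anticipate is the bound translation in step one, where the asymmetric $g$-bound on rohs must be identified with the symmetric-looking $\nu_1 + \nu_2 \leq 2g$ through the $\rind$ formulas for $\Gamma$; once this is in hand, the remaining work is bookkeeping and a short telescoping sum.
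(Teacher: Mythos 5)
Your argument is correct, and the key steps check out: the $k=0$ restriction of $\Gamma$ from Lemma~\ref{lem: rohs ohs connection} does identify $g$-bounded rohs of length $r$ with ohs $(\mu,\nu,\lambda)$ of length $r$ satisfying $\nu_1=\max(\mu_1,\lambda_1)$ and $\nu_1+\nu_2\le 2g$ (your computation $\mu_1+b_1+\max(\mu_2,\lambda_2)=\nu_1+\nu_2$ is exactly right), and the first-row factorization plus the telescoping sum then gives the identity via $M'-m'=2g-M-m$. The route differs from the paper's mainly in packaging: the paper argues bijectively, carving out subsets of $\kappa^{C}(\lambda,\mu,r;g-1)$ and $\kappa^{C}(\lambda,\mu,r-2;g-1)$, matching their complements by the first-row shift $\xi_1\mapsto\xi_1+1$, and sending the two exceptional subsets to $\kappa^{D}(\lambda,\mu,r;g)$ and $\kappa^{D}(\flip(\lambda,g),\mu,r;g)$ through $\Gamma$, with the case where $\lambda_1=g$ or $\mu_1=g$ treated separately via the symmetry of Lemma~\ref{lem: flip}. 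Your shift $c\mapsto c+1$ in the telescoping is precisely that first-row bijection in enumerative form, and your boundary terms $N(r-(M-m),m)$ and $N(r-2g+M+m,m)$ play the roles of the paper's two exceptional subsets; what your formulation buys is that recasting the asymmetric rohs bound as the symmetric condition $\nu_1+\nu_2\le 2g$ makes the effect of $\flip$ transparent, and the boundary case $\max(\mu_1,\lambda_1)=g$ is absorbed automatically as an empty sum rather than needing a separate argument. Two small bookkeeping points to make explicit in a full write-up: declare $N(\ell,u)=0$ for $\ell<0$, and note that the collapse $\min(m,2g-M)=m$, which you spell out only on the flipped side, is also used (and holds, since $\mu_1,\lambda_1\le g$) on the unflipped side and inside the $\kappa^{C}$ sums.
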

\begin{proof}
    If one of $\lambda_1$ and $\mu_1$ is $g$, then the right-hand side is trivially zero.
    We can verify that the left-hand side is also zero due to the symmetry established in Lemma \ref{lem: flip}. 
    Now we assume $\mu_1,\lambda_1 \leq g-1$. Let $A$ be a collection of an ohs $T$ in $\kappa^{C}(\lambda,\mu,r;g-1)$ such that $\bar{1}\in \rind(T)$ and $B$ be a collection of ah ohs $T=(\mu,\xi,\lambda)\in\kappa^{C}(\lambda,\mu,r-2;g-1)$ with $\xi_1=g-1$.
    Then there exists a natural bijection from $\kappa^{C}(\lambda,\mu,r-2;g-1)\setminus B$ to $\kappa^{C}(\lambda,\mu,r;g-1)\setminus A$ given by:
    $(\mu,\xi,\lambda) \rightarrow (\mu,\xi',\lambda)$ where $\xi'=(\xi_1+1,\xi_2,\xi_3,\dots)$.
    
    The map $\Gamma$ in Lemma \ref{lem: rohs ohs connection} sends $A$ bijectively to $\kappa^{D}(\lambda,\mu,r;g)$.
    Now given $(\mu,\xi,\lambda)\in B$, we associate $\Gamma(\mu,\xi',\flip(\lambda,g))$  where $\xi'=(2g-\lambda_1,\xi_2,\xi_3,\dots)$.
    This is a bijection from $B$ to $\kappa^{D}(\flip(\lambda,g),\mu,r;g)$.
\end{proof}
\begin{proof}[Proof of \eqref{eq: q=1 D} when $\lambda$ and $\mu$ are not spin weights]
  
    By Lemma \ref{lem: flip} we have 
    \begin{align*}
        P(\lambda,\mu)=\sum_{\nu}\left(|\kappa^D(\lambda,\nu,\mu_n;g)| + |\kappa^D(\flip(\lambda,g) ,\nu,\mu_n;g)|\right)P(\nu,\mu')\\
        N(\lambda,\mu)=\sum_{\nu}\left(|\kappa^D(\lambda,\nu,\mu_n;g)|-|\kappa^D (\flip(\lambda,g) ,\nu,\mu_n;g)|\right)N(\nu,\mu')
    \end{align*}
    where $\mu'=(\mu_1,\dots,\mu_{n-1})$.
    As in the proof of  \eqref{eq: q=1 B}, by Theorem \ref{thm: Cauchy type D}, Proposition \ref{prop: type D dual pieri} and Lemma \ref{lem: rohs okada}, we conclude $P(\hat{\lambda},\hat{\mu})=[m_\mu](\ps^{D_n}_{\lambda})$ by induction on $n$.
    
    By \eqref{eq: weyl D} and \eqref{eq: q=1 C}, we have  
    \begin{equation*}
        [m_\mu]\ns_{\lambda}^{D_n} = \sum_{S \subset [n-1]} (-1)^{n-|S|} |\SSOT_{g-1} (\hat{\lambda},(g-\mu_n,\hat{\mu'}-2\varepsilon_S))| - \sum_{S \subset [n-1]} (-1)^{n-|S|} |\SSOT_{g-1} (\hat{\lambda},(g-\mu_n-2,\hat{\mu'}-2\varepsilon_S))|.
    \end{equation*}
    where $\varepsilon_S$ is a 0-1 vector given by $\sum_{i\in S}{\varepsilon_i}$. 
    Then by Lemma \ref{lem: dohs minus} we deduce $N(\hat{\lambda},\hat{\mu})=[m_\mu] \ns_{\lambda}^{D_n}$. 
\end{proof}

\subsubsection{Proof of \eqref{eq: q=1 D} when $\lambda$ and $\mu$ are spin weights}
The proof parallels the arguments presented in Section \ref{subsub: D}. 
We provide the main ingredients and an outline of the proof, with details omitted.

For $\lambda=(\lambda_1,\lambda_2,\dots,\lambda_n)\in \Par_n$, we define
\begin{equation*}
    \pss^{D_n}_{\lambda}:=s^{D_n}_{\lambda^{\sharp}}+s^{D_n}_{\nu}, \qquad \nss^{D_n}_{\lambda}:=s^{D_n}_{\lambda^{\sharp}}-s^{D_n}_{\nu}
\end{equation*} 
where $\nu=(\lambda_1+\frac{1}{2},\dots,\lambda_{n-1}+\frac{1}{2},-\lambda_{n}-\frac{1}{2})$. 
Then $\pss^{D_n}_{\lambda}$ always has a factor $\prod_{i=1}^{n}(x_i^{\frac{1}{2}}+x_i^{-\frac{1}{2}})$ so we let 
\begin{equation*}
    \pss^{D_n}_{\lambda}= \prod_{i=1}^{n}(x_i^{\frac{1}{2}}+x_i^{-\frac{1}{2}}) \overline{\pss}^{D_n}_{\lambda}.
\end{equation*}
Additionally define
\begin{equation*}
        P^{\sharp}(\lambda,\mu)=|\SSROT_{g'}(\lambda,\mu)|+|\SSROT_{g'}(\flip(\lambda,g'),\mu)|, \quad   N^{\sharp}(\lambda,\mu)=|\SSROT_{g'}(\lambda,\mu)|-|\SSROT_{g'}(\flip(\lambda,g'),\mu)|
\end{equation*}
where $g'=g+\frac{1}{2}$. Then by Remark \ref{rmk: D spin} we can write a recursive formula for $P^{\sharp}(\lambda,\mu)$ and  $N^{\sharp}(\lambda,\mu)$ as before.

Lemma \ref{lem: spin cauchy} serves as the counterpart to Theorem \ref{thm: Cauchy type D}, and Lemma \ref{lem: spin okada} corresponds to Proposition \ref{prop: type D dual pieri}.
While these results are not explicitly stated in the literature, their proofs are identical in structure to those of the original versions.

\begin{lem}\label{lem: spin cauchy}
    For indeterminates $x_1,\dots x_n$ and $t_1,\dots,t_g$ we have
    \begin{equation*}
        \sum_{\lambda\subseteq (g^n)}(-1)^{|\tilde{\lambda}|}m_{\lambda}(x)\prod_{i=1}^{n}e^{(g)}_{g-\lambda_i}(t)=\sum_{\lambda\subseteq (g^n)}{(-1)^{|\tilde{\lambda}|}\overline{\pss}^{D_n}_{\lambda}(x) \overline{\pss}^{D_g}_{\tilde{\lambda}}(t)}.
    \end{equation*}
\end{lem}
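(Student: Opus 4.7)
The plan is to adapt, line by line, the proof of Theorem~\ref{thm: Cauchy type D} (Bump--Gamburd, BG2006 Lemma~5) to the spin setting. The only substantive change is that the non-spin character $\ps^{D_n}_{\lambda}$ on the right-hand side is replaced by the spin character $\overline{\pss}^{D_n}_{\lambda}$, while the left-hand side is unchanged.

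First, I would express $\overline{\pss}^{D_n}_{\lambda}(x)$ as a bialternant quotient whose denominator is invariant under the full hyperoctahedral group $\mathfrak{S}_n \ltimes (\mathbb{Z}/2)^n$. The weights $\lambda^{\sharp}$ and $\nu = (\lambda_1+\tfrac12,\dots,\lambda_{n-1}+\tfrac12,-\lambda_n-\tfrac12)$ appearing in $\pss^{D_n}_{\lambda} = s^{D_n}_{\lambda^{\sharp}} + s^{D_n}_{\nu}$ differ only by the sign flip of the last coordinate, so applying the $D_n$ Weyl character formula to each and summing combines the numerator alternants over the two $W_{D_n}$-cosets in $W_{B_n}$. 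This yields
\[
    \pss^{D_n}_{\lambda}(x) \;=\; \frac{\sum_{w \in \mathfrak{S}_n \ltimes (\mathbb{Z}/2)^n} (-1)^{w}\, x^{w(\lambda^{\sharp}+\rho^{D_n})}}{\Delta_D(x)},
\]
where $\Delta_D(x)$ is the $D_n$-Weyl denominator. Factoring $\prod_i (x_i^{1/2}+x_i^{-1/2})$ out of the numerator (this is where the half-integer shift is absorbed) and dividing produces the required bialternant expression for $\overline{\pss}^{D_n}_{\lambda}$. The analogous formula holds for $\overline{\pss}^{D_g}_{\tilde{\lambda}}(t)$.

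Next, I would substitute both bialternant expressions into the right-hand side of Lemma~\ref{lem: spin cauchy} and interchange the order of summation (sum over $\lambda \subseteq (g^n)$ first, then over the pair of Weyl group elements). The inner sum over partitions becomes a double product of geometric-series-like sums, which, by the standard Cauchy determinant manipulation used in BG2006, collapses to $\det_{i,j}\bigl(x_i+x_i^{-1}-t_j-t_j^{-1}\bigr)$ up to the explicit prefactors. This determinant, which is manifestly symmetric under $x_i \leftrightarrow x_i^{-1}$ and $t_j \leftrightarrow t_j^{-1}$, re-expands as $\sum_{\lambda \subseteq (g^n)} (-1)^{|\tilde{\lambda}|} m_{\lambda}(x) \prod_{i=1}^n e^{(g)}_{g-\lambda_i}(t)$, recovering the left-hand side.

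The main obstacle will be bookkeeping for the half-integer shifts. One must verify that the factor $\prod_i(x_i^{1/2}+x_i^{-1/2})$ introduced in the definition of $\overline{\pss}$ cancels precisely with the corresponding factor produced when passing from $s^{D_n}_{\lambda^{\sharp}}+s^{D_n}_{\nu}$ to a $W_{B_n}$-alternant, so that the Weyl denominator that actually survives in the quotient is the non-spin $B_n$-denominator used in BG2006. Once this cancellation is established, the determinantal manipulation in the middle step is formally identical to the one in the non-spin proof, and no new combinatorial input is needed.
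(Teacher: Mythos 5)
Your overall plan is the same as the paper's: the paper gives no independent argument for Lemma \ref{lem: spin cauchy}, stating only that its proof is ``identical in structure'' to that of Theorem \ref{thm: Cauchy type D} (BG2006), and your proposal is precisely to run that argument with the spin characters $\overline{\pss}$ in place of $\ps$. So the route is the intended one, and the skeleton (bialternant expression, substitute into the right-hand side, Cauchy/Laplace-type determinant manipulation, re-expand the product $\prod_{i,j}(x_i+x_i^{-1}-t_j-t_j^{-1})$ in the monomial/elementary basis) is sound.

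However, the one formula you actually commit to is wrong under the paper's sign convention, and as written it would derail the computation. Summing the two $D_n$ Weyl-character numerators for $\lambda^{\sharp}$ and $\nu=(\lambda_1+\tfrac12,\dots,\lambda_{n-1}+\tfrac12,-\lambda_n-\tfrac12)$ over the two $W_{D_n}$-cosets of $W_{B_n}$ produces $\sum_{w\in \mathfrak{S}_n\ltimes(\mathbb{Z}/2)^n}\mathrm{sgn}(\pi_w)\,x^{w(\lambda^{\sharp}+\rho^{D_n})}$, i.e.\ the sum is weighted by the sign of the \emph{permutation part} of $w$ only (the $D_n$-sign extended trivially across the nontrivial coset). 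If $(-1)^w$ is read as $(-1)^{\ell(w)}$ in $W_{B_n}$, as in the rest of the paper, your displayed alternant is antisymmetric under single sign flips and computes $\nss^{D_n}_{\lambda}$, not $\pss^{D_n}_{\lambda}$; the correct numerator can instead be folded over the sign-change subgroup into $\det_{i,j}\bigl(x_i^{\lambda_j+n-j+1/2}+x_i^{-(\lambda_j+n-j+1/2)}\bigr)$, and dividing row $i$ by $x_i^{1/2}+x_i^{-1/2}$ absorbs the spin factor inside the determinant. Two smaller slips: the left-hand side is the double product $\prod_{i=1}^{n}\prod_{j=1}^{g}(x_i+x_i^{-1}-t_j-t_j^{-1})$, not a determinant $\det_{i,j}(\cdots)$ (which would not even be square for $n\neq g$); and the middle step of a dual Cauchy identity with $\lambda\subseteq(g^n)$ proceeds by a Laplace expansion of an $(n+g)$-sized alternant rather than by summing geometric series, so the ``geometric-series-like sums'' description does not match the mechanism you need. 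Also, after the cancellation the surviving denominator is $\Delta_{D}(x)\prod_i(x_i^{1/2}+x_i^{-1/2})$ (equivalently, handled by the row division above), not the $B_n$ Weyl denominator as you assert. With these corrections the adaptation goes through as the paper intends.
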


\begin{lem}\label{lem: spin okada}
     For $r\leq n$ and $\lambda\in \Par_n$ we have
    \begin{equation*}
        e^{(n)}_r \overline{\pss}_{\mu}^{D_n}=\sum_{\lambda}|\kappa^{B}(\lambda^{t},\mu^{t},r;n)|\overline{\pss}_{\lambda}^{D_n}
    \end{equation*}
\end{lem}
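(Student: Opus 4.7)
The plan is to adapt Okada's proof of the type $D_n$ dual Pieri rule (Proposition \ref{prop: type D dual pieri}) to the spin setting, combined with the combinatorial sign-cancellation argument of Lemma \ref{lem: 5.9 coeff}.

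I would first derive a clean bialternant formula for $\overline{\pss}^{D_n}_\mu$. Applying the Weyl character formula to both $s^{D_n}_{\mu^\sharp}$ and $s^{D_n}_{(\mu_1^\sharp,\ldots,-\mu_n^\sharp)}$ and summing, the two determinantal expressions coalesce into a single determinant with entries $x_i^{\mu_j+n-j+1/2}+x_i^{-\mu_j-n+j-1/2}$. The factorization
\[
x^{k+1/2}+x^{-k-1/2}=(x^{1/2}+x^{-1/2})P_k(x+x^{-1}),\qquad P_0=1,\ P_1=y-1,\ P_{k+1}=yP_k-P_{k-1},
\]
allows one to pull a common factor of $\prod_i(x_i^{1/2}+x_i^{-1/2})$ out of every row; dividing by this factor to pass from $\pss^{D_n}_\mu$ to $\overline{\pss}^{D_n}_\mu$, and writing $y_i=x_i+x_i^{-1}$, gives
\[
\overline{\pss}^{D_n}_\mu(x)=\frac{\det\bigl[P_{\mu_j+n-j}(y_i)\bigr]_{1\le i,j\le n}}{\det\bigl[P_{n-j}(y_i)\bigr]_{1\le i,j\le n}}.
\]
This places $\overline{\pss}^{D_n}_\mu$ on the same algebraic footing as $s^{B_n}_\mu$, which enjoys the analogous formula with a different Chebyshev variant (namely $V_0=1$, $V_1=y+1$). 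A quick check in small rank ($n=2$, $\mu=(1,1)$: the formula gives $y_1y_2-y_1-y_2+2$, which agrees with $e^{(2)}_2\cdot 1-\overline{\pss}^{D_2}_0-\overline{\pss}^{D_2}_{(1,0)}$ predicted by the lemma) confirms the expression.

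Next I would multiply the numerator of this ratio by $e_r^{(n)}(x)=\sum_{i+j=r}e_i(x)e_j(x^{-1})$ and symmetrize over rows. Using the three-term recursion for $P_k$ together with $(x^a+x^{-a})(x^b+x^{-b})=(x^{a+b}+x^{-a-b})+(x^{a-b}+x^{-a+b})$, the product rewrites as a signed sum of determinants $\det[P_{\eta_j(\xi)}(y_i)]$, indexed by intermediate partitions $\xi$ such that $(\mu^t,\xi,\lambda^t)$ is a pair of vertical strips of total size $r$ or $r-1$, subject to $\xi_1\le n$. This manipulation is structurally identical to Okada's derivation of Proposition \ref{prop: type D dual pieri}, except for the replacement of the exponential basis by the Chebyshev basis $\{P_k\}$; after re-indexing by the outcome partition $\lambda$, the coefficient of $\det[P_{\lambda_j+n-j}(y_i)]$, hence of $\overline{\pss}^{D_n}_\lambda$, is a signed alternating sum of vertical-strip pair counts of the form $\sum_i(-1)^i K(r-i)$.

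Finally, Lemma \ref{lem: 5.9 coeff}, whose proof is purely combinatorial and independent of the character theory, collapses such an alternating sum to the unsigned count $|\kappa^B(\lambda^t,\mu^t,r;n)|$ of $n$-bounded gohs, yielding the stated formula. The main obstacle lies in the bookkeeping of the Pieri expansion: the $-1$ correction built into $P_1=y-1$ (absent from $V_1=y+1$ for the $B_n$ case) is exactly what generates the $r$ versus $r-1$ dichotomy in the definition of a generalized oscillating horizontal strip, and verifying that the resulting signs assemble into the combinatorial conditions defining $\kappa^B$ (rather than, say, $\kappa^D$ or some other configuration set) requires careful tracking through each step of the column expansion. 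No conceptually new ingredient beyond \cite{Okada2016} and Lemma \ref{lem: 5.9 coeff} is needed, but the computation must be executed with attention to the Chebyshev-versus-exponential translation.
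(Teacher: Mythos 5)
Your strategy is viable and the conclusion is right, but it is not quite the proof the paper points to, and it can be closed more cheaply than you propose. The paper gives no details at all: it only asserts that Lemma \ref{lem: spin okada} is proved ``identically in structure'' to Okada's proof of Proposition \ref{prop: type D dual pieri}, i.e.\ by rerunning the determinantal Pieri computation directly for the spin characters so that the gohs count $|\kappa^{B}(\lambda^t,\mu^t,r;n)|$, with its $r$ versus $r-1$ dichotomy, comes out in one pass with no inclusion--exclusion. You instead aim to produce an alternating sum $\sum_i(-1)^iK^{B_n}_{\lambda,\mu}(r-i)$ and then invoke Lemma \ref{lem: 5.9 coeff}; that is a genuinely different (and, given what is already stated in Section \ref{Sec: Ltilde}, more economical) route, and the step you flag as delicate is in fact unnecessary. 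Your bialternant already does all the work: writing $x^{k+\frac12}-x^{-k-\frac12}=(x^{\frac12}-x^{-\frac12})V_k(x+x^{-1})$ with $V_0=1$, $V_1=y+1$ and the same three-term recursion, one has $P_k(y)=(-1)^kV_k(-y)$, both denominators are the Vandermonde in the $y_i$, and hence
\begin{equation*}
\overline{\pss}^{D_n}_{\lambda}(x)=(-1)^{|\lambda|}\,s^{B_n}_{\lambda}\big|_{y_i\mapsto-y_i},\qquad y_i=x_i+x_i^{-1}.
\end{equation*}
Since $e^{(n)}_r$ is a polynomial in the $y_i$ with $e^{(n)}_r\big|_{y\mapsto-y}=(-1)^re^{(n)}_r$, substituting $y\mapsto-y$ into $e^{(n)}_rs^{B_n}_{\mu}=\sum_{\lambda}\bigl(\sum_i(-1)^iK^{B_n}_{\lambda,\mu}(r-i)\bigr)s^{B_n}_{\lambda}$ --- which is just the expansion $e^{(n)}_r=\sum_i(-1)^is^{B_n}_{(1^{r-i})}$ combined with Proposition \ref{prop: type B dual pieri} --- yields $e^{(n)}_r\overline{\pss}^{D_n}_{\mu}=\sum_{\lambda}(-1)^{|\lambda|-|\mu|-r}\bigl(\sum_i(-1)^iK^{B_n}_{\lambda,\mu}(r-i)\bigr)\overline{\pss}^{D_n}_{\lambda}$, and Lemma \ref{lem: 5.9 coeff} applied with $(\lambda^t,\mu^t)$ in place of $(\lambda,\mu)$ turns the coefficient into exactly $|\kappa^{B}(\lambda^t,\mu^t,r;n)|$. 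So no new Okada-style column expansion is needed: your step 1 plus Proposition \ref{prop: type B dual pieri} and Lemma \ref{lem: 5.9 coeff} already constitute a complete proof, whereas the paper's intended route redoes Okada's manipulation in the Chebyshev-type basis and obtains the count directly without Lemma \ref{lem: 5.9 coeff}. Two minor slips to correct in your write-up, neither fatal: in $K^{B_n}$ the constraint is $\ell(\xi)\le n$ rather than $\xi_1\le n$, and the vertical strips in the alternating sum sit on the untransposed $\lambda,\mu$ (your transposition bookkeeping is off by one dualization), which is precisely why applying Lemma \ref{lem: 5.9 coeff} at $(\lambda^t,\mu^t)$ is the clean way to land on $\kappa^{B}(\lambda^t,\mu^t,r;n)$.
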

The counterpart for \eqref{eq: weyl D} is given as \cite[Equation (63) and (65)]{BG2006}
\begin{equation*}
    \nss_{\lambda}^{D_n}=\prod_{i=1}^{n}(x_i^{\frac{1}{2}}-x_i^{-\frac{1}{2}})s^{B_n}_{\lambda}.
\end{equation*}

\begin{lem}\label{lem: spin okada connection}
     For partitions $\lambda$ and $\mu$ with $\lambda_1,\mu_1\leq g$, we have
     \begin{align*}
         |\kappa^{D}(\lambda,\mu,r;g+\frac{1}{2})|=|\kappa^{B}(\lambda,\mu,r;g)|-|\kappa^{B}(\lambda,\mu,r-1;g)| \qquad \text{if $|\lambda|-|\mu|-r$ is even},\\
         |\kappa^{D}(\flip(\lambda,g+\frac{1}{2}),\mu,r;g+\frac{1}{2})| =-|\kappa^{B}(\lambda,\mu,r;g)|+|\kappa^{B}(\lambda,\mu,r-1;g)| \qquad \text{if $|\lambda|-|\mu|-r$ is odd}.
     \end{align*}   
\end{lem}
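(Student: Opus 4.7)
The strategy is to reduce each identity to a difference of $|\kappa^C|$ counts via parity, and then to match each side with $|\kappa^D|$ via an explicit bijection in the spirit of Lemma \ref{lem: dohs minus}, adjusted for the spin bound $g+\tfrac12$. The starting observation is that $|\nu/\mu|+|\nu/\lambda|$ has the same parity as $|\lambda|-|\mu|$, so among the two gohs types (short with sum $r$ and bound $\nu_1\le g$, or long with sum $r-1$ and bound $\nu_1\le g-1$) exactly one is realizable for any given triple $(r,\lambda,\mu)$. Thus in Case A ($|\lambda|-|\mu|-r$ even) one has $|\kappa^B(\lambda,\mu,r;g)|=|\kappa^C(\lambda,\mu,r;g)|$ and $|\kappa^B(\lambda,\mu,r-1;g)|=|\kappa^C(\lambda,\mu,r-2;g-1)|$, while in Case B ($|\lambda|-|\mu|-r$ odd) the roles swap. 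The two identities become
\begin{align*}
|\kappa^D(\lambda,\mu,r;g+\tfrac12)|&=|\kappa^C(\lambda,\mu,r;g)|-|\kappa^C(\lambda,\mu,r-2;g-1)|,\\
|\kappa^D(\flip(\lambda,g+\tfrac12),\mu,r;g+\tfrac12)|&=|\kappa^C(\lambda,\mu,r-1;g)|-|\kappa^C(\lambda,\mu,r-1;g-1)|,
\end{align*}
and the right-hand side of the second visibly counts length-$(r-1)$ ohs with $\nu_1=g$.

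For Case B the bijection sends an ohs $(\mu,\nu,\lambda)$ of length $r-1$ with $\nu_1=g$ to the image under $\Gamma$ (from Lemma \ref{lem: rohs ohs connection}) of the enlarged ohs $(\mu,\nu',\flip(\lambda,g+\tfrac12))$ of length $r$, where $\nu'=(2g+1-\lambda_1,\nu_2,\nu_3,\dots)$. Since $\mu_1\le g<2g+1-\lambda_1$ and $\nu_2\le g$, one verifies that $\nu'$ is a partition and that both $\nu'/\mu$ and $\nu'/\flip(\lambda,g+\tfrac12)$ are horizontal strips; moreover $\nu'_1=\max(\mu_1,\flip(\lambda,g+\tfrac12)_1)$, so $\Gamma$ is length-preserving on this ohs. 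The spin bound on the resulting rohs reduces to $\mu_1+\nu_2\le 2g+1$, automatic from $\mu_1,\nu_2\le g$. The inverse applies $\Gamma^{-1}$ in its length-preserving branch and contracts $\nu'_1$ back to $g$. For Case A, the scheme mirrors Lemma \ref{lem: dohs minus}: set $A=\{T\in\kappa^C(\lambda,\mu,r;g):\bar 1\in\rind(T)\}$ and $B=\{(\mu,\nu,\lambda)\in\kappa^C(\lambda,\mu,r-2;g-1):\nu_1=g-1\}$; a shape-incrementing bijection on the respective complements yields $|\kappa^C(\lambda,\mu,r;g)|-|\kappa^C(\lambda,\mu,r-2;g-1)|=|A|-|B|$, and then $\Gamma$ maps $A$ bijectively to $\kappa^D(\lambda,\mu,r;g+\tfrac12)$, while the contribution of $B$ is absorbed by the fact that $|\kappa^D(\flip(\lambda,g+\tfrac12),\mu,r;g+\tfrac12)|=0$ by parity in Case A.

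The principal obstacle will be the detailed bound translation under $\Gamma$: one must verify that the ohs condition $\nu_1\le g$ together with $\bar 1\in\rind$ corresponds exactly to the spin bound $\mu_1+(\lambda_1-\xi_1)+\max(\mu_2,\lambda_2)\le 2g+1$ on the target rohs, and in Case B that the length-preserving branch of $\Gamma^{-1}$ really lands in the subset recoverable as a valid ohs from $\mu$ to $\lambda$ (so that the a priori possibility $\nu'_2>\lambda_1$ is in fact ruled out by the spin bound and the chosen parity). These bookkeeping computations parallel those in Lemmas \ref{lem: dohs minus} and \ref{lem: rohs okada}, with the factor $\tfrac12$ entering only through the shift $2g\to 2g+1$ in the rohs bound.
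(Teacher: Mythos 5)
Your overall route is the paper's: the paper's proof is literally ``identical to Lemma \ref{lem: dohs minus}'', and your parity reduction of the $\kappa^{B}$-differences to $\kappa^{C}$-differences, followed by the $\Gamma$-plus-first-row-box argument, is the intended adaptation. Your Case B is essentially correct: the enlargement $\nu\mapsto\nu'=(2g+1-\lambda_1,\nu_2,\dots)$ makes $\nu'_1=\flip(\lambda,g+\tfrac12)_1$, so $\Gamma$ preserves the length, and the spin bound on the image rohs works out to $\mu_1+(\flip(\lambda,g+\tfrac12)_1-\zeta_1)+\max(\mu_2,\lambda_2)=\nu'_1+\nu'_2\le 2g+1$, i.e.\ $\nu_2\le\lambda_1$ (not $\mu_1+\nu_2\le 2g+1$ as you wrote, but still automatic, now because $\nu/\lambda$ is a horizontal strip); in the inverse direction this same inequality is exactly what excludes $\nu'_2>\lambda_1$, so the bookkeeping you flagged as the ``principal obstacle'' does go through.

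Case A, however, has a genuine gap. With $A=\{T\in\kappa^{C}(\lambda,\mu,r;g):\bar1\in\rind(T)\}$ (the phrasing borrowed from Lemma \ref{lem: dohs minus}), $\Gamma$ does \emph{not} map $A$ bijectively to $\kappa^{D}(\lambda,\mu,r;g+\tfrac12)$: whenever both $1$ and $\bar1$ occur in $\rind(T)$, $\Gamma$ strictly shortens the length. Concretely, for $\lambda=\mu=\emptyset$, $r=4$, $g=2$ one has $A=\{(\emptyset,(2),\emptyset)\}$ while $\kappa^{D}(\emptyset,\emptyset,4;g+\tfrac12)=\emptyset$, and $B=\{(\emptyset,(1),\emptyset)\}\neq\emptyset$, so ``absorbing'' $B$ via the parity-vanishing of $\kappa^{D}(\flip(\lambda,g+\tfrac12),\mu,r;g+\tfrac12)$ cannot close the arithmetic: your scheme would force $|B|=0$. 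The trouble is that you imported the exceptional set $B$ from Lemma \ref{lem: dohs minus}, where both $\kappa^{C}$ bounds are $g-1$; here the bounds are $g$ and $g-1$, and the correct bookkeeping is simpler. Adding a box to the first row of the middle partition is a bijection from \emph{all} of $\kappa^{C}(\lambda,\mu,r-2;g-1)$ onto $\{(\mu,\nu,\lambda)\in\kappa^{C}(\lambda,\mu,r;g):\nu_1>\max(\mu_1,\lambda_1)\}$ (no set $B$ is excised, since $\nu_1=\xi_1+1\le g$ never overflows the bound $g$), while $\Gamma$ identifies the complement $\{\nu_1=\max(\mu_1,\lambda_1)\}$ --- not $\{\bar1\in\rind\}$ --- with $\kappa^{D}(\lambda,\mu,r;g+\tfrac12)$, the spin bound $\nu_1+\nu_2\le 2g+1$ being automatic from $\nu_1\le g$ and $\nu_2\le\min(\mu_1,\lambda_1)\le g$. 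With these corrections Case A goes through; as written, the two bijection claims at the end of your Case A are false and the argument does not close.
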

\begin{proof}
    Proof is identical to that of Lemma \ref{lem: dohs minus}.
\end{proof}
Note that $|\kappa^{D}(\lambda,\mu,r;g+\frac{1}{2})|=0$ if $|\lambda|-|\mu|-r$ is odd and $|\kappa^{D}(\flip(\lambda,g+\frac{1}{2}),\mu,r;g+\frac{1}{2})|=0$ if $|\lambda|-|\mu|-r$ is even. Therefore Lemma \ref{lem: spin okada connection} can be rephrased as
\begin{align*}
        (-1)^{|\lambda|-|\mu|-r}(|\kappa^{D}(\lambda,\mu,r;g+\frac{1}{2})|+|\kappa^{D}(\flip(\lambda,g+\frac{1}{2}),\mu,r;g+\frac{1}{2})|) =|\kappa^{B}(\lambda,\mu,r;g)|-|\kappa^{B}(\lambda,\mu,r-1;g)|,\\
        |\kappa^{D}(\lambda,\mu,r;g+\frac{1}{2})|-|\kappa^{D}(\flip(\lambda,g+\frac{1}{2}),\mu,r;g+\frac{1}{2})|=|\kappa^{B}(\lambda,\mu,r;g)|-|\kappa^{B}(\lambda,\mu,r-1;g)|.
\end{align*}
From here, we can complete the proof for \eqref{eq: q=1 D} when $\lambda$ and $\mu$ are spin weights, in the same manner as in Section \ref{subsub: D}.

\section{Concluding remarks and Future directions}\label{sec: future}
We outline some further questions for exploration. 
\begin{itemize}
    \item It is natural to try to complete the missing parts of Table \ref{table: summary}. For partial results in those missing parts, see \cite{JK2021}.
    For Lusztig $q$-weight multiplicity for type $D$, we suspect that a formula related to the column KR crystal of the affine type $C_N^{(1)}$ (kind $\diamond=\sboxtwo$) might exist. 
    For non-spin weights in type $B$, we may need to step beyond the current framework of crystal theory (Remark \ref{rmk: why non spin is hard}), which is a big and fascinating challenge.
    
    \item In \cite{Lee2023}, the crystal structure on $\SSOT$ was given. We may further try to establish crystal structures on $\GSSOT$ or $\SSROT$.\\
    
    \item Theorem \ref{thm: C lusztig} naturally implies the monotonicity of the Lusztig $q$-weight multiplicities, namely, $\KL^{C_n}_{\lambda+(1^n),\mu+(1^n)}(q)-\KL^{C_n}_{\lambda,\mu}(q)$ is a polynomial in $q$ with nonnegative coefficients. 
    By Theorem \ref{thm: B lusztig}, monotonicity property holds for type $B$ with spin weights.
    We conjecture that the monotonicity properties hold for other types.
    Additionally it would be an interesting problem to find a geometric explanation (possibly related to the intersection homology theory) for the monotonicity.

    \item In \cite{LecouLenart2020}, they gave a combinatorial formula for $\KL^{C_n}_{\lambda,(0^n)}$ by giving a suitable statistic on King tableaux.
    Transferring their statistic on $\SSOT$ via the bijection given in \cite{Lee2023}, we observed that their statistic is the same as ours. 
    It is an interesting problem to unveil a connection between these two.  
    
    \item
    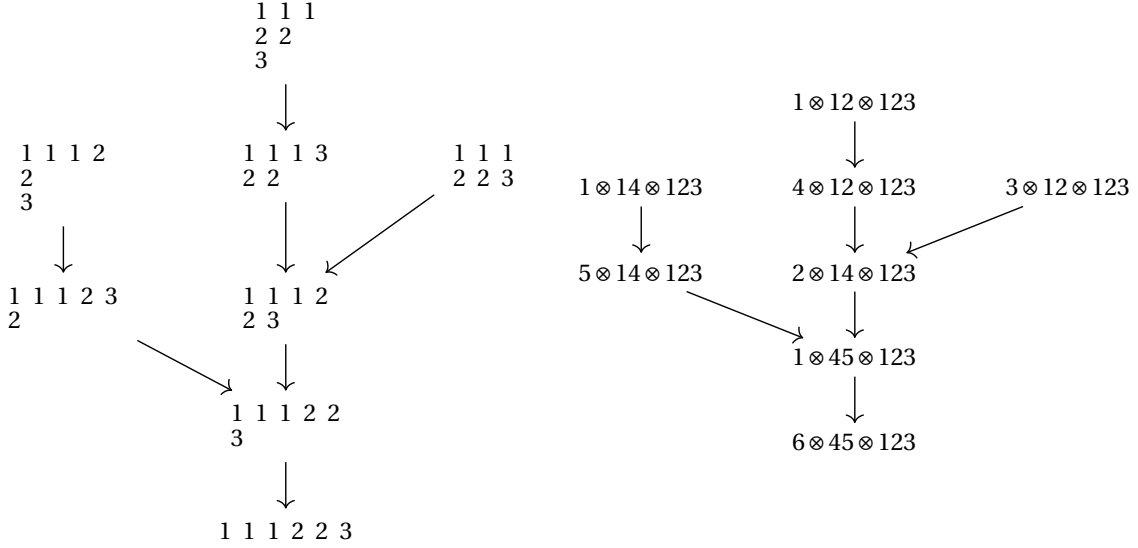
\begin{figure}
        \begin{tikzcd}
        	& \usebox\youngA \\
        	\usebox\youngB & \usebox\youngC & \usebox\youngD \\
        	\usebox\youngE & \usebox\youngF \\
        	& \usebox\youngG \\
        	& \usebox\youngH
        	\arrow[from=1-2, to=2-2]
        	\arrow[from=2-1, to=3-1]
        	\arrow[from=2-2, to=3-2]
        	\arrow[from=2-3, to=3-2]
        	\arrow[from=3-1, to=4-2]
        	\arrow[from=3-2, to=4-2]
        	\arrow[from=4-2, to=5-2]
        \end{tikzcd}
        \quad
        \begin{tikzcd}
        	&  1 \otimes 12 \otimes 123 \\
        	1 \otimes 14 \otimes 123 & 4 \otimes 12 \otimes 123 & 3 \otimes 12 \otimes 123 \\
        	5 \otimes 14 \otimes 123 & 2 \otimes 14 \otimes 123 \\
        	& 1\otimes 45 \otimes 123 \\
        	& 6\otimes 45 \otimes 123
        	\arrow[from=1-2, to=2-2]
        	\arrow[from=2-1, to=3-1]
        	\arrow[from=2-2, to=3-2]
        	\arrow[from=2-3, to=3-2]
        	\arrow[from=3-1, to=4-2]
        	\arrow[from=3-2, to=4-2]
        	\arrow[from=4-2, to=5-2]
        \end{tikzcd}
        \caption{A cyclage graph for weight $(3,2,1)$ and corresponding elements in $B^t_{(3,2,1)}(\emptyset)$.}
        \label{fig: cyclage}
    \end{figure}
    
    The left side of the Figure \ref{fig: cyclage} represents the cyclage graph \cite[Example 3.2]{Las1989} for the weight $(3,2,1)$. 
    The two tableaux connected by the directed edge are in a cyclage relation.
    On the other hand, each vertex on the right side represents the corresponding crystal element.
    The edge $T \to T'$ means that $\hw(e_0(T))=T'$.
    For example, let $T= 1 \otimes 12 \otimes 123 \in B_{(3,2,1)}^t(\emptyset)$.
    Then $e_0(T) = N \otimes 12 \otimes 123 $ and $\hw(e_0(T))=4 \otimes 12 \otimes 123$, which matches the above diagram.
    Therefore, the right side of the diagram can be extended to cyclage graphs for $\mathfrak{g}= B, C, D$, and there are natural embedding from a cyclage graph for a given weight to a cyclage graph for another weight. 
    
    \item The Catalan function (type $A$), introduced by Blasiak, Morse, Pun and Summers, is defined by applying a sequence of certain operators $R_{ij}$ to the Hall-Littlewood polynomial according to the associated root ideal \cite{BMPS19}. We may define the Catalan function for type $C$ in a similar manner: apply a sequence of operators $R_{ij}$ to $H_{\mu} := \sum_{\lambda \in P^{+}_n} \KL^{C_n}_{\lambda,\mu}(q) s^{C_n}_{\lambda}$ (according to the associated root ideal). By Theorem \ref{thm: C lusztig}, we can study the Catalan function for type $C$ in the realm of KR crystals. Natural questions, including positivity and geometric meaning, are under exploration by the third author and Shimozono.
\end{itemize}

\appendix
\section{Combinatorial $R$-matrix and the Splitting map}\label{Sec: append A}
We prove several results whose proofs were deferred to this section, although they were previously utilized in Section \ref{Sec: L}.
The key ingredient in these proofs is the description of the combinatorial $R$-matrix for $B^{k,1}(\diamond) \otimes B^{1,1}(\diamond)$, as presented in Appendix \ref{app:comb R B}. The reader might skip the rules in Appendix \ref{app:comb R B} at first, returning to them when they are used later on.

For the remainder of this section, we denote by $R$ the combinatorial $R$-matrix and by $S$ the splitting map $S^{t}_{\mu}: B^{t}_{\mu} \rightarrow (B^{1,1})^{\otimes |\mu|}$. 
Additionally, we restrict our focus to the cases where $\diamond = \sboxone$ or $\sboxeleven$.

\subsection{Combinatorial R matrix for $B^{k,1}(\diamond) \otimes B^{1,1}(\diamond)$}\label{app:comb R B}
We fix an integer $k \geq 2$ and consider an element $u \otimes u' \in B^{k,1}(\diamond) \otimes B^{1,1}(\diamond)$.
Below, we provide a complete description of $R(u \otimes u')$.
Recall that we typically represent $u$ as a word consisting of letters, but we also identify it with the set of letters contained in that word.
For instance, we identify the word $123$ with the set $\{1, 2, 3\}$. 
The rules outlined below can be established by induction, and we omit the details of the proof.

 (a) $\diamond=\sboxeleven$

\begin{itemize}
    \item (Case 1) $u' \prec u_1$  or $u=\emptyset$.

    Consider a word $v=u'u$ and let $w=\red(v)$.
    \begin{itemize}
        \item (Case 1-1)  $w$ is empty 

        $R(u\otimes u')=\overline{1}\otimes 1$.
        \item (Case 1-2) $w\neq v$, i.e., $v$ is not admissible 

        Let $d$ be the largest letter (in $\prec$ order) in $w$, then $R(u\otimes u')=d\otimes (w\setminus\{d\})$.

        \item (Case 1-3) $w= v$ and the length of $w$ equals $k+1$

        Let $d$ be the largest letter in $w$, then $R(u\otimes u')=d\otimes (w\setminus\{d\})$.

        \item (Case 1-4)  $w= v$ and the length of $w$ is smaller than $k+1$

        Find the smallest $i$ such that: $i, \overline{i} \notin w$ and $s=w\cup\{i,\overline{i}\}\setminus \{d\}$ is admissible where $d$ is the largest letter in $w\cup\{i,\overline{i}\}$. We have $R(u\otimes u')=d\otimes s$.
    \end{itemize}    
\end{itemize}

\begin{itemize}
    \item (Case 2) $c \nprec u_1$ where $u'=c$ is an unbarred letter

    Let $u_i$ be the largest letter (among letters in $u$) such that $u_i\preceq c$.

    \begin{itemize}
        \item (Case 2-1) $\overline{u_i}\notin u$

          $R(u\otimes u')=u_i\otimes (u\setminus\{u_i\}\cup \{c\})$.
        \item (Case 2-2) $\overline{u_i}\in u$

          Find the maximal $p\prec u_i$ such that $\overline{p}\notin u$.
          Then $R(u\otimes u')=p\otimes (u\setminus\{u_i,\overline{u_i}\}\cup \{c,\overline{p}\})$.
    \end{itemize}
\end{itemize}

\begin{itemize}
    \item (Case 3) $\overline{c} \nprec u_1$ where $u'=\overline{c}$ is a barred letter and $c\notin u$

    Find the maximal $u_i$ such that $u_i\preceq\overline{c}$.
    \begin{itemize}
        \item (Case 3-1) $u_i\prec c$ and $\overline{u_i}\in u$

        Pick the maximal $p\preceq u_i$ such that $\overline{p}\notin u$. 
        Then $R(u\otimes u')=p \otimes (u\setminus\{u_i,\overline{u_i}\}\cup \{\overline{c},\overline{p}\}) $.

        \item (Case 3-2) otherwise

        $R(u\otimes u')=u_i\otimes(u\setminus\{u_i\}\cup \{\overline{c}\})$.
    \end{itemize}
\end{itemize}

\begin{itemize}
    \item (Case 4)  $\overline{c} \nprec u_1$ where $u'=\overline{c}$ is a barred letter and $c\in u$

    Let $I=\{u_i\in u: c \preceq u_i \preceq \overline{c}\}$ and 
    \begin{equation*}
        I'=\red(\{d-c+1:d\in I\}\cup\{\overline{d-c+1}:\overline{d}\in I\}).
    \end{equation*}
    For example, if $c=6$ and $I=\{6,7,\overline{7}\}$ we have $I'=\red(\{1,2,\overline{2}\})=\{1\}$.
    
    \begin{itemize}
        \item (Case 4-1) $I'$ is empty

        Note that we must have $c,\overline{c}\in u$. 
        Find the maximal $p\preceq c$ such that $\overline{p}\notin u$. Then $R(u\otimes u')=p \otimes (u\setminus\{c\}\cup \{\overline{p}\}) $.

        \item (Case 4-2) $I'$ is not empty

        Let $d$ be the maximal letter in $I'$ and set $d'=d+c-1$ if $d$ is an unbarred letter and $d'=\overline{r+c-1}$ if $d=\overline{r}$ is a barred letter.
        If $d'=c$ then  $R(u\otimes u')=c\otimes(u\setminus\{c\}\cup \{\overline{c}\})$.
        If $d'\neq c$ then find the minimal $p>c$ such that $p,\overline{p}\notin I\setminus\{d'\}$. 
        We have $R(u\otimes u')=d'\otimes(u\setminus\{c,d'\}\cup \{p,\overline{p}\})$
    \end{itemize}
\end{itemize}

(b) $\diamond=\sboxone$
\begin{itemize}
    \item (Case 1) $u' \prec u_1$ or $u=\emptyset$, or $u'=\emptyset$.

    Consider a word $v=u'u$ (if $u'=\emptyset$ is empty then $v=u$). Let $w=\red(v)$.
    \begin{itemize}
        \item (Case 1-1) $w=v$ and length of $w$ is smaller than $k+1$

        $R(u\otimes u')=\emptyset\otimes w$.
        \item (Case 1-2) otherwise

        $R(u\otimes u')=d\otimes (v\setminus\{d\})$ where $d$ is the largest letter in $v$.
    \end{itemize}    
\end{itemize}

Case 2, 3 and 4 follow the same rules given when $\diamond=\sboxeleven$.

\begin{example}
    We provide examples for each case. Throughout this example, we consider $u\otimes u'\in B^{7,1}\otimes B^{1,1}$.
    
    $\diamond=\sboxeleven$
    \begin{itemize}
        \item (Case 1-1) $u\otimes u'=\overline{1}\otimes 1$ and $R(u\otimes u')=\overline{1}\otimes 1$
        \item (Case 1-2) $u\otimes u'=24\overline{2}\otimes 1$ and $R(u\otimes u')=4\otimes 1$
        \item (Case 1-3) $u\otimes u'=2345678\otimes 1$ and $R(u\otimes u')=8\otimes 1234567$
        \item (Case 1-4) $u\otimes u'=23456\otimes 1$ and $R(u\otimes u')=\overline{7}\otimes 1234567$   
    \end{itemize}
    
    \begin{itemize}
        \item (Case 2-1) $u\otimes u'=245\otimes 3$ and $R(u\otimes u')=2\otimes 345$
        \item (Case 2-2) $u\otimes u'=24\overline{2}\otimes 3$ and $R(u\otimes u')=1\otimes 34\overline{1}$
        \item (Case 3-1) $u\otimes u'=13\overline{3}\otimes \overline{4}$ and $R(u\otimes u')=2\otimes 1\bar{4}\bar{2}$
        \item (Case 3-2) $u\otimes u'=13\overline{2}\otimes \overline{4}$ and $R(u\otimes u')=3\otimes 1\bar{4}\bar{2}$   
    \end{itemize}
    
    \begin{itemize}
        \item (Case 4-1) $u\otimes u'=1 3\overline{3}\otimes \overline{3}$ and $R(u\otimes u')=2\otimes 1\bar{3}\bar{2}$
        \item (Case 4-2) $u\otimes u'=1 23  \otimes \overline{3}$ and $R(u\otimes u')=3\otimes 12\overline{3}$, $u\otimes u'=1 3  5 \otimes \overline{3}$ and $R(u\otimes u')=5\otimes 14\overline{4}$.
    \end{itemize}
    
    $\diamond=\sboxone$
    \begin{itemize}
        \item (Case 1-1) $u\otimes u'=234\otimes 1$ and $R(u\otimes u')=\emptyset\otimes 1234$
        \item (Case 1-2) $u\otimes u'=2345678\otimes 1$ and $R(u\otimes u')=8\otimes 123467$, $u\otimes u'=2 \overline{2}\otimes 1$ and $R(u\otimes u')=\overline{2}\otimes 12$.
    \end{itemize}
\end{example}

\subsection{Description of the splitting map}
We explicitly describe image of the splitting map $S(u \otimes u')$ for $u \otimes u' \in \HW(B^{a,1} \otimes B^{b,1})$. In particular, we show that the splitting map commutes with the combinatorial $R$-matrix, i.e., $S(u \otimes u') = S(R(u \otimes u'))$ (Corollary \ref{cor: splitting invariance}).

We adopt the following notation from \cite[Appendix D]{Sch2005}. Given a word $w_1 \dots w_2 w_3 \dots w_4\dots$, where $w_1 \dots w_2$ denotes consecutive letters from $w_1$ to $w_2$, we represent the word as $[w_1 \dots w_2 | w_3 \dots w_4 | \dots]$. For instance, the word $123 \bar{4} \bar{3} \bar{2}$ is written as $[1 \dots 3 | \overline{4} \dots \overline{2}]$. Note that an element $u \otimes u' \in \HW(B^{a,1} \otimes B^{b,1})$ is of the form 
$ [1 \dots k | n+1 \dots p | \overline{p} \dots \overline{q}] \otimes [1 \dots n], $
where the parameter ranges are $p \geq n\geq k \geq 0$ and $q \leq p + 1$. Any of the intervals in $u$ or $u'$ can be empty. For example, if $p = n$, then $u = [1 \dots k | \overline{n} \dots \overline{q}]$.

Next, we introduce some statistics for $u \otimes u'$. Let $\tol(u \otimes u')$ denote the maximal nonnegative integer, no greater than $n - k$, such that the word 
$$ [1 \dots k + \tol(u\otimes u') | n + 1 \dots p | \overline{p} \dots \overline{q}] $$
is admissible. We also define $\midd(u \otimes u') = \min(\tol(u \otimes u'), a - \ell(u))$, where $\ell(u) = k + (p - n) + (p + 1 - q)$ is the length of $u$ as a word. Lastly, the notation
\[
\underbrace{\overline{1} \otimes 1 \otimes \dots \otimes \overline{1} \otimes 1}_{2m}
\]
denotes the element in $(B^{1,1})^{\otimes 2m}$ formed by aligning $m$ pairs of $\overline{1} \otimes 1$.

\begin{proposition}\label{prop: split B}
   For $u \otimes u'\in\HW(B^{a,1}(\hspace{0.4mm}\sboxeleven\hspace{0.4mm})\otimes B^{b,1}(\hspace{0.4mm}\sboxeleven\hspace{0.4mm}))$, denoted by $[1 \dots k | n+1 \dots p | \overline{p} \dots \overline{q}] \otimes [1 \dots n]$, we have the following.
    \begin{itemize}
        \item \textbf{(Case 1)} If the number of barred letters in $u$, i.e., $p-q+1$, is an odd number and $\tol(u\otimes u')=2 \lfloor\frac{\midd(u\otimes u')}{2}\rfloor$, we have 
        \begin{equation*}
            S(u\otimes u')=k\otimes \dots \otimes1\otimes \underbrace{\overline{1} \otimes 1\otimes  \dots \otimes\overline{1}\otimes 1}_{2 \lfloor\frac{\midd(u\otimes u')}{2}\rfloor} \otimes \overline{q} \otimes \dots\otimes\overline{(p-1)} \otimes (p-1)\otimes \dots \otimes 1  \otimes \underbrace{\overline{1} \otimes 1\otimes  \dots \otimes\overline{1}\otimes 1}.
        \end{equation*}
        \item \textbf{(Case 2)} If the number of barred letters in $u$ is an even number and for $v=u\cup\{p+1,\overline{p+1}\}$, $v$ is admissible and $v\otimes u'$  corresponds \textbf{(Case 1)}, we have
        \begin{equation*}
            S(u\otimes u')=k\otimes \dots \otimes1\otimes \underbrace{\overline{1} \otimes 1\otimes  \dots \otimes\overline{1}\otimes 1}_{2 \lfloor\frac{\midd(u\otimes u')}{2}\rfloor} \otimes \overline{q} \otimes \dots\otimes\overline{(p+1)} \otimes (p+1)\otimes \dots \otimes 1  \otimes \underbrace{\overline{1} \otimes 1\otimes  \dots \otimes\overline{1}\otimes 1}.
        \end{equation*}
        
        \item \textbf{(Case 3)} Otherwise, we have
       \begin{equation*}
            S(u\otimes u')=k\otimes \dots \otimes1\otimes \underbrace{\overline{1} \otimes 1\otimes  \dots \otimes\overline{1}\otimes 1}_{2 \lfloor\frac{\midd(u\otimes u')}{2}\rfloor} \otimes \overline{q} \otimes \dots\otimes\overline{p} \otimes p\otimes \dots \otimes 1  \otimes \underbrace{\overline{1} \otimes 1\otimes  \dots \otimes\overline{1}\otimes 1}.
        \end{equation*}
    \end{itemize}
    In all cases the rightmost factors $\underbrace{\overline{1} \otimes 1\otimes  \dots \otimes\overline{1}\otimes 1}$ of $S(u\otimes u')$ are filled so that $S(u\otimes u')\in (B^{1,1})^{\otimes (a+b)}$. 
\end{proposition}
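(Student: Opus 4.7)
The plan is a direct computation, unwinding the recursive definition of $S = S^{t}_{(b,a)}$. Since generically $\mu_1 = b \geq 2$, the first step applies $\hat{S}$ to the rightmost factor $u'$. Because $u' \in B(\omega_n) \subset B^{b,1}(\hspace{0.4mm}\sboxeleven\hspace{0.4mm})$ with $b - n$ even (by \eqref{eq: column row KR decompose}),
\[
\hat{S}(u') \;=\; n \otimes (n-1) \otimes \dots \otimes 1 \otimes \underbrace{\bar{1} \otimes 1 \otimes \dots \otimes \bar{1} \otimes 1}_{b - n}.
\]
The remaining iterations of the algorithm push the residual $B^{a,1}$-factor leftward past each of the $b$ single-box factors via $R: B^{a,1} \otimes B^{1,1} \to B^{1,1} \otimes B^{a,1}$, and finally apply $\hat{S}$ to the surviving $B^{a,1}$-factor. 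Thus the task is to track the sequence of residuals $u^{(0)} = u, u^{(1)}, \dots, u^{(b)} \in B^{a,1}$ and emitted letters $x_1, \dots, x_b \in B^{1,1}$ produced by the $R$-matrix, using the explicit rules of Appendix~\ref{app:comb R B}.

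The bookkeeping splits into two phases. In Phase~1 we push the letters $n, n-1, \dots, n-k+1$ of $u'$: each such $c$ satisfies $c \succeq u^{(j)}_1$, and since the first-column letters $1, \dots, k$ of $u$ are unpaired, we land in Case~2-1, which outputs the largest unbarred letter of $u^{(j)}$ that is at most $c$. These outputs form, in order, $k, k-1, \dots, 1$, giving the leftmost block of the claimed formula. In Phase~2 we push the remaining letters $n-k, \dots, 1$ followed by the $\bar{1} \otimes 1$ pairs from $\hat{S}(u')$. Now the pushed letter is smaller than $u^{(j)}_1$, so we land in Case~1, which amounts to computing $\red$ of the augmented word. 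The statistic $\tol(u \otimes u')$ measures how many such consecutive extensions preserve admissibility, while $\midd(u \otimes u') = \min(\tol, a - \ell(u))$ imposes the length budget of $B^{a,1}$. Consequently the next $2\lfloor \midd/2 \rfloor$ steps fall in Case~1-4 and emit a pair $\bar{1} \otimes 1$ each, producing the middle pair block of the formula.

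The trichotomy of the proposition records which subcase of Case~1 is triggered once the plateau of pair outputs breaks down. When $p - q + 1$ is odd and $\tol(u\otimes u') = 2\lfloor \midd/2\rfloor$, parity forces the pair $(p, \bar{p})$ to be absorbed into $\red$, so the next application lands in Case~1-2 or 1-3 and the residual at the end of Phase~2 is the admissible word whose $\hat{S}$-image is $\bar{q} \otimes \dots \otimes \overline{p-1} \otimes (p-1) \otimes \dots \otimes 1$ followed by the padding pairs; this is Case~1 of the proposition. When $p - q + 1$ is even and $v = u \cup \{p+1,\overline{p+1}\}$ is admissible and matches the Case~1 pattern, the same subcases instead emit $\overline{p+1}$ and $p+1$ and extend the middle block, yielding Case~2. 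All remaining configurations produce a residual whose $\hat{S}$-image is directly $\bar{q} \otimes \dots \otimes \bar{p} \otimes p \otimes \dots \otimes 1$, which is Case~3. In each branch the final $\hat{S}$ supplies the trailing $\bar{1} \otimes 1$ pad so that the image lies in $(B^{1,1})^{\otimes(a+b)}$. The main obstacle is the verification of this trichotomy: one must check, via the precise formulas in Appendix~\ref{app:comb R B}, that the parity of $p - q + 1$ together with the relation $\tol = 2\lfloor \midd/2 \rfloor$ pin down exactly the claimed branch. This reduces to an elementary but delicate case analysis tracking the admissibility of prefixes of $u$ augmented by successive letters of $u'$, together with a boundary check for the edge case $b = 1$ where the initial $\hat{S}$-step is preceded by a single $R$-matrix swap.
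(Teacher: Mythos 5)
Your strategy is the same as the paper's: split $u'$ first into $n\otimes\dots\otimes1\otimes(\bar 1\otimes 1)^{\otimes(b-n)}$, push the residual $B^{a,1}$-factor rightward through the $b$ single boxes using the explicit rules of Appendix~\ref{app:comb R B}, and read off the emissions in phases — $k$ applications of (Case 2-1) giving $k\otimes\dots\otimes 1$, a plateau of length $2\lfloor\midd(u\otimes u')/2\rfloor$ producing the pairs $\bar 1\otimes 1$, then a branch analysis keyed to the parity of $p-q+1$ and the relation $\tol(u\otimes u')=2\lfloor\midd(u\otimes u')/2\rfloor$, finished by a final $\hat S$. This is exactly how the paper proceeds.

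However, as a proof the attempt has a genuine gap: the trichotomy is the whole content of the proposition, and you explicitly defer its verification (``the main obstacle is the verification of this trichotomy'') rather than carrying it out. The paper does carry it out: in its Case~1 it applies (Case 1-2) once (when $p>q$) and then (Case 1-4) repeatedly, $2m+1$ times in total with $p-q+1=2m+1$, emitting $\bar q\otimes\dots\otimes\overline{p-1}\otimes(p-1)$; in its Case~2 it alternates (Case 1-4) and (Case 1-1) to emit $\bar q\otimes\dots\otimes\overline{p+1}$; in Case~3 it emits $\bar q\otimes\dots\otimes\bar p$; and in each branch it then verifies that the leftover word together with the remaining boxes splits into the claimed decreasing run plus padding. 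Two of your mechanical assertions are also off in detail and would need repair in a full write-up: the plateau does not consist solely of (Case 1-4) steps — it alternates (Case 1-4), which emits $\bar 1$ and prepends a $1$ to the residual, with (Case 2-1), which then emits that $1$ — and each step emits a single letter, so $2\lfloor\midd/2\rfloor$ steps yield $\lfloor\midd/2\rfloor$ pairs, not a pair per step; moreover the block $\bar q\otimes\dots\otimes(p\pm1)\otimes\dots\otimes 1$ is emitted letter by letter during the subsequent pushes, not produced wholesale by the final $\hat S$ of the residual, since only the last $a$ tensor factors come from that final split. None of this changes the (correct) strategy, but without the case-by-case check the proposition is asserted rather than proved.
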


\begin{proof} 
We denote $\ell = \lfloor \frac{\midd(u \otimes u')}{2} \rfloor$. First, we split the right factor $u' = [1 \dots n]$ and start with the expression  
\begin{equation*}
    u\otimes S(u')=u \otimes n \otimes \dots \otimes 1 \otimes \underbrace{\overline{1} \otimes 1 \otimes \dots \otimes \overline{1} \otimes 1}_{b-n}.
\end{equation*}  
Next, we track the transformations that occur when moving the leftmost factor $u$ to the right by applying combinatorial $R$-matrices. Initially, we apply the combinatorial $R$-matrices $k$ times using the rule (Case 2-1) in Appendix~\ref{app:comb R B}. After $k$ applications, the front becomes $k \otimes \dots \otimes 1$, leaving the remaining term  
\begin{equation*}
    [n-k+1 \dots p | \overline{p} \dots \overline{q}] \otimes n-k \otimes \dots \otimes 1 \otimes \underbrace{\overline{1} \otimes 1 \otimes \dots \otimes \overline{1} \otimes 1}_{b-n}.
\end{equation*}  

If $\ell > 0$, then by repeatedly applying the rules (Case 1-4) and (Case 2-1) in Appendix~\ref{app:comb R B}, we derive  
\begin{align*}
    R([n-k+1 \dots p | \overline{p} \dots \overline{q}] \otimes n-k) &= \overline{1} \otimes [1 | n-k \dots p | \overline{p} \dots \overline{q}],\\  
    R([1 | n-k \dots p | \overline{p} \dots \overline{q}] \otimes n-k-1) &= 1 \otimes [n-k-1 \dots p | \overline{p} \dots \overline{q}].
\end{align*}  
Repeating this process $\ell$ times, we produce $\underbrace{\overline{1} \otimes 1 \otimes \dots \otimes \overline{1} \otimes 1}_{2\ell}$ at the front, with the remaining term  
\begin{equation}\label{eq: splitting B front}
    [n-k-2\ell+1 \dots p | \overline{p} \dots \overline{q}] \otimes n-k-2\ell \otimes \dots \otimes 1 \otimes \underbrace{\overline{1} \otimes 1 \otimes \dots \otimes \overline{1} \otimes 1}_{b-n}.
\end{equation}
From this point onward, we proceed separately for each case as outlined in the statement.

\textbf{(Case 1)} The word $[n-k-2\ell \dots p | \overline{p} \dots \overline{q}]$ is not admissible and maps to $[n-k-2\ell \dots p-1 | \overline{p-1} \dots \overline{q}]$ via the map $\red$. When $p>q$, applying the rule (Case 1-2) in Appendix~\ref{app:comb R B}, we obtain  
\begin{equation*}
    R([n-k-2\ell+1 \dots p | \overline{p} \dots \overline{q}] \otimes n-k-2\ell) = \overline{q} \otimes [n-k-2\ell \dots p-1 | \overline{p-1} \dots \overline{q+1}].
\end{equation*}  
 Then using the rule (Case 1-4) in Appendix~\ref{app:comb R B}, we have  
\begin{equation*}
    R([n-k-2\ell \dots p-1 | \overline{p-1} \dots \overline{q+1}] \otimes n-k-2\ell-1) = \overline{q+1} \otimes [n-k-2\ell-1 \dots p | \overline{p} \dots \overline{q+2}].
\end{equation*}  
Since $p-q+1=2m+1$ is an odd number, repeating this process $2m+1$ times results in $\overline{q} \otimes \dots \otimes \overline{p-1}\otimes p-1$ at the front, leaving the remaining term  
\begin{equation}\label{eq: dddd}
    [s+1 \dots p-2] \otimes s \otimes \dots \otimes 1 \otimes \underbrace{\overline{1} \otimes 1 \otimes \dots \otimes \overline{1} \otimes 1}_{b-n}
\end{equation}
where $s=n-k-2\ell-2m-1$. From this point, it is straightforward to confirm that \eqref{eq: dddd} splits into $p-2 \otimes \dots \otimes 1 \otimes \overline{1} \otimes 1 \otimes \dots \otimes \overline{1} \otimes 1$, as required.  

\textbf{(Case 2)} Using the rule (Case 1-4) in Appendix~\ref{app:comb R B}, we find  
\begin{equation*}
    R([n-k-2\ell+1 \dots p | \overline{p} \dots \overline{q}] \otimes n-k-2\ell) = \overline{q} \otimes [n-k-2\ell \dots p+1 | \overline{p+1} \dots \overline{q+1}],
\end{equation*}  
and applying the rule (Case 1-1) in Appendix~\ref{app:comb R B} we have  
\begin{equation*}
    R([n-k-2\ell \dots p+1 | \overline{p+1} \dots \overline{q+1}] \otimes n-k-2\ell-1) = \overline{q+1} \otimes [n-k-2\ell-2 \dots p | \overline{p} \dots \overline{q+2}].
\end{equation*}  
Since $p-q+1=2m$ is an even number, repeating this process $2m+1$ times produces $\overline{q} \otimes \dots \otimes \overline{p+1}$ at the front, leaving the remaining term  
\begin{equation}\label{eq: ddddd}
    [s+1 \dots p+1] \otimes s \otimes \dots \otimes 1 \otimes \underbrace{\overline{1} \otimes 1 \otimes \dots \otimes \overline{1} \otimes 1}_{b-n}
\end{equation}
where $s=n-k-2\ell-2m-1$.
The remainder of the proof proceeds analogously.  

\textbf{(Case 3)} Applying combinatorial $R$-matrices $p-q+1$ times to \eqref{eq: splitting B front} results in $\overline{q} \otimes \dots \otimes \overline{p}$ at the front, with the remaining term  
\begin{equation}\label{eq: ddddd}
    [s+1 \dots p] \otimes s \otimes \dots \otimes 1 \otimes \underbrace{\overline{1} \otimes 1 \otimes \dots \otimes \overline{1} \otimes 1}_{b-n}.
\end{equation} 
where $s=n-k-2\ell-(p-q+1)$. The remainder of the proof follows similarly.
\end{proof}

\begin{proposition}\label{prop: split D}
   For $u \otimes u'\in\HW(B^{a,1}(\sboxone)\otimes B^{b,1}(\sboxone))$, denoted by $[1 \dots k | n+1 \dots p | \overline{p} \dots \overline{q}] \otimes [1 \dots n]$, we have 
   \begin{equation*}
            S(u\otimes u')=k\otimes \dots \otimes1\otimes \underbrace{\emptyset\otimes \dots \otimes \emptyset}_{ \midd(u\otimes u')} \otimes \overline{q} \otimes \dots\otimes\overline{p} \otimes p\otimes \dots \otimes 1  \otimes \underbrace{\emptyset\otimes \dots \otimes \emptyset}_{}
        \end{equation*}
    where the rightmost factors $\underbrace{\emptyset\otimes \dots \otimes \emptyset}_{}$ of $S(u\otimes u')$ are filled so that $S(u\otimes u')\in (B^{1,1})^{\otimes (a+b)}$.      
\end{proposition}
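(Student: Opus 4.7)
The plan is to mirror the proof of Proposition~\ref{prop: split B}, using the simpler rules for $\diamond = \sboxone$ recorded in Appendix~\ref{app:comb R B}. I would first split the right factor via $\hat{S}$ to obtain
$$u \otimes n \otimes (n-1) \otimes \dots \otimes 1 \otimes \underbrace{\emptyset \otimes \dots \otimes \emptyset}_{b-n},$$
then track what happens when $u$ is pushed rightward by successive combinatorial $R$-matrices, and finally apply $\hat{S}$ to the remaining $B^{a,1}$ column once it reaches the rightmost position.

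The pushing naturally separates into three phases. Phase~1 invokes Case~(2-1) of Appendix~\ref{app:comb R B} exactly $k$ times, emitting the letters $k, k-1, \dots, 1$ in order and transforming the left factor into $[n-k+1 \dots p \mid \overline{p} \dots \overline{q}]$; this is identical to the corresponding step in the proof of Proposition~\ref{prop: split B} since Case~2 is the same for both kinds. Phase~2 invokes Case~(1-1) of the $\sboxone$ rules for $\midd(u \otimes u')$ steps: at each such step the word $v = u'u$ is admissible and has length at most $a$, so the output is $\emptyset \otimes w$, producing exactly $\midd$ empty boxes. By the very definition of $\midd$, Case~(1-1) fails on the next step. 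Phase~3 invokes Case~(1-2) for $p-q+1$ steps; since the barred letters $\overline{q}, \dots, \overline{p}$ are still present in the left factor, the largest letter $d$ runs through $\overline{q}, \overline{q+1}, \dots, \overline{p}$ in order, and the left factor is reduced to the purely unbarred column $[s - p + q \dots p]$ with $s = n - k - \midd$. The final application of $\hat{S}$ to this column then emits $p \otimes (p-1) \otimes \dots \otimes 1$ followed by the trailing $\emptyset$'s required to fill out $(B^{1,1})^{\otimes(a+b)}$.

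The main technical obstacle is the verification in Phase~3 that Case~(1-2) keeps firing at every step: after each removal of the currently smallest-index barred letter, I must check that the obstruction which prevented Case~(1-1) from applying at the start of Phase~3 — either saturation of the length bound $a$ (in the subcase $\midd = a - \ell(u)$) or the admissibility violation coming from the pair $(p, \overline{p})$ (in the subcase $\midd = \tol$) — continues to hold, so that the largest-letter rule does select the correct barred letter at each step. This is the direct analogue of the case split in Proposition~\ref{prop: split B}, but is substantially cleaner here since the $\sboxone$ rule set has only two subcases in Case~1 instead of four; the parity distinction between $p - q + 1$ being odd or even, which forced the statement of Proposition~\ref{prop: split B} into three cases, does not arise, so a single uniform argument handles all configurations.
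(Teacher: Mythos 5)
Your overall strategy coincides with the paper's, which simply defers to the argument for (Case 3) of Proposition~\ref{prop: split B}: split $u'$, push the column $u$ to the right with the rules of Appendix~\ref{app:comb R B}, and split at the end. Phases~1 and~2 are correct (Phase~2 does last exactly $\midd(u\otimes u')$ steps: the length bound accounts for $a-\ell(u)$ and the $(p,\overline{p})$-admissibility count for $\tol$), and the verification you flag for the barred phase does go through: the length obstruction persists because the column length is constant during that phase, and the admissibility obstruction persists because each step adds one unbarred letter below $p$ and deletes one barred letter above $\overline{p}$, leaving the relevant count unchanged.

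The gap is in the endgame. After the $p-q+1$ steps emitting $\overline{q},\dots,\overline{p}$, the column is $\{c,c+1,\dots,p\}$ with $c=n-k-\midd-(p-q)$, and one checks $c-1=\tol-\midd$. So unless $\midd=\tol$, the column does not contain the letters $1,\dots,c-1$, and in any case it need not yet be the rightmost factor: it must still be pushed past the remaining $\tol-\midd$ unbarred letters and past the $b-n$ empty factors produced by $\hat{S}(u')$ (these occur whenever $n<b$, which does happen for classical highest weight elements of kind $\sboxone$). During these additional $R$-matrix steps the emitted factors are the unbarred letters $p,p-1,\dots$ (via the length-saturated instance of Case (1-2) when a letter is passed, and via the $u'=\emptyset$ case of the rules when an empty factor is passed), and only the tail of the descending run together with the trailing $\emptyset$'s comes from the final application of $\hat{S}$. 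Your sentence ``the final application of $\hat{S}$ to this column then emits $p\otimes\dots\otimes 1$ followed by the trailing $\emptyset$'s'' is therefore incorrect as written: it attributes to $\hat{S}$ letters the column does not yet contain and skips the remaining transport, in particular the evaluation of $R$ on $u\otimes\emptyset$, which you never analyze and which is exactly where one must justify that the leftover $\emptyset$'s end up to the right of the run $p\otimes\dots\otimes 1$ rather than before it (a careless reading of Case (1-1) with $u'=\emptyset$ would place them before). The repair is of the same ``keeps firing'' type you already carried out, using $c-1=\tol-\midd$ (so nothing unbarred is left over when $\midd=\tol$) and the $u'=\emptyset$ behaviour of $R$, but it has to be done; without it the claimed final form is not established.
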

\begin{proof}
    The proof is similar to the proof for \textbf{(Case 3)} in Proposition \ref{prop: split B}.
\end{proof}

\begin{example}
We consider $u\otimes u'\in \HW(B^{4,1}(\hspace{0.4mm}\sboxeleven\hspace{0.4mm})\otimes B^{3,1}(\hspace{0.4mm}\sboxeleven\hspace{0.4mm}))$.
\begin{itemize}
    \item \textbf{(Case 1)} Let $u\otimes u'=4\overline{4}\otimes 123$. Then the word $124\overline{4}$ is admissible while $1234\overline{4}$ is not. We have $\tol(u\otimes u')=2$ and $\midd(u\otimes u')=\max(2, 4-\ell(u))=2$. To compute $S(u\otimes u')$ we start from 
    \begin{equation*}
        4\overline{4}\otimes 3\otimes 2\otimes 1\in B^{4,1}\otimes (B^{1,1})^{\otimes 3}
    \end{equation*}
    and apply combinatorial $R$-matrices using the rules in Appendix \ref{app:comb R B}:
    \begin{align*}
        4\overline{4}\otimes 3\otimes 2\otimes 1 \rightarrow \overline{1} \otimes 134\overline{4}\otimes 2\otimes 1 \rightarrow \overline{1} \otimes 1\otimes 23\otimes 1 \rightarrow \overline{1} \otimes 1\otimes 3\otimes 12.
    \end{align*}
    Lastly splitting the rightmost factor $12\in B^{4,1}$ we obtain 
    \begin{equation*}
        \overline{1} \otimes 1\otimes 3\otimes 2\otimes 1\otimes \overline{1} \otimes 1.
    \end{equation*}

    \item \textbf{(Case 2)} Let $u\otimes u'=\emptyset\otimes 123$. The computation of $S(u\otimes u')$  is given as:
     \begin{align*}
        \emptyset\otimes 3\otimes 2\otimes 1 \rightarrow \overline{1} \otimes 13 \otimes 2\otimes 1 \rightarrow \overline{1} \otimes 1\otimes 23\otimes 1 \rightarrow \overline{1} \otimes 1\otimes \overline{4}\otimes 1234.
    \end{align*}
    Lastly splitting the rightmost factor $1234\in B^{4,1}$ we obtain 
    \begin{equation*}
        \overline{1} \otimes 1\otimes \overline{4}\otimes 4\otimes3\otimes 2\otimes 1.
    \end{equation*}
\end{itemize}
\end{example}

\begin{corollary}\label{cor: splitting invariance}
    We have $S(R(u\otimes u'))=S(u\otimes u')$ for $u\otimes u'\in B^{a,1}(\diamond)\otimes B^{b,1}(\diamond)$ when $\diamond=\sboxone$ or $\sboxeleven$.
\end{corollary}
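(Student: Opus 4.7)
The plan is to reduce to classical highest weight elements via crystal theory and then verify the identity by explicit computation using the formulas in the appendix.

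Since the combinatorial $R$-matrix is an affine (hence classical) crystal isomorphism and $S$ is by definition a classical crystal embedding, both $S$ and $S \circ R$ are classical crystal embeddings from $B^{a,1}(\diamond) \otimes B^{b,1}(\diamond)$ to $(B^{1,1}(\diamond))^{\otimes (a+b)}$. Two such embeddings agree globally if and only if they agree on the classical highest weight element of every connected component of the source. Hence it suffices to prove $S(u \otimes u') = S(R(u \otimes u'))$ for every $u \otimes u' \in \HW(B^{a,1}(\diamond) \otimes B^{b,1}(\diamond))$.

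For such a highest weight element, I would write $u = [1 \dots k \,|\, n+1 \dots p \,|\, \overline{p} \dots \overline{q}]$ and $u' = [1 \dots n]$ in the standard form used in Propositions~\ref{prop: split B} and~\ref{prop: split D}, which then give $S(u \otimes u')$ by an explicit formula (three subcases when $\diamond = \sboxeleven$, a single formula when $\diamond = \sboxone$). To compute $R(u \otimes u')$, I would split $u'$ via $\hat{S}$ into individual letters and iteratively apply the R-matrix rules for $B^{k,1}(\diamond) \otimes B^{1,1}(\diamond)$ from Appendix~\ref{app:comb R B}, moving $u$ rightward past each unit factor of $\hat{S}(u')$. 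Recombining the leftmost $b$ unit factors into $B^{b,1}(\diamond)$ yields $R(u \otimes u') = v' \otimes v \in B^{b,1}(\diamond) \otimes B^{a,1}(\diamond)$, again a classical highest weight element in standard form with computable parameters $(k', n', p', q')$. Applying Propositions~\ref{prop: split B} and~\ref{prop: split D} to $v' \otimes v$ produces a formula for $S(R(u \otimes u'))$, and a case-by-case comparison with $S(u \otimes u')$ establishes the equality.

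The main obstacle is the combinatorial case analysis: the R-matrix rules in Appendix~\ref{app:comb R B} split into four cases for $\sboxeleven$ (and two for $\sboxone$), which must be tracked through $n$ iterated applications, and then matched against the three cases of Proposition~\ref{prop: split B}. The key invariants to watch are $\midd(u \otimes u')$ (which governs the number of interpolating $\overline{1} \otimes 1$ pairs) and the length of the barred segment $[\overline{p} \dots \overline{q}]$, both of which should transform predictably under $R$. A potentially cleaner conceptual route is to observe that the proofs of Propositions~\ref{prop: split B} and~\ref{prop: split D} already compute $S(u \otimes u')$ by splitting $u'$ first and then sliding $u$ rightward through unit factors using the very same rules of Appendix~\ref{app:comb R B}; the computation of $S(R(u \otimes u'))$ amounts to sliding $u$ all the way through $u'$ first (that is the $R$-matrix) and then splitting, so the equality reduces to showing that unit splittings and R-matrix applications can be freely interleaved in the two-factor setting---a concrete and tractable instance of the commutativity conjecture recorded in Remark~\ref{rmk: splitting R commute}.
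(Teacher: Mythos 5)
Your reduction to classical highest weight elements is exactly the paper's first step, and comparing the two sides via the explicit formulas of Propositions~\ref{prop: split B} and~\ref{prop: split D} is also how the paper concludes. The gap is in how you propose to get your hands on $R(u\otimes u')$. You compute it by splitting $u'$ via $\hat S$ into single boxes, sliding $u$ to the right with the $B^{k,1}\otimes B^{1,1}$ rules of Appendix~\ref{app:comb R B}, and then ``recombining the leftmost $b$ unit factors into $B^{b,1}$.'' This step is not justified: $\hat S$ is only a \emph{classical} crystal embedding, not an affine morphism, so the uniqueness characterization of the combinatorial $R$-matrix (the unique \emph{affine} isomorphism $B^{a,1}\otimes B^{b,1}\to B^{b,1}\otimes B^{a,1}$) does not apply to your composite; it is not even clear that the leftmost $b$ unit factors lie in the image of $\hat S$, nor that the element of $B^{b,1}$ you would recombine them into, tensored with the rightmost factor, equals $R(u\otimes u')$. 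Indeed, the assertion that splitting and $R$-matrix applications ``can be freely interleaved'' is essentially the statement being proved, and your closing paragraph concedes this by reducing the corollary to an instance of the commutativity statement recorded in Remark~\ref{rmk: splitting R commute} --- which the paper explicitly records as a conjecture, not a tool. As written, the argument is circular.

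The paper closes this gap by obtaining the action of $R$ on $\HW(B^{a,1}(\diamond)\otimes B^{b,1}(\diamond))$ from an independent source: for $\diamond=\sboxeleven$ it invokes the known explicit description of the combinatorial $R$-matrix on classical highest weight elements from \cite[Appendix D]{Sch2005}, and for $\diamond=\sboxone$ it writes down the two-case formula for $R(u\otimes u')$ directly (proved by a short induction), checks that $\midd(u\otimes u')=\midd(R(u\otimes u'))$, and then reads off equality of the splittings from Proposition~\ref{prop: split D} (respectively, from a case check against Proposition~\ref{prop: split B}). To repair your proof you would need to supply such an independent description of $R$ on highest weight elements --- either by citation, by induction as in the $\sboxone$ case, or by some other argument (e.g.\ pinning down $R(u\otimes u')$ as a classical highest weight element of the correct weight, which would additionally require ruling out weight multiplicities) --- rather than deriving it from the splitting procedure itself.
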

\begin{proof}
It suffices to verify when \( u \otimes u' \) is a classical highest weight element, since the splitting map \( S \) is a classical crystal embedding.  
For \( \diamond = \sboxeleven \), the description of \( R(u \otimes u') \) for \( u \otimes u' \in \HW(B^{a,1}(\diamond) \otimes B^{b,1}(\diamond)) \) is  fully provided in \cite[Appendix D]{Sch2005}.  
By conducting a thorough verification using Proposition \ref{prop: split B}, we conclude that \( S(u \otimes u') = S(R(u \otimes u')) \).

When \( \diamond = \sboxone \), we describe \( R(u \otimes u') \) for \( u \otimes u' = [1 \dots k | n+1 \dots p | \overline{p} \dots \overline{q}] \otimes [1 \dots n] \in \HW(B^{a,1}(\diamond) \otimes B^{b,1}(\diamond)) \).  
Without loss of generality, we assume \( a \geq b \).

Case 1: If \( b+1 \notin u \), then \( R(u \otimes u') = u \otimes u' \).

Case 2: If \( b+1 \in u \), then   \( R(u \otimes u') = [1 \dots k | n+m+1 \dots p | \overline{p} \dots \overline{q}] \otimes [1 \dots n+m] \) where \( m = \min(a-b, p-b) \).

The description for $R(u\otimes u')$ can be easily proved by induction and it is straightforward to verify that \( \midd(u \otimes u') = \midd(R(u \otimes u')) \). Therefore, Proposition \ref{prop: split D} completes the proof.
\end{proof}

\subsection{Invariance of $\SSOT_{g}$ under combinatorial $R$-matrix}\label{appendix: invariance ssot g}
\begin{proposition}\label{prop: invariance ssot}
    Let $\lambda$ be a partition and $\alpha$ a nonnegative integer vector. 
    Let $\beta$ be a rearrangement of $\alpha$, and let $g$ be a positive integer such that $g \geq \max(\alpha_1, \dots, \alpha_n)$.
    Then we have
    \begin{align}\label{eq: ssot invariance}
        \sum_{T \in \SSOT_g(\lambda, \alpha)} q^{\overline{D}(\phi_c(T))} &= \sum_{T \in \SSOT_g(\lambda, \beta)} q^{\overline{D}(\phi_c(T))}, \\
        \sum_{T \in \GSSOT_{g + \frac{1}{2}}(\lambda, \alpha)} \energy_{q,t}(\overline{D}(\phi_c(T)) &= \sum_{T \in \GSSOT_{g + \frac{1}{2}}(\lambda, \beta)} \energy_{q,t}(\overline{D}(\phi_c(T)). \label{eq: gssot invariance}
    \end{align}
\end{proposition}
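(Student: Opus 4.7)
The plan is to reduce to adjacent transpositions and exploit the combinatorial $R$-matrix. Since any rearrangement $\beta$ of $\alpha$ factors as a composition of adjacent swaps, it suffices to prove each identity when $\beta$ is obtained from $\alpha$ by swapping $\alpha_i$ and $\alpha_{i+1}$. Let $R_{i,i+1}: B_\alpha^t(\diamond) \to B_\beta^t(\diamond)$ be the combinatorial $R$-matrix acting on the $(i,i{+}1)$-st tensor factors, where $\diamond = \sboxeleven$ for \eqref{eq: ssot invariance} and $\diamond = \sboxone$ for \eqref{eq: gssot invariance}. Being an affine crystal isomorphism, $R_{i,i+1}$ preserves classical highest weight elements of a fixed weight and preserves the energy function $\overline{D}$; for $\diamond = \sboxone$ it also preserves $\vac$ (and hence $\energy_{q,t}$) as discussed in the paragraph preceding Theorem \ref{thm: B lusztig}. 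Via $\phi_c$, this produces bijections
\[
    \sigma := \phi_c^{-1} \circ R_{i,i+1} \circ \phi_c : \SSOT(\lambda,\alpha) \to \SSOT(\lambda,\beta), \qquad \sigma^{\sharp}: \GSSOT(\lambda,\alpha) \to \GSSOT(\lambda,\beta),
\]
each preserving the relevant energy statistic. It thus suffices to show that $\sigma$ and $\sigma^{\sharp}$ restrict to bijections between the $g$-bounded (resp.\ $(g + \tfrac{1}{2})$-bounded) subsets.

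Writing $T = (T_1, \ldots, T_n)$ with $T_j = (\mu^{(j-1)}, \nu^{(j)}, \mu^{(j)})$, the map $\sigma$ alters $T$ only in positions $i, i{+}1$: the image $T' := \sigma(T)$ has $T'_j = T_j$ for $j \notin \{i, i+1\}$ and new data $T'_i = (\mu^{(i-1)}, \tilde{\nu}^{(i)}, \tilde{\mu})$, $T'_{i+1} = (\tilde{\mu}, \tilde{\nu}^{(i+1)}, \mu^{(i+1)})$, for some intermediate shape $\tilde{\mu}$ and interiors $\tilde{\nu}^{(i)}, \tilde{\nu}^{(i+1)}$. Since $\mu^{(i-1)}_1, \mu^{(i+1)}_1 \leq g$ are inherited, it remains to show $\tilde{\mu}_1, \tilde{\nu}^{(i)}_1, \tilde{\nu}^{(i+1)}_1 \leq g$. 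I would carry this out by a case analysis of the explicit $R$-matrix rules in Appendix \ref{app:comb R B}: a general $R$-matrix on $B^{a,1}(\diamond) \otimes B^{b,1}(\diamond)$ decomposes into $R$-matrices on $B^{k,1}(\diamond) \otimes B^{1,1}(\diamond)$ via the splitting map and Corollary \ref{cor: splitting invariance}, and the hypothesis $g \geq \max_j \alpha_j$ bounds every ``fresh'' column index introduced by the $R$-matrix. For example, in Case 1-4 of Appendix \ref{app:comb R B}, the smallest $i$ with $i, \bar{i} \notin w$ lies in $\{1, \ldots, k\} \subseteq \{1, \ldots, g\}$, since $w$ involves at most $k = \alpha_i \leq g$ distinct columns; parallel bounds hold in the remaining cases.

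The main obstacle I anticipate is the interplay between the $\red$ map and the $R$-matrix: because $\red$ may cancel pairs $(j, \bar{j})$ with $j > g$, the condition ``all letters of $\phi_c(T)$ lie in $\{1, \ldots, g\} \cup \{\bar{g}, \ldots, \bar{1}\}$'' is necessary but not sufficient for $c(T) \leq g$. The un-reduction step that recovers $\cind(T'_i)$ and $\cind(T'_{i+1})$ from the corresponding tensor factors (unique given the target lengths, by the discussion preceding Lemma \ref{lem: cind}) must therefore be tracked carefully to verify that no ``hidden'' pair reintroduces a column index exceeding $g$. Finally, \eqref{eq: gssot invariance} may be derived either by the directly analogous argument for $\diamond = \sboxone$ (with the slightly relaxed $(g + \tfrac{1}{2})$-bound from Definition \ref{def: gssot}), or by applying the decomposition \eqref{eq: gssot decomposition} together with the compatibility of the $R$-matrix with it, thereby reducing to \eqref{eq: ssot invariance} for each 0-1 vector $v$.
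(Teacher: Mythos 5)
Your proposal follows essentially the same route as the paper's proof: reduce to an adjacent transposition, apply the combinatorial $R$-matrix to the two affected tensor factors, use that $\overline{D}$ (and $\vac$, hence $\energy_{q,t}$) is invariant under $R$, and control the letters of the image by combining the explicit rules of Appendix \ref{app:comb R B} with Corollary \ref{cor: splitting invariance}. The paper phrases the last step exactly as you suggest in spirit: every tensor factor of $S(x_{i+1}\otimes x_i)$ has letters $m,\overline{m}$ with $m\leq g$, so a letter exceeding $g$ in $R(x_{i+1}\otimes x_i)$ would have to appear in $S(R(x_{i+1}\otimes x_i))=S(x_{i+1}\otimes x_i)$, a contradiction; and for \eqref{eq: gssot invariance} the paper simply runs the same argument for $\diamond=\sboxone$ rather than invoking \eqref{eq: gssot decomposition}.

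The one point you leave open --- the interaction of $\red$ with the $g$-bound --- is not in fact an obstacle, and it is precisely where the hypothesis $g\geq\max(\alpha_1,\dots,\alpha_n)$ is used. Suppose a pair $(m,\overline{m})$ is deleted at some stage of the algorithm computing $\red$ applied to a strictly $\prec$-increasing word $w$ with $|w|\leq g$. At that stage the word is not admissible, so the selected index $m$ maximizes the defect and satisfies $m-1\leq|\{c\prec m\}|+|\{\overline{m}\prec c\}|\leq|w|-2\leq g-2$; hence $m<g$. Since $\cind(T_j)$ has length $\alpha_j\leq g$ and equals $\red(\cind(T_j))$ together with the cancelled pairs, it follows that, under the standing hypothesis, ``all letters of $\phi_c(T)$ are at most $g$'' is \emph{equivalent} to $c(T)\leq g$, not merely necessary: the unique un-reduction of an admissible word with letters at most $g$ to length $\alpha_j\leq g$ cannot introduce a column index exceeding $g$. (This is also why the hypothesis cannot be dropped, consistent with the counterexample in the remark following the proposition.) With that one-line observation your argument closes and coincides with the paper's proof.
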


\begin{proof}
We prove \eqref{eq: ssot invariance}, as \eqref{eq: gssot invariance} follows in a similar manner. 
It suffices to show the case when $\beta$ is obtained from $\alpha$ by switching $\alpha_i$ and $\alpha_{i+1}$.

Let $T \in \SSOT_g(\lambda, \alpha)$, and let $\phi_c(T) = x_n \otimes \dots \otimes x_1$. Each $x_j$ does not contain a letter $m$ or $\overline{m}$ for $m > g$.
Since we also have $g \geq \max(\alpha_i, \alpha_{i+1})$, by following the rules given in Appendix \ref{app:comb R B}, we conclude that any tensor factor of $S(x_{i+1} \otimes x_i)$ do not contain a letter $m$ or $\overline{m}$ for $m > g$.
Now, let $y_{i+1} \otimes y_i = R(x_{i+1} \otimes x_i)$. 
If either $y_i$ or $y_{i+1}$ contains a letter $m$ or $\overline{m}$ for $m > g$, then $m$ or $\overline{m}$ must appear in a tensor factor of $S(y_{i+1} \otimes y_i)$.
This contradicts $S(x_{i+1} \otimes x_i) = S(y_{i+1} \otimes y_i)$, as shown by Corollary \ref{cor: splitting invariance}. 
Therefore, there exists a $T' \in \SSOT_g(\lambda, \beta)$ such that
\[
    \phi_c(T) = x_n \otimes \dots \otimes x_{i+2} \otimes y_{i+1} \otimes y_i \otimes x_{i-1} \otimes \dots \otimes x_1.
\]
This provides an energy-preserving bijection between $\SSOT_g(\lambda, \alpha)$ and $\SSOT_g(\lambda, \beta)$.
\end{proof}

\begin{rmk}
    The assumption $g\geq \max(\alpha_1,\dots,\alpha_n)$ is crucial in Proposition \ref{prop: invariance ssot}. 
    Let $\lambda=(2,0)$, $\alpha=(3,1)$, $\beta=(1,3)$ and $g=1$.
    Then $\SSOT_g(\lambda,\alpha)=\{T\}$ and $\SSOT_g(\lambda,\beta)=\{T'\}$ where 
    \begin{equation*}
        T=\big((\emptyset,\ydiagram{2},\ydiagram{1}),(\ydiagram{1},\ydiagram{2},\ydiagram{2} )\big), \qquad T'=\big((\emptyset,\ydiagram{1},\ydiagram{1}),(\ydiagram{1},\ydiagram{2,1},\ydiagram{2} )\big).
    \end{equation*}
    However, $\overline{D}(\phi_c(T))=2$  and $\overline{D}(\phi_c(T'))=1$
\end{rmk}

\subsection{Energy is independent of $g$}\label{appendix: g independnt}
In this section, we prove \eqref{eq: energy g independent},  which ensures that the right-hand side of Theorem \ref{thm: C lusztig} is well-defined. 
Throughout this section, $B^{t}_{\mu} = B^{t}_{\mu}(\hspace{0.4mm}\sboxeleven\hspace{0.4mm})$, unless stated otherwise.
We begin by defining some key terminology. 
Let $v = v_1 \dots v_r \in B^{r', 1}$ (with the assumption that $0 \notin v$).
We define $\iota(v) = 1 \, u_1 \dots u_r \in B^{r' + 1, 1}$, where $u_i = c + 1$ if $v_i = c$ is an unbarred letter, and $u_i = \overline{c + 1}$ if $v_i = \overline{c}$ is a barred letter.
For example, if $v = 24\bar{4}\bar{3}$, then $\iota(v) = 135\bar{5}\bar{4}$. It is easy to check that if $x_n\otimes\dots\otimes x_1\in \HW(B^{t}_{\mu})$, then $\iota(x_n)\otimes \dots \otimes\iota(x_1)$ is also a classical highest weight element. The goal of this section is to prove the following statement, which is precisely \eqref{eq: energy g independent}.
\begin{proposition}\label{prop: energy g}
    Let $x \in \HW(B_{\mu}^{t})$, where $x = x_n \otimes \dots \otimes x_1$. Then we have $\overline{D}(x) = \overline{D}(\tilde{x})$, where $\tilde{x} = \iota(x_n) \otimes \dots \otimes \iota(x_1)$.
\end{proposition}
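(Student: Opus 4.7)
The plan is to apply the splitting map $S = S^t_\mu$ and reduce the claim to an explicit energy computation in $(B^{1,1})^{\otimes m}$, where the energy admits a closed-form expression. By Lemma~\ref{lem: splitting preserves coenergy},
\[
\overline{D}(S^t_\mu(x)) = \overline{D}(x) + C_\mu, \qquad \overline{D}(S^t_{\tilde\mu}(\tilde x)) = \overline{D}(\tilde x) + C_{\tilde\mu},
\]
where $C_\mu = \tfrac{|\mu|(|\mu|-1)}{2} - ||\mu||$. Since $\tilde\mu = \mu + (1^n)$, one has $||\tilde\mu|| = ||\mu|| + \binom{n}{2}$, and a direct calculation yields $C_{\tilde\mu} - C_\mu = n|\mu|$. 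Hence the proposition is equivalent to
\[
\overline{D}\bigl(S^t_{\tilde\mu}(\tilde x)\bigr) - \overline{D}\bigl(S^t_\mu(x)\bigr) = n|\mu|.
\]

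The strategy for proving this energy identity is to trace through both splitting procedures in parallel and exhibit a precise structural relationship between the two images. Concretely, I would establish the following: $S^t_{\tilde\mu}(\tilde x)$ is obtained from $S^t_\mu(x)$ by (i) shifting up by one every letter that originated as a letter of some $x_i$ (as opposed to a $\bar{1} \otimes 1$ filler pair introduced by $\hat S$), and (ii) inserting exactly $n$ additional copies of the letter $1$, one for each tensor factor $\tilde x_i$, at specific positions determined by the splitting procedure. This structural description would be proved by induction on the number of splitting steps, combining the explicit form of $\hat S(\iota(T))$ with a case analysis of the combinatorial $R$-matrix rules in Appendix~\ref{app:comb R B}. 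The crucial input is that in cases~2-1 and~3-2, a shifted letter behaves under the $R$-matrix identically to its unshifted counterpart, so that the prepended $1$ in each $\iota(x_i)$ propagates through the $R$-matrices to contribute exactly one additional $1$ in the final image.

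Granted this structural description, the target energy difference is verified by a direct computation using the explicit formula $\overline{D}(b) = \sum_{j=1}^{N-1} j\,\overline{H}_{\sboxeleven}(d_j, d_{j+1})$ in left-to-right indexing. Each inserted $1$ sits adjacent to a shifted letter $d \succ 1$, contributing $\overline{H}_{\sboxeleven}(d,1) = 1$ with the corresponding coefficient, while $\overline{H}_{\sboxeleven}(1, d') = 0$ on the other side. Combining these direct contributions with the coefficient shifts of pre-existing pairs induced by the insertions, and accounting for the drop from $(\bar{1}, 1)$-filler pairs (energy $2$) to shifted $(\bar{2}, 2)$-pairs (energy $1$), a careful but elementary bookkeeping yields exactly $n|\mu|$.

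The main obstacle will be the proof of the structural description in full generality: the $R$-matrix can place an inserted $1$ either at the beginning or at the end of a block of letters coming from a single tensor factor, depending on the surrounding data and the path taken through the splitting procedure. Verifying that the inductive step goes through uniformly across all these cases is the main technical work, and requires a meticulous case-by-case analysis guided by Appendix~\ref{app:comb R B}.
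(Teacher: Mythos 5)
Your reduction is correct: by Lemma~\ref{lem: splitting preserves coenergy} the claim is equivalent to $\overline{D}(S^t_{\tilde\mu}(\tilde x))-\overline{D}(S^t_\mu(x))=n|\mu|$, and your computation $C_{\tilde\mu}-C_\mu=n|\mu|$ is right (it matches the paper's two-factor computation, where the difference of split energies is $2(r_1+r_2)$). The gap is that everything then rests on your ``structural description'' of $S^t_{\tilde\mu}(\tilde x)$ — shift every genuine letter by one, insert one extra $1$ per factor, leave fillers alone — and this is precisely the hard content, which you only assert. It amounts to an $n$-factor strengthening, compatible with $\iota$, of the explicit two-factor splitting description (Proposition~\ref{prop: split B}), and your proposed induction ``on the number of splitting steps'' is where all the difficulty sits: the factor $\iota(x_i)$, which now contains an extra letter $1$, must be pushed by $R$-matrices past $B^{1,1}$ factors containing inserted $1$'s, fillers $\bar 1,1$, and shifted letters, and the relevant cases are by no means limited to 2-1 and 3-2 — already the splitting of highest-weight pairs in the paper's own analysis runs through cases 1-1, 1-2, 1-3, 1-4 and 2-1, where the choice of the inserted pair in 1-4 and the admissibility/\!$\red$ computations are sensitive to the presence of the extra $1$. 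Your final bookkeeping paragraph also wavers on whether $(\bar 1,1)$ filler pairs stay fixed (as in (i)) or become shifted $(\bar 2,2)$ pairs, which signals that the energy accounting has not actually been organized; with $n$ insertions all coefficients $(N-i)$ to the left of each insertion point move, and making the total come out to $n|\mu|$ uniformly requires either a closed formula for the split image (as in \eqref{eq: inc energy}) or a telescoping argument you have not supplied.

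For comparison, the paper avoids any global structural statement. It proves the pairwise Lemma~\ref{lem: energy independent} — $R(\iota(u)\otimes\iota(v))=\iota(u')\otimes\iota(v')$ and $\overline{D}(u\otimes v)=\overline{D}(\iota(u)\otimes\iota(v))$ — by reducing to highest-weight pairs and reading off $S(\iota(u)\otimes\iota(v))$ from $S(u\otimes v)$ via Proposition~\ref{prop: split B} (the map $P$), where the energy difference is a one-line computation with \eqref{eq: inc energy}. The $n$-factor statement then follows formally from the definition $\overline{D}(x)=\sum_{i<j}\overline{H}(x_j^{(i+1)}\otimes x_i)+\sum_i\overline{D}(x_i^{(1)})$: the commutation with $\iota$ identifies $\tilde x_j^{(i)}=\iota(x_j^{(i)})$, each local energy $\overline{H}$ and each single-factor energy is preserved, and no description of the full split image is ever needed. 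If you want to salvage your route, the realistic path is to first prove exactly these two pairwise facts — at which point the local-to-global decomposition makes your global structural lemma unnecessary.
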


To prove Proposition \ref{prop: energy g}, we need an auxiliary lemma (Lemma \ref{lem: energy independent}). Before stating the lemma, we establish some useful notation.

\begin{definition}
    For nonnegative integers $a$ and $b$, we define $\inc(a,b)$ as the set of elements $x \in (B^{1,1})^{\otimes a+2b}$ of the form
    \begin{equation*}
        x = v_a \otimes v_{a-1} \otimes \dots \otimes v_1 \otimes \underbrace{\overline{1} \otimes 1 \otimes \dots \otimes \overline{1} \otimes 1}_{2b}
    \end{equation*}
    where $v_a \succ v_{a-1} \succ \dots \succ v_1 = 1$ if $a > 0$, and $v_2 \otimes v_1 \neq \overline{1} \otimes 1$ if $a = 2$. We also define $\overline{\inc}(a,b)$ as the set of elements $x$ satisfying the same conditions as in $\inc(a,b)$, except for the condition that $v_1 = 1$. It is evident that $\inc(a,b) \subset \overline{\inc}(a,b)$.
\end{definition}

We now list some useful properties that will be applied throughout the rest of the paper:
\begin{itemize}
    \item If $x \in \inc(a,b)$, and $x'$ is obtained from $x$ by applying a sequence of classical crystal operators, then it follows that $x' \in \overline{\inc}(a,b)$.
    \item For $x \otimes y$ where $x \in \inc(a_1,b_1)$ and $y \in \inc(a_2,b_2)$, with the condition that $a_2 > 0$, we can compute $\overline{D}(x \otimes y)$ exactly, as the local energy functions are fully determined:
    \begin{equation}\label{eq: inc energy}
    \overline{D}(x \otimes y) = \sum_{i=1}^{a_1-1} i + \sum_{i=1}^{b_1} 2(a_1 + (2i - 1)) + \sum_{i=1}^{a_2-1} (a_1 + 2b_1 + i) + \sum_{i=1}^{b_2} 2(a_1 + 2b_1 + a_2 + 2(i-1)).
    \end{equation}
\end{itemize}

\begin{lem}\label{lem: energy independent}
    For $u \otimes v \in B^{r_2,1}\otimes B^{r_1,1}$, denote $R(u\otimes v)=u'\otimes v'$. We have 
    \begin{enumerate}
        \item $R(\iota(u)\otimes \iota(v))=\iota(u')\otimes \iota(v')$, 
        \item $\overline{D}(u\otimes v)=\overline{D}(\iota(u)\otimes \iota(v))$.
    \end{enumerate}
\end{lem}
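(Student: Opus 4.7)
The plan is to prove both parts by reducing to a computation in the fully-split space $(B^{1,1})^{\otimes(r_1+r_2)}$ via the splitting map $S = S^{t}_{(r_1, r_2)}$, exploiting Corollary \ref{cor: splitting invariance} ($S \circ R = S$) and Lemma \ref{lem: splitting preserves coenergy} (splitting preserves energy up to a shape-dependent constant).

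The central technical claim to establish is the existence of an explicit transformation $\Xi$ such that
\[
    S(\iota(u) \otimes \iota(v)) \;=\; \Xi\bigl(S(u \otimes v)\bigr),
\]
where $\Xi$ acts by shifting the ``content'' letters of $S(u \otimes v)$ (those originating from $u$ and $v$) up by one and inserting two additional $1$'s at specific positions, while leaving the ``padding'' letters $\bar{1} \otimes 1 \otimes \cdots$ unchanged. I would establish this identity by analyzing the recursive splitting procedure: the $\hat S$-step on a single factor behaves as claimed directly from its formula in Section \ref{Sec: Prel}, and the intermediate combinatorial $R$-matrix moves (all of the form $B^{k,1} \otimes B^{1,1} \to B^{1,1} \otimes B^{k,1}$) are compatible with the shift by a case check against the rules of Appendix \ref{app:comb R B}: each case there is governed by structural features (admissibility of a word, letter-set membership, the relative order $\prec$) that are invariant under the uniform shift $c \mapsto c+1$, and the inserted $1$'s sit in positions where they propagate predictably through the $R$-moves.

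Given the key claim, part (1) follows from the sequence
\[
    S\bigl(R(\iota(u) \otimes \iota(v))\bigr) = S(\iota(u) \otimes \iota(v)) = \Xi(S(u \otimes v)) = \Xi(S(u' \otimes v')) = S(\iota(u') \otimes \iota(v')),
\]
where the first and third equalities use Corollary \ref{cor: splitting invariance} and the middle equalities use the key claim. Injectivity of $S$ as a classical crystal embedding then gives $R(\iota(u) \otimes \iota(v)) = \iota(u') \otimes \iota(v')$. For part (2), set $C(a,b) = \frac{(a+b)(a+b-1)}{2} - ||(a,b)||$. By Lemma \ref{lem: splitting preserves coenergy}, the identity $\overline{D}(u \otimes v) = \overline{D}(\iota(u) \otimes \iota(v))$ is equivalent to
\[
    \overline{D}\bigl(\Xi(S(u \otimes v))\bigr) - \overline{D}(S(u \otimes v)) \;=\; C(r_1+1, r_2+1) - C(r_1, r_2).
\]
A direct computation shows the right-hand side equals $2(r_1 + r_2)$, and the left-hand side evaluates to the same value using the formula $\overline{D}(b) = \sum_i (m-i) \overline{H}_{\sboxeleven}(b_{i+1}, b_i)$: the shift $c \mapsto c+1$ preserves every $\overline{H}_{\sboxeleven}$-value among content letters (no content letter becomes $1$ or $\bar 1$), while each of the two inserted $1$'s contributes a controlled amount via its interactions with the shifted content and the untouched padding.

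The main obstacle is the verification of the key claim, which requires tracking the inserted $1$'s through the $R$-matrix moves of the splitting procedure and confirming they end up at the predicted positions. While each individual case from Appendix \ref{app:comb R B} is elementary, one must be attentive to the fact that the $R$-moves used are between $B^{k+1,1}$ (shifted versions) and $B^{1,1}$, and must verify that the case distinctions apply consistently before and after the shift, especially near the boundary position where the extra $1$ is inserted.
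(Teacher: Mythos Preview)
Your overall architecture matches the paper's: compare $S(\iota(u)\otimes\iota(v))$ with $S(u\otimes v)$ via an explicit transformation, then use Corollary \ref{cor: splitting invariance} and Lemma \ref{lem: splitting preserves coenergy} to read off both (1) and (2). Your computation $C(r_1+1,r_2+1)-C(r_1,r_2)=2(r_1+r_2)$ is exactly the constant the paper obtains.

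Where you diverge is in how to establish the key identity $S(\iota(u)\otimes\iota(v))=\Xi(S(u\otimes v))$. You propose to follow the recursive splitting procedure step by step for arbitrary $u\otimes v$, asserting that the $R$-moves of Appendix \ref{app:comb R B} are compatible with the shift. This is not true at the level of individual moves. Take $u=\bar 1\in B^{3,1}$ and the single letter $c=1$: by Case~1--1 one has $R(\bar 1\otimes 1)=\bar 1\otimes 1$, whereas for $\iota(u)=1\bar 2\in B^{4,1}$ and $c^+=2$ one lands in Case~2--1 and obtains $R(1\bar 2\otimes 2)=1\otimes 2\bar 2$. The new big factor $2\bar 2$ is not $\iota$ of anything (in particular not $\iota(1)=12$), so the moving column does not stay of the form $\iota(\,\cdot\,)$ through the procedure, and the case labels genuinely change because $\iota(u)$ always begins with $1$. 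The final outputs \emph{do} match your $\Xi$ (here $S(\bar 1\otimes 1)=\bar 1\otimes 1\otimes\bar 1\otimes 1$ and $S(1\bar 2\otimes 12)=1\otimes 1\otimes\bar 1\otimes 1\otimes\bar 1\otimes 1$), but this is not visible from the step-by-step invariance you describe.

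The paper sidesteps this by first reducing to classical highest weight: since $\iota$ intertwines $e_i$ with $e_{i+1}$ on each factor (and hence on tensor products), $\iota(\hat u)\otimes\iota(\hat v)=\hw(\iota(u)\otimes\iota(v))$, so it suffices to treat $u\otimes v\in\HW(B^{r_2,1}\otimes B^{r_1,1})$. There Proposition \ref{prop: split B} gives $S(u\otimes v)$ explicitly as $x\otimes y$ with $x\in\inc(a_1,b_1)$, $y\in\inc(a_2,b_2)$ (or a degenerate $x\in\inc(0,b_1)$), and one checks directly from the parameters $k,n,p,q,\tol,\midd$ that $S(\iota(u)\otimes\iota(v))=P(x)\otimes P(y)$, where $P$ shifts all letters and prepends a $1$. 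Your $\Xi$ is precisely this $P$ applied blockwise, but the clean proof goes through Proposition \ref{prop: split B} rather than through tracking $R$-moves.
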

\begin{proof}
    Let $\hat{u} \otimes \hat{v} = \hw(u \otimes v)$, which is given by
    $
        \hat{u} \otimes \hat{v} = e_{i_n} \dots e_{i_1} (u \otimes v).$
    Then, we have
    $
        \iota(\hat{u}) \otimes \iota(\hat{v}) = e_{i_n + 1} \dots e_{i_1 + 1} (\iota(u) \otimes \iota(v)),
    $
    and additionally, $\iota(\hat{u}) \otimes \iota(\hat{v}) = \hw(\iota(u) \otimes \iota(v))$. Thus, we can assume that $u \otimes v$ is a classical highest weight element and proceed with the proof.

    Our objective is to describe $S(\iota(u) \otimes \iota(v))$ in terms of $S(u \otimes v)$. To do this, we introduce the following notation. Let $x \in \inc(a, b)$ be given by
    \begin{equation*}
        x = v_a \otimes \dots \otimes v_1 \otimes \underbrace{\overline{1} \otimes 1 \otimes \dots \otimes \overline{1} \otimes 1}_{2b}.
    \end{equation*}
    We define $P(x) \in \inc(a + 1, b)$ as
    \begin{equation*}
        P(x) = v'_a \otimes \dots \otimes v'_1 \otimes 1 \otimes \underbrace{\overline{1} \otimes 1 \otimes \dots \otimes \overline{1} \otimes 1}_{2b},
    \end{equation*}
    where $v'_i = c + 1$ if $v_i = c$ is an unbarred letter, and $v'_i = \overline{c + 1}$ if $v_i = \overline{c}$ is a barred letter. 

    By Proposition \ref{prop: split B}, $S(u \otimes v)$ has one of the following forms: 
    \begin{enumerate}
        \item $x \otimes y$, where $x \in \inc(a_1, b_1)$ and $y \in \inc(a_2, b_2)$ for some $a_1 \geq 0$ and $a_2 > 0$,
        \item $x$, where $x \in \inc(0, b_1)$.
    \end{enumerate}

    For the first case, by Proposition \ref{prop: split B}, we have $S(\iota(u) \otimes \iota(v)) = P(x) \otimes P(y)$.
    For the second case, we have
    \begin{equation*}
        S(\iota(u) \otimes \iota(v)) = 1 \otimes 1 \otimes \underbrace{\overline{1} \otimes 1 \dots \otimes \overline{1} \otimes 1}_{2b_1}.
    \end{equation*}

    In either case, a direct computation of the energy function using \eqref{eq: energy g independent} shows that
    \begin{equation*}
        \overline{D}(S(\iota(u) \otimes \iota(v))) - \overline{D}(S(u \otimes v)) = 2(r_1 + r_2),
    \end{equation*}
    which implies that $\overline{D}(\iota(u) \otimes \iota(v)) = \overline{D}(u \otimes v)$. 

    Furthermore, we have shown that $S(\iota(u) \otimes \iota(v))$ can be uniquely constructed from the information of $S(u \otimes v)$. Thus, we conclude that
    \begin{equation*}
        S(\iota(u) \otimes \iota(v)) = S(\iota(u') \otimes \iota(v')),
    \end{equation*}
    since $S(u \otimes v) = S(u' \otimes v')$ by Corollary \ref{cor: splitting invariance}. Again by Corollary \ref{cor: splitting invariance}, we conclude  $R(\iota(u) \otimes \iota(v)) = \iota(u') \otimes \iota(v')$.
\end{proof}

\begin{example}
    For $14\otimes 123 \in \HW(B^{4,1}\otimes B^{3,1})$ we have
    \begin{equation*}
        S(14\otimes 123)=\underbrace{1 \otimes \overline{1}\otimes 1}_{x} \otimes\underbrace{ 4\otimes 3\otimes 2 \otimes 1}_{y}
    \end{equation*}
    which is of the form $x\otimes y$ where $x\in \inc(1,1)$ and $y\in \inc(4,0)$. For $\iota(14)\otimes \iota(123)=125 \otimes 1234 \in \HW(B^{5,1}\otimes B^{4,1})$ we have 
    \begin{equation*}
        S(125\otimes 1234)=\underbrace{2\otimes 1 \otimes \overline{1}\otimes 1}_{P(x)} \otimes\underbrace{5\otimes 4\otimes 3\otimes 2 \otimes 1}_{P(y)}
    \end{equation*}
    where $P$ is the map defined in the proof of Lemma \ref{lem: energy independent}.
\end{example}

\begin{proof}[Proof of Proposition \ref{prop: energy g}]
    For $i \leq j$, let $x_j^{(i)}$ denote the element of the $i$-th tensor factor obtained by applying the composition of combinatorial $R$-matrices that places the $j$-th tensor factor in the $i$-th position in $x$. Similarly, we define $\tilde{x}_j^{(i)}$ for the tensor $\tilde{x} = \tilde{x}_n \otimes \dots \otimes \tilde{x}_1$ where each $\tilde{x}_i=\iota(x_i)$. According to Lemma \ref{lem: energy independent} (1), we have $\iota(x_j^{(i)}) = \tilde{x}_j^{(i)}$.

    Next, by Lemma \ref{lem: energy independent} (2), we know that
$
    \overline{D}(x_j^{(i+1)} \otimes x_i) = \overline{D}(\tilde{x}_j^{(i+1)} \otimes \tilde{x}_i),
    $
    which can be rewritten as
    \[
    \overline{D}(x_j^{(i)}) + \overline{H}(x_j^{(i+1)} \otimes x_i) + \overline{D}(x_i) = \overline{D}(\overline{x}_j^{(i)}) + \overline{H}(\tilde{x}_j^{(i+1)} \otimes \tilde{x}_i) + \overline{D}(\tilde{x}_i).
    \]
    Note that for any $v \in B^{r,1}$ and $\iota(v) \in B^{r+1,1}$, we have $\overline{D}(v) = \overline{D}(\iota(v))$. Therefore, we conclude that
    \[
    \overline{H}(x_j^{(i+1)} \otimes x_i) = \overline{H}(\tilde{x}_j^{(i+1)} \otimes \tilde{x}_i).
    \]
    Using the definition of $\overline{D}$, we now obtain
    \[
    \overline{D}(x) = \sum_{1 \leq i < j \leq n} \overline{H}(x_j^{(i+1)} \otimes x_i) + \sum_{i=1}^{n} \overline{D}(x_i^{(1)}) = \sum_{1 \leq i < j \leq n} \overline{H}(\tilde{x}_j^{(i+1)} \otimes \tilde{x}_i) + \sum_{i=1}^{n} \overline{D}(\tilde{x}_i^{(1)}) = \overline{D}(\tilde{x}).
    \]
\end{proof}

\begin{rmk}\label{rmk: energy g ind type D_N+1}
    Keeping the notation from Proposition \ref{prop: energy g}, assume instead that $B_{\mu}^{t} = B_{\mu}^{t}(\sboxone)$.
    By applying the same argument as in Proposition \ref{prop: energy g}, and using Proposition \ref{prop: split D} in place of Proposition \ref{prop: split B}, we can show that $\overline{D}(x) = \overline{D}(\tilde{x})$. 
    We also need to modify the definition of $\inc(a,b)$ as follows: collection of $x = v_a \otimes \dots \otimes v_1 \otimes \emptyset \otimes \dots \otimes \emptyset$, where $v_a \succ \dots \succ v_1 = 1$ if $a > 0$. 
    
    Since we trivially have $\vac(x) = \vac(\tilde{x})$, we conclude $\energy_{q,t}(x) = \energy_{q,t}(\tilde{x}).$
\end{rmk}

\subsection{$\Aug$ preserves the energy}
We prove the following which was used in the proof of \eqref{eq: goal}. As before, $B^{t}_{\mu} = B^{t}_{\mu}(\hspace{0.4mm}\sboxeleven\hspace{0.4mm})$, unless stated otherwise.
\begin{proposition}\label{prop: aug preserves energy}
    Let $\mu = (\mu_1, \dots, \mu_n)$ be a nonnegative integer vector such that $\mu_1 \leq \min(\mu_2, \dots, \mu_n)$, and let $m$ be a nonnegative integer such that $\mu_1 + m \leq \min(\mu_2, \dots, \mu_n)$. 
    Let $x = x_n \otimes \cdots \otimes x_2 \otimes x_1 \in \HW(B_{\mu}^{t})$, where $x_1 = [1 \dots k]$ for some $k$, and define $x' = x_n \otimes \cdots \otimes x_2 \otimes x'_1$, where $x'_1 = [1 \dots (k+m)] \in B^{\mu_1+m, 1}$. 
    Then, we have the equality
    \begin{equation*}
        \overline{D}(x) = \overline{D}(x').
    \end{equation*}
\end{proposition}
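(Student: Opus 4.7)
The plan is to reduce to a two-factor identity via the recursive formula for $\overline{D}$, then verify the identity using the splitting map from Section~\ref{Sec: Prel}.

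For the reduction, I would expand $\overline{D}(x)$ and $\overline{D}(x')$ using the recursive formula. The summands that do not involve the rightmost factor (those where both the pair index and the slide index are $\geq 2$) coincide in $x$ and $x'$, since the corresponding $x_i$'s and $x_j^{(i+1)}$'s are computed without touching the rightmost factor. For the remaining terms, the local-energy contributions $\overline{H}(x_j^{(2)} \otimes x_1)$ and the one-factor contributions $\overline{D}(x_j^{(1)})$ (for $j \geq 2$) combine into two-factor energies via the identity
\[
\overline{D}(u \otimes v) = \overline{H}(u \otimes v) + \overline{D}(v) + \overline{D}(u^{(1)}),
\]
where $u^{(1)}$ is the right factor of $R(u \otimes v)$. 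Using $\overline{D}(x_1) = \overline{D}(x'_1) = (\mu_1-k)/2$, the entire difference collapses to
\[
\overline{D}(x) - \overline{D}(x') = \sum_{j=2}^n \bigl(\overline{D}(x_j^{(2)} \otimes x_1) - \overline{D}(x_j^{(2)} \otimes x'_1)\bigr),
\]
reducing the task to the two-factor identity: for every $u \in B^{\mu_j, 1}$,
\[
\overline{D}(u \otimes x_1) = \overline{D}(u \otimes x'_1). \tag{$\ast$}
\]

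For $(\ast)$, I would invoke Lemma~\ref{lem: splitting preserves coenergy} and Corollary~\ref{cor: splitting invariance}. Using the hypothesis $\mu_1 + m \leq \mu_j$ to evaluate $||(\mu_1,\mu_j)|| = \mu_1$ and $||(\mu_1+m,\mu_j)|| = \mu_1+m$, the identity $(\ast)$ becomes
\[
\overline{D}\bigl(S^t(\hw(u \otimes x'_1))\bigr) - \overline{D}\bigl(S^t(\hw(u \otimes x_1))\bigr) = m(\mu_1+\mu_j-1) + \tbinom{m}{2}.
\]
Proposition~\ref{prop: split B} describes both splittings explicitly as elements of $(B^{1,1})^{\otimes N}$; computing their energies via the explicit formula in Section~\ref{Sec: Prel} then verifies the identity.

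The main obstacle is the last step: $\hw(u \otimes x_1)$ and $\hw(u \otimes x'_1)$ may fall in different cases of Proposition~\ref{prop: split B}, requiring a delicate case-by-case analysis. The hypothesis $\mu_1+m \leq \mu_j$ is essential here, as it ensures that the admissibility conditions in each case behave predictably under the replacement $x_1 \to x'_1$, pinning down the exact contribution of the extra $m$ letters $(k+1), \ldots, (k+m)$ to the splitting energy.
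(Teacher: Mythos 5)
Your reduction to a two-factor statement is essentially the paper's own first step: the paper likewise writes $\overline{D}(x)=\sum_{2\le i<j\le n}\overline{H}(x_j^{(i+1)}\otimes x_i)+\sum_{i=2}^n\overline{D}(x_i^{(2)}\otimes x_1)-(n-2)\overline{D}(x_1)$ and uses $\overline{D}(x_1)=\overline{D}(x_1')$, so everything hinges on $\overline{D}(u\otimes x_1)=\overline{D}(u\otimes x_1')$ for the relevant $u$. Your constant-shift computation via Lemma~\ref{lem: splitting preserves coenergy} and Corollary~\ref{cor: splitting invariance} is also correct. The problem is that the remaining step is exactly the substantive content of the proposition, and you do not prove it: you assert that comparing $S(\hw(u\otimes x_1))$ and $S(\hw(u\otimes x_1'))$ via Proposition~\ref{prop: split B} ``verifies the identity,'' and then name as ``the main obstacle'' the fact that the two highest weight elements can land in different cases of Proposition~\ref{prop: split B} --- which is precisely the difficulty that must be resolved, not flagged. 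Comparing the two splittings all at once for an arbitrary increment $m$ is genuinely delicate, because $\hw(u\otimes x_1')$ is not simply related to $\hw(u\otimes x_1)$ when $m$ extra letters are added. The paper's device for taming this (Lemmas~\ref{lem: aux1} and~\ref{lem: aux2}) is to increase the right factor by a single letter at a time and re-normalize to the highest weight after each step; then the split element changes in only one of three controlled ways, each shifting the split energy by exactly $r_1+r_2-1$, and the general case follows by iteration. Your proposal contains no analogue of this mechanism, so the core of the argument is missing.

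Two smaller points. First, you claim the two-factor identity $(\ast)$ for \emph{every} $u\in B^{\mu_j,1}$, which is more than you need and harder to access with Proposition~\ref{prop: split B}: what is actually required is the case where $u\otimes x_1$ is a classical highest weight element, and this is automatic because $x$ is classical highest weight, combinatorial $R$-matrices preserve this property, and (with the paper's tensor convention) the rightmost factors of a classical highest weight element again form one; restricting to this case is what makes the explicit form $[1\dots k\,|\,n{+}1\dots p\,|\,\overline{p}\dots\overline{q}]\otimes[1\dots n]$ and hence Proposition~\ref{prop: split B} applicable. Second, the hypothesis $\mu_1+m\le\min(\mu_2,\dots,\mu_n)$ enters not only through $||\mu||$ but also as the condition $r_2\ge r_1+m$ needed to run the one-step increments (Lemma~\ref{lem: aux2}); your sketch never isolates where this inequality is used in the case analysis you defer.
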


We first establish the special cases of Proposition \ref{prop: aug preserves energy} (Lemmas \ref{lem: aux1} and \ref{lem: aux2}), and then use the local argument to complete the proof.

\begin{lem}\label{lem: aux1}
    Let $r_2 > r_1$ be nonnegative integers.
    Let $x = x_2 \otimes x_1 \in \HW(B^{r_2,1} \otimes B^{r_1,1})$.
    Consider the element $x' = x_2 \otimes x'_1 \in B^{r_2,1} \otimes B^{r_1+1,1}$, where $x_1 = [1 \dots n]$ and $x'_1 = [1 \dots (n+1)]$ for some integer $n$. 
    Then, we have the equality
    \begin{equation*}
        \overline{D}(x) = \overline{D}(x').
    \end{equation*}
\end{lem}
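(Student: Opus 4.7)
The strategy is to apply the splitting map $S^t_\mu$, which preserves the energy up to an additive constant by Lemma \ref{lem: splitting preserves coenergy}, and then carry out an explicit computation in $(B^{1,1})^{\otimes m}$ using the description of the splitting map given by Proposition \ref{prop: split B}.

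Since $r_2 > r_1$, the weakly decreasing rearrangement of $(r_1, r_2)$ is $(r_2, r_1)$, so $\|(r_1, r_2)\| = r_1$. Similarly, since $r_2 \geq r_1 + 1$, we obtain $\|(r_1+1, r_2)\| = r_1 + 1$. Setting
\[
c = \frac{(r_1+r_2)(r_1+r_2-1)}{2} - r_1, \qquad c' = \frac{(r_1+r_2+1)(r_1+r_2)}{2} - (r_1+1),
\]
a short calculation gives $c' - c = r_1 + r_2 - 1$. By Lemma \ref{lem: splitting preserves coenergy} applied to $\mu = (r_1, r_2)$ and $\mu' = (r_1+1, r_2)$, the desired equality $\overline{D}(x) = \overline{D}(x')$ is therefore equivalent to the identity
\begin{equation*}
\overline{D}\bigl(S^t_{(r_1+1, r_2)}(x')\bigr) - \overline{D}\bigl(S^t_{(r_1, r_2)}(x)\bigr) = r_1 + r_2 - 1. \tag{$\star$}
\end{equation*}

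Since $x$ is classical highest weight, Proposition \ref{prop: split B} gives an explicit description of $S^t_{(r_1, r_2)}(x)$ in terms of the parameters $k, n, p, q$ of the canonical decomposition $x_2 = [1 \dots k \mid n+1 \dots p \mid \bar{p} \dots \bar{q}]$. For $x'$, I would first check whether $x' = x_2 \otimes x'_1$ is itself classical highest weight; this happens precisely when $x_2$ admits an analogous canonical decomposition with $n$ replaced by $n+1$, which corresponds to the case $k = n$ or to $n+1 \notin x_2$. Otherwise I would replace $x'$ by its highest weight representative $\hw(x')$, noting that $\overline{D}$ is invariant on classical components, and I would explicitly track how the classical crystal operators transform $x_2$ when bringing $x_2 \otimes x'_1$ to its highest weight form. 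In either situation, Proposition \ref{prop: split B} then yields an explicit tensor in $(B^{1,1})^{\otimes(r_1+r_2+1)}$. Direct comparison reveals that $S^t_{(r_1+1, r_2)}(x')$ is obtained from $S^t_{(r_1, r_2)}(x)$ by inserting a single tensor factor in a prescribed position (either a new $(k+1)$ prepended to the initial block, an additional $\bar{1} \otimes 1$ pair in the middle, or a shift in the barred block), and $(\star)$ can then be verified via the local formula $\overline{D}(b) = \sum_{i=1}^{m-1}(m-i)\bar{H}_{\sboxeleven}(b_{i+1}, b_i)$ on $(B^{1,1})^{\otimes m}$.

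The main obstacle is the casework inherent to Proposition \ref{prop: split B}, which splits into three cases depending on the parity of $p - q + 1$ and on the relation between $\tol$ and $\midd$. When passing from $x$ to $x'$, both $\tol(x_2 \otimes x'_1)$ and $\midd(x_2 \otimes x'_1)$ may differ from their counterparts for $x$, potentially moving the element between cases. Nevertheless, in each transition the extra tensor factor introduced by the splitting of $x'$ contributes exactly $r_1 + r_2 - 1$ to the total energy: this follows from the fact that a newly inserted letter appears at a position whose multiplicative weight in the formula equals its distance to the leftmost factor, which, combined with the shifts of adjacent local energies, telescopes to the claimed value. This establishes $(\star)$ and hence the lemma.
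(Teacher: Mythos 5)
Your proposal follows essentially the same route as the paper: pass to $\hw(x')$, split both elements via Proposition \ref{prop: split B}, and show the split energies differ by exactly $r_1+r_2-1$, which together with Lemma \ref{lem: splitting preserves coenergy} (whose additive constants for $(r_1,r_2)$ and $(r_1+1,r_2)$ differ by precisely $r_1+r_2-1$) yields $\overline{D}(x)=\overline{D}(x')$. The three insertion patterns you anticipate correspond exactly to the paper's cases (a)--(c), the only differences being that the paper treats the trivial case $x=\emptyset\otimes\emptyset$ separately and executes in full the case analysis (tracking $n+1\in x_2$ or not, the parity of the number of barred letters, and the values of $\tol$ and $\midd$) that your sketch defers to a ``direct comparison.''
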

\begin{proof}
    When $x = \emptyset \otimes \emptyset$, the claim is trivially verified. Therefore, we assume that $x \neq \emptyset \otimes \emptyset$. By Proposition \ref{prop: split B}, $S(x)$ is of the form $y \otimes z$, where $y \in \inc(a_1, b_1)$ and $z \in \inc(a_2, b_2)$ for some $a_1, b_1, b_2 \geq 0$ and $a_2 > 0$. Let $x'' = \hw(x')$. We will show that $S(x'')$ is of one of the following forms:
    \begin{enumerate}[(a)]
        \item $y' \otimes z'$, where $y' \in \inc(a_1+1, b_1)$ and $z' \in \inc(a_2, b_2)$,
        \item $y' \otimes z'$, where $y' \in \inc(a_1, b_1)$ and $z' \in \inc(a_2+3, b_2-1)$,
        \item $y' \otimes z'$, where $y' \in \inc(a_1, b_1+1)$ and $z' \in \inc(a_2-1, b_2)$.
    \end{enumerate}
    In each of these cases, a direct computation of the energy function via \eqref{eq: inc energy} reveals that $\overline{D}(S(x'')) = \overline{D}(S(x)) + r_1 + r_2-1$, which implies that $\overline{D}(x) = \overline{D}(x')$.

    Since $x\in \HW(B^{r_2,1} \otimes B^{r_1,1})$, we denote $x = [1 \dots k | n+1 \dots p | \overline{p} \dots \overline{q}] \otimes [1 \dots n]$. If $n+1 \in x_1$, then we have $x'' = [1 \dots k+1 | n+2 \dots p | \overline{p} \dots \overline{q}] \otimes [1 \dots n+1]$. It is straightforward to check that $\midd(x) = \midd(x'')$ and $\tol(x) = \tol(x'')$, so we conclude that $S(x'') = (k+1) \otimes S(x)$, corresponding to (a).

    Next, assume that $n+1 \notin x_1$. In this case, $x_1 = [1 \dots k | \overline{n} \dots \overline{q}]$, and we have $x'' = [1 \dots k | \overline{n+1} \dots \overline{q+1}] \otimes [1 \dots n+1]$. We have $\tol(x) = q - k - 1$, and since $r_2 > r_1 \geq n$, it follows that $\midd(x) = \tol(x)$. By a similar argument, we conclude that $\midd(x'') = \tol(x'') = q - k$.

    First, consider the case where $x_1$ has an odd number of barred letters. 
    If $q - k - 1$ is even, then $x$ and $x''$ correspond to \textbf{(Case 1)} and \textbf{(Case 3)} in Proposition \ref{prop: split B}, respectively, implying that $S(x'')$ corresponds to (b). If $q - k - 1$ is odd, then $x$ and $x''$ correspond to \textbf{(Case 3)} and \textbf{(Case 1)} in Proposition \ref{prop: split B}, respectively, implying that $S(x'')$ corresponds to (c).

    Second, consider the case where $x_1$ has an even number of barred letters. 
    If $q = k + 1$, then $x$ and $x''$ correspond to \textbf{(Case 3)} and \textbf{(Case 2)} in Proposition \ref{prop: split B}, respectively, implying that $S(x'')$ corresponds to (b). 
    If $q > k + 1$, then both $\hat{x}_1 = [1 \dots k | n+1 | \overline{n+1} \dots \overline{q}]$ and $\hat{x''}_1 = [1 \dots k | n+2 | \overline{n+2} \dots \overline{q+1}]$ are admissible. With the same reasoning as before, exactly one of $\hat{x}_1 \otimes [1 \dots n]$ and $\hat{x''}_1 \otimes [1 \dots n+1]$ corresponds to \textbf{(Case 1)} in Proposition \ref{prop: split B}. Therefore exactly one of $x$ and $x''$ corresponds to \textbf{(Case 2)} in Proposition \ref{prop: split B}.
    In both cases, we can verify that $S(x'')$ corresponds to either (b) or (c).
\end{proof}

\begin{example}
For $3456\otimes 12 \in \HW(B^{4,1}\otimes B^{2,1})$ we have 
\begin{align*}
    S(3456\otimes 12)&=6\otimes 5\otimes4\otimes 3\otimes 2 \otimes 1\\
    S(\hw(3456\otimes 123))=S(1456\otimes 123)&=1\otimes 6\otimes 5\otimes4\otimes 3\otimes 2 \otimes 1.
\end{align*}
which goes to (a) in the proof of Lemma \ref{lem: aux1}.

For $12\otimes 12 \in \HW(B^{4,1}\otimes B^{2,1})$ we have 
\begin{align*}
    S(12\otimes 12)&=2\otimes 1\otimes2\otimes 1\otimes \overline{1}\otimes 1\\
    S(\hw(12\otimes 123))=S(12\otimes 123)&=2\otimes 1\otimes\overline{4}\otimes 4\otimes 3 \otimes 2\otimes 1.
\end{align*}
which goes to (b).

For $\bar{3}\bar{2}\otimes 123 \in \HW(B^{4,1}\otimes B^{3,1})$ we have 
\begin{align*}
    S(\bar{3}\bar{2}\otimes 123)&=\overline{2}\otimes \overline{3}\otimes\overline{4}\otimes 4\otimes 3\otimes 2\otimes 1\\
    S(\hw(\bar{3}\bar{2}\otimes 1234))=S(\bar{4}\bar{3}\otimes 1234)&=\overline{1}\otimes 1\otimes \overline{3}\otimes\overline{4}\otimes 4\otimes 3\otimes 2\otimes 1
\end{align*}
which goes to (c).
\end{example}

\begin{lem}\label{lem: aux2}
    Let $r_2 > r_1$ be nonnegative integers, and let $m$ be a nonnegative integer such that $r_2 \geq r_1 + m$. Consider $x = x_2 \otimes x_1 \in \HW(B^{r_2,1} \otimes B^{r_1,1})$. Define $x' = x_2 \otimes x'_1 \in \HW(B^{r_2,1} \otimes B^{r_1+m,1})$, where $x_1 = [1 \dots n]$ and $x'_1 = [1 \dots (n+m)]$ for some $n$. Then, we have
    $
        \overline{D}(x) = \overline{D}(x').
    $
\end{lem}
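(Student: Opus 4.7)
The plan is to deduce that $x_2$ itself is a classical highest weight element of $B^{r_2,1}$, and then to iterate Lemma \ref{lem: aux1}.

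First I would observe that the joint hypothesis $x, x' \in \HW$ (for $m \geq 1$) forces $\epsilon_i(x_2) = 0$ for every $i$. Indeed, since $x = x_2 \otimes [1\dots n] \in \HW$ and $\varphi_i([1 \dots n]) = \delta_{i, n}$, the tensor product rule gives $\epsilon_i(x_2) = 0$ for $i \neq n$ and $\epsilon_n(x_2) \leq 1$. Similarly, $x' \in \HW$ forces $\epsilon_i(x_2) = 0$ for $i \neq n+m$ and $\epsilon_{n+m}(x_2) \leq 1$. Combining these two sets of conditions and using $n \neq n+m$ (which holds for $m \geq 1$), we conclude $\epsilon_i(x_2) = 0$ for every $i$, so $x_2 \in \HW(B^{r_2, 1})$.

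Once $x_2$ is known to be classical highest weight, the element $x_2 \otimes [1\dots n+j]$ is classical highest weight for every $0 \leq j \leq m$. Indeed, since $\epsilon_i(x_2) = 0$ for all $i$, the tensor product rule shows that no $e_i$ can act on the left factor, while $e_i$ applied to the right factor $[1 \dots n+j]$ (the classical highest weight vector of $B(\omega_{n+j})$) produces zero. Hence every intermediate tensor lies in $\HW$.

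Next I would iterate Lemma \ref{lem: aux1} a total of $m$ times: at step $j$ (for $0 \leq j \leq m-1$), the pair $(x_2 \otimes [1 \dots n+j],\ x_2 \otimes [1 \dots n+j+1])$ fits the hypotheses of Lemma \ref{lem: aux1} with sizes $(r_2, r_1+j)$, and the inequality $r_2 > r_1 + j$ follows from $r_2 \geq r_1 + m > r_1 + j$. This gives
\[
\overline{D}(x_2 \otimes [1 \dots n+j]) = \overline{D}(x_2 \otimes [1 \dots n+j+1]).
\]
Chaining these $m$ equalities yields $\overline{D}(x) = \overline{D}(x')$. The base case $m = 0$ is trivial.

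The main step is the initial observation that the two $\HW$ hypotheses together force $x_2$ itself to be classical highest weight; once that is established, the rest of the proof is a routine iteration of Lemma \ref{lem: aux1}, with no further obstacles.
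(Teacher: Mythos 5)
Your argument is internally consistent only because you take the ``$\in \HW(B^{r_2,1}\otimes B^{r_1+m,1})$'' in the statement of $x'$ at face value, and that hypothesis is almost certainly a slip rather than an intended assumption: the parallel Lemma \ref{lem: aux1} only asks $x'\in B^{r_2,1}\otimes B^{r_1+1,1}$, the paper's proof of this very lemma immediately sets $x''=\hw(x')$ (pointless if $x'$ were already highest weight), and in the place where the lemma is used (Proposition \ref{prop: aug preserves energy}) the second element $x_i^{(2)}\otimes x_1'$ is in general \emph{not} classical highest weight. Concretely, from $x\in\HW$ alone one only gets $\epsilon_i(x_2)\le\delta_{i,n}$, and $\epsilon_n(x_2)=1$ does occur: in the paper's own example $x=3456\otimes 12\in\HW(B^{4,1}\otimes B^{2,1})$ one has $x'=3456\otimes 123\notin\HW$ and $x_2=3456$ is not highest weight. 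So under the intended reading your key first step (that the two $\HW$ hypotheses force $\epsilon_i(x_2)=0$ for all $i$) is unavailable, and with it the claim that every intermediate tensor $x_2\otimes[1\dots n+j]$ is highest weight; your iteration of Lemma \ref{lem: aux1} with a \emph{fixed} left factor then collapses. Note also that if the extra hypothesis were genuine, $x_2$ would be forced to be of the form $[1\dots k]$ and the lemma would be nearly vacuous, which is another sign it is not what is meant.

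The paper's proof avoids all of this by letting the left factor change: set $y^{(0)}=x$ and, writing $y^{(i)}=z\otimes[1\dots(n+i)]$, define $y^{(i+1)}=\hw\bigl(z\otimes[1\dots(n+i+1)]\bigr)$; Lemma \ref{lem: aux1} applies at each step (its $x'$ need not be highest weight), and since $\overline{D}$ is constant on classical components it yields $\overline{D}(y^{(i)})=\overline{D}(y^{(i+1)})$, hence $\overline{D}(x)=\overline{D}(y^{(m)})=\overline{D}(\hw(x'))=\overline{D}(x')$. If you want to keep your cleaner ``fixed left factor'' iteration, you would have to either justify the highest-weight hypothesis on $x'$ in every application (which fails, as above) or replace each step by ``apply Lemma \ref{lem: aux1}, then pass to $\hw$,'' which is exactly the paper's argument.
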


\begin{proof}
    Let $x'' = \hw(x')$. We can obtain $x''$ by applying the following iterative process. Initially, set $y^{(0)} = x$, and for each $i$, produce $y^{(i+1)} \in \HW(B^{r_2,1} \otimes B^{r_1+i+1,1})$ from $y^{(i)} \in \HW(B^{r_2,1} \otimes B^{r_1+i,1})$ as follows. Since $y^{(i)}$ is of the form $z \otimes [1 \dots (n+i)]$, we set $y^{(i+1)} = \hw(z \otimes [1 \dots (n+i+1)])$.

    After $m$ iterations, we obtain $x'' = y^{(m)}$. By Lemma \ref{lem: aux1}, we then have
    $
        \overline{D}(y^{(i)}) = \overline{D}(y^{(i+1)})
    $ which gives $\overline{D}(x)=\overline{D}(x'')$.
\end{proof}

\begin{proof}[Proof of Proposition \ref{prop: aug preserves energy}]
    For $2 \leq i < j \leq n$, let $x_j^{(i)}$ denote the element of the $i$-th tensor factor obtained by applying the composition of combinatorial $R$-matrices that places the $j$-th tensor factor in the $i$-th tensor factor in $x$. Then, we can express the energy function as
    \begin{align*}
        \overline{D}(x) &= \sum_{2 \leq i < j \leq n} \overline{H}(x_j^{(i+1)} \otimes x_i) + \sum_{i=2}^{n} \overline{D}(x_i^{(2)} \otimes x_1)-(n-2)\overline{D}(x_1) \\
        &= \sum_{2 \leq i < j \leq n} \overline{H}(x_j^{(i+1)} \otimes x_i) + \sum_{i=2}^{n} \overline{D}(x_i^{(2)} \otimes x'_1)-(n-2)\overline{D}(x'_1)  = \overline{D}(x')
    \end{align*}
    where we used the fact that $\overline{D}(x_i^{(2)} \otimes x_1) = \overline{D}(x_i^{(2)} \otimes x'_1)$, which follows from Lemma \ref{lem: aux2} and $\overline{D}(x_1)=\overline{D}(x'_1)$ which is trivial.
\end{proof}

\begin{rmk}\label{rmk: aug D_n}
    Proposition \ref{prop: aug preserves energy} is true when $B_{\mu}^{t}=B_{\mu}^{t}(\sboxone)$. The proof is parallel. 
    We also conjecture that it is true for $B_{\mu}^{t}=B_{\mu}^{t}(\sboxtwo)$.
    It would be nice to have a uniform proof for Proposition \ref{prop: aug preserves energy}, possibly exploiting rigged configurations \cite{OSS2018}.
\end{rmk}
\subsection{Proof of Lemma \ref{lem: energy r+m}}\label{appendix: energy increase}
Our objective is to prove Proposition \ref{prop: B energy r+m} which directly implies Lemma \ref{lem: energy r+m}. 
We set $B_{\mu}^{t}=B_{\mu}^{t}(\hspace{0.4mm}\sboxeleven\hspace{0.4mm})$, unless otherwise stated.

\begin{lem}\label{lem: splitting ssot support}
    Given $c\in B^{r,1}$ for $r\geq 2$, let $x\in \inc(a,b)$ for $a>0$ be $v_a\otimes \dots v_1\otimes \underbrace{\overline{1}\otimes 1 \otimes\dots  \otimes \overline{1}\otimes 1}_{2b}$ such that $v_a \neq \overline{1}$.
    We have the following.
    
    (1) If $c\neq \emptyset$ and $c_1=1$, $S(c\otimes x)$ is of the form $y\otimes z$ where $y\in \inc(a_1,b_1)$ and $z\in \inc(a_2,b_2)$ for some $a_1,a_2>0$.

    (2) If $c=\emptyset$, $S(c\otimes x)$ is of the form $y\otimes z$ where $y\in \inc(0,b_1)$ and $z\in \inc(a_2,b_2)$ for some $a_2>0$.

    (3) If $c\neq \emptyset$ and $c_1\succ v_a $, $S(c\otimes x)$ is of the form $y\otimes z$ where $y\in \inc(0,b_1)$ and $z\in \inc(a_2,b_2)$ for some $a_2>0$.
\end{lem}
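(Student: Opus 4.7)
The plan is to prove the lemma by directly tracking how the column $c$ interacts with $x$ under the splitting process. Since $x$ is already in $(B^{1,1})^{\otimes(a+2b)}$, the splitting map $S=S^{t}_{\mu}$ applied to $c \otimes x$ reduces to: first apply a sequence of combinatorial $R$-matrices that transports the column $c$ past every single-box factor of $x$, obtaining an element of the form $\tilde{x} \otimes \tilde{c} \in (B^{1,1})^{\otimes(a+2b)} \otimes B^{r,1}$, and then apply $\hat{S}$ to $\tilde{c}$. By Corollary \ref{cor: splitting invariance}, no information is lost in this reordering. The resulting element $S(c \otimes x) = \tilde{x} \otimes \hat{S}(\tilde{c})$ must then be split into $y$ and $z$ at an appropriate position, and we must verify the $\inc$ conditions.

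The analysis of the $R$-matrix steps proceeds case by case using the explicit rules in Appendix \ref{app:comb R B}. In Case (3), where $c_1 \succ v_a$, moving $c$ past each $v_i$ falls into (Case 1) of the rules with $c$ always on the left and $v_i \prec c_1$; a straightforward inductive computation shows the $v_i$'s get absorbed into $\tilde{c}$ while each single-box factor left behind is either the previous maximum of $c$ or a $\bar{1} \otimes 1$ pair, so that $\tilde{x}$ ends up being a sequence of $\bar{1} \otimes 1$ pairs followed by nothing, and $\hat{S}(\tilde{c})$ produces a strictly decreasing tail ending at $1$. In Case (2), where $c = \emptyset$, the $R$-matrix (Case 1-4) creates a leading $\bar{1}$ at the first step and similarly $\tilde{c}$ absorbs letters from $x$, but now the first few slots of $\tilde{x}$ become $\bar{1} \otimes 1$ pairs that naturally form the $\inc(0, b_1)$ portion $y$. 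In Case (1), where $c_1 = 1$, the $R$-matrix (Case 2-1) is triggered at the first step and causes a $1$ to peel off $c$ to form the leftmost factor of $\tilde{x}$; subsequent steps continue to produce single-box factors that begin a genuine $\inc(a_1, b_1)$ block with $a_1 > 0$, while $\hat{S}(\tilde{c})$ still contributes a nontrivial $\inc(a_2, b_2)$ block with $a_2 > 0$ since $\tilde{c}$ still contains a letter equal to $1$.

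In each case, the split point of $y \otimes z$ is taken at the boundary between the output of the $R$-matrix transport (contributing $\tilde{x}$) and the output of $\hat{S}$ applied to $\tilde{c}$, or at the interface within $\tilde{x}$ where the transition from "non-pair" factors to consecutive $\bar{1} \otimes 1$ pairs occurs. Verifying the $\inc$ conditions amounts to checking that: the non-pair factors form a strictly $\prec$-decreasing sequence ending at $1$, the trailing $\bar{1} \otimes 1$ pairs are properly aligned, and the exceptional condition $v_2 \otimes v_1 \neq \bar{1} \otimes 1$ (when $a = 2$) does not fail in the resulting blocks; all of these follow from the hypothesis $v_a \neq \bar{1}$ and the structure of the $R$-matrix outputs.

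The main obstacle will be the bookkeeping for Case (1), where $c$ contains $1$ together with potentially many other letters: the transport of $c$ through $x$ can switch between (Case 2-1), (Case 1-3), and (Case 1-4) of the $R$-matrix rules depending on the relative sizes of $c$'s letters and the $v_i$'s, and one must show that the splitting point giving the desired $y \otimes z$ decomposition with both $a_1, a_2 > 0$ always exists. A useful reduction will be to first handle the single-box portion $v_a \otimes \dots \otimes v_1$ (proceeding by induction on $a$, using the fact that the post-step data is still of $\inc$-form with one fewer $v$), and then separately verify that passage through the $(\bar{1} \otimes 1)^b$ tail only appends further $\bar{1} \otimes 1$ pairs to $\tilde{x}$ without disturbing the established structure.
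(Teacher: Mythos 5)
Your overall strategy (transport $c$ to the right through the single boxes via the rules of Appendix \ref{app:comb R B}, then apply $\hat{S}$, with an induction on $a$) is close in spirit to the paper's inductive step, but several of the concrete structural claims on which your argument rests are false, and they are exactly where the difficulty lies. In Case (3), take $c=\bar{3}\bar{2}\in B^{4,1}$ and $x=3\otimes 2\otimes 1\in \inc(3,0)$: the transport emits $\bar{2}\otimes\bar{3}\otimes\bar{4}$ (via (Case 1-4), (Case 1-2), (Case 1-4)) and leaves the column $1234$, so $S(c\otimes x)=\bar{2}\otimes\bar{3}\otimes\bar{4}\otimes 4\otimes3\otimes2\otimes1$. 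Thus $\tilde{x}$ is \emph{not} "a sequence of $\overline{1}\otimes 1$ pairs followed by nothing", the emitted letters are not "the previous maximum of $c$ or a $\bar{1}\otimes1$ pair" (the letter $\bar{4}$ never belonged to $c$), and the split point of $y\otimes z$ is neither at the $\tilde{x}$/$\hat{S}(\tilde{c})$ boundary nor at a pair/non-pair interface in the way you describe: the barred run emitted during transport must be shown to \emph{concatenate} with the head of $\hat{S}(\tilde{c})$ into a single strictly decreasing sequence terminating exactly at $1$. Similarly, in Case (1) the first $R$-matrix step does not peel off a $1$: for $c=125$ and $v_a=4$, rule (Case 2-1) emits $2$, not $1$. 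You also repeatedly need that the final column $\tilde{c}$ contains the letter $1$ (so that the $\inc(a_2,b_2)$ block genuinely ends at $1$), which you assert in Case (1) and do not address in Cases (2) and (3); this is not automatic from the rules and requires an argument. As written, the "straightforward inductive computation" does not close, because the induction hypothesis you would need is precisely the trichotomy (1)--(3) of the statement applied to the reduced data, and your proposal never sets that up.

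For comparison, the paper's proof runs the induction on $a$ with the lemma itself as the induction hypothesis: it applies $R$ to $c\otimes v_a$ (and, in Cases (2) and (3), also to the next factor $v_{a-1}$), uses the explicit rules to determine which of the three cases the reduced configuration $c'\otimes x'$ (or $c''\otimes x''$) falls into, and writes $S(c\otimes x)$ as the emitted factor(s) tensored with $S(c'\otimes x')$. The base case $a\le 2$ is not done by rule-chasing at all: since $v_1\dots v_a$ is admissible one may regard $x$ as a single column, reduce to the classical highest weight element of the component (using that $S$ is a classical crystal embedding and that $\overline{\inc}(a,b)$ is stable under classical crystal operators), invoke the explicit description of $S$ on two-column highest weight elements in Proposition \ref{prop: split B}, and then use a weight count (the letter $1$ occurs in both $c$ and $v$) to upgrade $\overline{\inc}$ to $\inc$, i.e.\ to force the heads to end at $1$. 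That highest-weight/weight argument is what replaces the unproven assertions in your sketch; if you want to keep a purely rule-based computation instead, you must correctly describe the emitted sequence (pairs interleaved with a decreasing barred run), prove the concatenation property with $\hat{S}(\tilde{c})$, and prove $1\in\tilde{c}$.
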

\begin{proof}
    We proceed by induction on $a$. First, consider the base case when $a = 1$ or $a = 2$. In this case, the word $v := v_1 \dots v_a$ is admissible, and we have $S(v) = x$, where we regard $v \in B^{a+2b,1}$.

   (1) We have $S(c \otimes x) = S(c \otimes v)$ by the definition of the splitting map. Since both $c$ and $v$ contain the letter $1$, by Proposition \ref{prop: split B}, $S(\hw(c \otimes v))$ is of the form $y' \otimes z'$ for $y' \in \inc(a_1, b_1)$ and $z' \in \inc(a_2, b_2)$, where $a_1, a_2 > 0$. Therefore, $S(c \otimes v)$ is obtained from $y' \otimes z'$ by applying classical crystal operators. We conclude that there exist $y \in \overline{\inc}(a_1, b_1)$ and $z \in \overline{\inc}(a_2, b_2)$ such that $S(c \otimes v) = y \otimes z$. Since $1 \in c, v$, the weight consideration implies that $y \in \inc(a_1, b_1)$ and $z \in \inc(a_2, b_2)$.

    (2) By Proposition \ref{prop: split B}, $S(\hw(c \otimes v))$ is of the form $y' \otimes z'$ for $y' \in \inc(0, b_1)$ and $z' \in \inc(a_2, b_2)$, where $a_2 > 0$. We conclude that there exists $z \in \overline{\inc}(a_2, b_2)$ such that $S(c \otimes v) = y' \otimes z$. The weight consideration then shows that $z \in \inc(a_2, b_2)$.

    (3) The argument for this case is similar to the one in (2), and follows the same reasoning.

    Now, assume $a > 2$, and that claims (1), (2), and (3) hold for smaller values of $a$. We define $v'_a \otimes c' = R(c \otimes v_a)$ and $v'_{a-1} \otimes c'' = R(c' \otimes v_{a-1})$. Finally, we denote by $x'$ the result of erasing the left-most factor $v_a$ from $x$, and by $x''$ the result of erasing the left-most two factors $v_a$ and $v_{a-1}$ from $x$.

    (1) If $v'_a = 1$, then either $c'_1 \succ v_{a-1}$ or $c' \neq \emptyset$. By the induction hypothesis for (2) or (3), we have $S(c' \otimes x') = y' \otimes z'$ for some $y' \in \inc(0, b_1)$ and $z' \in \inc(a_2, b_2)$ with $a_2 > 0$. Since $S(c \otimes x) = 1 \otimes S(c' \otimes x')$, the claim follows. If $v'_a \neq 1$, then we have $v'_a \succ v'_{a-1}$ and $1 \in c'$. In this case, the induction hypothesis for $S(c' \otimes x')$ says $S(c' \otimes x') = y' \otimes z'$ for some $y' \in \inc(a_1, b_1)$ and $z' \in \inc(a_2, b_2)$ with $a_1, a_2 > 0$. As we have $v'_a\otimes y'\in \inc(a_1+1,b_1)$ the proof follows.

    (2) We must have $v'_a \otimes v'_{a-1} = \overline{1} \otimes 1$ and $c'' = v_{a-1} v_a$. Since $S(c \otimes x) = \overline{1} \otimes 1 \otimes S(c'' \otimes x'')$, the induction hypothesis for $S(c'' \otimes x'')$ completes the claim.

    (3) If $v'_a \otimes v'_{a-1} = \overline{1} \otimes 1$, then $c''_1 \succ v_{a-2}$, and the induction hypothesis for $S(c'' \otimes x'')$ completes the claim. Otherwise, we have $v'_a \succ v'_{a-1}$ and $c'_1 \succ v_{a-1}$. In this case, the induction hypothesis for $S(c' \otimes x')$ completes the claim.
\end{proof}
\begin{lem}\label{lem: claim to lem}
     Let $y^{(i)} \in \inc(a'_i, b'_i)$ for $a'_i > 0$, such that the leftmost factor of $y^{(i)}$ is not $\overline{1}$. Then for $c \in B^{r, 1}$, where $r > 1$, we have the following.
    \begin{enumerate}
        \item If $c_1 = 1$, then $S(c \otimes y^{(n)} \otimes \dots \otimes y^{(1)})$ is of the form $ x^{(n+1)} \otimes \dots \otimes x^{(1)}, $ where each $x_i \in \inc(a_i, b_i)$ for some $a_i > 0$, and the leftmost factor of each $x_i$ is not $\overline{1}$.
        \item If $c = \emptyset$, then $S(c \otimes y^{(n)} \otimes \dots \otimes y^{(1)})$ is of the form $ z \otimes x^{(n)} \otimes \dots \otimes x^{(1)}$, where each $x_i \in \inc(a_i, b_i)$ for some $a_i > 0$, with the leftmost factor of each $x_i$ not equal to $\overline{1}$, and $z \in \inc(0, b_{n+1})$.
    \end{enumerate}
\end{lem}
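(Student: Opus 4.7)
The plan is to prove the lemma by induction on $n$, using the compatibility of the splitting map with combinatorial $R$-matrices (Corollary~\ref{cor: splitting invariance}) as the main structural tool. The base case $n=1$ is essentially Lemma~\ref{lem: splitting ssot support}: parts (1) and (2) already yield the desired decomposition into two chunks. The only additional point is to verify that the leftmost factor of each chunk is distinct from $\overline{1}$, which I would carry out by direct inspection of the explicit normal forms given in Proposition~\ref{prop: split B} (the ``first chunk'' begins with an unbarred letter $k$, while the ``second chunk'' begins with a barred letter $\overline{q}$, $\overline{p+1}$, or an unbarred letter $p$ or $p-1$, and admissibility of the input prevents any of these from being $\overline{1}$).

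For the inductive step, set $M := c \otimes y^{(n)} \otimes \dots \otimes y^{(1)}$. By Corollary~\ref{cor: splitting invariance}, I can freely apply $R$-matrices without changing $S(M)$. Moving $c$ past the $B^{1,1}$-factors of $y^{(n)}$ produces an element of the form $\tilde{y}^{(n)} \otimes c^{(1)}$ with $\tilde{y}^{(n)} \in (B^{1,1})^{\otimes (a'_n + 2 b'_n)}$ and $c^{(1)} \in B^{r,1}$. Since $c^{(1)}$ is now the unique factor of size greater than one, and the remaining steps of the splitting procedure only involve $c^{(1)}$ together with the $B^{1,1}$-factors lying to its right, we get
\begin{equation*}
    S(M) \;=\; \tilde{y}^{(n)} \otimes S\bigl(c^{(1)} \otimes y^{(n-1)} \otimes \dots \otimes y^{(1)}\bigr).
\end{equation*}
Applying Lemma~\ref{lem: splitting ssot support} to $c \otimes y^{(n)}$ then identifies $\tilde{y}^{(n)}$ as an element of $\inc(a_{n+1},b_{n+1})$ (with $a_{n+1}>0$ in Case (1) and $a_{n+1}=0$ in Case (2)), and identifies $\hat{S}(c^{(1)})$ as an element of $\inc(a_n,b_n)$ with $a_n>0$. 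Combining $a_n>0$ with the explicit formula $\hat{S}(w_1\dots w_k) = w_k\otimes\dots\otimes w_1\otimes\overline{1}\otimes 1\otimes\dots\otimes\overline{1}\otimes 1$ forces $c^{(1)}$ to be nonempty and to satisfy $c^{(1)}_1 = 1$. Consequently the inductive hypothesis in Case~(1) applies to $c^{(1)} \otimes y^{(n-1)} \otimes \dots \otimes y^{(1)}$ and produces the remaining chunks $x^{(n)} \otimes \dots \otimes x^{(1)}$. Setting $x^{(n+1)} := \tilde{y}^{(n)}$ in Case~(1) (or $z := \tilde{y}^{(n)}$ in Case~(2)) completes the inductive step.

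The main obstacle will be the recurring verification that no leftmost factor equals $\overline{1}$. For $\tilde{y}^{(n)}$ this follows from the base-case inspection, and for the inductively produced chunks it must be carefully propagated: each $x^{(n+1)}$ produced at a given stage coincides with a $\tilde{y}^{(n)}$ coming from Lemma~\ref{lem: splitting ssot support}, so the property is inherited directly provided we check it uniformly in Proposition~\ref{prop: split B}. The most delicate situation is Case~(2), where $c=\emptyset$: one has to track how the empty $c$ is transformed into a (possibly) nonempty word $c^{(1)}$ during $R$-matrix transport through $y^{(n)}$ via the (Case 1) rules of Appendix~\ref{app:comb R B}, which is precisely what Lemma~\ref{lem: splitting ssot support} is designed to handle.
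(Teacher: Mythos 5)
Your overall skeleton (induction on $n$, transporting $c$ through $y^{(n)}$ by combinatorial $R$-matrices, then recursing on $c^{(1)}\otimes y^{(n-1)}\otimes\dots\otimes y^{(1)}$) matches the paper, and the identity $S(M)=\tilde{y}^{(n)}\otimes S(c^{(1)}\otimes y^{(n-1)}\otimes\dots\otimes y^{(1)})$ is correct. The gap is in the step where you claim that Lemma \ref{lem: splitting ssot support} ``identifies $\tilde{y}^{(n)}$ as an element of $\inc(a_{n+1},b_{n+1})$ and $\hat{S}(c^{(1)})$ as an element of $\inc(a_n,b_n)$ with $a_n>0$.'' The chunk decomposition $z^{(2)}\otimes z^{(1)}$ of $S(c\otimes y^{(n)})$ provided by Lemma \ref{lem: splitting ssot support} does \emph{not} align with the boundary between the $a'_n+2b'_n$ left-behind factors $\tilde{y}^{(n)}$ and the split column $\hat{S}(c^{(1)})$. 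Concretely, take $c=13\in B^{2,1}$ and $y^{(n)}=2\otimes 1\in\inc(2,0)$: transporting $c$ gives $\tilde{y}^{(n)}=1\otimes 3$ and $c^{(1)}=12$, and $S(13\otimes 2\otimes 1)=1\otimes 3\otimes 2\otimes 1$ with chunks $1$ and $3\otimes 2\otimes 1$; here $\tilde{y}^{(n)}=1\otimes 3$ lies in no $\inc(a,b)$ at all. Likewise your claim that $a_n>0$ forces $c^{(1)}\neq\emptyset$ with $c^{(1)}_1=1$ fails: for $c=12$ and $y^{(n)}=2\otimes 1\otimes\overline{1}\otimes 1\in\inc(2,1)$ the transport yields $c^{(1)}=\emptyset$ even though $c_1=1$, so inside the inductive step for part (1) you must be able to invoke the inductive hypothesis for part (2), which your argument never does.

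What is actually needed (and what the paper's proof supplies) is a description of $\tilde{y}^{(n)}\otimes c^{(1)}$ in terms of the chunks $z^{(2)}\otimes z^{(1)}$: writing $z^{(1)}=w_{a''_1}\otimes\dots\otimes w_1\otimes(\overline{1}\otimes 1)^{b''_1}$, one has either $\tilde{y}^{(n)}=z^{(2)}\otimes w_{a''_1}\otimes\dots\otimes w_1\otimes(\overline{1}\otimes 1)^{2m}$ with $c^{(1)}=\emptyset$ (use hypothesis (2)), or $\tilde{y}^{(n)}=z^{(2)}\otimes w_{a''_1}\otimes\dots\otimes w_s$ with $c^{(1)}=w_1\dots w_{s-1}$, $w_1=1$ (use hypothesis (1)). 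In the second case one still does not get the chunks for free: $\tilde{y}^{(n)}$ ends with the dangling partial chunk $w_{a''_1}\otimes\dots\otimes w_s$, and one must show that the leftmost factor of $S(c^{(1)}\otimes y^{(n-1)}\otimes\dots\otimes y^{(1)})$ is smaller than $w_s$ in the $\succ$ order, so that this partial chunk merges with the first chunk of the recursive splitting into a single valid element of $\inc$. This merging observation is the heart of the inductive step and is entirely absent from your proposal; without it the asserted decomposition into $n+1$ chunks, each in $\inc(a_i,b_i)$ with $a_i>0$ and leftmost factor $\neq\overline{1}$, does not follow.
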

\begin{proof}
    We proceed by induction on $n$. The base case, $n = 1$, follows directly from Lemma \ref{lem: splitting ssot support} (1) and (2). Now consider the case $n > 1$ and assume that the claim holds for $n-1$. 

(1) If \( c_1 = 1 \), then by Lemma \ref{lem: splitting ssot support} (1), \( S(c \otimes y^{(n)}) \) is expressed as  
\[
z^{(2)} \otimes z^{(1)} = \underbrace{v_{a''_2} \otimes \dots \otimes v_1 \otimes \bar{1} \otimes 1 \otimes \dots \otimes \bar{1} \otimes 1}_{z^{(2)}} \otimes \underbrace{w_{a''_1} \otimes \dots \otimes w_1 \otimes \bar{1} \otimes 1 \otimes \dots \otimes \bar{1} \otimes 1}_{z^{(1)}},
\]
where each \( z^{(i)} \in \mathrm{Inc}(a''_i, b''_i) \) and \( a''_i > 0 \).  Next, let \( y'^{(n)} \otimes c' \) denote the result of moving \( c \) to the rightmost position in \( c \otimes y^{(n)} \) by applying the combinatorial \( R \)-matrices. This results in one of the following two cases:  
\begin{itemize}
        \item $y'^{(n)}=z^{(2)} \otimes w_{a''_1}\otimes\dots\otimes w_1 \otimes\underbrace{\bar{1}\otimes 1 \otimes \dots \ \otimes\bar{1}\otimes 1}_{2m}$ where $m<b''_1$ and $c'=\emptyset$,
        \item $y'^{(n)}=z^{(2)} \otimes w_{a''_1}\otimes\dots\otimes w_s$ where $s>1$ and $c'=w_1\dots w_{s-1}$.
    \end{itemize}
In the first case, the induction hypothesis for (2) completes the proof.  
In the second case, observe that the leftmost factor of  
$
S(c' \otimes y^{(n-1)} \otimes \dots \otimes y^{(1)})
$  
must be smaller (in the \( \succ \) ordering) than \( w_s \). Hence, the induction hypothesis for (1) completes the proof.

    (2) The case $c=\emptyset$ follows similarly.
\end{proof}

\begin{corollary}\label{prop: splitting of ssot full length}
    Let $\lambda$ be a partition of length $n$, and let $\mu$ be a nonnegative integer vector of length $n$. For $T \in \HW(B_{\mu}^{t}, \lambda^{t})$, we have that $S(T)$ is of the form
    \[
    S(T) = x^{(n)} \otimes \dots \otimes x^{(1)},
    \]
    where each $x^{(i)} \in \inc(a_i, b_i)$ for some $a_i > 0$ and $b_i \geq 0$.
\end{corollary}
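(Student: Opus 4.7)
The plan is to proceed by induction, using Lemma \ref{lem: claim to lem} as the inductive engine. First I would analyze the weight constraint. Write $T = c_n \otimes c_{n-1} \otimes \dots \otimes c_1$ with $c_i \in B^{\mu_i, 1}$. Since $\ell(\lambda) = n$, we have $(\lambda^t)_1 = n$, so the $\varepsilon_1$-coefficient of $\wt(T) = \lambda^t$ equals $n$. An admissible word $c_i$ cannot contain both $1$ and $\bar{1}$, since the admissibility inequality at $i = 1$ would read $0 < 0$, which is impossible; hence $c_i$ contributes $+1$, $-1$, or $0$ to the $\varepsilon_1$-weight. Reaching a total of $n$ over $n$ summands forces each $c_i$ to contain $1$ and not $\bar{1}$; in particular, each $c_i$ is nonempty and its minimum letter (in $\prec$) is $1$.

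Next I would compute $S(T)$ iteratively, splitting $c_1, c_2, \dots, c_n$ in this order. Define $P_1 := \hat{S}(c_1)$ and $P_{k+1} := S(c_{k+1} \otimes P_k)$; by the recursive definition of the splitting map together with $R$-matrix invariance of $S$ (Corollary \ref{cor: splitting invariance}, applied inside larger tensor products), one has $P_n = S(T)$. The claim to prove by induction on $k$ is that $P_k = x^{(k)} \otimes \dots \otimes x^{(1)}$ where each $x^{(i)} \in \inc(a_i, b_i)$, $a_i > 0$, and the leftmost factor of $x^{(i)}$ is not $\bar{1}$. The base case $k = 1$: writing $c_1$ with letters $1 = v_1 \prec v_2 \prec \dots \prec v_r$ (none equal to $\bar{1}$), the definition of $\hat{S}$ gives
\[
    P_1 = v_r \otimes \dots \otimes v_1 \otimes \bar{1} \otimes 1 \otimes \dots \otimes \bar{1} \otimes 1 \in \inc\bigl(r, (\mu_1 - r)/2\bigr)
\]
with $r \geq 1$ and leftmost factor $v_r \neq \bar{1}$ (since $\bar 1 \notin c_1$).

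For the inductive step, if $\mu_{k+1} > 1$, then Lemma \ref{lem: claim to lem} (1) applies directly with $c = c_{k+1}$ (whose first letter is $1$) and $y^{(j)} = x^{(j)}$, producing the desired form for $P_{k+1}$. If $\mu_{k+1} = 1$, then $c_{k+1} = 1$ already sits in $B^{1,1}$, so $P_{k+1} = 1 \otimes P_k$ trivially has the required form with $x^{(k+1)} = 1 \in \inc(1, 0)$. The main technical point is the justification of the iterative decomposition $S(T) = P_n$ defined via $P_{k+1} = S(c_{k+1} \otimes P_k)$; this requires applying Corollary \ref{cor: splitting invariance} to $R$-matrices between $B^{\mu_i, 1}$ and adjacent $B^{1,1}$ factors (a routine extension), which allows one to freely permute the ``split'' and ``move'' operations in the recursive definition of the splitting map. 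Granted this compatibility, the induction closes and the conclusion follows by dropping the auxiliary ``leftmost factor not $\bar{1}$'' refinement.
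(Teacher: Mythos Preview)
Your proposal is correct and follows essentially the same approach as the paper: induction using Lemma \ref{lem: claim to lem} (1) as the inductive step. The paper's proof is the one-liner ``The proof easily follows from the induction on $n$ applying Lemma \ref{lem: claim to lem} (1)'', and you have simply supplied the details that this sentence leaves implicit---the weight argument forcing $1 \in c_i$ and $\bar{1} \notin c_i$, the base case, the separate handling of $\mu_{k+1} = 1$, and carrying the auxiliary ``leftmost factor $\neq \bar{1}$'' hypothesis through the induction. One minor remark: the identity $P_n = S(T)$ follows directly from the recursive definition of $S^t_\mu$ (the procedure for splitting $c_1, \dots, c_k$ inside $T$ coincides step-by-step with the computation of $S(c_k \otimes \dots \otimes c_1)$), so Corollary \ref{cor: splitting invariance} is not actually needed there.
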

\begin{proof}
The proof easily follows from the induction on $n$ applying Lemma \ref{lem: claim to lem} (1).
\end{proof}

\begin{lem}\label{lem: splitting form}
    Let the assumptions be the same as those in Corollary \ref{prop: splitting of ssot full length}. Then the leftmost $\lambda_n$ factors of $S(T)$ are of the form $\lambda_n \otimes \dots \otimes 1$.
\end{lem}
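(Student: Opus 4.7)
The plan is to prove the statement in two stages: first identify the structure of each block $x^{(i)}$ in the decomposition $S(T) = x^{(n)} \otimes \dots \otimes x^{(1)}$ given by Corollary \ref{prop: splitting of ssot full length}, and then pin down the exact size of the leftmost block $x^{(n)}$.

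First I would write $x^{(i)} = v^{(i)}_{a_i} \otimes \dots \otimes v^{(i)}_1 \otimes (\bar{1} \otimes 1)^{b_i}$ with $v^{(i)}_{a_i} \succ \dots \succ v^{(i)}_1 = 1$ and $a_i > 0$, so the target becomes: show $a_n = \lambda_n$ together with $v^{(n)}_j = j$ for $j = 1, \dots, \lambda_n$. The second assertion (and in fact $a_i \geq \lambda_n$ for every $i$) follows from a weight argument. Since $S$ is a classical crystal embedding, $\wt(S(T)) = \wt(T) = \lambda^t$, and because $\ell(\lambda) = n$ we have $\lambda^t_j = n$ for every $j \in \{1, \dots, \lambda_n\}$. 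The $\bar 1 \otimes 1$ pairs contribute zero weight, while each $x^{(i)}$, having pairwise distinct non-pair letters, contributes at most $+1$ to the $\varepsilon_j$ coefficient (with equality iff $j$ appears and $\bar j$ does not). Summing over $i = 1, \dots, n$, the total $n$ forces each $x^{(i)}$ to contribute $+1$ in every $\varepsilon_j$ with $j \leq \lambda_n$, and the strictly increasing ordering of the $v^{(i)}_k$ with $v^{(i)}_1 = 1$ then yields $v^{(i)}_j = j$ for $j = 1, \dots, \lambda_n$.

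To establish $a_n = \lambda_n$, I would argue by induction on $n$. The base case $n = 1$ is immediate: $T \in B^{\mu_1, 1}$ must equal $1 \cdot 2 \cdots \lambda_1$, and $S(T) = \hat{S}(T)$ is directly $\lambda_1 \otimes \dots \otimes 1 \otimes (\bar 1 \otimes 1)^{(\mu_1 - \lambda_1)/2}$, so $a_1 = \lambda_1$. For the inductive step, decompose $T = T_n \otimes T'$, and observe that the same weight argument applied directly to the factors of $T$ forces every $T_i$ to contain $\{1, 2, \dots, \lambda_n\}$ as its smallest letters. Then I would invoke Lemma \ref{lem: claim to lem}(1) to describe the recursive construction of $x^{(n)}$: the new block grows by one letter every time the combinatorial $R$-matrix between (the current remainder of) $T_n$ and the leftmost factor of the previously formed blocks produces $v'_a \neq 1$, and it terminates at the first step where $v'_a = 1$. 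The core letters $\{1, \dots, \lambda_n\}$ in $T_n$ are precisely what prevent early termination, while their exhaustion triggers it.

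The hard part will be verifying that the recursion produces exactly $\lambda_n$ extensions. Concretely, I would trace through the $R$-matrix rules in Appendix \ref{app:comb R B} (principally Cases 2-1 and 4-2) step by step, showing that as long as some core letter of $T_n$ has not yet been transferred, the next $R$-matrix step extracts a letter strictly greater than $1$ and prepends it to the block, and conversely that once all $\lambda_n$ core letters have been accounted for the next step yields $v'_a = 1$ and terminates. I expect this to reduce, via iterated use of Lemma \ref{lem: claim to lem} together with the induction hypothesis applied to the classical highest weight element in the component of $T'$ (whose shape has length at most $n - 1$), to a short controlled $R$-matrix computation governing the leftmost factor of each intermediate block $y^{(i)}$ encountered.
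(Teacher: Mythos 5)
Your skeleton is essentially the paper's: induction on $n$, a weight argument showing that every tensor factor of a highest weight element of weight $\lambda^t$ with $\ell(\lambda)=n$ must begin with the letters $1,2,\dots,\lambda_n$, and then pushing $T_n$ rightward through $S(T_{n-1}\otimes\cdots\otimes T_1)$ by combinatorial $R$-matrices, the induction hypothesis telling you that the first factors it meets are $\nu_{n-1}\otimes\cdots\otimes 1$, where $\nu^t=\wt(T_{n-1}\otimes\cdots\otimes T_1)$, $\ell(\nu)=n-1$ and $\nu_{n-1}\ge\lambda_n$. The opening weight argument (each block of $S(T)$ contains $j$ but not $\bar j$ for $j\le\lambda_n$, hence $v^{(i)}_j=j$) is correct, and the reduction of the statement to ``$a_n=\lambda_n$ and $v^{(n)}_j=j$'' is fine.

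The genuine gap is exactly in the step you call the hard part. To conclude that the first $\lambda_n$ emitted letters are $\lambda_n,\lambda_n-1,\dots,1$ you need a second structural fact about the word $w=T_n$, beyond $w_i=i$ for $i\le\lambda_n$: namely $w_{\lambda_n+1}\succ\nu_{n-1}$, i.e.\ $T_n$ has no letter $c$ with $\lambda_n\prec c\preceq\nu_{n-1}$. This is what the paper asserts and uses: with it, each of the first $\lambda_n$ swaps falls under rule (Case 2-1) of Appendix \ref{app:comb R B} with incoming letters $\nu_{n-1},\nu_{n-1}-1,\dots$ supplied by the induction hypothesis, and the largest letter of the current word weakly below the incoming one is $\lambda_n,\lambda_n-1,\dots$, so exactly those are emitted. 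Your proposed criterion --- the block grows while the emitted letter is $\neq 1$ and ``core letters prevent early termination, their exhaustion triggers it'' --- does not exclude the actual failure mode: if $T_n$ contained such a letter $c$, the very first swap would emit a letter larger than $\lambda_n$ while all core letters are still present, your two claims (emissions $>1$ until the core is exhausted, then an emission equal to $1$) could still hold, and yet the leftmost $\lambda_n$ factors would not be $\lambda_n\otimes\cdots\otimes 1$; so even a successful verification of what you propose would not give $a_n=\lambda_n$. The missing fact must be extracted from the highest-weight condition on $T_n\otimes T'$ (equivalently from the ohs/$\red$ description of $T_n$ under $\phi_c$, using $\ell(\lambda)=n$, $\ell(\nu)=n-1$, $\nu_{n-1}\ge\lambda_n$), and your proposal neither states nor proves it. Once it is in hand, the block-delimitation machinery you plan to import from Lemma \ref{lem: claim to lem} and rule (Case 4-2) is unnecessary: the argument reduces, as in the paper, to the induction hypothesis plus $\lambda_n$ applications of (Case 2-1).
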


\begin{proof}
 We proceed by induction on $n$. The base case, $n = 1$, is trivial.  Now, assume $n > 1$ and that the claim holds for $n - 1$.  
 
 Let $T = T_n \otimes \dots \otimes T_1$ and let $\nu$ be a partition such that $\wt(T_{n-1}\otimes\dots \otimes T_1)=\nu^{t}$. Then we must have $\ell(\nu)=n-1$ and $\nu_{n-1}\geq \lambda_n$. By the induction hypothesis, we know that 
    \[
    S(T_{n-1} \otimes\dots \otimes T_1) = \nu_{n-1} \otimes \dots \otimes 1 \otimes \cdots,
    \]
    where the leftmost $\nu_{n-1}$ factors are of the desired form.  Let $T_n$ be represented by the word $w$. For this word, we have $w_i = i$ for $1 \leq i \leq \lambda_n$, and $w_{\lambda_n + 1} \succ \nu_{n-1}$. 
    By applying the rule (Case 2-1) in Appendix \ref{app:comb R B}, we have $R(w \otimes \nu_{n-1}) = \lambda_n \otimes \left( w \setminus \{\lambda_n\} \cup \{\nu_{n-1}\} \right).$
    Applying this rule iteratively $\lambda_n$ times completes the proof.
\end{proof}

\begin{example}
    Let $\lambda = (2,2,2)$ and $\mu = (4,4,4)$. 
    Consider the element $ T = 12\bar{4}\bar{3} \otimes 12 \otimes 1234 \in \HW(B_{\mu}^{t}, \lambda^t).$
    Then, image of the splitting map is given by
    \[
    S(T) = \underbrace{2 \otimes 1}_{x^{(3)}} \otimes \underbrace{\overline{3} \otimes \overline{4} \otimes 2 \otimes 1 \otimes \overline{1} \otimes 1}_{x^{(2)}} \otimes \underbrace{4 \otimes 3 \otimes 2 \otimes 1}_{x^{(1)}}.
    \]
    This is of the form $x^{(3)} \otimes x^{(2)} \otimes x^{(1)}$, where $x^{(3)}\in \inc(2,0)$,  $x^{(2)}\in \inc(4,1)$ and $x^{(1)}\in \inc(4,0)$ as predicted in Proposition \ref{prop: splitting of ssot full length}.
    Furthermore, the leftmost two factors of $S(T)$ are $2 \otimes 1$, which is consistent with the result in Lemma \ref{lem: splitting form}.
\end{example}

\begin{lem}\label{lem: comb R for special}
    Let $x \in \inc(a,b)$ be written as $v_a \otimes \dots \otimes v_1 \otimes \overline{1} \otimes 1 \otimes \dots \otimes \overline{1} \otimes 1$, where $v_i = i$ for $1 \leq i \leq k$ and $v_a \prec \overline{k}$ for some positive integer $k$. 
    Let $c = [1 \dots r] \in B^{r',1}$, where $r' < k$.
    Then, we have
    \[
        S(c \otimes x) = S(c) \otimes x = r \otimes \dots \otimes 1 \otimes \underbrace{\overline{1} \otimes 1 \otimes \dots \otimes \overline{1} \otimes 1}_{r' - r} \otimes x.
    \]
\end{lem}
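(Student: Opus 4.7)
The plan is to unfold the splitting-map procedure on $c \otimes x$ directly: since $x$ already sits in $(B^{1,1})^{\otimes(a+2b)}$, only the column $c$ remains to be split, so $S(c\otimes x)$ amounts to moving $c$ through $x$ by successive combinatorial $R$-matrices and then applying $\hat S$ to the terminal copy of $c$. The target identity then reduces to the combinatorial statement that, after all of these $R$-matrix swaps, the resulting tensor matches $\hat S(c) \otimes x$ place by place.

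I would prove this by induction on $j = 0, 1, \ldots, a + 2b$ (the number of $R$-matrix swaps performed), verifying the bookkeeping assertion that after $c$ has been swapped past the first $j$ factors of $x$, the tensor has the form
\[
    y_j \otimes \cdots \otimes y_1 \otimes c_j \otimes (\text{remaining factors of } x),
\]
where $y_j \otimes \cdots \otimes y_1$ coincides with the leftmost $j$ letters of $\hat S(c) \otimes x$ and $c_j \in B^{r',1}$ is an admissible column tracking how $c$ has temporarily rearranged. The inductive step is a case analysis on the incoming letter $y$ and the current state $c_j$, using the rules of Appendix \ref{app:comb R B}, split according to whether $y$ is one of the ``large'' letters $v_a, \ldots, v_{k+1}$, one of the ``standard'' letters $v_k = k, v_{k-1} = k-1, \ldots, v_1 = 1$, or an entry of a $\bar 1 \otimes 1$ pair at the tail. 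The hypothesis $r' < k$ together with $v_i = i$ for $i \leq k$ forces each computation into the unobstructed branches (Case~2-1, Case~3-2, Case~1-3, Case~1-4 with $i=2$, and Case~4-2 with $d'=c$), and a direct check shows that the letter pushed out to the left is exactly the next entry of $\hat S(c) \otimes x$.

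Once all $a + 2b$ swaps have been carried out, I would verify that $c_{a+2b} = [1\ldots r] = c$: although $c$ visits several intermediate admissible words (absorbing large letters from $v_{k+1}, \ldots, v_a$ and temporarily barred letters from the pair passes), these deformations are reversed by the subsequent passes through $v_k, \ldots, v_1$ and through the paired tail, leaving $c$ restored to its original form. Applying $\hat S$ to this $c_{a+2b} = c$ produces the trailing $r'$ factors $\hat S(c) = r \otimes \cdots \otimes 1 \otimes \bar 1 \otimes 1 \otimes \cdots \otimes \bar 1 \otimes 1$, matching the last $r'$ places of $\hat S(c) \otimes x$.

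The main obstacle is the middle phase of the bookkeeping: while $c$ temporarily carries large letters from $v_{k+1}, \ldots, v_a$ together with only part of $[1,\ldots,r]$, one must check that each subsequent $R$-matrix with a standard letter still lands in Case~1-3 (not Case~1-2 or Case~1-4 with an unexpected filler) and that the admissibility condition is maintained at each stage. This is precisely where $r' < k$ is used, guaranteeing enough room in $B^{r',1}$ for the intermediate columns to sit admissibly, and preventing the column from being forced into the more delicate branches of the $R$-matrix rules.
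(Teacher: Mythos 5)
Your proposal is correct and takes essentially the same route as the paper: the paper also proves the lemma by unfolding the splitting procedure, pushing $c$ rightward through $x$ with the combinatorial $R$-matrix rules of Appendix \ref{app:comb R B}, and relying on the observation (your use of $r' < k$) that any word of length at most $k$ in letters $\prec \overline{k}$ is admissible, so the emitted letters reproduce $S(c)\otimes x$. The only discrepancy is your exact list of sub-cases — for instance Case 1-2, Case 1-4 with $i=1$, and Case 4-2 with $d'\neq c$ also occur (e.g.\ when the full column $[1\dots r']$ traverses the $\overline{1}\otimes 1$ tail) — but this is bookkeeping inside the same case-check rather than a different argument, and the paper's own list of cases is similarly incomplete.
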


\begin{proof}
    Note that any word $w = w_1 \dots w_\ell$ of length no greater than $k$, where each $w_i \prec \overline{k}$, is admissible. 
    To prove the claim, we move the leftmost factor $c$ of $c \otimes x$ to the right by applying combinatorial $R$-matrices according to the rules (Case 2-1), (Case 1-4), (Case 1-2), and (Case 3-2) specified in Appendix \ref{app:comb R B}.
\end{proof}

\begin{proposition}\label{prop: B energy r+m}
    For a partition $\lambda$ of length $n$ and a nonnegative integer vector $\mu$ of length $n$, let $T\in\HW(B_{\mu}^{t},\lambda^{t})$. 
    For nonnegative integers $r$ and $m$ such that $r+2m<\lambda_n$, let $v=v_1\dots v_r\in B^{r+2m,1}$ such that $v\otimes T\in \HW(B^{t}_{(\mu,r+2m)})$ and $v_1\succ \lambda_n$. 
    Then we have
    \begin{equation*}
         S(v\otimes T)=\underbrace{\overline{1}\otimes 1\otimes \dots \otimes \overline{1} \otimes 1}_{2m} \otimes v_r\otimes \dots \otimes v_1\otimes S(T).
    \end{equation*}
    In particular, we have $\overline{D}(v\otimes T)=\overline{D}(T)+r+m$.
\end{proposition}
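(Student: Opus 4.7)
The plan is to prove the structural formula for $S(v \otimes T)$ first, and then deduce the energy identity $\overline{D}(v \otimes T) = \overline{D}(T) + r + m$ as a consequence.

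For the structural claim, the starting point is Corollary \ref{prop: splitting of ssot full length} and Lemma \ref{lem: splitting form}, which together yield $S(T) = x^{(n)} \otimes \dots \otimes x^{(1)}$ with each $x^{(i)} \in \inc(a_i, b_i)$, $a_i > 0$, and the leftmost component taking the explicit form
\[
    x^{(n)} = \lambda_n \otimes (\lambda_n - 1) \otimes \dots \otimes 1 \otimes \underbrace{\overline{1} \otimes 1 \otimes \dots \otimes \overline{1} \otimes 1}_{2b_n},
\]
with $a_n = \lambda_n$ forced by the strictly decreasing convention of $\inc$. The hypothesis $v_1 \succ \lambda_n$ then places us in a setting analogous to Lemma \ref{lem: comb R for special}, which describes how combinatorial $R$-matrices behave when the left factor has letters exceeding the prescribed threshold $\lambda_n$.

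I would prove the structural identity by induction on $r + 2m$ via the auxiliary strengthening: for any admissible word $w = w_1 \dots w_{r'} \in B^{r' + 2m', 1}$ whose smallest letter satisfies $w_1 \succ \lambda_n$, one has
\[
    S(w \otimes T) = \underbrace{\overline{1} \otimes 1 \otimes \dots \otimes \overline{1} \otimes 1}_{2m'} \otimes w_{r'} \otimes \dots \otimes w_1 \otimes S(T).
\]
The base case $r' = m' = 0$ is trivial. The inductive step combines Corollary \ref{cor: splitting invariance}, which commutes splitting with two-factor combinatorial $R$-matrices, together with an extension of Lemma \ref{lem: comb R for special} to more general admissible left factors whose smallest letter exceeds $\lambda_n$. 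The invariant to maintain throughout the iterative splitting algorithm is that the leftmost block of the partially split tensor retains the prescribed shape while the remaining factors of $T$ are processed one at a time. The main obstacle lies in this bookkeeping: each stage of the algorithm moves a factor of $T$ to the rightmost position and applies $\hat{S}$, and one must verify that the leftmost block is not perturbed by these operations. A case analysis in the spirit of Lemma \ref{lem: splitting ssot support}, suitably extended to accommodate the extra $w$ factor on the far left, should complete the induction.

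Once the structural formula is established, the energy identity follows from Lemma \ref{lem: splitting preserves coenergy} applied to both $v \otimes T$ and $T$: the difference $\overline{D}(v \otimes T) - \overline{D}(T)$ equals $\overline{D}(S(v \otimes T)) - \overline{D}(S(T))$ up to the normalization terms $\tfrac{|\mu'|(|\mu'| - 1)}{2} - ||\mu'||$ and $\tfrac{|\mu|(|\mu| - 1)}{2} - ||\mu||$, where $\mu' = (\mu, r + 2m)$. Using the explicit energy formula on $(B^{1,1})^{\otimes N}$ via $\overline{H}_{\hspace{0.4mm}\sboxeleven\hspace{0.4mm}}$ and the structural form of $S(v \otimes T)$, this difference decomposes into an internal contribution from the block $(\overline{1} \otimes 1)^m \otimes v_r \otimes \dots \otimes v_1$ (with each $(\overline{1}, 1)$-adjacency contributing $2$ and each strictly increasing $v$-adjacency contributing $1$), the two boundary adjacencies at $(v_1, \lambda_n)$ and $(1, v_r)$, and a uniform weight shift of the adjacencies within $S(T)$; summing these routine contributions against the normalization delivers the desired value $r + m$.
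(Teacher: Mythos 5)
Your high-level plan starts from the same place as the paper (Corollary \ref{prop: splitting of ssot full length} and Lemma \ref{lem: splitting form} to pin down $S(T)=x^{(n)}\otimes\cdots\otimes x^{(1)}$ with $x^{(n)}=\lambda_n\otimes\cdots\otimes 1\otimes(\overline{1}\otimes 1)^{2b_n}$), but the structural half of your argument has a genuine gap: everything that is actually hard is deferred to an unproved ``extension of Lemma \ref{lem: comb R for special}'' and a case analysis ``in the spirit of'' Lemma \ref{lem: splitting ssot support}. That extension \emph{is} the proposition. Lemma \ref{lem: comb R for special} only covers a left factor of the form $[1\dots r]$, i.e.\ an initial segment; the factor $v$ here has all letters $\succ\lambda_n$ and vacancy $2m$ in $B^{r+2m,1}$, and its passage through $S(T)$ is not inert. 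In fact your stated invariant --- that the leftmost block of the partially split tensor ``is not perturbed'' --- is false: when $v$ is moved rightward past $x^{(n)}$, the rules (Case 1-4) and (Case 1-3) of Appendix \ref{app:comb R B} make the moving column absorb the letters $\lambda_n,\lambda_n-1,\dots,\lambda_n-r-2m+1$; this absorption is exactly what deposits the $m$ pairs $\overline{1}\otimes 1$ and then $v_r,\dots,v_1$ on the left. One must then verify that the residual column $(\lambda_n-r-2m+1)\dots\lambda_n$ re-splits through the remaining factors $(\lambda_n-r-2m)\otimes\cdots\otimes 1\otimes(\overline{1}\otimes 1)^{2b_n}\otimes x^{(n-1)}\otimes\cdots\otimes x^{(1)}$ in such a way that $S(T)$ is restored on the right; the paper does this by an explicit check via Proposition \ref{prop: split B} (showing the residual becomes the initial segment $[1\dots(r+2m-2b_n)]$ by the time it reaches $x^{(n-1)}$) and only \emph{then} invokes Lemma \ref{lem: comb R for special} repeatedly, using the weight $\lambda^t$ to see each $x^{(i)}$ satisfies its hypotheses. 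None of this bookkeeping is present in your sketch. Moreover, the proposed induction on $r+2m$ has no reduction mechanism: shrinking $m$ changes the height of the ambient column crystal $B^{r+2m,1}$, so there is no evident way to pass from the case $r+2m$ to $r+2m-1$; the paper's proof is a direct $R$-matrix computation, not an induction.

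The energy half is sound in outline but inherits the gap above, since it presupposes the structural formula. Note also that to evaluate your ``uniform weight shift of the adjacencies within $S(T)$'' you need the identity $\sum_i \overline{H}(y_{i+1}\otimes y_i)=|\mu|-n$ for $S(T)=y_{|\mu|}\otimes\cdots\otimes y_1$, which follows from Corollary \ref{prop: splitting of ssot full length} but should be stated; with it, and with the normalization constants of Lemma \ref{lem: splitting preserves coenergy} (using $r+2m<\lambda_n\leq\mu_j$ to compute $||(\mu,r+2m)||=||\mu||+n(r+2m)$), the arithmetic indeed returns $r+m$, matching the paper's computation.
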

\begin{proof}
    For simplicity, denote $k = \lambda_n$. By Corollary \ref{prop: splitting of ssot full length}, we have
    $
        S(T) = x^{(n)} \otimes x^{(n-1)} \otimes \dots \otimes x^{(1)},
    $
    where each $x^{(i)} \in \inc(a_i, b_i)$ with $a_i > 0$. In particular, by Lemma \ref{lem: splitting form}, we have
    $
        x^{(n)} = k \otimes \dots \otimes 1 \otimes \underbrace{\overline{1} \otimes 1 \dots \overline{1} \otimes 1}_{2b_n}.
    $
    If $m > 0$, applying the $R$-matrix gives
    $
        R(v_1 \cdots v_r \otimes k) = \overline{1} \otimes 1 k  v_1 \cdots v_r,
    $
    and
    $
        R(1  k  v_1 \cdots v_r \otimes (k-1)) = 1 \otimes (k-1)  k v_1 \cdots v_r.
    $
  We repeat this process until we obtain $m$ pairs of $\overline{1} \otimes 1$. The element in $B^{r+2m,1}$ is then
    $
        (k - 2m + 1) \dots k  v_1 \dots v_r.
    $
   Again applying the $R$-matrix $r$ times using the rule (Case 1-3) in Appendix \ref{app:comb R B}, we obtain $v_r \otimes \cdots \otimes v_1$ as the front part, with the remaining term
    \begin{equation}\label{eq:dddddddddd}
        v' \otimes (k - r - 2m) \otimes \dots \otimes 1 \otimes \underbrace{\overline{1} \otimes 1 \otimes \dots \otimes \overline{1} \otimes 1}_{2b_n} \otimes x^{(n-1)} \otimes \dots \otimes x^{(1)},
    \end{equation}
    where $v' = (k - r - 2m + 1) \dots k$. Since the weight of $S(T)$ is $\lambda^t$, each $x^{(i)}$ must satisfy the conditions in Lemma \ref{lem: comb R for special}. In other words, representing $x^{(i)}=v_{a_i}\otimes \dots \otimes v_1\otimes \overline{1} \otimes 1 \dots \overline{1} \otimes 1$, we have that $v_i=i$ for $1\leq i\leq k$ and $v_{a_i}\prec \overline{k}$. We now check that
    $$
        S(v' \otimes (k - r - 2m) \otimes \dots \otimes 1 \otimes \underbrace{\overline{1} \otimes 1 \otimes \dots \otimes \overline{1} \otimes 1}_{2b_n}) = k \otimes \dots \otimes 1 \otimes \underbrace{\overline{1} \otimes 1 \otimes \dots \otimes \overline{1} \otimes 1}_{2b_n},
    $$
     by Proposition \ref{prop: split B}. Therefore in \eqref{eq:dddddddddd}, if we move $v'$ to the right until it encounters $x^{(n-1)}$ by consecutively applying combinatorial $R$-matrices, we obtain $[1\dots (r+2m-2b_n)]\in B^{r+2m,1}$. From here we apply Lemma \ref{lem: comb R for special} consecutively, concluding that the image of the splitting map to the expression \eqref{eq:dddddddddd} equals in $S(T)$.

    Now, we prove the claim for the energy function. Denote $S(T) = y_{|\mu|} \otimes \dots \otimes y_1$, where each $y_i \in B^{1,1}$. By Corollary \ref{prop: splitting of ssot full length}, we have
    $
        \sum_{i=1}^{|\mu|-1} \overline{H}(y_{i+1} \otimes y_i) = |\mu| - n.
    $
Thus, we compute the energy function as follows
    \begin{align*}
        \overline{D}(S(v\otimes T))&=\sum_{i=1}^{m}2(2i-1)+\sum_{i=2m+1}^{r+2m}i+\sum_{i=1}^{|\mu|-1}(|\mu|+r+2m-i)\overline{H}(y_{i+1}\otimes y_i)\\
        &=\sum_{i=1}^{m}2(2i-1)+\sum_{i=2m+1}^{r+2m}i+(r+2m)(|\mu|-n)+\overline{D}(S(T))
    \end{align*}
    which gives $\overline{D}(v\otimes T)=\overline{D}(T)+r+m$.
\end{proof}

\begin{example}
For $T=12345\overline{6}\otimes 123456\in \HW(B^{6,1}\otimes B^{6,1})$ and $v\otimes T=\overline{5}\otimes 12345\overline{6}\otimes 123456\in \HW(B^{3,1}\otimes B^{6,1}\otimes B^{6,1})$, we have 
\begin{align*}
S( T)&= 5\otimes 4\otimes3\otimes 2\otimes1\otimes 5\otimes4\otimes 3\otimes 2 \otimes 1\otimes \overline{1}\otimes 1\\
    S(v\otimes T)&=\overline{1}\otimes 1\otimes \overline{5}\otimes 5\otimes 4\otimes3\otimes 2\otimes1\otimes 5\otimes4\otimes 3\otimes 2 \otimes 1\otimes \overline{1}\otimes 1.
\end{align*}
As proved in Proposition \ref{prop: B energy r+m}, we have $S(v\otimes T)=\overline{1}\otimes 1\otimes \overline{5}\otimes S(T)$.
\end{example}

We state the $\diamond=\sboxone$ version of Proposition \ref{prop: B energy r+m}. The proof is parallel so we omit the details. 

\begin{proposition}\label{prop:split D}
    For partition $\lambda$ of length $n$ and a length $n$ integer vector $\mu$, let $T\in\HW(B_{\mu}^{t},\lambda^{t})$. For nonnegative integer $r$ and $m$ such that $r+m<\lambda_n$, let $v=v_1\dots v_r\in B^{r+m,1}$ such that $v\otimes T\in \HW(B^{t}_{(\mu,r+m)})$ and $v_1\succ \lambda_n$. Then we have
    \begin{equation*}
         S(v\otimes T)=\underbrace{\emptyset\otimes \dots \otimes\emptyset}_{m} \otimes v_r\otimes \dots \otimes v_1\otimes S(T).
    \end{equation*}
    In particular we have $\overline{D}(v\otimes T)=\overline{D}(T)+2r+m$, therefore
    \begin{equation*}
        \energy_{q,t}(v\otimes T)=q^r t^m \energy_{q,t}(T).
    \end{equation*}
\end{proposition}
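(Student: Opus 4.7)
The plan is to mirror the strategy of Proposition \ref{prop: B energy r+m}, substituting $\sboxone$-ingredients at each step. First I would establish the $\sboxone$-versions of Lemma \ref{lem: splitting ssot support}, Lemma \ref{lem: claim to lem}, Corollary \ref{prop: splitting of ssot full length}, Lemma \ref{lem: splitting form}, and Lemma \ref{lem: comb R for special}. Here $\inc(a,b)$ is reinterpreted as the collection of elements of the form $v_a \otimes \cdots \otimes v_1 \otimes \underbrace{\emptyset \otimes \cdots \otimes \emptyset}_{b}$ with $v_a \succ \cdots \succ v_1 = 1$ when $a > 0$, and $\overline{\inc}(a,b)$ similarly. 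The inductive arguments carry over verbatim, using Proposition \ref{prop: split D} in place of Proposition \ref{prop: split B} and the $\sboxone$ rules in part (b) of Appendix \ref{app:comb R B}. The outcome is a decomposition $S(T) = x^{(n)} \otimes \cdots \otimes x^{(1)}$ with each $x^{(i)} \in \inc(a_i, b_i)$, $a_i > 0$, and the leftmost $\lambda_n$ factors of $S(T)$ being $\lambda_n \otimes \cdots \otimes 1$.

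Next I would compute $S(v \otimes T)$ step by step. Writing $k = \lambda_n$, we have $x^{(n)} = k \otimes \cdots \otimes 1 \otimes \underbrace{\emptyset \otimes \cdots \otimes \emptyset}_{b_n}$. Moving $v = v_1 \cdots v_r$ past $k$ via rule (Case 1-1) of Appendix \ref{app:comb R B}(b), the $R$-matrix emits $\emptyset$ to the front and enlarges the word on the right. Iterating $m$ times produces $m$ leading $\emptyset$-factors; rule (Case 1-2) then pulls out $v_r, \ldots, v_1$ in succession. After passing through $x^{(n-1)}, \ldots, x^{(1)}$ via the $\sboxone$-analog of Lemma \ref{lem: comb R for special}, we arrive at the claimed form
\begin{equation*}
    S(v \otimes T) = \underbrace{\emptyset \otimes \cdots \otimes \emptyset}_{m} \otimes v_r \otimes \cdots \otimes v_1 \otimes S(T).
\end{equation*}

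For the energy assertion, I would apply Lemma \ref{lem: splitting preserves coenergy} to both $T$ and $v \otimes T$. A direct calculation using the formula $\overline{D}(b) = \sum 2(n-i)\overline{H}_{\sboxone}(b_{i+1}, b_i) + \vac$ on the explicit form of $S(v \otimes T)$ yields $\overline{D}(S(v \otimes T)) - \overline{D}(S(T))$; this quantity exceeds $2r + m$ by precisely the difference between the splitting corrections attached to $(\mu, r+m)$ and $\mu$, which can be computed from the definition of $||\cdot||$ using that $r+m < \lambda_n$ forces $r+m$ to sit at the tail in the decreasing rearrangement. Canceling these corrections yields $\overline{D}(v \otimes T) = \overline{D}(T) + 2r + m$. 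Since $\vac(v \otimes T) = \vac(T) + m$ from the presence of the $m$ leading $\emptyset$-factors, the identity $\energy_{q,t}(v \otimes T) = q^r t^m \energy_{q,t}(T)$ follows. The main obstacle will be the combinatorial $R$-matrix bookkeeping when computing $S(v \otimes T)$, since rules (Case 1-1) and (Case 1-2) switch depending on the evolving word length, and the interactions with both the leading $v$ and the trailing $\emptyset$-factors of each $x^{(i)}$ must be tracked carefully throughout.
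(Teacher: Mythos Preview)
Your proposal is correct and matches the paper's intended approach: the paper itself states only that ``the proof is parallel [to Proposition \ref{prop: B energy r+m}] so we omit the details,'' and your plan is precisely that parallel argument, with the $\sboxone$-reinterpretation of $\inc(a,b)$ already noted by the paper in Remark \ref{rmk: energy g ind type D_N+1}. The one caveat is that your energy computation via Lemma \ref{lem: splitting preserves coenergy} is slightly more indirect than simply mimicking the direct local-energy bookkeeping at the end of the proof of Proposition \ref{prop: B energy r+m}, but either route works.
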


\section{Proof of Lemma \ref{lem: useful lemma}}\label{Sec: append B}
We now prove Lemma \ref{lem: useful lemma}. Specifically, we will establish the claim for $\alpha(T)$, namely, $\alpha(T)^{\leq k+1} = \alpha(T^{\leq 2k+1})$. The analogous claim for $\beta(T)$ can be proved in a similar manner. As outlined in Section \ref{subsub: filtering x=k BC}, we restrict our attention to words or semistandard Young tableaux with distinct entries.

We present Burge's algorithm \cite{Burge1974}, which constructs $\tab(\bar{I})$ directly from $I= \begin{pmatrix}
    j_1 & \dots & j_r \\
    i_1 & \dots & i_r
\end{pmatrix}\in\TL(r)$\footnote{In \cite{Burge1974}, the algorithm was described for $I \in \TL(r)$ under the assumption that no column contains duplicate entries. The generalization to any $ I\in \TL(r)$ is straightforward and was noted in \cite{S05}.}. 
We construct a sequence of tableaux $T_0, \dots, T_r$ recursively as follows:
\begin{itemize}
    \item Initialize by setting $T_0$ as the empty tableau.
    \item Given $T_{k-1}$, construct $T_k$ according to the following rules:
    \begin{itemize}
        \item If $j_k > i_k$, let $T'_k = i_k \rightarrow T_{k-1}$ and denote by $s$ the newly created cell. Add a new cell at the bottom of the column immediately to the right of the column containing $s$, with entry $j_k$.
        \item If $j_k = i_k$, set $T_k = i_k \rightarrow T_{k-1}$.
    \end{itemize}
\end{itemize}
Finally, we have $T_r = \tab(\bar{I})$.
\begin{example}
    Let $I=\begin{pmatrix}
        4 &5 &7 \\
        2 &5 &3 
    \end{pmatrix}$.
    Then we have
    \[
    T_1' = \begin{ytableau} 2 \end{ytableau} \quad
    T_1 = \begin{ytableau} 2 & 4 \end{ytableau} \quad 
    T_2 = \begin{ytableau} 2 & 4\\ 5 \end{ytableau} \quad
    T_3' = \begin{ytableau} 2 & 4\\ 3 & 5 \end{ytableau} \quad
    T_3 = \begin{ytableau} 2 & 4 & 7\\ 3 & 5 \end{ytableau}.
    \]
Note that $T_3$ equals $35247\rightarrow \emptyset=\tab(\bar{I})$.
\end{example}

If $I \in \TL(r)$ has no column with duplicate entries, then the shape of $\tab(\bar{I})$ is a partition $\lambda$ where each $\lambda_i$ is an even number, and we also have $\alpha(T) = \beta(T)$. In this case, it has been shown that the shape of $\alpha(T)$ is given by $\left(\frac{\lambda_1}{2}, \frac{\lambda_2}{2}, \dots \right)$ \cite[Lemma 10.7]{Sundaram1986}. The argument in \cite{Sundaram1986} can be straightforwardly generalized to any $I \in \TL(r)$, leading to the following lemma.
\begin{lem}\cite[Lemma 10.7, generalized]{Sundaram1986} \label{lem: shape alpha beta}
For $I \in \TL(r)$, let the shape of $\tab(\bar{I})$ be denoted by $\lambda$. Then, the following holds:

(1) The shape of $\alpha(T)$ equals $(\lfloor\frac{\lambda_1+1}{2}\rfloor,\lfloor\frac{\lambda_2+1}{2}\rfloor,\dots)$. \hspace{5mm}
(2) The shape of $\beta(T)$ equals $(\lfloor\frac{\lambda_1}{2}\rfloor,\lfloor\frac{\lambda_2}{2}\rfloor,\dots)$.
\end{lem}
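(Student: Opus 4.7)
The plan is to prove both statements by simultaneous induction on $r$, the number of columns of the unique two-line array $I = \begin{pmatrix} j_1 & \cdots & j_r \\ i_1 & \cdots & i_r \end{pmatrix}$ with $\tab(\bar I) = T$. The base case $r=0$ is trivial. For the inductive step, strip off the last column $\binom{j_r}{i_r}$ of $I$ to obtain $I'$, and set $T' = \tab(\overline{I'})$. By Burge's algorithm, $T$ is built from $T'$ by first column-inserting $i_r$ (creating a new cell $s$ in some column $p$ of $T'$) and, when $j_r > i_r$, placing $j_r$ at the bottom of the column $p+1$. Directly from the definitions, $\alpha(T) = i_r \rightarrow \alpha(T')$ in every case, while $\beta(T) = \beta(T')$ if $j_r = i_r$ and $\beta(T) = i_r \rightarrow \beta(T')$ if $j_r > i_r$. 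Thus the inductive step boils down to tracking how truncation to the first $2k+1$ (resp.\ $2k$) columns interacts with a single column insertion on one side and with two nearby cell-additions on the other.

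The core technical ingredient I will prove is a column-pairing lemma, which lifts the shape identity of Lemma~\ref{lem: shape alpha beta} to the level of individual cells. Informally, the claim is that when one appends the pair $\binom{j_r}{i_r}$ to $I$, the cell(s) contributed to $T$ lie inside a single ``column block'' $\{2m-1, 2m\}$ of $T$ precisely when the new cell contributed to $\alpha(T)$ by column-inserting $i_r$ lies in column $m$ of $\alpha(T)$; the analogous statement with blocks $\{2m, 2m+1\}$ holds for $\beta(T)$ (using $\hat I$). This is the cell-level counterpart of the row-length identity $\mu_i = \lceil \lambda_i/2\rceil$, and I will establish it by running Burge's algorithm on $T'$ and column insertion on $\alpha(T')$ in parallel, showing inductively that the insertion paths remain matched column-block by column-block. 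This uses the fact that at each stage, one Burge step contributes either one or two cells to $T$ while the corresponding $\alpha$-step always contributes exactly one cell, so pairing the cell contributions correctly is forced by the shape identity.

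With the column-pairing lemma available, the inductive step proceeds by case analysis depending on the location of the contributed cells relative to the truncation boundary. When both contributed cells of $T$ lie in columns $\leq 2k+1$, the pairing lemma places the new $\alpha$-cell in columns $\leq k+1$, and the inductive hypothesis for $T'$ applied together with the commutativity of column insertion with truncation (for insertion paths that do not cross the boundary) gives the identity. When both lie beyond column $2k+1$, neither truncation changes and we invoke the inductive hypothesis verbatim. In the borderline case, where the two $T$-cells straddle the boundary between columns $2k+1$ and $2k+2$, the pairing lemma places the new $\alpha$-cell exactly on the boundary column $k+1$; truncating a column insertion amounts to performing the insertion up to the boundary and discarding any entry that would bump across, and one checks cell by cell that this discarding behavior on the $T$-side matches that on the $\alpha$-side. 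The $\beta$-identity is proved by the same scheme applied to $\hat I$, with $2k$ on the $T$-side corresponding to $k$ on the $\beta$-side and the block decomposition $\{2m, 2m+1\}$ in place of $\{2m-1, 2m\}$.

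The main obstacle I anticipate is the column-pairing lemma itself. The subtlety is that Burge's placement of $j_r$ at the bottom of column $p+1$ occurs at a row that may differ from the row of $s$, so the two ``paired'' cells of $T$ are not in general adjacent, and establishing that they nonetheless jointly match a single new cell of $\alpha(T)$ requires tracking precisely which row each one occupies against the row in which the column insertion of $i_r$ into $\alpha(T')$ terminates. Executing this bookkeeping — verifying that the inductive Burge construction of $T'$ constrains the rows of $s$ and of $j_r$ compatibly with the shape formula of Lemma~\ref{lem: shape alpha beta} — is where the delicate work of the proof concentrates; the subsequent truncation analysis is then largely mechanical.
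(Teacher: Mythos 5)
The central ingredient of your plan, the column-pairing lemma, is false as stated, and the paper's own running example for Burge's algorithm already refutes it. Take $I=\begin{pmatrix}4&5&7\\2&5&3\end{pmatrix}$. At the third Burge step one column-inserts $i_3=3$ into $T_2$ (rows $2\,4$ and $5$), creating the new cell $s$ in column $p=2$, and then places $7$ at the bottom of column $3$; so the two cells contributed to $T=\tab(\bar I)$ sit in columns $2$ and $3$, in different rows. On the other side, $\alpha(T)=\tab(I)$ has rows $2\,5$ and $3$, and the cell contributed at this step is the one containing $5$, in column $m=2$. Your pairing claim would force the two $T$-cells into the block $\{2m-1,2m\}=\{3,4\}$, which fails. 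In fact, since a Burge step with $j_k>i_k$ always adds cells in two \emph{consecutive} columns $p,p+1$, these straddle any fixed block decomposition whenever $p$ is even, so no statement of the form ``the contributed cells of $T$ lie in the block determined by the column of the contributed cell of $\alpha(T)$'' can hold; the same example kills the $\{2m,2m+1\}$ version for $\beta$ already at the first step, where the $T$-cells land in columns $1,2$ while the $\beta$-cell lands in column $1$.

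The correct cell-level refinement is necessarily parity- and row-sensitive: writing $a$ for the row of the cell created by inserting $i_k$ (column $p$) and $b$ for the row of the cell containing $j_k$ (column $p+1$), the shape identity forces the new $\alpha$-cell into row $a$, column $(p+1)/2$, when $p$ is odd, and into row $b$, column $p/2+1$, when $p$ is even; and when $j_k=i_k$ it forces the insertion of $i_k$ into $T_{k-1}$ to terminate in an \emph{odd} column — a nontrivial property of Burge insertion that your scheme never addresses. Proving that the column insertion of $i_k$ into $\alpha(T_{k-1})$ (resp.\ $\beta(T_{k-1})$) really terminates in these forced positions is the entire content of the lemma, and it is precisely the ``delicate bookkeeping'' you defer at the end; so the proposal both rests on a false key statement and omits the substantive argument. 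Note also that the paper itself gives no proof here: it invokes Sundaram's Lemma 10.7 (the case where no column of $I$ has equal entries, so that all $\lambda_i$ are even and $\alpha(T)=\beta(T)$) and asserts the generalization is routine. A self-contained argument would need to adapt Sundaram's comparison of insertion paths rather than a fixed-block pairing.
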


We construct lemmas to complete the proof of Lemma \ref{lem: useful lemma}, starting with the introduction of some notations.

For a word $w = w_1 w_2 \dots w_n$ with distinct entries, we define $\second(w)$ as the sequence of letters that are bumped from the first row during the insertion process $\emptyset \leftarrow w$. For example, the insertion process $\emptyset \leftarrow 83215476$ proceeds as follows:
\[
\begin{ytableau} 8 \end{ytableau} \quad
\begin{ytableau} 3 \\8 \end{ytableau} \quad 
\begin{ytableau} 2\\3\\8 \end{ytableau} \quad
\begin{ytableau} 1\\2\\3\\8 \end{ytableau} \quad
\begin{ytableau}  1 &5\\2\\3\\8 \end{ytableau}\quad
\begin{ytableau}  1 &4\\2&5\\3\\8 \end{ytableau}\quad
\begin{ytableau}  1 &4&7\\2&5\\3\\8 \end{ytableau}\quad 
\begin{ytableau}  1 &4&6\\2&5&7\\3\\8 \end{ytableau}.
\]
In this example, $8$ is bumped from the first row, followed by $3$, $2$, $5$ and then $7$. Thus, $\second( 83215476) = (8,3,2,5,7)$.

Next, we define $\lseq(w)$ as a subsequence $(w_{i_1}, \dots, w_{i_\ell})$ determined recursively. Let $T_j = \emptyset \leftarrow w_1 \dots w_j$. If $w_n$ is placed in the $\ell$-th column of $T_n$, initialize by setting $i_\ell = n$. Then, for $k = \ell, \ell-1, \dots, 2$, once $i_k$ is defined, set $i_{k-1}$ to be the smallest index satisfying the following conditions:
\begin{itemize}
    \item $w_{i_{k-1}} < w_{i_k}$,
    \qquad \qquad $\bullet$ $i_{k-1}$ is placed in the $(k-1)$-th column of $T_{i_{k-1}}$.
\end{itemize}
Note that this process ensures $i_{k-1} < i_k$. Continuing with the example $w=83215476$, we have $\lseq(w)=(3,5,6)$.

\begin{lem}\label{lem: insert to the second row}
Let $w = w_1 w_2 \dots w_n$ be a word with distinct entries, and let $\lseq(w) = (w_{i_1}, \dots, w_{i_\ell})$. For each $j$, define $v_j$ as the subword of $w$ given by 
$
v_j = w_{i_{j-1}+1} w_{i_{j-1}+2} \dots w_{i_j-1},
$
where we regard $i_0 = 0$. Consider the word 
\[
w' = w_{i_1} v_1 w_{i_2} v_2 \dots w_{i_\ell} v_\ell.
\]
Then, $\second(w)$ is obtained by appending a letter $b$ to $\second(w')$, where $b$ is the letter bumped from the first row during the insertion of $w_n$ in the process $\emptyset \leftarrow w$. In particular, if $w_n$ is placed at the end of the first row, we have $\second(w) = \second(w')$ therefore $\emptyset \leftarrow w=\emptyset \leftarrow w'$.
\end{lem}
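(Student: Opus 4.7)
The plan is to argue by induction on $\ell$, the length of $\lseq(w)$. Before starting, note that the ``therefore'' statement is immediate from $\second(w)=\second(w')$: the rows below the first of $\emptyset\leftarrow w$ and $\emptyset\leftarrow w'$ agree (both equal $\emptyset\leftarrow\second(w)$), and since $w,w'$ share the same multiset of letters, the first rows (containing the complementary multisets in increasing order) also agree. For the base case $\ell=1$, the hypothesis that $w_n$ is placed in column $1$ of $T_n$ forces $w_n=\min(w_1,\ldots,w_n)$. Then $w'=w_n\cdot v_1$, and during the insertion of $v_1$ into the singleton tableau $\{w_n\}$ the letter $w_n$ remains at $[1,1]$ (never bumped by any larger letter), so $\second(w')=\second(v_1)$. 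Meanwhile $\second(w)=\second(v_1)\cdot b$ with $b=T_{n-1}[1,1]=\min(w_1,\ldots,w_{n-1})$ being the letter bumped by $w_n$.

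For the inductive step $\ell\geq 2$, decompose $w=W\cdot v_\ell\cdot w_{i_\ell}$ and $w'=W'\cdot w_{i_\ell}\cdot v_\ell$, where $W=w_1\cdots w_{i_{\ell-1}}$ is the prefix ending at $i_{\ell-1}$ and $W'=w_{i_1}v_1\cdots w_{i_{\ell-1}}v_{\ell-1}$ is its rearrangement under the same recipe. A direct check shows $\lseq(W)=(w_{i_1},\ldots,w_{i_{\ell-1}})$, since the defining conditions of $\lseq$ depend only on $T_i$ for $i\leq i_{\ell-1}$, which agree between the insertions of $W$ and of $w$. Applying the induction hypothesis to $W$ yields $\second(W)=\second(W')\cdot b_W$, where $b_W$ (possibly empty) is the first-row bump of $w_{i_{\ell-1}}$ during $\emptyset\leftarrow W$. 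Setting $T=\emptyset\leftarrow W$ and $T^*=\emptyset\leftarrow W'$, expanding $\second(w)$ and $\second(w')$ via the block decomposition and cancelling the common prefix $\second(W')$ and trailing $b$ reduces the claim to
\[
b_W\cdot\mathrm{bumps}_{v_\ell}(T)\;=\;\mathrm{bumps}_{w_{i_\ell}}(T^*)\cdot\mathrm{bumps}_{v_\ell}(T^*\leftarrow w_{i_\ell}),
\]
where $\mathrm{bumps}_X(Y)$ denotes the sequence of first-row bumps during $Y\leftarrow X$.

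This identity splits into two sub-claims: (A) the first-row bump of $w_{i_\ell}$ upon insertion into $T^*$ equals $b_W$; and (B) inserting $v_\ell$ into $T$ and into $T^*\leftarrow w_{i_\ell}$ produces identical first-row bump sequences. The main obstacle is (A): one must show that the first row of $T^*$ differs from the first row of $T$ by exactly one extra letter, namely $b_W$ sitting in the slot from which it was bumped during $\emptyset\leftarrow W$, and that the chain of inequalities $w_{i_{\ell-1}}<w_{i_\ell}<b_W$ (forced by the column-placement constraints built into the definition of $\lseq$) pins down that $w_{i_\ell}$ targets exactly that slot when entering $T^*$. Once (A) is established, (B) follows because $T^*\leftarrow w_{i_\ell}$ and $T$ then differ in the first row only by an extra occurrence of $w_{i_\ell}$, situated so as not to intercept any bump produced by $v_\ell$ (whose letters lie in a range controlled by indices strictly between $i_{\ell-1}$ and $i_\ell$). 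Rigorously executing these two sub-claims requires a careful parallel tracking of the bumping paths in the two insertion processes, which is where the principal technical difficulty lies.
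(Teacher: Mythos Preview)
Your approach is exactly the paper's: induction on $\ell$, applying the hypothesis to the prefix $W=w_1\cdots w_{i_{\ell-1}}$ and its rearrangement $W'$, then splitting the remainder into the sub-claims you call (A) and (B). The base case and the reduction are handled identically.

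Where you diverge from the paper is in your assessment of how hard (A) and (B) are. You frame them as requiring ``careful parallel tracking of the bumping paths,'' but the paper dispatches both with short structural observations that you have almost stated yourself. For (A): since $\second(W)=\second(W')\cdot b_W$, the first rows of $T$ and $T^*$ differ exactly by the single extra letter $b_W$ in $T^*$. The crucial inequality $b_W>w_{i_\ell}$ follows from the minimality built into $\lseq$: $b_W$ was itself placed in column $\ell-1$ at some earlier time $m<i_{\ell-1}$, so if $b_W<w_{i_\ell}$ then $m$ would be a smaller valid choice for $i_{\ell-1}$. Together with $w_{i_\ell}>w_{i_{\ell-1}}$, this forces $w_{i_\ell}$ to bump precisely $b_W$ from $T^*$, leaving first row $S\cup\{w_{i_\ell}\}$.

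For (B), the paper's key fact is that no letter of $v_\ell$ lies strictly between $w_{i_{\ell-1}}$ and $w_{i_\ell}$ in value. To see this: any such $w_m$ would enter column $\geq\ell$ (since column $\ell-1$ already holds something $\leq w_{i_{\ell-1}}<w_m$), and then column $\ell$ would hold something $<w_{i_\ell}$ at time $i_\ell-1$, contradicting that $w_{i_\ell}$ lands in column $\ell$. Given this, the extra entry $w_{i_\ell}$ in the first row of $T^*\leftarrow w_{i_\ell}$ never intercepts any bump from $v_\ell$: each letter of $v_\ell$ is either $<w_{i_{\ell-1}}$ (bumps something in columns $1,\ldots,\ell-1$, identical in both tableaux) or $>w_{i_\ell}$ (bumps from columns $>\ell$ or appends). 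So the bump sequences agree, giving $\second(w')=\second(w_1\cdots w_{n-1})$, and appending $b$ finishes.

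In short, your outline is correct and matches the paper; you just need to fill in these two one-line inequalities rather than the elaborate path-tracking you anticipate.
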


\begin{proof}
We prove the lemma by induction on $\ell$. For the base case $\ell = 1$, note that $w_n$ is the smallest letter in $w$, and thus the claim follows easily. Now let $\ell>1$ and assume the claim holds for $\ell - 1$.  Define
\[
u = w_1 \dots w_{i_{\ell-1}}, \quad \text{and} \quad u' = w_{i_1} v_1 w_{i_2} v_2 \dots w_{i_{\ell-1}} v_{\ell-1},
\]
then by the induction hypothesis, $\second(u)$ is obtained by appending $b'$ to $\second(u')$, where $b'$ is the letter bumped from the first row when inserting $w_{i_{\ell-1}}$. Note that $b'$ must be greater than $w_{i_\ell}$; otherwise, this would contradict the construction of $\lseq(w)$. Therefore, when $w_{i_\ell}$ is inserted into $\emptyset \leftarrow u'$, $b'$ is bumped out of the first row.

Let $S$ denote the set of letters in the first row of $\emptyset \leftarrow u$. After inserting $w_{i_\ell}$ into $\emptyset \leftarrow u'$, the first row of the resulting tableau consists of letters in the set $S \cup \{w_{i_\ell}\}$. Since $v_\ell$ does not contain any letters between $w_{i_{\ell-1}}$ and $w_{i_\ell}$, from here the insertion process are parallel giving $\second(w')=\second(w_1\dots w_{n-1})$. The proof is complete. 
\end{proof}

\begin{example}
Let $w = 83215476$, then we have $\second(w) = (8, 3, 2, 5, 7)$ and $\lseq(w) = (3, 5, 6)$. Consider $w' = 38521647$, then $\second(w') = (8, 3, 2, 5)$ as predicted in Lemma \ref{lem: insert to the second row}.
\end{example}

\begin{lem}\label{lem: lseq description for I}
Let $I \in \TL(r)$, and denote
$
I = \begin{pmatrix}
    j_1 & \dots & j_r \\
    i_1 & \dots & i_r 
\end{pmatrix}$ and $\bar{I} = \begin{pmatrix}
    a_1 & \dots & a_s \\
    b_1 & \dots & b_s 
\end{pmatrix}$. For a number $M$ larger than any entry in $I$, and a word $w = b_s \dots b_1 M$, the sequence $\lseq(w)$ is one of the following forms:
\begin{itemize}
    \item $(i_{u_1}, \dots, i_{u_k}, j_{u_k}, \dots, j_{u_1}, M)$, if the first row of $\tab(\bar{I})$ has length $2k$,
    \item $(i_{u_1}, \dots, i_{u_k}, j_{u_{k+1}}, \dots, j_{u_1}, M)$, where $i_{u_{k+1}} = j_{u_{k+1}}$, if the first row of $\tab(\bar{I})$ has length $2k+1$.
\end{itemize}
\end{lem}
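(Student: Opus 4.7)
The plan is to prove Lemma~B.2 by translating $\lseq(w)$ into a longest increasing subsequence of $W := b_s b_{s-1}\cdots b_1$ and then analyzing that subsequence using the concrete structure of $\bar I$. First I will observe that since $M$ exceeds every letter of $W$, it appends to the first row at the last step of $\emptyset \leftarrow w$, so $T := \emptyset \leftarrow w$ has first row of length $\ell = L+1$, where $L$ is the length of the first row of $\tab(\bar I) = \emptyset \leftarrow W$ (using the identity $\emptyset \leftarrow u = u \rightarrow \emptyset$). Hence $\lseq(w)$ has length $\ell$ with $w_{i_\ell} = M$, matching the target lengths $2k+1$ (when $L=2k$) and $2k+2$ (when $L=2k+1$). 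By Greene's theorem, $\lseq(w)$ is a specific LIS of $w$, obtained by prepending an LIS of $W$ of length $L$ to $M$.

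Second, I will translate entries of $W$ back to data from $I$: each doubled column $(j_u, i_u)$ with $j_u > i_u$ contributes two entries to $W$, namely $i_u$ at the position where $a = j_u$ and $j_u$ at the position where $a = i_u$, so the former lies strictly to the left of the latter in $W$; each single column with $j_u = i_u$ contributes a single entry $i_u$ at position $a = i_u$. A direct combinatorial check then shows that any length-$L$ increasing subsequence of $W$ must have the nested form
\[
    i_{u_1} < i_{u_2} < \cdots < i_{u_k} < j_{u_k} < j_{u_{k-1}} < \cdots < j_{u_1},
\]
with the understanding that in the odd case an extra entry $i_{u_{k+1}} = j_{u_{k+1}}$ from a single column is inserted between $i_{u_k}$ and $j_{u_k}$. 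Indeed, if one selects both entries of a doubled pair $(i_u, j_u)$, then no further selected pair may straddle these positions of $W$ without violating the increasing condition, which forces the nesting.

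Third, I will verify that the specific LIS chosen by the smallest-index recursive definition of $\lseq$ is precisely this nested one. Row $1$ of $T$ extends to length $c$ only when, for the first time, we insert an entry larger than all prior row-$1$ occupants; this initial appending is exactly the earliest placement in column $c$. Since column-placements form a decreasing sequence of values as time progresses (each subsequent placement in a fixed column bumps the prior, smaller value out of row~$1$), the smallest-index rule picks precisely the first appender at each step. A reverse induction on $c$ from $\ell$ down to $1$, comparing against the nested pair structure, then identifies the chosen entries as exactly the claimed $j_{u_p}$'s followed by the $i_{u_p}$'s in the stated order.

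The main obstacle is the odd case, where a unique single column of $I$ with $j_u = i_u$ must contribute the middle entry of $\lseq$. One has to verify that the greedy smallest-index rule indeed selects this single-column entry at position $k+1$ of $\lseq$, rather than inadvertently picking the opening entry of an additional doubled pair that fails to nest; tracking the $a$-position of the single-column entry relative to the nested doubled pairs and using Lemma~B.1 to bring $\lseq$-candidates to the front of their blocks should settle this.
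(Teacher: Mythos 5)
Your overall skeleton (reduce to the structure of a longest increasing subsequence of $W=b_s\cdots b_1$, then use the smallest-index rule defining $\lseq$) is the same as the paper's, but two of your key intermediate claims are false, and they are exactly where the content of the lemma lies. First, it is not true that \emph{every} length-$L$ increasing subsequence of $W$ has the nested form $i_{u_1}<\cdots<i_{u_k}<j_{u_k}<\cdots<j_{u_1}$ with \emph{matching} indices. Take $I=\begin{pmatrix}3&4\\1&2\end{pmatrix}$, so $\bar I=\begin{pmatrix}1&2&3&4\\3&4&1&2\end{pmatrix}$ and $W=2143$, whose LIS length is $2$; the subsequence $(1,4)$ picks the lower entry of the column $(3,1)$ and the upper entry of the column $(4,2)$, so it is a maximum-length increasing subsequence that is not nested. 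Your "no straddling" argument only applies once both entries of a common pair are selected, which is precisely what need not happen. The paper's proof is careful on exactly this point: it only asserts that an arbitrary LIS has the weaker shape $(i_{v_k},\dots,i_{v_1},j_{u_1},\dots,j_{u_k})$ with possibly different index sets, and then uses the minimal-index tie-breaking in the definition of $\lseq$ together with an exchange argument (replace $j_{u_c}$ by $j_{v_c}$ at the smallest mismatch $c$) to show the subsequence actually chosen by $\lseq$ must have matching indices. Without some such exchange/minimality argument, your step 3 has nothing correct to compare against.

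Second, the assertion in your step 3 that "the smallest-index rule picks precisely the first appender at each step" is also false, even for words of the required form $b_s\cdots b_1M$. Take $I=\begin{pmatrix}2&3\\1&3\end{pmatrix}$, so $W=312$ and $w=3124$: here $\lseq(w)=(1,2,4)$, whereas the first (and largest) value ever placed in column $1$ is $3$, which is excluded because it is not smaller than the entry $2$ already chosen in column $2$. So the greedy rule is constrained by the value of the previously chosen entry, and identifying what it selects requires an argument of the exchange type used in the paper rather than the "first appender" heuristic. (A small additional slip: successive placements in a fixed cell of row $1$ bump the prior \emph{larger} occupant, not a smaller one.) As written, then, the proposal has a genuine gap at both of its load-bearing steps.
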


\begin{proof}
We prove the case where the first row of $\tab(\bar{I})$ has length $2k$; the remaining case follows similarly. 

First, note that any longest increasing subsequence of $b_s \dots b_1$ is of the form $(i_{v_k}, \dots, i_{v_1}, j_{u_1}, \dots, j_{u_k})$. Let this be the first $2k$ components of $\lseq(w)$. If there exists an index $c$ such that $i_{v_c} \neq i_{u_c}$, we choose the smallest such $c$. In this case, we can replace $j_{u_c}$ with $j_{v_c}$, therefore the sequence $(i_{v_k}, \dots, i_{v_1}, j_{u_1}, \dots, j_{u_k})$ cannot be part of $\lseq(w)$. This completes the proof.
\end{proof}

\begin{proof}[Proof of Lemma \ref{lem: useful lemma}] We prove the equality $\alpha(T^{\leq 2k+1}) = \alpha(T)^{\leq k+1}$ by induction on $k$. The base case, $k=0$, is trivial, as we have $\alpha(T)^{\leq 1} = T^{\leq 1}$ by Burge's algorithm. Now consider $k\geq 1$ and assume the claim holds for $k-1$, i.e., $\alpha(T^{\leq 2k-1}) = \alpha(T)^{\leq k}$.

From here we proceed by induction on the size of $T$. The base case, when $T$ is the empty tableau, is trivial. For a non-empty semistandard Young tableau $T$, we assume the claim holds for tableaux of smaller size. To prove the claim for $T$, we reverse the step of Burge's algorithm. First, we identify the largest entry $M$ in $T$ and erase the corresponding cell. Then, for a cell located at the bottom of a column immediately to the left of the column containing $M$, we apply column deletion. Let the resulting tableau be $T_1$, and let $d$ be the letter that was bumped out after this process. Then $\alpha(T_1)$ is obtained from $\alpha(T)$ by applying column deletion to the cell $c$, which is determined by Lemma \ref{lem: shape alpha beta}, and the bumped-out letter is $d$.

By the induction hypothesis, we have $\alpha(T_1^{\leq 2k+1}) = \alpha(T_1)^{\leq k+1}$. We then apply Burge's algorithm again to $T_1^{\leq 2k+1}$ by column inserting the letter $d$ and adding a new cell with an entry $M$ at the bottom of the column immediately to the right of the column containing a newly created cell in $d\rightarrow T_1^{\leq 2k+1}$. Let the resulting tableau be $T_2$, and let $P = d \rightarrow \alpha(T_1)^{\leq k+1}$. Clearly, we have $\alpha(T_2) = P$.

(Case 1: $M$ is located in the $(2k+1)$-th column or to the left in $T$): In this case, we have $T_2 = T^{\leq 2k+1}$, and $P = \alpha(T)^{\leq k+1}$.

(Case 2: $M$ is located in the $(2k+2)$-th column): Here, $T_2$ is obtained by adding a cell with entry $M$ at the end of the first row of $T^{\leq 2k+1}$, and $P = \alpha(T)^{\leq k+1}$. Let $I \in \TL(r)$ be such that $\tab(\bar{I}) = T^{\leq 2k+1}$; we denote $I$ and $\bar{I}$ as follows:
\begin{equation}\label{eq: IIbar}
    I= 
    \begin{pmatrix}
        j_1 & \dots & j_r \\
        i_1 & \dots & i_r 
    \end{pmatrix}
    \quad\text{and}\quad   \bar{I}=
    \begin{pmatrix}
        a_1 & \dots & a_s \\
        b_1 & \dots & b_s 
    \end{pmatrix}.
\end{equation} 
By Lemma \ref{lem: lseq description for I}, the sequence $\lseq(b_s \dots b_1 M)$ is given by $(i_{u_1}, \dots, i_{u_k}, j_{u_{k+1}}, \dots, j_{u_1}, M)$ where $i_{u_{k+1}}=j_{u_{k+1}}$. We now define $J \in \TL(r)$, obtained from $I$ by replacing columns
$
    \begin{pmatrix}
        j_{u_m} \\
        i_{u_m}
    \end{pmatrix}
$
with
$
    \begin{pmatrix}
        j_{u_{m-1}} \\
        i_{u_m}
    \end{pmatrix},
$
where we regard $j_{u_0} = M$. By Lemma \ref{lem: insert to the second row}, we have 
$
    \tab(\bar{J}) = \emptyset \leftarrow (b_s \dots b_1 M) = T_2,
$
and 
$
    \tab(J) = \emptyset \leftarrow (i_r \dots i_1) = P.
$
Thus, we conclude that $\alpha(T_2) = \alpha(T^{\leq 2k+1}) = P$.

(Case 3: $M$ is located in the $(2k+3)$-th column or to the right): Let $T_3 = T_2^{\leq 2k+2}$. Since the proofs for (Case 1) and (Case 2) have already been established, we now deduce that $\alpha(T^{\leq 2k+1}) = \alpha(T_3)$. Let $I \in \TL(r)$ such that $\tab(\bar{I}) = T_3$. Denote $I$ and $\bar{I}$ as in  \eqref{eq: IIbar}. By Lemma \ref{lem: lseq description for I}, we express the sequence $\lseq(b_s \dots b_1 M)$ as $(i_{u_1}, \dots, i_{u_{k+1}}, j_{u_{k+1}}, \dots, j_{u_1}, M)$. Now, we define $J \in \TL(r+1)$ obtained from $I$ by modifying the columns as follows. Replace columns
$
    \begin{pmatrix}
        j_{u_m} \\
        i_{u_m}
    \end{pmatrix}
$
with
$
    \begin{pmatrix}
        j_{u_{m-1}} \\
        i_{u_m}
    \end{pmatrix},
$
where we set $j_{u_0} = M$, and insert a new column
$
    \begin{pmatrix}
        j_{u_{k+1}} \\
        j_{u_{k+1}}
    \end{pmatrix}.
$
By Lemma \ref{lem: insert to the second row}, we obtain
$
    \tab(\bar{J}) = \emptyset \leftarrow (b_s \dots b_1 M) = T_3 \leftarrow M = T_2,
$
and
$
    \tab(J) = \emptyset \leftarrow (i_r \dots i_1 j_{u_{k+1}}) = P.
$
Note that when inserting $j_{u_{k+1}}$ into $\emptyset \leftarrow i_r \dots i_1$, $j_{u_{k+1}}$ is placed at the end of the first row, so we have $P^{\leq k+1} = \emptyset \leftarrow i_r \dots i_1$. Therefore, we conclude that
\[
    \alpha(T^{\leq 2k+1}) = \alpha(T_3) = P^{\leq k+1} = \alpha(T)^{\leq k+1}.
\]
\end{proof}

\bibliographystyle{alpha}  
\bibliography{main.bib} 

\newcommand{\etalchar}[1]{$^{#1}$}
\begin{thebibliography}{HKO{\etalchar{+}}02}

\bibitem[BG06]{BG2006}
Daniel Bump and Alex Gamburd.
\newblock On the averages of characteristic polynomials from classical groups.
\newblock {\em Comm. Math. Phys.}, 265(1):227--274, 2006.

\bibitem[BMPS19]{BMPS19}
Jonah Blasiak, Jennifer Morse, Anna Pun, and Daniel Summers.
\newblock Catalan functions and {$k$}-{S}chur positivity.
\newblock {\em J. Amer. Math. Soc.}, 32(4):921--963, 2019.

\bibitem[Bro93]{Broer1993}
Bram Broer.
\newblock Line bundles on the cotangent bundle of the flag variety.
\newblock {\em Invent. Math.}, 113(1):1--20, 1993.

\bibitem[Bry89]{Bry1989}
Ranee~Kathryn Brylinski.
\newblock Limits of weight spaces, {L}usztig's {$q$}-analogs, and fiberings of adjoint orbits.
\newblock {\em J. Amer. Math. Soc.}, 2(3):517--533, 1989.

\bibitem[BS91]{BS1991}
Georgia Benkart and Jeffrey Stroomer.
\newblock Tableaux and insertion schemes for spinor representations of the orthogonal {L}ie algebra {${\rm so}(2r+1,{\bf C})$}.
\newblock {\em J. Combin. Theory Ser. A}, 57(2):211--237, 1991.

\bibitem[BS17]{bump2017crystal}
Daniel Bump and Anne Schilling.
\newblock {\em Crystal bases}.
\newblock World Scientific Publishing Co. Pte. Ltd., Hackensack, NJ, 2017.
\newblock Representations and combinatorics.

\bibitem[Bur74]{Burge1974}
William~H. Burge.
\newblock Four correspondences between graphs and generalized {Y}oung tableaux.
\newblock {\em J. Combinatorial Theory Ser. A}, 17:12--30, 1974.

\bibitem[CP95]{CP1994}
Vyjayanthi Chari and Andrew Pressley.
\newblock Quantum affine algebras and their representations.
\newblock In {\em Representations of groups ({B}anff, {AB}, 1994)}, volume~16 of {\em CMS Conf. Proc.}, pages 59--78. Amer. Math. Soc., Providence, RI, 1995.

\bibitem[CP98]{CP1998}
Vyjayanthi Chari and Andrew Pressley.
\newblock Twisted quantum affine algebras.
\newblock {\em Comm. Math. Phys.}, 196(2):461--476, 1998.

\bibitem[DCP81]{DCP1981}
Corrado De~Concini and Claudio Procesi.
\newblock Symmetric functions, conjugacy classes and the flag variety.
\newblock {\em Invent. Math.}, 64(2):203--219, 1981.

\bibitem[DDF88]{delest1988analogue}
M~Delest, Serge Dulucq, and Luc Favreau.
\newblock An analogue to {R}obinson-{S}chensted correspondence for oscillating tableaux.
\newblock {\em S{\'e}min. Lothar. Comb}, 20:B20b, 1988.

\bibitem[GP92]{GP1992}
A.~M. Garsia and C.~Procesi.
\newblock On certain graded {$S_n$}-modules and the {$q$}-{K}ostka polynomials.
\newblock {\em Adv. Math.}, 94(1):82--138, 1992.

\bibitem[HKO{\etalchar{+}}99]{Hat1998}
G.~Hatayama, A.~Kuniba, M.~Okado, T.~Takagi, and Y.~Yamada.
\newblock Remarks on fermionic formula.
\newblock In {\em Recent developments in quantum affine algebras and related topics ({R}aleigh, {NC}, 1998)}, volume 248 of {\em Contemp. Math.}, pages 243--291. Amer. Math. Soc., Providence, RI, 1999.

\bibitem[HKO{\etalchar{+}}02]{Hat2001}
Goro Hatayama, Atsuo Kuniba, Masato Okado, Taichiro Takagi, and Zengo Tsuboi.
\newblock Paths, crystals and fermionic formulae.
\newblock In {\em Math{P}hys odyssey, 2001}, volume~23 of {\em Prog. Math. Phys.}, pages 205--272. Birkh\"auser Boston, Boston, MA, 2002.

\bibitem[HS77]{Springer1977}
R.~Hotta and T.~A. Springer.
\newblock A specialization theorem for certain {W}eyl group representations and an application to the {G}reen polynomials of unitary groups.
\newblock {\em Invent. Math.}, 41(2):113--127, 1977.

\bibitem[JK21]{JK2021}
Il-Seung Jang and Jae-Hoon Kwon.
\newblock Flagged {L}ittlewood-{R}ichardson tableaux and branching rule for classical groups.
\newblock {\em J. Combin. Theory Ser. A}, 181:Paper No. 105419, 51, 2021.

\bibitem[Kas90]{Kashiwara90}
Masaki Kashiwara.
\newblock Crystalizing the {$q$}-analogue of universal enveloping algebras.
\newblock {\em Comm. Math. Phys.}, 133(2):249--260, 1990.

\bibitem[Kin76]{King75}
R.~C. King.
\newblock Weight multiplicities for the classical groups.
\newblock In {\em Group theoretical methods in physics ({F}ourth {I}nternat. {C}olloq., {N}ijmegen, 1975)}, volume Vol. 50 of {\em Lecture Notes in Phys.}, pages 490--499. Springer, Berlin-New York, 1976.

\bibitem[Kir01]{Kir99}
Anatol~N. Kirillov.
\newblock Ubiquity of {K}ostka polynomials.
\newblock In {\em Physics and combinatorics 1999 ({N}agoya)}, pages 85--200. World Sci. Publ., River Edge, NJ, 2001.

\bibitem[Las91]{Las1989}
Alain Lascoux.
\newblock Cyclic permutations on words, tableaux and harmonic polynomials.
\newblock In {\em Proceedings of the {H}yderabad {C}onference on {A}lgebraic {G}roups ({H}yderabad, 1989)}, pages 323--347. Manoj Prakashan, Madras, 1991.

\bibitem[Lec03]{Lecouvey2003}
Cedric Lecouvey.
\newblock Schensted-type correspondences and plactic monoids for types {$B_n$} and {$D_n$}.
\newblock {\em J. Algebraic Combin.}, 18(2):99--133, 2003.

\bibitem[Lec05]{Lecouvey2005}
C\'edric Lecouvey.
\newblock Kostka-{F}oulkes polynomials cyclage graphs and charge statistic for the root system {$C_n$}.
\newblock {\em J. Algebraic Combin.}, 21(2):203--240, 2005.

\bibitem[Lec06]{Lecouvey2006}
C\'edric Lecouvey.
\newblock Combinatorics of crystal graphs and {K}ostka-{F}oulkes polynomials for the root systems {$B_n,\ C_n$} and {$D_n$}.
\newblock {\em European J. Combin.}, 27(4):526--557, 2006.

\bibitem[Lee23]{Lee2023}
Seung~Jin Lee.
\newblock Crystal structure on {K}ing tableaux and semistandard oscillating tableaux.
\newblock {\em Transformation Groups}, 2023.

\bibitem[Lit06]{LW50}
Dudley~E. Littlewood.
\newblock {\em The theory of group characters and matrix representations of groups}.
\newblock AMS Chelsea Publishing, Providence, RI, 2006.
\newblock Reprint of the second (1950) edition.

\bibitem[LL20]{LecouLenart2020}
C\'edric Lecouvey and Cristian Lenart.
\newblock Combinatorics of generalized exponents.
\newblock {\em Int. Math. Res. Not. IMRN}, (16):4942--4992, 2020.

\bibitem[LOS12]{LOS2012}
C\'edric Lecouvey, Masato Okado, and Mark Shimozono.
\newblock Affine crystals, one-dimensional sums and parabolic {L}usztig {$q$}-analogues.
\newblock {\em Math. Z.}, 271(3-4):819--865, 2012.

\bibitem[LS78]{LasSch1978}
Alain Lascoux and Marcel-Paul Sch\"utzenberger.
\newblock Sur une conjecture de {H}. {O}. {F}oulkes.
\newblock {\em C. R. Acad. Sci. Paris S\'er. A-B}, 286(7):A323--A324, 1978.

\bibitem[LS07]{LS2007}
C\'edric Lecouvey and Mark Shimozono.
\newblock Lusztig's {$q$}-analogue of weight multiplicity and one-dimensional sums for affine root systems.
\newblock {\em Adv. Math.}, 208(1):438--466, 2007.

\bibitem[Lus83]{Lusztig1983}
George Lusztig.
\newblock Singularities, character formulas, and a {$q$}-analog of weight multiplicities.
\newblock In {\em Analysis and topology on singular spaces, {II}, {III} ({L}uminy, 1981)}, volume 101-102 of {\em Ast\'erisque}, pages 208--229. Soc. Math. France, Paris, 1983.

\bibitem[Lus90]{Lus90}
G.~Lusztig.
\newblock Canonical bases arising from quantized enveloping algebras.
\newblock {\em J. Amer. Math. Soc.}, 3(2):447--498, 1990.

\bibitem[NY97]{NY1997}
Atsushi Nakayashiki and Yasuhiko Yamada.
\newblock Kostka polynomials and energy functions in solvable lattice models.
\newblock {\em Selecta Math. (N.S.)}, 3(4):547--599, 1997.

\bibitem[Oka16]{Okada2016}
Soichi Okada.
\newblock Pieri rules for classical groups and equinumeration between generalized oscillating tableaux and semistandard tableaux.
\newblock {\em Electron. J. Combin.}, 23(4):Paper 4.43, 27, 2016.

\bibitem[OS08]{OS2008}
Masato Okado and Anne Schilling.
\newblock Existence of {K}irillov-{R}eshetikhin crystals for nonexceptional types.
\newblock {\em Represent. Theory}, 12:186--207, 2008.

\bibitem[OSS18]{OSS2018}
Masato Okado, Anne Schilling, and Travis Scrimshaw.
\newblock Rigged configuration bijection and proof of the {$X=M$} conjecture for nonexceptional affine types.
\newblock {\em J. Algebra}, 516:1--37, 2018.

\bibitem[Sch05]{Sch2005}
Anne Schilling.
\newblock A bijection between type {$D^{(1)}_n$} crystals and rigged configurations.
\newblock {\em J. Algebra}, 285(1):292--334, 2005.

\bibitem[Shi05]{S05}
Mark Shimozono.
\newblock On the {X}={M}={K} conjecture, 2005.

\bibitem[Sta24]{EC2}
Richard~P. Stanley.
\newblock {\em Enumerative combinatorics. {V}ol. 2}, volume 208 of {\em Cambridge Studies in Advanced Mathematics}.
\newblock Cambridge University Press, Cambridge, second edition, [2024] \copyright 2024.
\newblock With an appendix by Sergey Fomin.

\bibitem[Sun86]{Sundaram1986}
Sheila Sundaram.
\newblock {\em On the combinatorics of representations of Sp (2n, C)}.
\newblock ProQuest LLC, Ann Arbor, MI, 1986.
\newblock Thesis (Ph.D.)--Massachusetts Institute of Technology.

\bibitem[SZ06]{SZ2006}
Mark Shimozono and Mike Zabrocki.
\newblock Deformed universal characters for classical and affine algebras.
\newblock {\em J. Algebra}, 299(1):33--61, 2006.

\bibitem[Tan82]{Tan1982}
Toshiyuki Tanisaki.
\newblock Defining ideals of the closures of the conjugacy classes and representations of the {W}eyl groups.
\newblock {\em Tohoku Math. J. (2)}, 34(4):575--585, 1982.

\bibitem[Zab98]{Zab1998}
Michael~Alan Zabrocki.
\newblock {\em On the action of the {H}all-{L}ittlewood vertex operator}.
\newblock ProQuest LLC, Ann Arbor, MI, 1998.
\newblock Thesis (Ph.D.)--University of California, San Diego.

\end{thebibliography}

\end{document}